\documentclass[11pt]{amsart}

\usepackage{kotex}
\usepackage{amsmath}
\usepackage{mathrsfs}
\usepackage{amsfonts}
\usepackage{amscd}
\usepackage{slashed}

\usepackage{amssymb}
\usepackage{graphicx}
\usepackage{caption}
\usepackage{subcaption}
\usepackage{dsfont}
\usepackage{color}
\usepackage{hyperref}
\usepackage{bbm}
\usepackage{a4wide}
\usepackage{sseq}
\usepackage{tikz-cd}
\usepackage[abs]{overpic}

\newtheorem{theorem}{Theorem}[section]
\newtheorem{lemma}[theorem]{Lemma}
\newtheorem{proposition}[theorem]{Proposition}
\newtheorem{corollary}[theorem]{Corollary}

\newtheorem{assumption}[theorem]{Assumption}

\theoremstyle{definition}
\newtheorem*{definition*}{Definition}
\newtheorem{definition}[theorem]{Definition}

\theoremstyle{remark}
\newtheorem*{remark*}{Remark}
\newtheorem{remark}[theorem]{Remark}

\theoremstyle{question}
\newtheorem*{question*}{Question}

\numberwithin{equation}{section}

\theoremstyle{definition}

\newcommand{\myproof}[2]{Proof of {#1} {#2}}

\begin{document}

\title[Brane quantization and SYZ mirror symmetry]{Brane quantization and SYZ mirror symmetry}

\author[Chan, Leung, Li, Suen and Yau]{Kwokwai Chan, Naichung Conan Leung, Qin Li, Yat-Hin Suen and YuTung Yau}
\address{Department of Mathematics, The Chinese University of Hong Kong, Shatin, Hong Kong}
\email{kwchan@math.cuhk.edu.hk}
\address{The Institute of Mathematical Sciences and Department of Mathematics, The Chinese University of Hong Kong, Shatin, Hong Kong}
\email{leung@ims.cuhk.edu.hk}
\address{Shenzhen Institute for Quantum Science and Engineering, Southern University of Science and Technology, Shenzhen, China}
\email{liqin@sustech.edu.cn}
\address{Department of Mathematics, National Cheng Kung University}
\email{yhsuen@gs.ncku.edu.tw}
\address{Kavli Institute for the Physics and Mathematics of the Universe (WPI), The University of Tokyo Institutes for Advanced Study, The University of Tokyo, Kashiwa, Chiba 277-8583, Japan}
\email{yu-tung.yau@ipmu.jp}

\thanks{}

\maketitle

\begin{abstract}
	Coisotropic A-branes were introduced by Kapustin--Orlov to enlarge the Fukaya category of a symplectic manifold in a way that aligns with predictions from homological mirror symmetry. From a mathematical perspective, however, the categorical framework governing such branes remains largely undeveloped.
	On the other hand, Gukov--Witten's brane quantization suggests that a holomorphic deformation quantization of a holomorphic symplectic manifold $X$ arises from the endomorphism algebra $\operatorname{Hom}_A(\mathcal{B}_{\operatorname{cc}},\mathcal{B}_{\operatorname{cc}})$ of a canonical coisotropic A-brane $\mathcal{B}_{\operatorname{cc}}$, which naturally acts on the morphism space $\operatorname{Hom}_A(\mathcal{B},\mathcal{B}_{\operatorname{cc}})$ with a Lagrangian A-brane $\mathcal{B}$ that in turn gives precisely the geometric quantization of $\mathcal{B}$.
	\par
	In this paper, we consider a holomorphic symplectic manifold $X$ which admits an SYZ fibration and apply SYZ mirror symmetry to study its brane quantization. 
	Given any semi-affine, space-filling coisotropic A-brane $\mathcal{B}_{\operatorname{cc}}$ on $X$, we construct the mirror B-brane $\check{\mathcal{B}}_{\operatorname{cc}}$ on the mirror manifold $\check{X}$ by an SYZ transform.
	We then present a mathematical definition of the endomorphism algebra $\operatorname{Hom}_A(\mathcal{B}_{\operatorname{cc}},\mathcal{B}_{\operatorname{cc}})$ by constructing a distinguished non-formal holomorphic deformation quantization of $X$.
	Using a twisted family Toeplitz construction, we transform $\operatorname{Hom}_A(\mathcal{B}_{\operatorname{cc}},\mathcal{B}_{\operatorname{cc}})$ to the mirror B-side and prove that this induces an isomorphism 
	$\operatorname{Hom}_A(\mathcal{B}_{\operatorname{cc}},\mathcal{B}_{\operatorname{cc}})\cong \operatorname{Hom}_B(\check{\mathcal{B}}_{\operatorname{cc}},\check{\mathcal{B}}_{\operatorname{cc}})$
	between the endomorphism algebras. 
	Furthermore, taking any torus fiber of $X$ as the Lagrangian A-brane $\mathcal{B}$, we fully realize Gukov--Witten's proposal, namely, there is a natural action of $\operatorname{Hom}_A(\mathcal{B}_{\operatorname{cc}},\mathcal{B}_{\operatorname{cc}})$ on $\operatorname{Hom}_A(\mathcal{B},\mathcal{B}_{\operatorname{cc}})$ which is precisely mirror to the natural action on the mirror B-side. This provides a mathematical framework which is compatible with Gukov--Witten's brane quantization proposal, SYZ mirror symmetry as well as family Floer theory.
\end{abstract}

\section{Introduction}
\label{Section: Introduction}

\subsection{Background and the main result}
\quad\par

Kontsevich's homological mirror symmetry \cite{HMS} asserts that the Fukaya category $Fuk(X,\omega)$ of a symplectic manifold $(X,\omega)$ is quasi-equivalent to the derived category $D^b(\check{X},\check{J})$ of coherent sheaves of its mirror complex manifold $(\check{X},\check{J})$. 
However, Kapustin--Orlov \cite{KapOrl2003} observed that for some special symplectic structure, the Fukaya category of $(X,\omega)$ could be smaller than $D^b(\check{X},\check{J})$. An algebraic way to resolve this asymmetry is to add more objects by taking the split closure of $Fuk(X,\omega)$, as was done in e.g., \cite{HMS_4torus, HMS_for_log_CY_surfaces, Sheriden_HMS_CY_hypersurfaces}. 
A more geometric approach was proposed by Kapustin--Orlov \cite{KapOrl2003}, who suggested that coisotropic submanifolds, not just Lagrangian submanifolds, should be considered as objects in the Fukaya category. These new objects are called \emph{coisotropic A-branes}. 
Nevertheless, a definition of morphism spaces between coisotropic A-branes remains elusive, despite much effort \cite{AldZas2005, BisGua2022, GaiWit2022, KapLi2005, LeuYau2024, Qin2020, Qiu2024, Qui2012, Yau2024}.

On the other hand, Gukov--Witten \cite{GukWit2009} introduced the notion of a \emph{canonical coisotropic A-brane} of a symplectic manifold $(X,\omega)$. This is a space-filling brane, meaning that it is a coisotropic A-brane $\mathcal{B}_{\operatorname{cc}}$ supported on the whole $X$, and it is equipped with a $U(1)$-line bundle $(L,\nabla^L)$ over $X$ such that 
$I := \omega^{-1}F_{\nabla^L}$ defines a complex structure on $X$; here $-2\pi\sqrt{-1} F_{\nabla^L}$ is the curvature 2-form of the $U(1)$-connection $\nabla^L$. 
We thus obtain a structure sheaf $\mathcal{O}_X$ and a Dolbeault operator $\bar{\partial}_I$ on $X$. Also, the 2-form $\Omega_X:=F_{\nabla^L}+\sqrt{-1}\omega$ 
defines a holomorphic symplectic structure on $(X, I)$. 
Gukov and Witten proposed that 
\begin{itemize}
	\item The endomorphism algebra $\operatorname{Hom}_A(\mathcal{B}_{\operatorname{cc}},\mathcal{B}_{\operatorname{cc}})$ of the coisotropic A-brane $\mathcal{B}_{\operatorname{cc}}$ is a \emph{holomorphic deformation quantization (DQ)} of the sheaf cohomology $H^*(X,\mathcal{O}_X)$ of holomorphic functions on $X$ with respect to $\Omega_X$.
	\item Furthermore, for any Lagrangian brane $\mathcal{B}=(\underline{\mathcal{B}},\mathcal{L})$ such that $F_{\nabla^L}|_{\underline{\mathcal{B}}}$ is non-degenerate, the morphism space $\operatorname{Hom}_A(\mathcal{B},\mathcal{B}_{\operatorname{cc}})$ is the \emph{geometric quantization (GQ)} of $\underline{\mathcal{B}}$ equipped with the pre-quantum line bundle $L|_{\underline{\mathcal{B}}}\otimes\mathcal{L}^*$.
\end{itemize}
In particular, there is a natural action ``$DQ\curvearrowright GQ$'' given by the categorical composition
$$\operatorname{Hom}_A(\mathcal{B}_{\operatorname{cc}},\mathcal{B}_{\operatorname{cc}})\otimes \operatorname{Hom}_A(\mathcal{B},\mathcal{B}_{\operatorname{cc}})\to \operatorname{Hom}_A(\mathcal{B},\mathcal{B}_{\operatorname{cc}}).$$
This action is called a \emph{brane quantization}.

In this paper, we study Gukov--Witten's brane quantization proposal via SYZ mirror symmetry \cite{SYZ}. Suppose $X$ admits a Lagrangian torus fibration $p: X \to B$, or an \emph{SYZ fibration}, without singular fibers. The \emph{SYZ mirror} $\check{X}$ of $X$ is then constructed as the total space of the fiberwise dual torus fibration. We obtain a pair of fiberwise dual torus fibrations \cite{ChanLeung10, Leung05, LeuYauZas2000, SYZ}
\begin{center}
	\begin{tikzcd}
		X=T^*B/\Lambda \ar[rd, "p"'] & & \check{X}=TB/\check{\Lambda} \ar[ld, "\check{p}"]\\
		& B
	\end{tikzcd}
\end{center}

Consider a canonical coisotropic A-brane $\mathcal{B}_{\operatorname{cc}}=(X,L,\nabla^L)$ on $X$ which is \emph{semi-affine}, meaning that the restriction $F_{\nabla^L}|_{F_x}$, where $F_x := p^{-1}(x)$ is the torus fiber over $x \in B$, is a constant 2-form independent of $x \in B$; this notion was introduced in \cite{ChaLeuZha2018}. We impose an assumption that for each $x \in B$, $F_{\nabla^L} \vert_{F_x}$ is non-degenerate. For any positive integer $k \in \mathbb{Z}_{>0}$, we also consider the canonical coisotropic A-brane $\mathcal{B}_{\operatorname{cc}}^{(k)}:=(X,L^{\otimes k},\nabla^{L^{\otimes k}})$ with respect to the scaled symplectic structure $k\omega$.

By a slight modification of the \emph{SYZ transform} introduced in \cite{ChaLeuZha2018}, we construct a family $\{\check{\mathcal{B}}_{\operatorname{cc}}^{(k)}\}_{k\in\mathbb{Z}_{>0}}$ of B-branes on $\check{X}$ which is SYZ mirror to the family $\{\mathcal{B}_{\operatorname{cc}}^{(k)}\}_{k\in\mathbb{Z}_{>0}}$ of A-branes on $X$. 
We prove that the mirror B-brane $\check{\mathcal{B}}_{\operatorname{cc}}^{(k)}$ is a holomorphic vector bundle $\check{E}^{(k)}$ on $\check{X}$ whose rank is given by $rk(\check{E}^{(k)})=\int_{F_x}ch(L)^k$, which tends to infinity as $k \to \infty$. 

Our main focus is on the A-model morphism spaces. Following Gukov--Witten's proposal \cite{GukWit2009}, we propose a mathematically rigorous definition of the endomorpism algebra of $\mathcal{B}_{\operatorname{cc}}^{(k)}$, as a \emph{non-formal holomorphic deformation quantization} of $X$. 
Here comes the key novelty of the paper.
By considering fiberwise geometric quantization with K\"ahler polarization on $X$ and introducing a suitable twist of family of Toeplitz operators, we are able to define a mirror transform from the endomorphism algebra of $\mathcal{B}_{\operatorname{cc}}^{(k)}$ to that of the mirror B-brane $\check{\mathcal{B}}_{\operatorname{cc}}^{(k)}$. 
Our main result is the following mirror isomorphism.
\begin{theorem}
	For every $k\in\mathbb{Z}_{>0}$, the mirror transform, defined as a family twisted Toeplitz operators, induces an isomorphism
	$$\operatorname{Hom}_A(\mathcal{B}_{\operatorname{cc}}^{(k)},\mathcal{B}_{\operatorname{cc}}^{(k)})\cong \operatorname{Hom}_B(\check{\mathcal{B}}_{\operatorname{cc}}^{(k)},\check{\mathcal{B}}_{\operatorname{cc}}^{(k)})$$
	of graded algebras.
\end{theorem}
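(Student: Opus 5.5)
The plan is to reduce the statement to a fiberwise computation over $B$ and then globalize using the semi-affine hypothesis. On the B-side, $\operatorname{Hom}_B(\check{\mathcal{B}}^{(k)}_{\operatorname{cc}},\check{\mathcal{B}}^{(k)}_{\operatorname{cc}})$ is the Dolbeault cohomology $H^{0,*}(\check{X},\operatorname{End}(\check{E}^{(k)}))$, with the graded algebra structure given by wedge product and composition. On the A-side, $\operatorname{Hom}_A(\mathcal{B}^{(k)}_{\operatorname{cc}},\mathcal{B}^{(k)}_{\operatorname{cc}})$ is the cohomology of a Dolbeault-type complex $(\Omega^{0,*}_I(X)[[\cdot]]^{\mathrm{nf}},\bar\partial_I)$ carrying the non-formal holomorphic star product $\star_k$ attached to $\Omega_X$ and $k$. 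I would build the isomorphism at the cochain level, as a map of dg algebras.

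\textbf{Step 1 (fiberwise Toeplitz).} For each $x\in B$, the torus fiber $F_x$ carries the non-degenerate constant form $kF_{\nabla^L}|_{F_x}$ and the line bundle $L^{\otimes k}|_{F_x}$. Using the induced fiberwise Kähler polarization, I would form the quantum space $\mathcal{H}_x^{(k)}=H^0(F_x,L^{\otimes k}|_{F_x})$. By Riemann--Roch its dimension is $\int_{F_x}ch(L)^k$, which matches $rk(\check{E}^{(k)})$. Twisting by the flat $U(1)$-connection parametrized by a point $\check{y}$ in the dual fiber $\check{F}_x$ should identify these spaces with the fibers $\check{E}^{(k)}_{(x,\check{y})}$; this is the SYZ transform construction stated earlier. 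A function $f$ on $X$, expanded in Fourier modes along the fibers, acts by the twisted Toeplitz operator $T_f(x,\check{y})=\Pi_{x,\check{y}}\,m_f\,\Pi_{x,\check{y}}$. Because $F_{\nabla^L}|_{F_x}$ is constant, the fiber is a linear torus with a translation-invariant structure. In that setting the Toeplitz map is exact on Fourier characters, since characters act as Heisenberg operators. Consequently $T_{f\star_k g}=T_f\circ T_g$ holds exactly, not merely asymptotically. This exactness is where the non-formal star product is pinned down: I would choose $\star_k$ on each fiber to be the Moyal--Wick type product that realizes the Heisenberg composition law.

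\textbf{Step 2 (families and forms).} I would extend $T$ to forms $\Omega^{0,*}_I(X)\to\Omega^{0,*}(\check{X},\operatorname{End}\check{E}^{(k)})$. The base directions are handled by the semi-flat identification: $\bar\partial_I$ in the base directions corresponds, via Fourier transform along the fibers, to $\bar\partial_{\check{X}}$ coupled to the Chern connection of $\check{E}^{(k)}$. The key identity to prove is $T\circ\bar\partial_I=\bar\partial_{\operatorname{End}\check{E}}\circ T$. To prove it I would compute on Fourier modes $e^{2\pi\sqrt{-1}\langle m,\theta\rangle}\,a(x)$ and check the commutation mode by mode, using the fact that the semi-affine condition makes the twisting data vary affinely in $x$.

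\textbf{Step 3 (isomorphism).} I would show that $T$ is injective and has dense image in a suitable completion, fiberwise via Fourier analysis, since characters map to a basis of $\operatorname{End}(\mathcal{H}_x^{(k)})$ modulo the lattice. The intended argument is the Weyl-algebra fact that Heisenberg operators span the matrix algebra, with the non-formal class of functions chosen so that $T$ is bijective onto smooth endomorphism-valued forms. A chain isomorphism that is multiplicative then descends to an isomorphism of graded algebras in cohomology.

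The main obstacle is Step 2, namely the compatibility of the twisted Toeplitz map with $\bar\partial$ in the base directions. Both the choice of twist and the precise definition of the non-formal star product have to be arranged so that this holds exactly.
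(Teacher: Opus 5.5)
Your overall architecture (fiberwise quantization over $B$, extension to $(0,*)$-forms, mode-by-mode checks, bijectivity, descent to cohomology) matches the paper's. However, the decisive idea is missing, and Step~1 is false as stated.

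Under the plain fiberwise Toeplitz map, a character $e_m=e^{2\pi\sqrt{-1}m\cdot u}$ does \emph{not} act as a pure Heisenberg operator. Instead $T_{e_m}=e^{-\lambda_m/k}W_m$. Here $W_m$ is the shift-and-phase operator of Lemma~\ref{Lemma: family Toeplitz operator}, and $\lambda_m$ is the eigenvalue of the fiberwise Laplacian $\Delta_\Omega$ on $e_m$. This eigenvalue is positive for $m\neq 0$, grows quadratically in $m$, and depends on the auxiliary polarization $\Omega$. So the product that makes $T$ exactly multiplicative is the non-formal Berezin--Toeplitz (Wick) product. On $e_m,e_{m'}$ it differs from the Moyal product $\star_{k^{-1}}$ by the factor $e^{(\lambda_{m+m'}-\lambda_m-\lambda_{m'})/k}$. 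That factor is unbounded (it equals $e^{2\lambda_m/k}$ when $m=m'$), so this product does not preserve smooth functions, and it depends on $\Omega$.

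For the same reason, the image of $T$ is only a proper dense subspace of the smooth sections of $\operatorname{End}(\check{E}^{(k)})$. The Gaussian factors force the coefficient functions in the shift-operator expansion of $T_f$ to be real-analytic in $\check{\boldsymbol{y}}_1$. Hence Step~3 cannot give a bijection onto smooth endomorphism-valued forms unless you replace $\Omega^{0,*}(X)$ by an $\Omega$-dependent space of Fourier series with Gaussian growth. That space is no longer the Dolbeault complex whose cohomology defines $\operatorname{Hom}_A$.

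Step~2, which you flag as the main obstacle, is handled in the paper by a direct mode-by-mode computation (Lemma~\ref{Lemma: holomorphicity}). The real difficulty is globalization, which your proposal does not address.

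There is no ``induced'' fiberwise K\"ahler polarization: the fibers are totally real for $I$, so an auxiliary $\Omega\in\mathbb{H}_n$ must be chosen on each chart. The monodromy acts on $\mathbb{H}_n$ through $\operatorname{Sp}(\mathbb{Z}^{2n},H)$ and in general fixes no point (e.g.\ the unipotent monodromy in the Kodaira--Thurston and Ooguri--Vafa examples). So $\check{E}^{(k)}$ must be glued by BKS maps. This is also where the half-form factor $\sqrt{K_\Omega}$ you dropped matters: it makes these maps unitary and compatible on triple overlaps.

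Plain Toeplitz maps do not intertwine conjugation by $\operatorname{BKS}_{\Omega,\Omega'}$, precisely because $e^{-\lambda_m(\Omega)/k}\neq e^{-\lambda_m(\Omega')/k}$. So your local maps do not glue.

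The paper resolves all of this with one twist, $\Phi=T\circ e^{\frac{1}{k}\Delta_\Omega}$. Although $e^{\frac{1}{k}\Delta_\Omega}$ is a backward heat flow and ill-posed on $\mathcal{C}^\infty$, the composite is well defined (Proposition~\ref{Proposition: absolute convergence}), because it sends $e_m$ exactly to $W_m$, which is independent of $\Omega$. This one device gives:
exact multiplicativity for the $\Omega$-independent, convergent Moyal product $\star_{k^{-1}}$;
commutation with the BKS maps (Proposition~\ref{Proposition: Commutativity one}), and hence gluing;
bijectivity onto smooth sections, via the shift-operator description $\langle\Phi_f\rangle=\sum_{\boldsymbol{m}_1}\check{f}_{\boldsymbol{m}_1}\operatorname{S}_{\boldsymbol{m}_1}$ together with Lemma~\ref{Lemma: characterization of sections of endo-bundle}.
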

Note that the ring structure on the LHS is given by a \emph{star product}, while $\operatorname{Hom}_B(\check{\mathcal{B}}_{\operatorname{cc}}^{(k)},\check{\mathcal{B}}_{\operatorname{cc}}^{(k)}) = H^*(\check{X},\operatorname{End}(\check{E}^{(k)}))$ so the ring structure on the RHS is the \emph{matrix multiplication} in $\operatorname{End}(\check{E}^{(k)})$. 
Explicit transformations of morphism spaces which induce mirror isomorphisms are very rarely achieved. To the best of our knowledge, our construction is the first such transform for coisotropic A-branes.

As we will see shortly, the fiber of the holomorphic vector bundle $\check{E}^{(k)}$ at the point $(F_x,\mathcal{L})\in\check{X}$ is by construction given by
$$\check{E}_{(F_x,\mathcal{L})}^{(k)}=H^0(F_x,L^{\otimes k}|_{F_x}\otimes\mathcal{L}^*\otimes\sqrt{K_{\Omega}}).$$
This is nothing but the geometric quantization of $(F_x,\Omega_X|_{F_x})$, which, again according to Gukov--Witten's proposal \cite{GukWit2009}, should exactly be the A-model morphism space $\operatorname{Hom}_A(\mathcal{B},\mathcal{B}_{\operatorname{cc}}^{(k)})$, where $\mathcal{B}$ denotes the Lagrangian A-brane $(F_x, \mathcal{L})$. In other words, we should set
$$\operatorname{Hom}_A(\mathcal{B},\mathcal{B}_{\operatorname{cc}}^{(k)}):=H^0(F_x,L^{\otimes k}|_{F_x}\otimes\mathcal{L}^*\otimes\sqrt{K_{\Omega}}).$$

Now the natural action of $\operatorname{Hom}_A(\mathcal{B}_{\operatorname{cc}}^{(k)},\mathcal{B}_{\operatorname{cc}}^{(k)})$ on $\operatorname{Hom}_A(\mathcal{B},\mathcal{B}_{\operatorname{cc}}^{(k)})$ by Toeplitz operators \cite{ChaLeuLi2023, Yau2024}
corresponds, through the above mirror isomorphism, to the simple and natural B-model action of $\operatorname{Hom}_B(\check{\mathcal{B}}_{\operatorname{cc}}^{(k)},\check{\mathcal{B}}_{\operatorname{cc}}^{(k)}) = H^*(\check{X},\operatorname{End}(\check{E}^{(k)}))$ on $\operatorname{Hom}_B(\check{\mathcal{B}},\check{\mathcal{B}}_{\operatorname{cc}}^{(k)}) = \check{E}_{(F_x,\mathcal{L})}^{(k)}$.
This shows that the $DQ$-module structure on $GQ$ (of the fiber $F_x$), which fully and rigorously realizes Gukov--Witten's brane quantization proposal in the current setting, is compatible with SYZ mirror symmetry as well as family Floer theory.

\subsection{The SYZ transform and construction of the mirror B-brane}
\quad\par

Let us explain our constructions in more detail.
Recall that in SYZ mirror symmetry \cite{ChanLeung10, Leung05, LeuYauZas2000, SYZ}, the SYZ mirror $\check{X}$ of $X$ is constructed as the moduli space of pairs $\{(F_x, \mathcal{L})\}$ consisting of a Lagrangian torus fiber $F_x = p^{-1}(x)$ of $p:X\to B$ equipped with a flat $U(1)$-local system $\mathcal{L}$. Given a semi-affine canonical coisotropic A-brane $\mathcal{B}_{\operatorname{cc}} = (X,L,\nabla^L)$ on $X$ whose curvature is non-degenerate on every fiber of $p: X \to B$, we can then construct the mirror B-brane $\check{\mathcal{B}}_{\operatorname{cc}} = \check{E}$, as a holomorphic vector bundle on $\check{X}$, using a slight modification of the \emph{SYZ transform} defined in \cite{ChaLeuZha2018}. 

More precisely, the fiber of $\check{E}$ over $(F_x,\mathcal{L})\in\check{X}$ is given by
$$\check{E}_{(F_x,\mathcal{L})} := H^0(F_x, L|_{F_x}\otimes\mathcal{L}^*\otimes\sqrt{K_{\Omega}}),$$
where $\sqrt{K_{\Omega}}$ is a holomorphic half-form bundle on $F_x$ with respect to some auxiliary locally constant (with respect to $x \in B$) flat complex structure $\Omega$ on $F_x$. This complex structure turns each fiber into an abelian variety and $L|_{F_x} \otimes \mathcal{L}^*$ gives a pre-quantum line bundle on $(F_x,\Omega)$. Thus, $\check{E}_{(F_x,\mathcal{L})}$ is nothing but the \emph{Hilbert space with half-form corrections} in the geometric quantization of $F_x$ with respect to the pre-quantum line bundle $L|_{F_x}\otimes\mathcal{L}^*$ and the K\"ahler polarization.

To glue these fibers together, we need to consider a change of affine charts in the base $B$.
Let $\Omega'$ be an auxiliary locally constant flat complex structure on $F_x$ over another chart. 
Then the two spaces of K\"ahler polarized sections are related by the \emph{BKS isomorphism}
$$\operatorname{BKS}_{\Omega',\Omega}:H^0(F_x,L\otimes\mathcal{L}^*\otimes\sqrt{K_{\Omega}})\xrightarrow{\sim}H^0(F_x,L\otimes\mathcal{L}^*\otimes\sqrt{K_{\Omega'}}).$$
It is well-known that BKS maps satisfy \cite{And2005, BaiMouNun2010}
$$\operatorname{BKS}_{\Omega'',\Omega}\circ \operatorname{BKS}_{\Omega',\Omega}=\operatorname{BKS}_{\Omega'',\Omega}.$$
This yields the desired cocycle condition and independence of the choice of the fiber K\"ahler polarization. 

This construction can easily be generalized to the \emph{level $k$} canonical coisotropic A-brane $$\mathcal{B}_{\operatorname{cc}}^{(k)}:=(X,L^{\otimes k},\nabla^{L^{\otimes k}})$$ 
with respect to the $k$-scaled symplectic manifold $(X,k\omega)$ for any $k \in \mathbb{Z}_{>0}$. The mirror B-brane $\check{\mathcal{B}}_{\operatorname{cc}}^{(k)}$ is a holomorphic vector bundle $\check{E}^{(k)}$ on $\check{X}^{(k)}$ (with complex coordinates $\check{z}_i:=x_i+\frac{\sqrt{-1}}{k}\check{y}_i$, $i=1,\dots,2n$), whose rank is given by the Riemann-Roch formula:
$$rk(\check{E}^{(k)})=\int_{F_x}ch(L)^k.$$
The detailed construction of $\check{\mathcal{B}}_{\operatorname{cc}}^{(k)}$ will be given in Section \ref{Section: The mirror B-brane via Fourier--Mukai type transforms} (see Theorems \ref{Theorem: unitary connection on the local mirror B brane} and \ref{Proposiiton: Global mirror B-brane}).

\subsection{Non-formal holomorphic deformation quantization}
\quad\par

The next step is to define the endomorphism algebra $\operatorname{Hom}_A(\mathcal{B}_{\operatorname{cc}}^{(k)},\mathcal{B}_{\operatorname{cc}}^{(k)})$ and construct the mirror transform. 

First of all, Gukov--Witten \cite{GukWit2009} proposed that $\operatorname{Hom}_A(\mathcal{B}_{\operatorname{cc}}, \mathcal{B}_{\operatorname{cc}})$ carries a natural noncommutative algebra structure arising from a holomorphic deformation quantization of $(X, \Omega_X)$. They further suggested that when $X$ admits a ``good A-model'' (see \cite[pages 2-3]{GukWit2009}), this deformation quantization should be \emph{non-formal} and governed by an actual complex parameter rather than a formal one; see \cite[page 11]{GukWit2009}.

In Section \ref{Section: Holomorphic deformation quantization}, we construct a distinguished \emph{non-formal holomorphic deformation quantization} of $(X, \Omega)$ depending on a real parameter $\hbar$. 
For each level $k \in \mathbb{Z}_{>0}$, the specialization $\hbar = 1/k$ realizes the A-model endomorphism algebra $\operatorname{Hom}_A(\mathcal{B}_{\operatorname{cc}}^{(k)}, \mathcal{B}_{\operatorname{cc}}^{(k)})$ for the A-model on $(X, k\omega)$. 
Such a non-formal definition is also necessary to make sense of the desired mirror statement $\operatorname{Hom}_A(\mathcal{B}_{\operatorname{cc}}^{(k)}, \mathcal{B}_{\operatorname{cc}}^{(k)}) \cong \operatorname{Hom}_B(\check{\mathcal{B}}_{\operatorname{cc}}^{(k)}, \check{\mathcal{B}}_{\operatorname{cc}}^{(k)})$. 

Let us briefly describe the construction. Using data from $\mathcal{B}_{\operatorname{cc}}^{(k)}$, we first write down a non-formal Moyal-type star product $\star_q$ on each torus fiber of $X$.
On the other hand, following \cite{ChaLeuZha2018}, we construct some distinguished local models for the semi-affine A-brane $\mathcal{B}_{\operatorname{cc}}^{(k)}$. These so-called \emph{skew-Smith forms} of $\mathcal{B}_{\operatorname{cc}}^{(k)}$ produce an atlas of complex affine coordinates on $X$. Now for each affine chart $U \subset B$, let $X_U := p^{-1}(U)$. We expand each smooth function $f\in C^{\infty}(X_U)$ as a fiberwise Fourier series whose Fourier coefficients $\widehat{f}_m(x)$ are smooth functions on $U\subset B$ and then we define $\star_{U,\hbar}:C^{\infty}(X_U)\times C^{\infty}(X_U)\to C^{\infty}(X_U)$ by extending the star product $\star_q$ $C^{\infty}(U)$-bilinearly. The Schwartz's condition (in the lattice variable $m$) of the Fourier coefficients ensures (absolute) convergence. We denote this non-formal star product by $\star_{U,\hbar}$ and extend it to a graded star product
$$\star_{U,\hbar}:\Omega^{0,*}(X_U)\times \Omega^{0,*}(X_U)\to \Omega^{0,*}(X_U)$$
in the most natural way. 

The transition maps of $(X,I)$ being affine allows us to glue these graded algebras to a global graded algebra
$$\star_{\hbar}:\Omega^{0,*}(X)\times \Omega^{0,*}(X)\to \Omega^{0,*}(X),$$
and we can establish the Leibniz rule:
$$\bar{\partial}_I(\alpha\star_{\hbar}\beta)=(\bar{\partial}_I\alpha)\star_{\hbar}\beta+(-1)^{|\alpha|}\alpha\star_{\hbar}\bar{\partial}_I\beta.$$
As a result, we obtain a differential graded algebra (dga) (see Theorem \ref{Theorem: non-commutative Dolbeault complex})
$$\mathcal{A}_{X,\hbar}:=(\Omega^{0,*}(X),\bar{\partial}_I,\star_{\hbar}).$$
A key feature of this Moyal-type star product $\star_\hbar$ is that, although it is non-local along the fiber direction of the SYZ fibration $p: X \to B$, it remains \emph{local along the base $B$}.
We then define
$$\operatorname{Hom}_A(\mathcal{B}_{\operatorname{cc}}^{(k)},\mathcal{B}_{\operatorname{cc}}^{(k)}):=(H_{\bar{\partial}_I}^{0,*}(X),\star_{k^{-1}}).$$

Some justification of this definition is in order. Let $F_x$ be a torus fiber of $X$ over a point $x \in B$. Our standing assumption \ref{Setup} on the space-filling A-brane $\mathcal{B}_{\operatorname{cc}}^{(k)}$ implies that $(X, \Omega_X)$ is a \emph{complexification} of the symplectic torus $(F_x, \Omega_X \vert_{F_X})$.
According to the Gukov--Witten picture, we shall realize $\operatorname{Hom}_A(\mathcal{B}, \mathcal{B}_{\operatorname{cc}}^{(k)})$ as the geometric quantization of $(F_x, \Omega_X \vert_{F_x})$ in a \emph{torus-invariant K\"ahler polarization}.

As explained in \cite{Yau2024}, this places us in a setting where the \emph{Berezin--Toeplitz star product} yields a \emph{canonical but formal} holomorphic deformation quantization on $X_U$ for a sufficiently small neighborhood $U \subset B$ of $x$, uniquely determined (up to equivalence) by Toeplitz operators acting on these morphism spaces \cite{ChaLeuLi2023, Yau2024}.
This is also in line with the philosophy of family Floer theory, namely, the noncommutative product on $\operatorname{Hom}_A(\mathcal{B}_{\operatorname{cc}}^{(k)}, \mathcal{B}_{\operatorname{cc}}^{(k)})$ should be determined by its action on $\operatorname{Hom}_A(\mathcal{B}, \mathcal{B}_{\operatorname{cc}}^{(k)})$, where $\mathcal{B}$ ranges over all the Lagrangian A-branes supported on torus fibers of $X$ (cf. \cite{GaiWit2022}).

Now, as shown in \cite{And2005} (see also \cite{LeuYau2023}), the Berezin--Toeplitz star product and the Moyal star product on a symplectic torus are actually gauge equivalent (as formal star products). So it seems that we can use the fiberwise Berezin--Toeplitz quantization to define the required holomorphic deformation quantization on $X$. Unfortunately, for different open subsets $X_U$'s, the choices of torus invariant K\"ahler polarization could be very different (in fact they could be arbitrary). To glue these Berezin--Toeplitz star products together, we need to choose gauge transformations between different polarizations. The issue is that these gauge transformations need \emph{not} converge at the non-formal level (i.e., when we evaluate at $\hbar = 1/k$). 
In contrast, our construction gives a holomorphic deformation quantization of $X$ which is \emph{non-formal but locally formally equivalent to the fiberwise Berezin--Toeplitz deformation quantization}. This is consistent with the Gukov--Witten picture. More importantly, we are able to construct the mirror transform of the morphism spaces under such a definition.

\subsection{The mirror isomorphism}
\quad\par

In defining the mirror transform for $\mathcal{B}_{\operatorname{cc}}^{(k)}$, our idea is to apply the \emph{family Toeplitz construction}. Recall that for a general compact K\"ahler manifold $(M,\omega,J)$ equipped with a pre-quantum line bundle $L$, the \emph{level $k$ Toeplitz operator} $T_f^{(k)}\in\operatorname{End}_{\mathbb{C}}(H^0(M,L^{\otimes k}))$ associated to $f\in C^{\infty}(M)$ is defined by
$$T_f^{(k)}:s\mapsto \Pi^{(k)}(f\cdot s),$$
where $\Pi^{(k)}:L^2(M,L^{\otimes k})\to H^0(M,L^{\otimes k})$ is the orthogonal projection \cite{BT_operators}. 

Applying this construction to each fiber of $p:X\to B$, we obtain a $\mathbb{C}$-linear map
$$T_U^{(k)}:C^{\infty}(X_U)\to C^{\infty}(\check{X}_U^{(k)},\operatorname{End}(\check{E}^{(k)})).$$
By sending $d\bar{z}^{i_1}\wedge\cdots\wedge d\bar{z}^{i_l}$ to $d\bar{\check{z}}_{i_1}\wedge\cdots\wedge d\bar{\check{z}}_{i_l}$, $T_U^{(k)}$ extends to a graded $\mathbb{C}$-linear map $$T_U^{(k)}:\Omega^{0,*}(X_U)\to \Omega^{0,*}(\check{X}_U^{(k)},\operatorname{End}(\check{E}_U^{(k)})).$$
The most surprising property of $T_U^{(k)}$ is that it interchanges the Dolbeault operators $\bar{\partial}_I$ and $\bar{\partial}_{\operatorname{End}(\check{E}^{(k)})}$:
\begin{proposition}[= Lemma \ref{Lemma: holomorphicity}]\label{proposition : holomorphicity_intro}
	The $\mathbb{C}$-linear map
	$$T_U^{(k)}:(\Omega^{0,*}(X_U),\bar{\partial}_I)\to (\Omega^{0,*}(\check{X}_U^{(k)},\operatorname{End}(\check{E}^{(k)})),\bar{\partial}_{\operatorname{End}(\check{E}^{(k)})})$$
	is a cochain homomorphism.
\end{proposition}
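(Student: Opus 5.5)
We will verify the cochain property locally over an affine chart $U\subset B$, using the skew-Smith local model of $\mathcal{B}_{\operatorname{cc}}^{(k)}$. Over such a chart we have fiber coordinates $y_i$ on $X_U$ and base coordinates $x_i$. The complex coordinates of $(X,I)$ are affine combinations $z_i$ of $x$ and $y$, and the mirror complex coordinates are $\check{z}_i=x_i+\frac{\sqrt{-1}}{k}\check{y}_i$. Since $T_U^{(k)}$ is defined on forms by acting on coefficient functions and sending $d\bar{z}^{i}\mapsto d\bar{\check{z}}_i$, and both Dolbeault operators act by $\bar\partial(f\,d\bar z^I)=\sum_i \partial_{\bar z^i}f\, d\bar z^i\wedge d\bar z^I$, the statement reduces to the identity on functions
$$T_U^{(k)}\Big(\frac{\partial f}{\partial \bar z^i}\Big)=\nabla^{\operatorname{End}}_{\partial/\partial\bar{\check z}_i}\,T_U^{(k)}(f),\qquad f\in C^\infty(X_U),$$
where $\nabla^{\operatorname{End}}$ is induced from the Chern connection on $\check E^{(k)}$ of Theorem \ref{Theorem: unitary connection on the local mirror B brane}. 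Signs on higher forms are then automatic, because both sides treat the antiholomorphic forms as commuting past functions in the same way.

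Next we expand $f$ fiberwise in Fourier modes, $f=\sum_m \widehat f_m(x)e^{2\pi\sqrt{-1}\langle m,y\rangle}$. By $C^\infty(U)$-linearity and the Schwartz decay, which justifies exchanging sums and derivatives, it suffices to treat $f=g(x)e_m(y)$. Write $\partial/\partial\bar z^i$ as a combination $\tfrac12(\partial_{x}+A\,\partial_{y})$ dictated by $I=\omega^{-1}F_{\nabla^L}$ in skew-Smith form. The $\partial_x$ part acts on $g$, and $T^{(k)}$ commutes with multiplication by $g(x)$. The $\partial_y$ part acting on $e_m$ multiplies it by $2\pi\sqrt{-1}m$.

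On the mirror side we write the fiber $\check E^{(k)}_{(x,\check y)}$ using the theta-function basis of $H^0(F_x,L^{k}\otimes\mathcal L^*\otimes\sqrt{K_\Omega})$. The parameter $\check y$ enters through the local system $\mathcal L$, which twists the theta functions by characters. The Toeplitz operator $T_{e_m}$ acts on this basis as a Heisenberg (clock-and-shift) matrix times a Gaussian factor. In the unitary frame supplied by Theorem \ref{Theorem: unitary connection on the local mirror B brane}, the connection form is explicit, linear in $x$ and $\check y$. The covariant derivative of $T_{e_m}$ is then the ordinary derivative plus the commutator with this connection form.

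The main obstacle is the explicit comparison of the commutator with the connection matrix against the factor $2\pi\sqrt{-1}m$ coming from differentiating $e_m$, including the normalization $1/k$ in $\check z$. I would first compute $[\,A^{(0,1)},T_{e_m}]$ using the Heisenberg relations. Under translation of the local system parameter, $T_{e_m}$ satisfies $\partial_{\check y}T_{e_m}=[\,\cdot\,,T_{e_m}]$, the infinitesimal version of the fact that tensoring by the character $e_m$ shifts $\mathcal L$. I would check directly that the resulting eigenvalue is $\tfrac{1}{k}$ times the fiber momentum $m$ contracted with $F_{\nabla^L}|_{F_x}$, matching the $y$-derivative term.

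The $x$-dependence also requires care, since the Gaussian normalization of the theta basis varies with $x$. Semi-affineness makes $F_{\nabla^L}|_{F_x}$, and hence the auxiliary $\Omega$, constant in $x$, so that variation reduces to the $\partial_x g$ term. Finally, independence from the chart follows from the BKS compatibility of the gluing, since BKS maps intertwine Toeplitz operators up to the same local gauge on both sides.
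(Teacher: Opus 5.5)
Your reduction is the paper's: prove $T(\partial f/\partial\overline{z}^j)=[\check\nabla_{\partial/\partial\overline{\check z}_j},T(f)]$ on single fiberwise Fourier modes with $\mathcal{C}^\infty(U)$-coefficients, then compare with the explicit $(0,1)$-part of the connection from Step 3 of the proof of Theorem~\ref{Theorem: unitary connection on the local mirror B brane}. The paper does this for $\Phi=T\circ e^{\frac1k\Delta}$, but on a mode $e^{2\pi\sqrt{-1}m\cdot u}$ the twist is an $x$-independent scalar, so the untwisted case is identical.

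\textbf{The handling of the $x$-dependence is wrong as stated.} You expand in $e^{2\pi\sqrt{-1}m\cdot y}$ and argue that, $\Omega$ being constant, the only $x$-variation on the mirror side is that of the coefficient. But the frame $\sigma_l$ and the connection also depend on $x$ through the skew-Smith datum $g(x)$: through $u=y+g(x)$, through the phase $e^{-2\pi\sqrt{-1}\boldsymbol{g}^1(x)\cdot\check{\boldsymbol{u}}_1}$, and through the terms $k\pi g^j$ in $\check\nabla^{0,1}$. Semi-affineness does nothing about this. Concretely, $T_{e^{2\pi\sqrt{-1}m\cdot y}}=e^{-2\pi\sqrt{-1}m\cdot g(x)}\,T_{e^{2\pi\sqrt{-1}m\cdot u}}$. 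Meanwhile, in $(x,y)$-coordinates your $\tfrac12(\partial_x+A\partial_y)$ has an $x$-dependent $A$ containing $Dg$. The resulting $m\cdot Dg$ terms on the A-side are matched only by differentiating that phase, which your bookkeeping discards, so the identity would not close. The remedy is to expand in $u$. In $(x,u)$ the coordinates $z=(\boldsymbol{x}_1-\sqrt{-1}\boldsymbol{H}\boldsymbol{u}^2,\ \boldsymbol{x}_2+\sqrt{-1}\boldsymbol{H}\boldsymbol{u}^1)$ have constant coefficients. The $g$-dependent pieces $k\pi g^j$ of $\check\nabla^{0,1}$ are multiplications by functions of $x$ alone, so they commute with $T_f$.

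\textbf{The decisive computation is only announced, and its predicted outcome cannot match the A-side.} Here $\partial_{\overline z^i}e^{2\pi\sqrt{-1}m\cdot u}=\tfrac{\pi m_{n+i}}{h_i}e^{2\pi\sqrt{-1}m\cdot u}$ and $\partial_{\overline z^{n+i}}e^{2\pi\sqrt{-1}m\cdot u}=-\tfrac{\pi m_i}{h_i}e^{2\pi\sqrt{-1}m\cdot u}$. Both are independent of $k$ and governed by $\boldsymbol{H}^{-1}$, not by $\tfrac1k$ times $m$ paired with the fiber curvature. On the B-side they arise from two different mechanisms. In the paper's Weil--Brezin model, $\langle\Phi_f s\rangle=F_m\cdot\operatorname{S}_{\boldsymbol{m}_1}\langle s\rangle$ with $F_m=f_m(x)\,e^{-\frac{2\pi\sqrt{-1}}{k}\boldsymbol{m}_2\cdot\boldsymbol{H}^{-1}(\check{\boldsymbol{y}}_1+\frac12\boldsymbol{m}_1)}$. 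For $i\le n$ the connection is $\partial/\partial\overline{\check z}_i$ plus a scalar. The matching term is then the plain derivative of this phase, the $\frac1k$ cancelling the $k$ in $\partial/\partial\overline{\check z}_i=\frac12(\partial_{x_i}+\sqrt{-1}k\,\partial_{\check y_i})$. For the index $n+i$ the term is the failure of $\operatorname{S}_{\boldsymbol{m}_1}$ to commute with the multiplication operator $-\pi\sqrt{-1}\frac{k}{h_i}\check z_i$. This gives $-\frac{k\pi\sqrt{-1}}{h_i}\cdot\bigl(-\frac{\sqrt{-1}}{k}m_i\bigr)=-\frac{\pi m_i}{h_i}$. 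In your finite frame $\{\sigma_l\}$ the same thing appears as $a_{\tilde l}-a_l$, where the diagonal connection entries $a_l$ are affine in $l_i$ with slope $-\pi/h_i$. It comes with a wrap-around term $\pi k\tilde q_i$ that cancels against the derivative of the phase $e^{2\pi\sqrt{-1}\tilde q\cdot\check{\boldsymbol{u}}_2}$ in Lemma~\ref{Lemma: Toeplitz operator}. That route is workable, but the function model avoids the bookkeeping.

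\textbf{Drop your last sentence.} Untwisted Toeplitz operators do \emph{not} commute with the BKS maps, because the Gaussian factor in the symbol of $e^{2\pi\sqrt{-1}m\cdot u}$ depends on $\Omega$. That is precisely why the paper introduces $e^{\frac1k\Delta_\Omega}$. In any case, the local statement needs no chart-independence.
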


However, we encounter a difficulty when we try to globalize the construction. Namely, the collection of maps $\{T_U^{(k)}\}_U$ does \emph{not} commute with the transition maps (i.e., the BKS maps) of $\check{E}^{(k)}$. 
It turns out that this problem can be solved by applying the gauge transformation $e^{\frac{1}{k}\Delta_{\Omega}}$, where $\Delta_{\Omega}$ is a complexification of the $\bar{\partial}_{\Omega}$-Laplacian of $(F_x,\Omega)$.
Since this gauge transformation involves differential operators, it is \emph{not} well-defined on $C^{\infty}(X_U)$. 
Nevertheless, a careful calculation shows that the composition $$\Phi_U^{(k)}:=T_U^{(k)}\circ e^{\frac{1}{k}\Delta_{\Omega}}$$
is actually well-defined. 
Moreover, the collection of maps $\{\Phi_U^{(k)}\}_U$ now commutes with the BKS maps. Extending $\Phi^{(k)}$ as in Proposition \ref{proposition : holomorphicity_intro}, we obtain a cochain map between the Dolbeault complex of $(X,\mathcal{O}_X)$ and that of $\operatorname{End}(\check{E}^{(k)})$:
$$\Phi^{(k)}:(\Omega^{0,*}(X),\bar{\partial}_I)\to (\Omega^{0,*}(\check{X},\operatorname{End}(\check{E}^{(k)})),\bar{\partial}_{\operatorname{End}(\check{E}^{(k)})}),$$
called the \emph{mirror transform}.
Below is a more precise version of our main theorem.

\begin{theorem}[=Theorem \ref{Theorem: mirror statement}]
	For every $k \in \mathbb{Z}_{>0}$, we have
	$$\Phi^{(k)}(\alpha\star_{k^{-1}}\beta)=\Phi^{(k)}(\alpha)\circ\Phi^{(k)}(\beta)$$
	for any $\alpha,\beta\in \Omega^{0,*}(X)$, and $\Phi^{(k)}$ is bijective. 
	In other words, the mirror transform
	$$\Phi^{(k)}:(\Omega^{0,*}(X),\bar{\partial}_I)\to (\Omega^{0,*}(\check{X},\operatorname{End}(\check{E}^{(k)})),\bar{\partial}_{\operatorname{End}(\check{E}^{(k)})})$$
	is an isomorphism of differential graded algebras for every $k\in\mathbb{Z}_{>0}$.
	In particular, we have an isomorphism
	$$\operatorname{Hom}_A(\mathcal{B}_{\operatorname{cc}}^{(k)},\mathcal{B}_{\operatorname{cc}}^{(k)})\cong \operatorname{Hom}_B(\check{\mathcal{B}}_{\operatorname{cc}}^{(k)},\check{\mathcal{B}}_{\operatorname{cc}}^{(k)}),$$
	of graded algebras for every $k\in\mathbb{Z}_{>0}$.
\end{theorem}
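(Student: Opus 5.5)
The argument is local along the base, and I would reduce everything to one affine chart $U\subset B$ on which $\mathcal{B}_{\operatorname{cc}}^{(k)}$ is in skew-Smith form. Both sides are $C^\infty(U)$-modules, and both $\star_{k^{-1}}$ and $\Phi_U^{(k)}$ are local along $B$. Both are also $C^\infty(U)$-linear and extended to $(0,*)$-forms by the rule $d\bar z^{i}\mapsto d\bar{\check z}_i$. It therefore suffices to check multiplicativity and bijectivity fiberwise on functions, then extend to forms by the graded Leibniz/sign conventions. Globalization then follows from the fact, stated before the theorem, that $\{\Phi_U^{(k)}\}_U$ commutes with the BKS transition maps and glues to $\Phi^{(k)}$. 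The cochain property comes from Proposition \ref{proposition : holomorphicity_intro}, once one checks that $e^{\frac1k\Delta_\Omega}$ commutes with $\bar\partial_I$ in the affine coordinates, since it has constant coefficients along the base.

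On a single fiber $F_x$ with flat complex structure $\Omega$, I would expand $f\in C^\infty(X_U)$ in fiberwise Fourier modes $e_m=e^{2\pi\sqrt{-1}\langle m,y\rangle}$, $m\in\Lambda^*$, with coefficients $\widehat f_m(x)$. The key computation is $\Phi_U^{(k)}(e_m)$.

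First, $e^{\frac1k\Delta_\Omega}e_m=e^{-c_k(m)}e_m$, where $c_k(m)$ is quadratic in $m$.

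Second, the Toeplitz operator $T^{(k)}_{e_m}$ on the theta-function space $H^0(F_x,L^{\otimes k}|_{F_x}\otimes\mathcal L^*\otimes\sqrt{K_\Omega})$ is a Gaussian factor $e^{+c_k(m)}$ times a Heisenberg (clock-and-shift) operator $W(m)$, translation by $\omega^{-1}m/k$ composed with a character.

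The gauge factor is chosen precisely to cancel these Gaussians, so that $\Phi_U^{(k)}(e_m)=W(m)$. This also shows $\Phi_U^{(k)}$ is well-defined on Schwartz-coefficient series. The operators $W(m)$ satisfy the twisted group law $W(m)W(m')=e^{\frac{\pi\sqrt{-1}}{k}\sigma(m,m')}W(m+m')$, where $\sigma$ is the fiber restriction of $F_{\nabla^L}$ (constant, by semi-affineness). By construction the Moyal-type product satisfies $e_m\star_{k^{-1}}e_{m'}=e^{\frac{\pi\sqrt{-1}}{k}\sigma(m,m')}e_{m+m'}$. Multiplicativity then follows on modes, and extends by $C^\infty(U)$-bilinearity and absolute convergence of the Fourier sums.

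For bijectivity, $W(m)$ depends only on $m$ modulo the sublattice $k\sigma$-dual to $\Lambda^*$ (namely $k\,\sigma(\Lambda^*)^{\vee}$-periodicity), up to a phase. The classes $\{W(m)\}$ over the finite quotient $\Lambda^*/K$, with $|\Lambda^*/K|=rk(\check E^{(k)})^2$, form a basis of $\operatorname{End}(\check E^{(k)}_{(F_x,\mathcal L)})$. This is Schur's lemma / Stone–von Neumann for the finite Heisenberg group, and the rank count uses Riemann–Roch $rk=\int_{F_x}ch(L)^k$.

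The remaining sum over $m$ in a coset is absorbed by the dependence of the $\check{X}^{(k)}$ coordinates: the fiber of $\check X^{(k)}$ over $x$ is the dual torus, and the phases $e^{2\pi\sqrt{-1}\langle m,\check y\rangle/k}$ arising from the local system $\mathcal L$ turn the coset sum into a Fourier series on $\check X^{(k)}$. Inverting is then a finite Fourier transform on the Heisenberg basis combined with Fourier inversion on the dual torus, and I would exhibit it explicitly to prove bijectivity, including preservation of smoothness via Schwartz decay.

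The main obstacle I expect is this last identification: matching the lattice bookkeeping between the coordinates $\check z_i=x_i+\frac{\sqrt{-1}}k\check y_i$, the $\mathcal L$-dependence of $W(m)$, and the finite Heisenberg quotient, so that the map is exactly bijective onto smooth sections rather than merely injective. I would handle it first in skew-Smith normal form, where $\sigma$ is block diagonal with integer entries $d_j$, and reduce to products of one-dimensional theta-function computations.
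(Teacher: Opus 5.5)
Your proposal is correct. For multiplicativity and gluing it coincides with the paper: reduce to single modes $e_m=e^{2\pi\sqrt{-1}m\cdot u}$, on which the twisted Toeplitz operator is a Heisenberg operator (Lemma~\ref{Lemma: family Toeplitz operator}) whose cocycle is the Moyal phase. Compatibility with BKS and chart changes is taken from the earlier propositions.

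You take the cochain property on citation. The introductory proposition you cite is precisely the paper's Lemma~\ref{Lemma: holomorphicity}, which the paper proves by computing $[\check\nabla_{\partial_{\overline{\check z}_j}},\Phi_f]$ from its explicit formulas for $\check\nabla$. Your passage from $T$ to $\Phi$ is legitimate mode by mode: $\Delta$ multiplies $f_m(x)e_m$ by an $x$-independent constant, and $\partial_{\overline z^j}$ preserves each mode.

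The genuine difference is in well-definedness and bijectivity. The paper never chooses a frame of $\operatorname{End}(\check E)$. Instead it:
\begin{itemize}
\item characterizes sections as continuous operators $\sum_{\boldsymbol m_1}a_{\boldsymbol m_1}\operatorname{S}_{\boldsymbol m_1}$ on $\mathcal C^\infty(U,\mathcal S(\check{\mathbb R}^n))$ with $k\boldsymbol H\mathbb Z^n$-periodic coefficients (Lemmas~\ref{Lemma: characterization of sections of endo-bundle} and~\ref{Lemma: dependence on lattice});
\item Fourier-expands $f$ only in $\boldsymbol u^1$ and matches $a_{\boldsymbol m_1}$ with a rescaled copy $\check f_{\boldsymbol m_1}$ of the $\boldsymbol u^2$-dependent coefficient;
\item proves convergence by seminorm estimates;
\item proves injectivity and surjectivity by testing against sections whose $\langle s\rangle$ is $1$ at one lattice translate and $0$ at the others, using the Schwartz estimate of Lemma~\ref{Lemma: Fibrewise Schwartz property}.
\end{itemize}

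You instead expand fully and split $\mathbb Z^{2n}$ into the $N^2$ cosets of $kH\mathbb Z^{2n}$, where $N=k^n\det\boldsymbol H$. Your lattice bookkeeping does close. Since $\operatorname{S}_{k\boldsymbol Ha}$ corresponds to multiplication by $e^{2\pi\sqrt{-1}a\cdot\boldsymbol{\check u}_2}$, the paper's formula gives, for $\kappa=(k\boldsymbol Ha,k\boldsymbol Hb)$,
$\Phi(e_{c+\kappa})=\epsilon\, e^{2\pi\sqrt{-1}(a\cdot\boldsymbol{\check u}_2-b\cdot\boldsymbol{\check u}_1)}\Phi(e_c)$ with $|\epsilon|=1$. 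The map $(a,b)\mapsto$ this character is a bijection onto the characters of $\check{\mathbb T}^{2n}$. The global sections $\{\Phi(e_c)\}$ are pointwise the clock-and-shift matrices of Lemma~\ref{Lemma: Toeplitz operator}, which are orthogonal for the trace form. Hence they form a global smooth frame of $\operatorname{End}(\check E)$ over $\check X_U$. In that frame $\Phi$ is just coset-splitting of Fourier series, and every analytic point reduces to Fourier series with rapidly decreasing coefficients.

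Your route is more elementary, and it shows that $\operatorname{End}(\check E)$ is trivialized over $\check X_U$ by Heisenberg operators. The paper's route needs no coset representatives. It also stays in the same Weil--Brezin picture it uses for the $\check\nabla$ computations and for the continuity of $(f,s)\mapsto\Phi_f s$.

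There are two slips to fix:
\begin{itemize}
\item The fiber translation is by $(k\sigma)^{-1}m$ with $\sigma=F_{\nabla^L}|_{F_x}$, not by $\omega^{-1}m/k$, since $\omega$ vanishes on the Lagrangian fibers.
\item The cocycle on $\Lambda^*$ is the dual form $\sigma^{-1}$ ($H^{-1}$ in \eqref{Equation: quantum tori}), not $\sigma$.
\end{itemize}
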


All constructions in this paper do not require any choice of metrics on $X$. In particular, our main theorem remains valid even when $X$ is non-K\"ahler (cf. Subsection \ref{Subsubsection: Kodaira--Thurston manifold}).

A key technical tool in our proof is the family Weil--Brezin transform, which renders the construction explicit and computationally tractable. To illustrate, consider the case $B = \mathbb{R}^2$. Then $(X, \omega)$ admits a semi-affine coisotropic A-brane $\mathcal{B}_{\operatorname{cc}}$ that induces a complex structure on $X$ with complex coordinates $(e^{2\pi z^1}, e^{2\pi z^2}) = (e^{2\pi(x_1 - \sqrt{-1} y^2)}, e^{2\pi(x_2 + \sqrt{-1} y^1)})$, where $(y^1, y^2)$ are the fiber periodic coordinates on $X$ associated with the standard affine coordinates $(x_1, x_2)$ on $B$ (cf. Subsection \ref{Subsubsection: transcendental mirror symmetry}). In particular, $X \cong (\mathbb{C}^\times)^2$. For these coordinate functions, the non-formal star product is a phase deformation of the usual commutative product:
\begin{equation*}
	e^{2\pi z^1} \star_{k^{-1}} e^{2\pi z^2} = e^{-\frac{\pi\sqrt{-1}}{k}} e^{2\pi (z^1 + z^2)}.
\end{equation*}
The family Weil--Brezin transform identifies $\Gamma(\check{X}, \check{E}^{(k)})$ with the space $\mathcal{C}^\infty(\mathbb{R}^2, \mathcal{S}(\mathbb{R}))$ of smooth functions on $\mathbb{R}^2 \times \mathbb{R}$ (with coordiantes $(x, \check{y}_1)$) that are rapidly decreasing in the $\mathbb{R}$-variable (see Section \ref{Section: The mirror B-brane via Fourier--Mukai type transforms}). Under this identification, the mirror transform can be expressed in terms of multiplication and shift operators. More precisely, for any $s \in \Gamma(\check{X}, \check{E}^{(k)}) \cong \mathcal{C}^\infty(\mathbb{R}^2, \mathcal{S}(\mathbb{R}))$,
\begin{align*}
	(\Phi^{(k)}(e^{2\pi z^1}) (s)) (x, \check{y}_1) = & e^{ 2\pi \check{z}_1} \cdot s (x, \check{y}_1),\\
	(\Phi^{(k)}(e^{2\pi z^2}) (s)) (x, \check{y}_1) = & e^{2\pi x_2} \cdot s (x, \check{y}_1 + 1),\\
	(\Phi^{(k)} (e^{2\pi (z^1 + z^2)})(s)) (x, \check{y}_1) = & e^{2\pi \left( \left( \check{z}_1 + \frac{\sqrt{-1}}{2k} \right) + x_2 \right)} \cdot s(x, \check{y}_1 + 1).
\end{align*}
It follows directly that $\Phi^{(k)} (e^{2\pi z^1} \star_{k^{-1}} e^{2\pi z^2}) = \Phi^{(k)}(e^{2\pi z^1}) \circ \Phi^{(k)}(e^{2\pi z^2})$.

In this mirror isomorphism, the A-model action of $\operatorname{Hom}_A(\mathcal{B}_{\operatorname{cc}}^{(k)}, \mathcal{B}_{\operatorname{cc}}^{(k)})$ on $\operatorname{Hom}_A(\mathcal{B}, \mathcal{B}_{\operatorname{cc}}^{(k)})$ precisely corresponds to the B-model action of $H^*(\check{X},\operatorname{End}(\check{E}^{(k)}))$ on $\check{E}_{(F_x,\mathcal{L})}^{(k)}$. As far as we know, this is the first mirror theorem for coisotropic A-branes as well as for brane quantization. This also provides another instance where we see that SYZ transforms can provide information which is beyond the reach of homological mirror symmetry (cf. \cite{CS_SYZ_immersed}).

\subsection{Organization of the paper}
\quad\par

In Section \ref{Section: Preliminaries}, we review the notion of coisotropic A-branes and in particular the semi-affine condition introduced in \cite{ChaLeuZha2018}.
In Section \ref{Section: Holomorphic deformation quantization}, we construct a non-formal Moyal-type holomorphic star product on $\mathcal{O}_X[[\hbar]]$. We also present three concrete examples (including a non-K\"ahler example).
In Section \ref{Section: The mirror B-brane via Fourier--Mukai type transforms}, we make a slight modification of the SYZ transform constructed in \cite{ChaLeuZha2018} and use it to construct the family $\{\check{\mathcal{B}}_{\operatorname{cc}}^{(k)}\}_{k\in\mathbb{Z}_{>0}}$ of mirror B-branes.
Section \ref{Section: Brane quantization via SYZ transforms} is the heart of the paper, where we define the mirror transform for morphism spaces via a twisted family Toeplitz construction. Finally we prove that the mirror transform induces a mirror isomorphism for every level $k \in \mathbb{Z}_{>0}$.

\subsection{Setup and notations}
\quad\par
\label{Subsection: Setup and notations}
Throughout this paper (except in Section \ref{Section: Preliminaries}), let $B$ be a $2n$-dimensional connected integral affine manifold, and define $X := T^*B / \Lambda$ and $\check{X} := TB / \check{\Lambda}$. Here, $\Lambda = T_\mathbb{Z}^*B \subset T^*B$ and $\check{\Lambda} = T_\mathbb{Z}B \subset TB$ denote the lattice bundle and the dual lattice bundle, respectively, induced by the integral affine structure on $B$. Moreover, let $\mathcal{B}_{\operatorname{cc}} = (X, L, \nabla^{L})$ be a rank-$1$ coisotropic A-brane on the symplectic manifold $(X, \omega)$ satisfying the following standing assumption:

\begin{assumption}
	\label{Setup}
	The coisotropic A-brane $\mathcal{B}_{\operatorname{cc}}$ is \emph{semi-affine} (see Definition \ref{Definition: semi-affine A-brane}) and the curvature of $\nabla^L$ is non-degenerate on every fiber of the Lagrangian torus fibration $p: X \to B$.
\end{assumption}

For each $k \in \mathbb{Z}_{>0}$, $\mathcal{B}_{\operatorname{cc}}^{(k)} = (X, L^{\otimes k}, \nabla^{L^{\otimes k}})$ is then a rank-$1$ coisotropic A-brane on the symplectic manifold $(X, k\omega)$. We denote by $\check{\mathcal{B}}_{\operatorname{cc}}^{(k)}$ be the mirror brane of $\mathcal{B}_{\operatorname{cc}}^{(k)}$, which will be constructed in Section \ref{Section: The mirror B-brane via Fourier--Mukai type transforms}. We furthermore adopt the following notations:
\begin{itemize}
	\item Let $\mathcal{C}^\infty(M)$ be the algebra of smooth $\mathbb{C}$-valued functions on a smooth manifold $M$.
	\item For all open subset $U$ of $B$, define the open subsets $X_U := X \vert_U$ of $X$ and $\check{X}_U := \check{X} \vert_U$ of $\check{X}$. Similarly, define $\Lambda_U := \Lambda \vert_U$ and $\check{\Lambda}_U := \check{\Lambda} \vert_U$.
	\item When $x = (x_1, ..., x_{2n})$ are integral affine coordinates on $B$, we denote by
	\begin{equation*}
		(x, y) = (x_1, ..., x_{2n}, y^1, ..., y^{2n}) \quad \text{and} \quad (x, \check{y}) = (x_1, ..., x_{2n}, \check{y}_1, ..., \check{y}_{2n})
	\end{equation*}
	the induced real coordinates on $T^*B$ and on $TB$ respectively.
	\item Let $\omega$ be the canonical symplectic form on $X$ given by $\omega = dx_i \wedge dy^i$.
	\item A $k$-tuple of complex valued functions on a manifold is identified with the corresponding column vector, and vice versa. If $a = (a_1, ..., a_k)$ and $b = (b_1, ..., b_k)$ are such $k$-tuples, then we denote their dot product by $a \cdot b$, i.e.
	\begin{equation*}
		a \cdot b := a_1b_1 + \cdots + a_k b_k.
	\end{equation*}
	If $\alpha = (\alpha_1, ..., \alpha_k)$ and $\beta = (\beta_1, ..., \beta_k)$ are $k$-tuples of $1$-forms, then we define
	\begin{equation*}
		\alpha \wedge \beta := \alpha_1 \wedge \beta_1 + \cdots + \alpha_k \wedge \beta_k.
	\end{equation*}
	\item For a $k \times k$ matrix $A$, $A^T$ denotes the transpose of $A$ and $A^{-T} := (A^T)^{-1}$. For a $k$-tuple $a$, $Aa$ denotes the matrix multiplication of $A$ and $a$ whenever it is well defined.
	\item If $a = (a_1, ..., a_{2n})$, then we write
	\begin{equation*}
		\boldsymbol{a}_1 = (a_1, ..., a_n) \quad \text{and} \quad \boldsymbol{a}_2 = (a_{n+1}, ..., a_{2n}).
	\end{equation*}
	The notations $\boldsymbol{a}^1$ and $\boldsymbol{a}^2$ are similarly defined for $a = (a^1, ..., a^{2n})$.
	\item Denote by $\mathbb{H}_n$ the \emph{Segal upper half space}, i.e. the set of symmetric $n \times n$ matrices $\Omega$ over $\mathbb{C}$ such that $\operatorname{Im} \Omega$ is positive definite.
\end{itemize}

\subsection*{Acknowledgement}
\quad\par
The work of K. Chan was substantially supported by grants of the Hong Kong Research Grants Council (Project No. CUHK14305023, CUHK14302524, CUHK14310425). The work of N. C. Leung was substantially supported by grants of the Hong Kong Research Grants Council (Project No. 14305923, 14302224, 14302225). The work of Q. Li was supported by grants from National Natural Science Foundation of China (Project No. 12471061). The work of Y.-H. Suen was supported by the National Science and Technology Council (Project No. 113-2115-M-006-016-MY2) and partially supported by the Yushan Fellow Program by the Ministry of Education (MOE), Taiwan. (MOE-114-YSFMS-0005-001-P1). Y. Yau was supported by World Premier International Research Center Initiative (WPI), MEXT, Japan, and gratefully acknowledges the support of the National Center for Theoretical Sciences, where part of this work was carried out.\par
The authors thank Si Li for helpful discussions and interest in this work.

\section{Preliminaries}
\label{Section: Preliminaries}

In this section, we recall the notion of coisotropic A-branes and the semi-affine condition introduced by K.-L. Chan--Leung--Zhang \cite{ChaLeuZha2018} and give a rough idea on what the mirror B-brane should be. Such an idea will be made rigorous in Section \ref{Section: The mirror B-brane via Fourier--Mukai type transforms}.

\subsection{A- and B-branes}
\quad\par
\label{Subsection: Semi-affine A- and B-branes}

In this paper, we restrict our attention to the definition of \emph{rank-$1$} coisotropic A-branes. For a more general treatment involving higher-rank cases, we refer the reader to \cite{ChaLeuZha2018, GukKorNawPeiSab2023, Her2012, LeuXieYau2025}.

\begin{definition}\label{definition : coisotropic A brane}
	Let  $(X,\omega)$ be a symplectic manifold. A coisotropic \emph{A-brane} is a coisotropic submanifold $C\subset X$ together with a $U(1)$-line bundle $(L,\nabla^L)$ on $C$ such that
	\begin{itemize}
		\item [(a)] The curvature $-2\pi\sqrt{-1} F_{\nabla^L}$ vanishes on $\ker(\omega|_C)$. In particular, the $2$-form $F_{\nabla^L}$ descends to a map $F_{\nabla^L}:TC/\ker(\omega|_C)\to T^*C$.
		\item [(b)] The composition $I:=\omega^{-1}F_{\nabla^L}:TC/\ker(\omega|_C) \to TC/\ker(\omega|_C)$ defines a complex structure, i.e. $I^2=-\operatorname{Id}$.
	\end{itemize}
\end{definition}

\begin{remark}
	The complex structure $I$ on $TC/\ker(\omega|_C)$ associated with $(C, L, \nabla^L)$ is always integrable \cite{KapOrl2003}. When $C=L\subset X$ is a Lagrangian, $\ker(\omega|_L)=TL$, so the brane condition boils down to the well-known condition for a Lagrangian brane, i.e. $F_{\nabla^L}=0$.
\end{remark}

Specifically, by Condition (b) in Definition \ref{definition : coisotropic A brane}, the brane structure $(L,\nabla^L)$ of a \emph{space-filling} (i.e. $C=X$) coisotropic A-brane turns $X$ into a complex manifold with complex structure $I:=\omega^{-1}F_{\nabla^L}$. We use $\mathcal{O}_X$ to denote the structure sheaf corresponding to this complex structure. The 2-form
$$\Omega_X:=F_{\nabla^L}+\sqrt{-1}\omega$$
is an $I$-holomorphic 2-form on $X$, turning $X$ into a holomorphic symplectic manifold. Conversely, given a holomorphic symplectic manifold $(X,\Omega_X)$ so that $\operatorname{Im}(\Omega_X)$ is non-degenerate and $-2\pi\sqrt{-1} \operatorname{Re}(\Omega_X)$ is the curvature of some Hermitian line bundle $(L,\nabla^L)$ on $X$, $\mathcal{B}_{\operatorname{cc}} = (X,L,\nabla^L)$ defines a space-filling coisotropic A-brane, called the \emph{canonical coisotropic brane on $(X,\operatorname{Im}(\Omega_X))$}.

Coisotropic A-branes show up at very specific points in the K\"ahler moduli. For example, if we scale $\omega$ by an arbitrary constant $k$, Condition (b) in Definition \ref{definition : coisotropic A brane} will no longer be fulfilled by any $U(1)$-line bundle because $F_{\nabla^L}$ needs to be integral. However, when $k\in\mathbb{Z}_{>0}$, we do have the canonical coisotropic brane $\mathcal{B}_{\operatorname{cc}}^{(k)} = (X,L^{\otimes k},\nabla^{L^{\otimes k}})$ with respect to the scaled symplectic manifold $(X,k\omega)$.

Kapustin--Orlov \cite{KapOrl2003} suggested that non-Lagrangian coisotropic A-branes should also be included as objects in the Fukaya category of $(X,\omega)$. In particular, by mirror symmetry, every coisotropic A-brane should admit a mirror B-brane. In general, B-branes on a complex manifold are described as objects of its derived category of coherent sheaves. For the purposes of this paper, however, we restrict attention to a more concrete class of B-branes that is compatible with the SYZ framework:

\begin{definition}
	Let $\check{X}$ be a complex manifold. A \emph{B-brane} is a complex submanifold $\check{C}\subset\check{X}$ together with a Hermitian holomorphic vector bundle $\check{E}$ on $\check{C}$.
\end{definition}

\subsection{Semi-affine branes}
\quad\par
\label{Subsection: Semi-flat SYZ picture}
Let $p:(X, \omega)\to B$ be a Lagrangian torus fibration without singular fibers, which in particular induces an integral affine structure on the base $B$. Such a fibration is often called \emph{semi-flat}, reflecting the fact that its fibers are affine tori; although one can equip $X$ with a compatible Riemannian metric making these fibers flat, no such metric will be needed in this paper. Assume moreover that $p:X\to B$ admits a Lagrangian section.\par
Fix a parameter $k \in \mathbb{Z}_{>0}$, and consider the A-model of $(X, k\omega)$ together with its mirror B-model. It is then well-known that $(X,k\omega)$ can be identified with $(T^*B/T_\mathbb{Z}^*B,k\omega_{std})$, where
$$\omega_{std}=\textstyle\sum_{i=1}^{2n}dx_i\wedge dy^i.$$
Let $\check{X}:=TB/T_\mathbb{Z}B$ and $\check{p}:\check{X}\to B$ be the dual torus fibration. The total space $\check{X}$ admits a complex structure $\check{J}_k$, which has local complex coordinates
\begin{equation*}
	\check{z}^{(k)} = (\check{z}_1^{(k)}, ..., \check{z}_{2n}^{(k)}) := x + \tfrac{\sqrt{-1}}{k} \check{y}.
\end{equation*}
This can be verified by observing how the pure spinors are transformed by the \emph{Poincar\'e bundle} $\mathcal{P}$ on $X\times_B\check{X}$. The complex manifold $\check{X}^{(k)} := (\check{X},\check{J}_k)$ is called the \emph{SYZ mirror of} $(X,k\omega)$. Recall the following pullback diagram in the usual semi-flat SYZ setting:
\begin{center}
	\begin{tikzcd}
		& X \times_B \check{X} \ar[ld, "\pi"'] \ar[rd, "\check{\pi}"]\\
		X \ar[rd, "p"'] & & \check{X} \ar[ld, "\check{p}"]\\
		& B
	\end{tikzcd}
\end{center}

In what follows, take $k = 1$ as an illustration. The SYZ philosophy says that any symplecto-geometric object on $X$ is transformed to a complex-geometric object on $\check{X}$ via the ``pullback-tensor-pushforward" construction. For instance, if $L\subset X$ is a Lagrangian section equipped with a flat $U(1)$-bundle $\mathcal{L}$, then the SYZ transform of $L$ is the line bundle
$$\check{L}:=\check{\pi}_*(\pi^*\mathcal{L}\otimes\mathcal{P}).$$
In other words, the fiber of $\check{L}$ at the point $(x,\check{y})\in\check{X}$ is given by
$$H^0(L\cap F_x,\mathcal{L}\otimes\mathcal{P}|_{(L\cap F_x)\times\{\check{y}\}})=\mathcal{L}_x\otimes\mathcal{P}_{(p_x,\check{y})},$$
where $L\cap F_x=\{p_x\}$. See \cite{LeuYauZas2000, ChaSue2020, CS_SYZ_immersed} for more details and applications. K.-L. Chan--Leung--Zhang \cite{ChaLeuZha2018} applied the same idea to construct the mirror B-brane of a so-called \emph{semi-affine} coisotropic A-brane.

\begin{definition}
	\label{Definition: semi-affine A-brane}
	A (rank-$1$) coisotropic A-brane $(C,L,\nabla^L)$ on $(X, \omega)$ is called \emph{semi-affine} if
	\begin{itemize}
		\item [(a)] The restriction $p \vert_C: C \to p(C)$ is a torus bundle such that for any $x\in B$, $C\cap F_x$ is an affine subtorus of the fiber $F_x\subset X$ over $x$.
		\item [(b)] The curvature of $\nabla^L$ is a constant $2$-form along each fiber, that is, if $(y^i)$ are affine coordinates of $C\cap F_x$, then $F_{\nabla^L}|_{C\cap F_x}=\sum_{i,j}h_{ij}dy^i\wedge dy^j$ for some integers $h_{ij}\in\mathbb{Z}$.
	\end{itemize}
\end{definition}

On the mirror side, the semi-affine condition becomes the following

\begin{definition}
	A B-brane $(\check{C},\check{E},\nabla^{\check{E}})$ on $\check{X}$ is called \emph{semi-affine} if
	\begin{itemize}
		\item [(a)] The restriction $\check{p} \vert_{\check{C}}: \check{C} \to \check{p}(\check{C})$ is a torus bundle such that for any $x\in B$, $\check{C}\cap\check{F}_x$ is an affine subtorus of the fiber $\check{F}_x$ of $\check{p}:\check{X}\to B$.
		\item [(b)] The curvature of $\nabla^{\check{E}}$ is a constant 2-form multiple of the identity map along each fiber.
	\end{itemize}
\end{definition}

K.-L. Chan--Leung--Zhang \cite{ChaLeuZha2018} established a correspondence
$$\{\text{Semi-affine A-branes}\}\longleftrightarrow\{\text{Semi-affine B-branes}\}$$
via family Nahm transform, which involves the choice of a fiberwise spinor bundle. For simplicity, we recall only their construction of the mirror B-brane associated with the coisotropic A-brane $\mathcal{B}_{\operatorname{cc}}=(X,L,\nabla^L)$ under our standing assumption \ref{Setup}. In this case, the mirror B-brane is again space-filling, and the fiber of its associated vector bundle $\check{E}$ over a point $(x,\check{y})\in\check{X}$ is given by
$$\ker(\slashed{D}_{(x,\check{y})}:\Gamma(F_x,L|_{F_x} \otimes\mathcal{P}|_{F_x\times\{\check{y}\}}\otimes\slashed{S}_x)\to\Gamma(F_x,L|_{F_x} \otimes\mathcal{P}|_{F_x\times\{\check{y}\}}\otimes\slashed{S}_x)),$$
where $\slashed{S}_x$ is a spinor bundle on $F_x$ and $\slashed{D}_{(x,\check{y})}$ is the associated Dirac operator.

Our construction of the mirror B-brane is a slight modification of this approach: we replace the spinor bundle by the half-form bundle $\sqrt{K}$, defined with respect to a choice of fiberwise torus-invariant K\"ahler polarizations. With such a choice, the line bundle $L|_{F_x} \otimes \mathcal{P} \vert_{F_x \times \{\check{y}\}}$ may be viewed as a pre-quantum line bundle on the K\"ahler manifold $(F_x,F_{\nabla^L}|_{F_x})$, becuase the Poincar\'e bundle $\mathcal{P}$ is flat along $F_x \times \{\check{y}\}$. Accordingly, the Dirac operator is replaced by the fiberwise $\bar{\partial}$-operator and the fiber $\check{E}_{(x, \check{y})}$ is given by
$$H^0(F_x,L|_{F_x}\otimes\mathcal{P}|_{F_x\times\{\check{y}\}}\otimes\sqrt{K}).$$
This is precisely where geometric quantization enters the picture. Analogous constructions apply for $k > 1$. We refer the reader to Section \ref{Section: The mirror B-brane via Fourier--Mukai type transforms} for further details.

\subsection{Quotient bundles by fiberwise factors of automorphy}
\quad\par
\label{Subsection: Quotient bundles by fibrewise factors of automorphy}
Consider our setup stated in Subsection \ref{Subsection: Setup and notations}. In the following (sub)sections we repeatedly use a standard construction to describe line bundles over $X_U$ and $X_U \times_U \check{X}_U$ for an open subset $U \subset B$.\par
Consider $m \in \mathbb{Z}_{>0}$ and the trivial Hermitian line bundle $(U \times \mathbb{R}^m) \times \mathbb{C}$ over $U \times \mathbb{R}^m$, equipped with a unitary connection $\nabla$. Suppose that the bundle of groups $U \times \mathbb{Z}^m$ over $U$ acts smoothly on $(U \times \mathbb{R}^m) \times \mathbb{C}$ via
\begin{equation*}
	\bullet : (U \times \mathbb{Z}^m) \times (U \times \mathbb{R}^m) \times \mathbb{C} \longrightarrow (U \times \mathbb{R}^m) \times \mathbb{C},
\end{equation*}
covering the natural translation action of $U \times \mathbb{Z}^m$ on $U \times \mathbb{R}^m$, such that $\nabla$ is $\bullet$-equivariant. We refer to such an action $\bullet$ as a \emph{fiberwise factor of automorphy}. By descending $((U \times \mathbb{R}^m) \times \mathbb{C}, \nabla)$ via $\bullet$, we obtain a Hermitian line bundle with unitary connection over $U \times \mathbb{T}^m$, where $\mathbb{T}^m := \mathbb{R}^m / \mathbb{Z}^m$. This is called the \emph{quotient bundle} on $U \times \mathbb{T}^m$ \emph{associated with $\nabla$ and $\bullet$}.\par
As an important example, the Poincar\'e bundle $(\mathcal{P}, \nabla^\mathcal{P})$ on $X \times_B \check{X}$ arises from this construction. Let $U \subset B$ be equipped with integral affine coordinates $x$, so that $X_U \times_U \check{X}_U$ is identified with $U \times \mathbb{R}^{4n}$ via coordinates $(x, y, \check{y})$. On $U \times \mathbb{R}^{4n}$ we consider the trivial Hermitian line bundle endowed with the unitary connection
\begin{equation*}
	d - 2\pi\sqrt{-1} \check{y} \cdot dy.
\end{equation*}
Define a fiberwise factor of automorphy by: for $(\lambda, \check{\lambda}) \in \mathbb{Z}^{2n} \times \mathbb{Z}^{2n}$,
\begin{equation*}
	(\lambda, \check{\lambda}) \bullet (x, y, \check{y}, v)
	:= \left(x, y + \lambda, \check{y} + \check{\lambda}, e^{2\pi\sqrt{-1} \check{\lambda} \cdot y} v \right).
\end{equation*}
The associated quotient bundle on $U \times \mathbb{T}^{4n}$ defines the local model of $(\mathcal{P}, \nabla^{\mathcal{P}})$ on $X_U \times_U \check{X}_U$. These quotient bundles glue together to yield the global Poincar\'e bundle $(\mathcal{P}, \nabla^{\mathcal{P}})$.

\section{Non-formal quantization on $\mathcal{B}_{\operatorname{cc}}^{(k)}$}
\label{Section: Holomorphic deformation quantization}
Equip $X$ with the holomorphic symplectic structure induced by $\mathcal{B}_{\operatorname{cc}}$ and let $\Omega_X$ denote the corresponding holomorphic symplectic form, i.e. $-2\pi\sqrt{-1} \cdot \operatorname{Re}\Omega_X$ is the curvature of $\nabla^L$ and $\operatorname{Im} \Omega_X = \omega$. Under our standing assumption \ref{Setup}, the main statement of this section is the existence of a canonical strict quantization of holomorphic functions on $X$ for each level $k$. In particular, the asymptotic expansion of these deformed products yields a holomorphic deformation quantization of the holomorphic symplectic manifold $(X, \Omega_X)$.\par
We briefly outline the section. Following \cite{ChaLeuZha2018}, Subsection \ref{Subsection: Skew-Smith forms and invariant factors} constructs distinguished local models for the bundle $(L, \nabla^L)$, referred to as \emph{skew-Smith forms} of $\mathcal{B}_{\operatorname{cc}}$. These local models show that the fibers of $p$, equipped with the restriction of $\operatorname{Re} \Omega_X$, are mutually symplectomorphic. As a consequence, the induced symplectic structures on the fibers are classified by a diagonal integer matrix $\boldsymbol{H}$, which we call the \emph{invariant factor matrix} of $\mathcal{B}_{\operatorname{cc}}$. In Subsection \ref{Subsection: holomorphic Moyal product}, $\boldsymbol{H}$ provides an explicit coordinate description of the holomorphic symplectic structure on $X$. This yields an atlas of complex coordinates with affine transitions, from which we construct a Moyal-type quantization. Subsection \ref{Subsection: Examples} presents examples of SYZ fibrations $p: X \to B$ (without singularities) admitting space-filling A-branes that satisfy the standing assumption \ref{Setup}.
Finally, in Subsection \ref{Subsection: proof of skew smith form}, we give a proof of the existence of skew-Smith forms of $\mathcal{B}_{\operatorname{cc}}$.

\subsection{Skew-Smith forms, invariant factors, and complex coordinates}
\quad\par
\label{Subsection: Skew-Smith forms and invariant factors}
The semi-affine A-brane $\mathcal{B}_{\operatorname{cc}} = (X, L, \nabla^L)$ admits an explicit local coordinate description. By Proposition 27 in \cite{ChaLeuZha2018}, every point in $B$ has a neighbourhood $U$ equipped with data
\begin{equation}
	\label{Equation: original data}
	(x, f, \widetilde{g}, H, \chi),
\end{equation}
where $x = (x_1, ..., x_{2n})$ are integral affine coordinates on $U$, $f, \widetilde{g} \in \mathcal{C}^\infty(U, \mathbb{R}^{2n})$, $H$ is a constant skew-symmetric $2n \times 2n$ matrix over $\mathbb{Z}$ and $\chi: \mathbb{Z}^{2n} \to \operatorname{U}(1)$ is an $H$-semi-character \footnote{That is, $\chi(\lambda + \lambda') = (-1)^{\lambda \cdot H \lambda'} \chi(\lambda) \chi(\lambda')$ for all $\lambda, \lambda' \in \mathbb{Z}^{2n}$.}. These data determine $(L, \nabla^L) \vert_{X_U}$ as the quotient bundle on $U \times \mathbb{T}^{2n}$ associated with
\begin{equation}
	\label{Equation: first connection for local A brane}
	d - 2\pi\sqrt{-1} \left( \tfrac{1}{2} f \cdot dx + \widetilde{g} \cdot dy + \tfrac{1}{2} y \cdot H dy \right)
\end{equation}
and the fiberwise factor of automorphy $\lambda \bullet (x, y, v) = (x, y + \lambda, \chi(\lambda) e^{\pi\sqrt{-1} \lambda \cdot H y} v)$.\par
In this subsection, we simplify this description and introduce local models for $\mathcal{B}_{\operatorname{cc}}$ that are better suited for the proof of our main results. These models are described in Proposition \ref{Proposition: skew smith form of A brane}.\par
For an open subset $U \subset B$ equipped with integral affine coordinates $x$ and an $\mathbb{R}^{2n}$-valued function $g \in \mathcal{C}^\infty(U, \mathbb{R}^{2n})$, denote by $Dg$ the Jacobian matrix of $g$ with respect to $x$. In what follows, we will frequently decompose a $2n$-tuple into two $n$-tuples, As described in Section \ref{Subsection: Setup and notations}, we denote the resulting components in boldface with appropriate indices. For example, we write $\boldsymbol{x}_1 = (x_1, ..., x_n)$ and $\boldsymbol{x}_2 = (x_{n+1}, ..., x_{2n})$.

\begin{proposition}
	\label{Proposition: skew smith form of A brane}
	Every point in $B$ admits a neighbourhood $U$ together with data
	\begin{equation*}
		(x, g, \check{g}, \boldsymbol{H}),
	\end{equation*}
	where $x = (x_1, ..., x_{2n})$ is an integral affine chart on $U$, $g \in \mathcal{C}^\infty(U, \mathbb{R}^{2n})$ satisfies $Dg = (Dg)^T$, $\check{g} \in \mathbb{R}^{2n}$ is constant, and $\boldsymbol{H} = \operatorname{diag}(h_1, ..., h_n)$ is an $n \times n$ diagonal matrix with $h_1, ..., h_n \in \mathbb{Z}_{>0}$ satisfying $h_1 \mid h_2 \mid \cdots \mid h_n$. With respect to these data, $(L, \nabla^L) \vert_{X_U}$ is isomorphic to the quotient bundle on $U \times \mathbb{T}^{2n}$ associated with the unitary connection
	\begin{equation*}
		d - 2\pi\sqrt{-1} ( \boldsymbol{x}_1 \cdot \boldsymbol{H}^{-1} d\boldsymbol{x}_2 + (\boldsymbol{y}^2 + \boldsymbol{g}^2) \cdot \boldsymbol{H} d(\boldsymbol{y}^1 + \boldsymbol{g}^1) ),
	\end{equation*}
	and the fiberwise factor of automorphy
	\begin{equation}
		\lambda \bullet (x, y, v) = (x, y + \lambda, e^{2\pi\sqrt{-1} (\check{g} \cdot \lambda + \boldsymbol{\lambda}^2 \cdot \boldsymbol{H} (\boldsymbol{y}^1 + \boldsymbol{g}^1))} v).
	\end{equation}
\end{proposition}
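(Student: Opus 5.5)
We start from the local data $(x, f, \widetilde{g}, H, \chi)$ of Proposition 27 in \cite{ChaLeuZha2018} and normalize it in three stages: the fiber-constant part $H$, then the $y$-linear part $\widetilde{g}$ together with $\chi$, and finally the base part $f$.

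\textbf{Step 1: normal form for $H$.} By Assumption \ref{Setup}, $H$ is non-degenerate. By the Frobenius (skew-Smith) normal form over $\mathbb{Z}$, there is $P \in \operatorname{GL}(2n,\mathbb{Z})$ with
\begin{equation*}
P^T H P = \begin{pmatrix} 0 & -\boldsymbol{H} \\ \boldsymbol{H} & 0 \end{pmatrix}, \qquad \boldsymbol{H} = \operatorname{diag}(h_1, ..., h_n), \quad h_1 \mid \cdots \mid h_n .
\end{equation*}
We change the integral affine coordinates by $x \mapsto P^T x$, so that $y \mapsto P^{-1} y$. This is an integral affine change and preserves $\omega = dx \cdot dy$. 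In the new coordinates $\tfrac12 y \cdot H dy$ becomes $\tfrac12(\boldsymbol{y}^2 \cdot \boldsymbol{H} d\boldsymbol{y}^1 - \boldsymbol{y}^1 \cdot \boldsymbol{H} d\boldsymbol{y}^2)$, up to sign conventions that we fix by choosing the order of the blocks. A gauge transformation by $e^{\pi\sqrt{-1}\, \boldsymbol{y}^1 \cdot \boldsymbol{H} \boldsymbol{y}^2}$ turns this into $\boldsymbol{y}^2 \cdot \boldsymbol{H} d\boldsymbol{y}^1$. The same gauge change alters the factor of automorphy by a quadratic phase $e^{\pi\sqrt{-1}(\ldots)}$. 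Combined with the $H$-semi-character $\chi$, this yields a genuine character $\lambda \mapsto e^{2\pi\sqrt{-1} c\cdot\lambda}$ times $e^{2\pi\sqrt{-1}\boldsymbol{\lambda}^2 \cdot \boldsymbol{H}\boldsymbol{y}^1}$. The key check here is that the semi-character sign $(-1)^{\lambda\cdot H\lambda'}$ is exactly cancelled by the cross term $e^{\pi\sqrt{-1}\boldsymbol{\lambda}^1\cdot\boldsymbol{H}\boldsymbol{\lambda}^2}$.

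\textbf{Step 2: absorbing $\widetilde{g}$ and $\chi$.} Write $\widetilde{g}\cdot dy = \widetilde{\boldsymbol{g}}_1\cdot d\boldsymbol{y}^1 + \widetilde{\boldsymbol{g}}_2\cdot d\boldsymbol{y}^2$. Since $\boldsymbol{H}$ is invertible, we can set $\boldsymbol{g}^2 := \boldsymbol{H}^{-1}\widetilde{\boldsymbol{g}}_1$, up to constants. We then try to write the connection form as $(\boldsymbol{y}^2+\boldsymbol{g}^2)\cdot\boldsymbol{H}d(\boldsymbol{y}^1+\boldsymbol{g}^1)$ plus a pure $dx$-form. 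Expanding the product produces $\boldsymbol{g}^2\cdot\boldsymbol{H}d\boldsymbol{y}^1$, the term $\boldsymbol{y}^2\cdot\boldsymbol{H}d\boldsymbol{g}^1$ (a mixed $y\,dx$ term), and $\boldsymbol{g}^2\cdot\boldsymbol{H}\,d\boldsymbol{g}^1$. The $d\boldsymbol{y}^2$ term $\widetilde{\boldsymbol{g}}_2\cdot d\boldsymbol{y}^2$ is handled by the gauge transformation $e^{-2\pi\sqrt{-1}\,\widetilde{\boldsymbol{g}}_2\cdot\boldsymbol{y}^2}$. That gauge is not $\lambda$-periodic, so it shifts the character by $\widetilde{\boldsymbol{g}}_2$ and produces $\boldsymbol{y}^2\cdot d\widetilde{\boldsymbol{g}}_2$. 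The $y$-dependence of the character is only allowed to be through the constant $\check{g}$, which forces us to move the $x$-dependence of $\widetilde{\boldsymbol{g}}_2$ into the $\boldsymbol{g}^1$ shift. Concretely, we set $\boldsymbol{g}^1 := -\boldsymbol{H}^{-1}(\widetilde{\boldsymbol{g}}_2 - \widetilde{\boldsymbol{g}}_2(x_0))$ and take $\check{g}$ constant, built from $\widetilde{\boldsymbol{g}}_2(x_0)$ and $c$. The resulting factor of automorphy $e^{2\pi\sqrt{-1}(\check g\cdot\lambda+\boldsymbol{\lambda}^2\cdot\boldsymbol{H}(\boldsymbol{y}^1+\boldsymbol{g}^1))}$ is then consistent with the connection, which we verify by checking $\bullet$-equivariance directly.

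\textbf{Step 3: the base part and the symmetry $Dg = (Dg)^T$.} The brane condition (b) of Definition \ref{definition : coisotropic A brane}, that $I = \omega^{-1}F_{\nabla^L}$ squares to $-\operatorname{Id}$, together with the vanishing of the $dx\wedge dx$ and mixed components forced by $I^2=-1$, should give two things:
\begin{itemize}
\item[(i)] the remaining pure $dx$ part is curvature-equivalent to $\boldsymbol{x}_1\cdot\boldsymbol{H}^{-1}d\boldsymbol{x}_2$, after possibly a further linear change of the base coordinates inside the stabilizer of the normal form;
\item[(ii)] $Dg$ is symmetric.
\end{itemize}
Two curvature-equivalent connections on $U \times \mathbb{T}^{2n}$ with $U$ contractible differ by a gauge transformation and a flat connection. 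The flat part is a closed $1$-form on $U$ plus a fiber constant, the latter absorbed into $\check g$. So it suffices to compute the curvature
\begin{equation*}
F = d\boldsymbol{x}_1\wedge\boldsymbol{H}^{-1}d\boldsymbol{x}_2 + d(\boldsymbol{y}^2+\boldsymbol{g}^2)\wedge\boldsymbol{H}\,d(\boldsymbol{y}^1+\boldsymbol{g}^1)
\end{equation*}
and impose $(\omega^{-1}F)^2 = -1$ with $\omega = dx\cdot dy$.

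\textbf{Main obstacle.} We expect the main difficulty to lie in Step 3. There we must show that the base component can be brought to exactly $\boldsymbol{H}^{-1}$, using only integral affine changes and a choice of $g$ modified by constants. We must also show that the integrability forces $Dg$ symmetric, rather than merely giving some algebraic relation. We would attack this by writing $F$ in block form. The condition $I^2=-1$ then reads $A\,\omega_0^{-1}A\,\omega_0^{-1} = -1$ for the block matrix $A$ built from $\begin{pmatrix}0&-\boldsymbol{H}\\ \boldsymbol{H}&0\end{pmatrix}$, from $Dg$, and from the $dx\wedge dx$ block. Solving this blockwise, we expect the $dx\wedge dx$ block to equal $(Dg)^T J_{\boldsymbol{H}} Dg$ plus the inverse of the fiber block. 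This identifies the base term as $\boldsymbol{H}^{-1}$ and leaves the symmetry of $Dg$ as the remaining constraint coming from the off-diagonal blocks.
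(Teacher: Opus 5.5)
Your proposal is correct and follows essentially the paper's route. You put $H$ in skew-Smith normal form and split $\chi$ as the canonical semi-character times a genuine character. You absorb $\widetilde{g}$ via $g = H^{-T}\widetilde{g}$: your $\boldsymbol{g}^2 = \boldsymbol{H}^{-1}\widetilde{\boldsymbol{g}}_1$ and $\boldsymbol{g}^1 = -\boldsymbol{H}^{-1}\widetilde{\boldsymbol{g}}_2$ agree with this up to constants moved into $\check{g}$. You then solve $I^2 = -\operatorname{Id}$ blockwise, treating the $dx \wedge dx$ block as unknown as in your last paragraph. This is exactly the paper's lemma and gives $Dg = (Dg)^T$ and $H^{-1} = -(F + Dg\, H\, Dg)$ with $F = \tfrac{1}{2}(Df - (Df)^T)$, so the leftover base part differs from $\boldsymbol{x}_1 \cdot \boldsymbol{H}^{-1} d\boldsymbol{x}_2$ by an exact form on a shrunken $U$.

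The remaining differences are cosmetic. You pass to the gauge $\boldsymbol{y}^2 \cdot \boldsymbol{H} d\boldsymbol{y}^1$ first, whereas the paper stays in the symmetric gauge $\tfrac{1}{2}(y+g)\cdot H\, d(y+g)$ until its final step. Also, the extra change of base coordinates you allow for in Step 3 is never needed, because that blockwise computation already fixes the base curvature exactly.
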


The proof is largely computational, involving a sequence of coordinate changes and gauge transformations based on the coordinate description introduced at the beginning of this subsection. We postpone the details to Subsection~\ref{Subsection: proof of skew smith form}.\par
As a consequence of Proposition \ref{Proposition: skew smith form of A brane}, the matrix $\boldsymbol{H}$ is an invariant of the brane $\mathcal{B}_{\operatorname{cc}}$.

\begin{proposition}
	\label{Proposition: invariant factor matrix}
	Let $U, U' \subset B$ be open subsets with $U \cap U' \neq \emptyset$, equipped respectively with the data $(x, g, \check{g}, \boldsymbol{H})$ and $(x', g', \check{g}', \boldsymbol{H}')$ as in Proposition \ref{Proposition: skew smith form of A brane}. Then
	\begin{equation*}
		\boldsymbol{H} = \boldsymbol{H}'.
	\end{equation*}
\end{proposition}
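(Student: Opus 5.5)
The plan is to read off from each skew-Smith form the restriction of the curvature to a single torus fiber, and then use the uniqueness of the Smith normal form of an integral skew-symmetric matrix.

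Fix a point $x_0 \in U \cap U'$ and the fiber $F_{x_0}$. From the connection in Proposition \ref{Proposition: skew smith form of A brane}, differentiate the connection form and restrict to $F_{x_0}$; this kills the $dx$-terms. Since $\boldsymbol{g}$ is constant on the fiber, we get
\begin{equation*}
	F_{\nabla^L}\vert_{F_{x_0}} = d\boldsymbol{y}^2 \wedge \boldsymbol{H}\, d\boldsymbol{y}^1 .
\end{equation*}
This is a constant $2$-form with integral coefficients in the lattice basis. Its matrix with respect to the basis of $H_1(F_{x_0},\mathbb{Z}) = \Lambda^*_{x_0}$ dual to $(y^i)$ is the skew matrix
\begin{equation*}
	\begin{pmatrix} 0 & -\boldsymbol{H} \\ \boldsymbol{H} & 0 \end{pmatrix},
\end{equation*}
up to a sign convention that is irrelevant here. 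Equivalently, the class $c_1(L\vert_{F_{x_0}}) \in H^2(F_{x_0},\mathbb{Z}) = \wedge^2 \operatorname{Hom}(H_1(F_{x_0},\mathbb{Z}),\mathbb{Z})$ is an integral alternating form on the lattice $H_1(F_{x_0},\mathbb{Z})$, and in the chart $U$ it is written in Frobenius normal form with invariants $h_1 \mid \cdots \mid h_n$.

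Do the same in the chart $U'$. The coordinates $x'$ are integral affine, so on $U \cap U'$ the two charts differ by $x' = Ax + b$ with $A \in GL_{2n}(\mathbb{Z})$. The induced fiber coordinates then satisfy $y' = A^{-T} y + c(x)$, which is an integral change of lattice basis of $\Lambda_{x_0}$ together with a translation. The curvature is intrinsic: the skew-Smith forms are only isomorphic models of the same $(L,\nabla^L)$, and isomorphic unitary connections have equal curvature. Hence the two block matrices represent the same integral alternating form on the same lattice, and they are related by $P^T(\cdot)P$ with $P \in GL_{2n}(\mathbb{Z})$.

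Finally invoke the classical uniqueness of the Frobenius (skew-Smith) normal form. The invariant factors of an integral alternating form on $\mathbb{Z}^{2n}$ are determined by the $GL_{2n}(\mathbb{Z})$-orbit. For example, the gcd of the $2j\times 2j$ Pfaffian minors equals $h_1\cdots h_j$. Alternatively, the cokernel of the form as a map $\mathbb{Z}^{2n}\to(\mathbb{Z}^{2n})^*$ is $\bigoplus_i (\mathbb{Z}/h_i)^2$, and the elementary divisors of this cokernel determine the $h_i$. Nondegeneracy from Assumption \ref{Setup} ensures that all $h_i > 0$. The normalization $h_i > 0$ with the divisibility chain then forces $\boldsymbol{H} = \boldsymbol{H}'$.

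The only delicate step is the bookkeeping: one must check that the isomorphism in Proposition \ref{Proposition: skew smith form of A brane} and the chart transition really identify the fiber coordinates through an integral linear map plus a translation. Any shift by $\boldsymbol{g}$ or $\check{g}$ only adds constants and does not change $d y$. With that in place, the conclusion is just Smith normal form uniqueness.
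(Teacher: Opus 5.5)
Your proposal is correct and follows essentially the same route as the paper. The paper also restricts the curvature in the two skew-Smith forms to a fiber, obtains $H = AH'A^T$ with $A \in \operatorname{GL}(2n,\mathbb{Z})$ the linear part of the affine chart transition, and concludes by uniqueness of the skew-Smith normal form. You go slightly further by justifying why that normal form is a $\operatorname{GL}_{2n}(\mathbb{Z})$-congruence invariant (via Pfaffian minors or the cokernel), which the paper simply cites.
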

\begin{proof}
	The charts $x$ and $x'$ are related by the coordinate change $x = A ( x' + b )$ on $U \cap U'$ for some $A \in \operatorname{GL}(2n, \mathbb{Z})$ and $b \in \mathbb{R}^{2n}$.  The induced coordinate changes on $X$ are $y = A^{-T} y'$. Let
	\begin{equation*}
		H = \begin{pmatrix}
			0 & -\boldsymbol{H}\\
			\boldsymbol{H} & 0
		\end{pmatrix}
		\quad \text{and} \quad 
		H' = \begin{pmatrix}
			0 & -\boldsymbol{H}'\\
			\boldsymbol{H}' & 0
		\end{pmatrix}.
	\end{equation*}
	Comparing the coordinate expression for the restriction of the curvature of $\nabla^L$ stated in Proposition \ref{Proposition: skew smith form of A brane} onto a fiber of $X_{U \cap U'} \to U \cap U'$:
	\begin{align*}
		-\pi\sqrt{-1} dy\wedge H dy = -\pi\sqrt{-1} dy' \wedge H' dy',
	\end{align*}
	we obtain $H = A H' A^T$. The uniqueness of the skew-Smith norm form yields $\boldsymbol{H} = \boldsymbol{H}'$.
\end{proof}

This motivates the following definitions.

\begin{definition}
	A \emph{skew-Smith form} of $\mathcal{B}_{\operatorname{cc}}$ is a Hermitian line bundle with a unitary connection over $U \times \mathbb{T}^n$ of the form as in Proposition \ref{Proposition: skew smith form of A brane}.
\end{definition}

\begin{definition}
	The \emph{invariant factor matrix} of $\mathcal{B}_{\operatorname{cc}}$ is the diagonal matrix $\boldsymbol{H}$ given as in Proposition \ref{Proposition: skew smith form of A brane}. The \emph{invariant factors} of $\mathcal{B}_{\operatorname{cc}}$ are the diagonal entries of $\boldsymbol{H}$.
\end{definition}

Here and in the sequel, we always denote by $\boldsymbol{H}$ the invariant factor matrix of $\mathcal{B}_{\operatorname{cc}}$, and by $h_1, ..., h_n$ the invariant factors of $\mathcal{B}_{\operatorname{cc}}$ with $h_1 \mid h_2 \mid \cdots \mid h_n$.\par
We conclude this subsection by providing a coordinate description of the complex structure $I$ induced by $\mathcal{B}_{\operatorname{cc}}$.

\begin{proposition}
	\label{Proposition: complex coordinates induced by A brane}
	Let $(x, g, \check{g}, \boldsymbol{H})$ be data as in Proposition \ref{Proposition: skew smith form of A brane}. Then the functions
	\begin{equation}
		z := (z^1, ..., z^{2n}) = ( \boldsymbol{x}_1 - \sqrt{-1} \boldsymbol{H} (\boldsymbol{y}^2 + \boldsymbol{g}^2), \boldsymbol{x}_2 + \sqrt{-1} \boldsymbol{H} (\boldsymbol{y}^1 + \boldsymbol{g}^1) )
	\end{equation}
	form complex coordinates \footnote{Strictly speaking, $z$ defines complex coordinates on the covering space $T^*U$ of $X_U$, since the fiber coordinates $y$ are periodic. By abuse of notation, we still treat $z$ as coordinates on $X_U$.} on $X$ with respect to the complex structure $I$. Moreover, the holomorphic symplectic form $\Omega_X$ is locally given by
	\begin{equation}
		\label{Equation: coordinate description of holomorpihc symplectic form}
		\Omega_X = d\boldsymbol{z}^1 \wedge \boldsymbol{H}^{-1} d\boldsymbol{z}^2.
	\end{equation}
\end{proposition}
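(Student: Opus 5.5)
The plan is to avoid inverting $\omega$ and computing $I=\omega^{-1}F_{\nabla^L}$ directly. Instead I will compute the complex $2$-form $\Omega:=d\boldsymbol{z}^1\wedge\boldsymbol{H}^{-1}d\boldsymbol{z}^2$ built from the proposed functions $z$. I will check that $\operatorname{Re}\Omega=F_{\nabla^L}$ and $\operatorname{Im}\Omega=\omega$. Then I will deduce holomorphicity of $z$ from a linear-algebra argument about the kernel of $\Omega$. Throughout I work on the cover $T^*U$, where everything is given by the explicit formulas of Proposition \ref{Proposition: skew smith form of A brane}. I abbreviate $\boldsymbol{u}^1:=\boldsymbol{y}^1+\boldsymbol{g}^1$ and $\boldsymbol{u}^2:=\boldsymbol{y}^2+\boldsymbol{g}^2$.

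First, the curvature. The connection in Proposition \ref{Proposition: skew smith form of A brane} is $d-2\pi\sqrt{-1}A$ with
\[
A=\boldsymbol{x}_1\cdot\boldsymbol{H}^{-1}d\boldsymbol{x}_2+\boldsymbol{u}^2\cdot\boldsymbol{H}\,d\boldsymbol{u}^1 .
\]
The convention $-2\pi\sqrt{-1}F_{\nabla^L}=$ curvature gives
\[
F_{\nabla^L}=dA=d\boldsymbol{x}_1\wedge\boldsymbol{H}^{-1}d\boldsymbol{x}_2+d\boldsymbol{u}^2\wedge\boldsymbol{H}\,d\boldsymbol{u}^1 .
\]
This is independent of the trivialization, so the factor of automorphy plays no role here.

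Second, expand $\Omega$. Since $\boldsymbol{H}$ is diagonal, it commutes past itself in the wedge pairing, and
\[
\Omega=(d\boldsymbol{x}_1-\sqrt{-1}\boldsymbol{H}d\boldsymbol{u}^2)\wedge\boldsymbol{H}^{-1}(d\boldsymbol{x}_2+\sqrt{-1}\boldsymbol{H}d\boldsymbol{u}^1).
\]
The real part is $d\boldsymbol{x}_1\wedge\boldsymbol{H}^{-1}d\boldsymbol{x}_2+d\boldsymbol{u}^2\wedge\boldsymbol{H}d\boldsymbol{u}^1=F_{\nabla^L}$.

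The imaginary part is $d\boldsymbol{x}_1\wedge d\boldsymbol{u}^1+d\boldsymbol{x}_2\wedge d\boldsymbol{u}^2=\sum_i dx_i\wedge dy^i+\sum_i dx_i\wedge dg^i$. The extra term is
\[
\sum_{i,j}\partial_jg^i\,dx_i\wedge dx_j ,
\]
and it vanishes precisely because $Dg=(Dg)^T$. Hence $\operatorname{Im}\Omega=\omega$. This symmetry cancellation is the one point where the hypotheses of Proposition \ref{Proposition: skew smith form of A brane} are genuinely used, and I expect it, together with careful sign bookkeeping, to be the main obstacle.

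Third, deduce holomorphicity. The $1$-forms $dz^j$ and $d\bar z^j$ are linearly independent: their real parts give $dx$, and their imaginary parts are $\pm\boldsymbol{H}d\boldsymbol{u}$, whose $dy$-components are nondegenerate. So $z$ is a local coordinate system for some almost complex structure $J$ with $T^{0,1}_J=\{v:dz^j(v)=0\ \forall j\}$, and $T^{0,1}_J=\ker\Omega$ since $\Omega$ is a nondegenerate pairing of the $dz$'s.

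Now take $v\in\ker\Omega$. Then $F_{\nabla^L}(v)+\sqrt{-1}\omega(v)=0$, so $Iv=\omega^{-1}F_{\nabla^L}v=-\sqrt{-1}v$. Thus $T^{0,1}_J$ lies in the $(-\sqrt{-1})$-eigenspace of $I$. Both spaces have complex dimension $2n$, so $I=J$. Hence $z$ are $I$-holomorphic coordinates, and $\Omega_X=F_{\nabla^L}+\sqrt{-1}\omega=\Omega$ is the claimed expression \eqref{Equation: coordinate description of holomorpihc symplectic form}.
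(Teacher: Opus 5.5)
Your proof is correct and follows the same route as the paper. You compute $F_{\nabla^L}+\sqrt{-1}\omega$ from the skew-Smith form, use $Dg=(Dg)^T$ to cancel the $dx\wedge dx$ term so that it equals $d\boldsymbol{z}^1\wedge\boldsymbol{H}^{-1}d\boldsymbol{z}^2$, and then identify the complex structure defined by $z$ with $I$ via non-degeneracy. Your argument that $\ker\Omega=T^{0,1}_J$ lies in the $(-\sqrt{-1})$-eigenspace of $I$ is just an explicit version of the paper's one-line step ``non-degeneracy of $\Omega_X$ forces $I_U=I$''.
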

\begin{proof}
	Recall that $(x, y)$ are coordinates on $X_U$. Since $\boldsymbol{H}$ is non-degenerate, the functions $(z^1, \dots, z^{2n})$ define a system of complex coordinates on $X_U$, and hence determine a complex structure $I_U$ on it. Using Proposition \ref{Proposition: skew smith form of A brane}, we compute the curvature of $\nabla^L$. Together with the condition $(Dg)^T = Dg$, a direct calculation shows that $\Omega_X$ can be expressed in the form \eqref{Equation: coordinate description of holomorpihc symplectic form}. In particular, $\Omega_X$ is of type $(2,0)$ with respect to $I_U$. The non-degeneracy of $\Omega_X$ forces that $I_U$ is the restriction $I$ to the open subset $X_U \subset X$.
\end{proof}

\begin{corollary}
	\label{Corollary: affine change in complex coorindates}
	Let $U, U' \subset B$ be connected open subsets with $U \cap U' \neq \emptyset$, equipped with the data $(x, g, \check{g}, \boldsymbol{H})$ and $(x', g', \check{g}', \boldsymbol{H}')$ respectively, as in Proposition \ref{Proposition: skew smith form of A brane}. Suppose $A \in \operatorname{GL}(2n, \mathbb{Z})$ and $b \in \mathbb{R}^{2n}$ define the transition between integral affine charts on $B$ via
	\begin{equation*}
		x = A(x' + b).
	\end{equation*}
	Then there exists $c \in \mathbb{R}^{2n}$ such that the induced change of the corresponding complex coordinates $z, z'$ as described in Proposition \ref{Proposition: complex coordinates induced by A brane} is given by the affine transformation
	\begin{equation*}
		z = A(z' + b + \sqrt{-1} c).
	\end{equation*}
\end{corollary}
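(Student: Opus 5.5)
We have to prove Corollary \ref{Corollary: affine change in complex coorindates}: on the overlap, the complex coordinates of two skew-Smith charts differ by an affine map $z = A(z' + b + \sqrt{-1}c)$.

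\textbf{Step 1: the linear part is holomorphic.} By Proposition \ref{Proposition: complex coordinates induced by A brane}, both $z$ and $z'$ are $I$-holomorphic coordinates on the common cover of $X_{U\cap U'}$. So $z$ is a holomorphic function of $z'$. The change $x = A(x'+b)$ induces $y = A^{-T}y'$ on the fibers, and we have $\operatorname{Re} z = x$ and $\operatorname{Re} z' = x'$. Hence the holomorphic map $\phi := z - A(z'+b)$ has identically vanishing real part.

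\textbf{Step 2: a holomorphic map with zero real part is constant.} A holomorphic function with zero real part is locally constant, by the open mapping theorem or directly from Cauchy--Riemann. Apply this componentwise to $\phi$ on each connected component of $p^{-1}(U\cap U')$, lifted to the cover $T^*(U\cap U')$. This gives $\phi = \sqrt{-1}\,A c$ for some real constant vector, written in this form by absorbing $A$ into $c$. That is exactly $z = A(z'+b+\sqrt{-1}c)$.

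The real content of this step is that $\operatorname{Im}\phi$ does not depend on $x$ or $y$. Explicitly, $\operatorname{Im} z = (-\boldsymbol H(\boldsymbol y^2+\boldsymbol g^2),\ \boldsymbol H(\boldsymbol y^1+\boldsymbol g^1))$. Holomorphicity of $\phi$, combined with $\operatorname{Re}\phi = 0$, forces every derivative of $\operatorname{Im}\phi$ to vanish.

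\textbf{Step 3: the main obstacle, connectedness of the overlap.} If $U\cap U'$ is disconnected, the constant $c$ could a priori differ between components. This would make the single constant $c$ in the statement fail. I would handle it in one of two ways:

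1. Shrink to a connected component, which is how the corollary is used locally.
2. Note the hypothesis that $U, U'$ are connected, and interpret $c$ componentwise.

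A second point to check is the dependence on lifts. The coordinates live on the cover, and changing the lift of $y'$ by a lattice vector shifts $\operatorname{Im} z'$ by $\boldsymbol H$ times an integer vector. This only changes $c$ by a lattice-type element. So once the lifts are fixed compatibly with $y = A^{-T}y'$, the constant is well defined.

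As a consistency check, linear algebra gives the relation $H = AHA^T$ from the proof of Proposition \ref{Proposition: invariant factor matrix}. This confirms that the $y$-linear parts of $\operatorname{Im} z$ and $A\operatorname{Im} z'$ agree. The $g$-terms then differ by a constant, using $Dg = (Dg)^T$.
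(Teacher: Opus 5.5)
Your proof is correct, but it takes a different route from the paper's.

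\textbf{The paper's route.} It argues directly from the bundle data. Comparing the fiber part of the curvature of $\nabla^L$ in the two skew-Smith charts gives $H = AHA^T$, as in the proof of Proposition~\ref{Proposition: invariant factor matrix}. Comparing the mixed $dx\wedge dy$ terms gives $d(g(x)) = d(A^{-T}g'(x'))$. Writing $z = x + \sqrt{-1}H(y+g)$ and using $y = A^{-T}y'$, one then computes $z - A(z'+b) = \sqrt{-1}H(g - A^{-T}g')$, which is constant.

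\textbf{Your route.} You use only the conclusion of Proposition~\ref{Proposition: complex coordinates induced by A brane}: $z$ and $z'$ are holomorphic for the same complex structure $I$, with $\operatorname{Re} z = x$ and $\operatorname{Re} z' = x'$. Then $\phi = z - A(z'+b)$ is holomorphic with vanishing real part, so $d\phi = 0$. This avoids all curvature bookkeeping. It also recovers the paper's two intermediate facts as by-products. Indeed, $\operatorname{Im}\phi = (H - AHA^T)y + H g - AHg'$. Constancy in $y$ forces $H = AHA^T$, and constancy in $x$ then forces $H(g - A^{-T}g')$ to be constant.

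\textbf{What each buys.} Your argument rests entirely on the identification $I_U = I$ in Proposition~\ref{Proposition: complex coordinates induced by A brane}, which is only sketched there. The paper's computation instead derives the affine relation from the brane data itself.

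\textbf{Minor points.} Your caveat about a disconnected $U\cap U'$ is fair, and it applies equally to the paper's proof, which likewise only yields local constancy of $g - A^{-T}g'$. In your closing consistency check, the constancy of the $g$-terms does not follow from $Dg = (Dg)^T$ alone. It comes from comparing the mixed curvature terms, or in your approach from Step 2 itself. That remark is not load-bearing, so nothing breaks.
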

\begin{proof}
	Recall that $H = AHA^T$, where $H$ is given by \ref{Equation: Skew-Smith form of H}. Comparing the mixed terms in the coordinate expression for the curvature of $\nabla^L$ given in Proposition \ref{Proposition: skew smith form of A brane}, we deduce that $d(g(x)) = d(A^{-T} g'(x'))$. Using the relation $y = A^{-T} y'$, together with the definitions of $z$ and $z'$ from Proposition \ref{Proposition: complex coordinates induced by A brane}, we then obtain the desired affine transformation.
\end{proof}

\subsection{A non-formal quantization of the holomorphic symplectic manifold $(X, \Omega_X)$}
\quad\par
\label{Subsection: holomorphic Moyal product}
We begin by outlining the main constructions in this subsection. Since the coordinate changes in $(x, y)$ are affine and compatible with the matrix $\boldsymbol{H}$, the following local expressions for smooth functions $f, g \in \mathcal{C}^\infty(X)$ patches together to define a global Poisson bracket on $X$:
\begin{equation*}
	\{f, g\} := \frac{1}{2\pi} \sum_{i=1}^n h_i^{-1} \left( \frac{\partial f}{\partial y^i} \frac{\partial g}{\partial y^{n+i}} - \frac{\partial f}{\partial y^{n+i}} \frac{\partial g}{\partial y^i} \right).
\end{equation*}
We may regard $\{ \,\cdot\,, \,\cdot\, \}$ as a smooth family of translation-invariant Poisson brackets on $2n$-dimensional symplectic tori, parametrized by the base manifold $B$. Using this structure, we obtain a corresponding family of quantum tori \cite{Rie1989}, realized as a non-formal deformation $\star_\hbar$ of the commutative product on $\mathcal{C}^\infty(X)$ for $\hbar \in \mathbb{R}$. A key feature of the deformation $\star_\hbar$ is that, although it is nonlocal in the fiber directions of the SYZ fibration $X \to B$, it remains \emph{local along the base}: if either $f$ or $g$ vanishes on $X_U$ for some open subset $U \subset B$, then $f \star_\hbar g$ also vanishes on $X_U$.\par
Next, observe that the holomorphic coordinate vector fields on $X$ are given by
\begin{align}
	\label{Equation: first complex coordinate derivative}
	\tfrac{\partial}{\partial z^i} = \tfrac{1}{2} \left( \tfrac{\partial}{\partial x_i} - \textstyle\sum_{j=1}^n \tfrac{\partial g^{n+j}}{\partial x_i} \tfrac{\partial}{\partial y^{n+j}} + \sqrt{-1} h_i^{-1} \tfrac{\partial}{\partial y^{n+i}} \right),\\
	\label{Equation: second complex coordinate derivative}
	\tfrac{\partial}{\partial z^{n+i}} = \tfrac{1}{2} \left( \tfrac{\partial}{\partial x_{n+i}} - \textstyle\sum_{j=1}^n \frac{\partial g^i}{\partial x_{n+i}} \frac{\partial}{\partial y^j} -\sqrt{-1} h_i^{-1} \frac{\partial}{\partial y^i} \right),
\end{align}
for $1 \leq i \leq n$. Consequently, for local holomorphic functions $f, g$ on $X$, the bracket becomes
\begin{equation*}
	\{f, g\} = \frac{1}{4\pi} \sum_{i=1}^n h_i \left( \frac{\partial f}{\partial z^{n+i}} \frac{\partial g}{\partial z^i} - \frac{\partial f}{\partial z^i} \frac{\partial g}{\partial z^{n+i}} \right),
\end{equation*}
which coincides with the holomorphic Poisson bracket induced by the holomorphic symplectic form $\Omega_X = d\boldsymbol{z}^1 \wedge \boldsymbol{H}^{-1} d\boldsymbol{z}^2$. We further show that the product $\star_\hbar$ naturally extends to the space $\Omega^{0, *}(X)$ of $(0, *)$-forms in such a way that the Dolbeault operator $\overline{\partial}$ continues to act as a graded derivation. In particular, the subspace $\mathcal{O}(X)$ of holomorphic functions is closed under $\star_\hbar$. The resulting subalgebra $\mathcal{A}_\hbar(X) := (\mathcal{O}(X), \star_\hbar)$ yields a non-formal quantization of the holomorphic symplectic manifold $(X, \Omega_X)$.\par
We now turn to the details of the construction.

\subsubsection{Family of quantum tori}
\quad\par
Fix $\hbar \in \mathbb{R}$. We begin by working over an open subset $U \subset B$ for which $\mathcal{B}_{\operatorname{cc}}$ admits a skew-Smith form. Let $(x, y)$ (resp. $z$) be the corresponding real (resp. complex) coordinates on $X_U$. Let $f, g \in \mathcal{C}^\infty(X_U)$ be smooth functions, and consider their fiberwise Fourier expansions:
\begin{equation*}
	f(x, y) = \sum_{m \in \mathbb{Z}^{2n}} \widehat{f}_m(x) e^{2\pi \sqrt{-1} m \cdot y}, \quad g(x, y) = \sum_{m \in \mathbb{Z}^{2n}} \widehat{g}_m (x) e^{2\pi\sqrt{-1} m \cdot y}.
\end{equation*}
Following the standard construction of quantum tori via twisted convolution products of Fourier modes, we define a binary operation
\begin{equation*}
	\star_\hbar: \mathcal{C}^\infty(X_U) \times \mathcal{C}^\infty(X_U) \to \mathcal{C}^\infty(X_U), \quad (f, g) \mapsto f \star_\hbar g,
\end{equation*}
by
\begin{equation}
	\label{Equation: quantum tori}
	(f \star_\hbar g)(x, y) := \sum_{q \in \mathbb{Z}^{2n}} \left( \sum_{m + m' = q} e^{-\hbar\pi\sqrt{-1} m \cdot H^{-1} m'} \widehat{f}_m(x) \widehat{g}_{m'}(x) \right) e^{2\pi\sqrt{-1} q \cdot y},
\end{equation}
where the bicharacter $(m, m') \mapsto m \cdot H^{-1} m'$ on $\mathbb{Z}^{2n}$ with $H$ given in \eqref{Equation: Skew-Smith form of H} is obtained from the fiberwise Fourier transform of the Poisson bracket $\{ \,\cdot\, ,\,\cdot\, \}$.\par
Equip $\mathcal{C}^\infty(X_U)$ with its compact-open $\mathcal{C}^\infty$-topology. While Rieffel \cite{Rie1993} utilized this construction to develop a framework for \emph{strict deformation quantization} in the context of $C^*$-algebras, our interests lie in the differential geometric properties of the smooth algebra. For our purposes, it is sufficient to view the one-parameter family $\{ (\mathcal{C}^\infty(X_U), \star_\hbar) \}_{\hbar \in \mathbb{R}}$ as a deformation quantization of the Fr\'echet Poisson algebra $(\mathcal{C}^\infty(X_U), \{\,\cdot\,,\,\cdot\,\})$ in the sense of Omori--Maeda--Miyazaki--Yoshioka \cite{OmoHaeMiyYos2000}.\par
Indeed, by employing seminorm estimates on the rapidly decreasing fiberwise Fourier coefficients (cf. \cite{Rie1989, Rie1993}), one can show that for each $\hbar \in \mathbb{R}$, $(\mathcal{C}^\infty(X_U), \star_\hbar)$ is a Fr\'echet algebra where the product $\star_\hbar$ is jointly continuous. Furthermore, for all $f, g \in \mathcal{C}^\infty(X_U)$, the following conditions are satisfied:
\begin{enumerate}
	\item The map $\mathbb{R} \to \mathcal{C}^\infty(X_U)$ given by $\hbar \mapsto f \star_\hbar g$ is continuous, and $f \star_0 g = fg$.
	\item As $\hbar \to 0$, $\tfrac{1}{\hbar} (f \star_\hbar g - g \star_\hbar f) \to \sqrt{-1} \{f, g\}$.
\end{enumerate}

Using the following basis of $\Omega^{0, *}(X_U)$ over $\mathcal{C}^\infty(X_U)$: 
\begin{equation*}
	\{ dz^{i_1} \wedge \cdots \wedge dz^{i_l} : 0 \leq l \leq 2n, 1 \leq i_1 < \cdots < i_l \leq 2n\}
\end{equation*}
We extend $\star_\hbar$ to $\Omega^{0, *}(X)$ in a natural way: for all $f, g \in \mathcal{C}^\infty(X_U)$ and indices $i_1, ..., i_a, j_1, ..., j_b$,
\begin{equation*}
	\left( f dz^{i_1} \wedge \cdots \wedge dz^{i_a}\right) \star_\hbar \left( g dz^{j_1} \wedge \cdots \wedge dz^{j_b}\right) := (f \star_\hbar g) dz^{i_1} \wedge \cdots \wedge dz^{i_a} \wedge dz^{j_1} \wedge \cdots \wedge dz^{j_b}.
\end{equation*}
Using the same basis, we extend the Poisson bracket $\{ \,\cdot\,, \,\cdot\, \}$ to $\Omega^{0, *}(X_U)$ analogously.\par
These constructions are compatible with restrictions to smaller open subsets. More precisely, if $U' \subset U$ and $\alpha, \beta \in \Omega^{0, *}(X_U)$, then
\begin{equation*}
	(\alpha \star_\hbar \beta) \vert_{X_{U'}} = (\alpha \vert_{X_{U'}}) \star_\hbar (\beta \vert_{X_{U'}}) \quad \text{and} \quad \{\alpha, \beta\} \vert_{X_{U'}} = \{\alpha \vert_{X_{U'}}, \beta \vert_{X_{U'}}\},
\end{equation*}
where the right-hand sides are defined using the restricted coordinates on $X_{U'}$.\par
We also verify that these constructions are independent of the choice of local coordinates. Suppose $(x', y')$ is another coordinate system on $X_U$ arising from a different skew-Smith form of $\mathcal{B}_{\operatorname{cc}}$. Then the fiber coordinates transform as $y = A^{-T}y'$ for some constant matrix $A \in \operatorname{GL}(2n, \mathbb{Z})$ satisfying $AHA^T = H$. By Corollary \ref{Corollary: affine change in complex coorindates}, the corresponding $(0, 1)$-forms transform as $d\overline{z} = A d\overline{z}'$. It follows from these transformation rules that both $\star_\hbar$ and $\{ \,\cdot\,,\,\cdot\,\}$ are invariant under such coordinate changes. Consequently, they are well defined on $\Omega^{0, *}(X_U)$ for \emph{any} open subset $U \subset B$.

\begin{theorem}
	\label{Theorem: non-commutative Dolbeault complex}
	For each $\hbar \in \mathbb{R}$, the assignment
	\begin{equation*}
		U \to (\Omega^{0, *}(X_U), \star_\hbar, \overline{\partial})
	\end{equation*}
	defines a sheaf of differential graded algebras on $B$. Moreover, for every open subset $U \subset B$ and all $\alpha, \beta \in \Omega^{0, *}(X_U)$, the following convergences hold in the compact-open $\mathcal{C}^\infty$-topology:
	\begin{enumerate}
		\item $\alpha \star_\hbar \beta \to \alpha \star_0 \beta = \alpha \wedge \beta$ as $\hbar \to 0$;
		\item $\tfrac{1}{\hbar} [\alpha, \beta]_{\star_\hbar} \to \sqrt{-1} \{\alpha, \beta\}$ as $\hbar \to 0$, where $[\,\cdot\,,\,\cdot\,]_{\star_\hbar}$ denotes the graded commutator of $\star_\hbar$.
	\end{enumerate}
\end{theorem}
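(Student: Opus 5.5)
The plan is to verify everything on a single chart $X_U$ carrying a skew-Smith form (Proposition \ref{Proposition: skew smith form of A brane}), and then globalize. Two facts already established handle the globalization: the compatibility of $\star_\hbar$ with restriction, and its invariance under changes of skew-Smith coordinates ($y=A^{-T}y'$ with $AHA^T=H$, and $d\overline{z}=A\,d\overline{z}'$ by Corollary \ref{Corollary: affine change in complex coorindates}). Together they show that $U\mapsto(\Omega^{0,*}(X_U),\star_\hbar,\overline{\partial})$ is a presheaf of graded vector spaces with well-defined operations. The sheaf axiom is inherited from the sheaf $U\mapsto \Omega^{0,*}(X_U)$. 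The operations glue because $\star_\hbar$ is local along the base: the Fourier coefficients $\widehat{f}_m(x)$ of $f|_{X_{U'}}$ are the restrictions of those of $f$. So it remains to check, on one chart, that $\star_\hbar$ is a graded associative product, that $\overline{\partial}$ is a graded derivation of it (and squares to zero, which is automatic), and the two limits.

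Associativity is checked mode by mode. For the twisted convolution \eqref{Equation: quantum tori}, the phase $c(m,m')=e^{-\hbar\pi\sqrt{-1}\,m\cdot H^{-1}m'}$ is a bicharacter, hence a $2$-cocycle. Both bracketings of $f\star_\hbar g\star_\hbar k$ therefore produce the coefficient
\begin{equation*}
\sum_{m+m'+m''=q} c(m,m')\,c(m,m'')\,c(m',m'')\,\widehat f_m\widehat g_{m'}\widehat k_{m''}.
\end{equation*}
Rearranging these triple sums is justified by absolute convergence: since $|c|=1$ and the coefficients are Schwartz in $m$ locally uniformly in $x$ together with all $x$-derivatives, the sums converge absolutely. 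The same Schwartz estimates show that $f\star_\hbar g$ again has rapidly decreasing coefficients, smooth in $x$, so that the product lands in $\mathcal{C}^\infty(X_U)$. On forms, the basis monomials in $d\overline{z}$ have constant coefficients, so the graded extension is associative and $(\Omega^{0,*},\star_\hbar)$ is a graded algebra.

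The key step, which I expect to be the main obstacle, is the Leibniz rule. The difficulty is that, by \eqref{Equation: first complex coordinate derivative} and \eqref{Equation: second complex coordinate derivative} and their conjugates, each $\partial/\partial\overline{z}^i$ is a combination of $\partial/\partial x_j$ and $\partial/\partial y^j$ whose coefficients involve $\partial g/\partial x$, and these depend on $x$. I would resolve this with three observations.
\begin{enumerate}
\item $\partial/\partial y^j$ acts on the mode $m$ by multiplication by $2\pi\sqrt{-1}\,m_j$. Since $m_j+m'_j=q_j$, this is a derivation of the twisted convolution.
\item $\partial/\partial x_j$ acts on the coefficients $\widehat f_m(x)$, and the phase $c(m,m')$ is independent of $x$, so it is also a derivation.
\item $\star_\hbar$ is $\mathcal{C}^\infty(U)$-bilinear, because functions of $x$ alone are pure zero modes and $c(0,m')=c(m,0)=1$. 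Hence multiplying a derivation by a coefficient $a(x)$ again gives a derivation.
\end{enumerate}
Therefore each $\partial/\partial\overline{z}^i$ is a derivation of $\star_\hbar$. Writing $\overline{\partial}=\sum_i d\overline{z}^i\wedge\partial/\partial\overline{z}^i$, and using that the $d\overline{z}^i$ are $\overline{\partial}$-closed with constant coefficients, a sign count gives the graded Leibniz rule $\overline{\partial}(\alpha\star_\hbar\beta)=\overline{\partial}\alpha\star_\hbar\beta+(-1)^{|\alpha|}\alpha\star_\hbar\overline{\partial}\beta$. One point to double-check is the interchange of derivatives with the lattice sums; this is again justified by the Schwartz bounds.

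For the two limits, I would write
\begin{equation*}
c(m,m')=1-\hbar\pi\sqrt{-1}\,m\cdot H^{-1}m'+R_\hbar(m,m'),\qquad |R_\hbar(m,m')|\le C\hbar^2|m|^2|m'|^2 .
\end{equation*}
Summing against $\widehat f_m\widehat g_{m'}$, and against their derivatives in $x$ and $y$ (which only introduce polynomial weights in $m$ and $m'$), and using rapid decay uniformly on compacts in $U$, gives both $\alpha\star_\hbar\beta\to\alpha\wedge\beta$ and $\hbar^{-1}(f\star_\hbar g-g\star_\hbar f)\to -2\pi\sqrt{-1}\sum \widehat f_m\widehat g_{m'}\,m\cdot H^{-1}m'\,e^{2\pi\sqrt{-1}q\cdot y}$ in the compact-open $\mathcal{C}^\infty$-topology. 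For the commutator I use the antisymmetry of $H^{-1}$. The last expression is to be identified with $\sqrt{-1}\{f,g\}$ by substituting $\partial_y\mapsto 2\pi\sqrt{-1}m$ into the defining formula of the bracket, with $H$ as in the proof of Proposition \ref{Proposition: invariant factor matrix}; I have not yet checked the normalization constants and sign conventions, so this is where the proof has to confirm that the constant comes out to exactly $\sqrt{-1}$. Finally, because the basis forms graded-commute, the graded commutator of $f\,d\overline{z}^I$ and $g\,d\overline{z}^J$ equals $[f,g]_{\star_\hbar}\,d\overline{z}^I\wedge d\overline{z}^J$, so the statement for forms reduces to the statement for functions.
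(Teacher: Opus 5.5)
Your proposal is correct and takes essentially the same route as the paper. The paper also works on a skew-Smith chart, and it gets the Leibniz rule exactly as you do, from $\partial/\partial x_j$ and $\partial/\partial y^j$ being derivations of the twisted convolution together with $\mathcal{C}^\infty(U)$-bilinearity. For the two limits and associativity the paper simply cites Rieffel and Omori--Maeda--Miyazaki--Yoshioka, whereas you prove them directly. The normalization you left unchecked does work out: $\{e^{2\pi\sqrt{-1}m\cdot y},e^{2\pi\sqrt{-1}m'\cdot y}\}=-2\pi\,(m\cdot H^{-1}m')\,e^{2\pi\sqrt{-1}(m+m')\cdot y}$, so your limit is exactly $\sqrt{-1}\{f,g\}$.
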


An immediate consequence of the theorem is that, for any open subset $U \subset B$, the space $\mathcal{O}(X_U)$ forms a $\mathbb{C}$-subalgebra of $(\mathcal{C}^\infty(X_U), \star_\hbar)$. We denote this subalgebra by $\mathcal{A}_\hbar(X_U)$.

\begin{remark}
	Although $\mathcal{A}_\hbar(X)$ is non-commutative for sufficiently small $\hbar$, with semiclassical behaviour $\frac{1}{\hbar} [f, g]_{\star} \to \sqrt{-1} \{f, g\}$ as $\hbar \to 0$,
	it need not be non-commutative for all values of $\hbar$. For instance, when $\hbar = 1$ and $\boldsymbol{H}$ is the identity matrix, the phase factor $e^{-\pi\sqrt{-1}\, m \cdot m'}$ in \eqref{Equation: quantum tori} is symmetric in $m$ and $m'$, and hence the resulting product is commutative.
\end{remark}

\begin{proof}[\myproof{Theorem}{\ref{Theorem: non-commutative Dolbeault complex}}]
	As explained above, the assignment $U \mapsto (\Omega^{0,*}(X_U), \star_\hbar)$ defines a sheaf of graded algebras on $B$. It therefore suffices to argue locally on the base. Accordingly, fix an open subset $U \subset B$ equipped with coordinates arising from a skew-Smith form of $\mathcal{B}_{\operatorname{cc}}$.\par
	To prove that $\overline{\partial}$ is a graded derivation of $(\Omega^{0,*}(X_U), \star_\hbar)$, it suffices to verify the Leibniz rule on functions. More precisely, for all $f,g \in \mathcal{C}^\infty(X_U)$ and each coordinate $\overline{z}^i$, we must show
	\begin{equation}
		\label{Equation: Leibniz rule for zbar}
		\frac{\partial}{\partial \overline{z}^i}(f \star_\hbar g)
		=
		\left(\frac{\partial f}{\partial \overline{z}^i}\right)\star_\hbar g
		+
		f \star_\hbar \left(\frac{\partial g}{\partial \overline{z}^i}\right),
		\quad 1 \leq i \leq 2n.
	\end{equation}
	
	To this end, observe that the product $\star_\hbar$ is invariant under translations along the fibers. Furthermore, by its locality in the base direction, $\star_\hbar$ is $\mathcal{C}^\infty(U)$-bilinear, where $\mathcal{C}^\infty(U)$ acts on $\mathcal{C}^\infty(X_U)$ via pullback. It follows that the Leibniz rule for $\star_\hbar$ holds for coordinate vector fields $\tfrac{\partial}{\partial x^i}$, as well as for vector fields of the form $f \tfrac{\partial}{\partial y^j}$, where $f$ is the pullback of a smooth function on $U$. Since each $\frac{\partial}{\partial \overline{z}^i}$ can be expressed as a linear combination of such vector fields, \eqref{Equation: Leibniz rule for zbar} follows.\par
	For open sets $U \subset B$ of the above form, the family $\{(\mathcal{C}^\infty(X_U), \star_\hbar)\}_{\hbar \in \mathbb{R}}$ defines a deformation quantization of the Fr\'echet Poisson algebra $(\mathcal{C}^\infty(X_U), \{\,\cdot\,,\,\cdot\,\})$ in the sense of Definition~1.1 in \cite{OmoHaeMiyYos2000}. In particular, for all $f,g \in \mathcal{C}^\infty(X_U)$, we have
	\begin{itemize}
		\item $f \star_\hbar g \to f \star_0 g = fg$ as $\hbar \to 0$;
		\item $\tfrac{1}{\hbar}[f,g]_{\star_\hbar} \to \sqrt{-1} \{f,g\}$ as $\hbar \to 0$.
	\end{itemize}
	
	By construction, the product $\star_\hbar$ extends to $\Omega^{0,*}(X_U)$ in a manner compatible with the wedge product and the grading. The convergence statements (1) and (2) therefore extend immediately from functions to arbitrary $(0, *)$-forms $\alpha,\beta \in \Omega^{0,*}(X_U)$. Finally, since these statements are formulated in the compact-open $\mathcal{C}^\infty$-topology, they globalize to arbitrary open subsets of $B$.
\end{proof}

\begin{remark}
	We give a string-theoretic insight of the above star product. The fibrewise non-degeneracy of the curvature of $\nabla^L$, as assumed in Assumption \ref{Setup}, provides an identification between the lattice bundles on A- and B-sides. Together with the fiberwise Fourier transform, we can identify holomorphic functions on $X$ as a subclass of rapidly decreasing smooth functions on the lattice bundle $\Lambda$ on the A-side, exchanging the multiplication and the convolution product. As $\Lambda$ is canonically diffeomorphic to the space of fiberwise geodesic loop space of $X$ with base point in the zero section, physically speaking, our construction of a deformed product of holomorphic functions on $X$ can be regarded as a deformation of the product on a based loop space obtained from concatenation of strings. See related ideas in \cite{ChaSue2020, ChaLeuMa2012, KapOrl2004}.
\end{remark}

\subsection{Examples}
\quad\par
\label{Subsection: Examples}
We present three kinds of examples of coisotropic A-branes satisfying Assumption \ref{Setup}.

\subsubsection{$\mathbb{R}^2 \times \mathbb{T}^2$}
\quad\par
\label{Subsubsection: transcendental mirror symmetry}
Let's consider $B=\mathbb{R}^2$ with standard affine structure. In this case, $X\cong(\mathbb{C}^{\times})^2$. Consider the $U(1)$-line bundle $(L,\nabla^L)$ obtained by the connection
$$d-2\pi\sqrt{-1}(x_1dx_2+y^2dy^1)$$
and the fiberwise factor of automorphy
$$\lambda\bullet(x,y,v)=(x,y+\lambda,e^{2\pi\sqrt{-1}\lambda_1y^2}v),$$
for $\lambda=(\lambda_1,\lambda_2)$. The curvature of $(L,\nabla^L)$ is given by
$$-2\pi\sqrt{-1} F_{\nabla^L}=2\pi\sqrt{-1} (dx_1\wedge dx_2-dy^1\wedge dy^2)$$
and $I=\omega^{-1}F_{\nabla^L}$ has complex coordinates
$$z^1:=x_1-\sqrt{-1}y^2,z^2:=x_2+\sqrt{-1}y^1.$$
This gives a canonical coisotropic brane $\mathcal{B}_{\operatorname{cc}}=(X,L,\nabla^L)$ on $X$ satisfying Assumption \ref{Setup}. The invariant factor matrix of this brane is $\boldsymbol{H} = \begin{pmatrix}
	1
\end{pmatrix}$.

\subsubsection{The $4$-torus $\mathbb{T}^4$ and the Kodaira--Thurston manifold}
\label{Subsubsection: Kodaira--Thurston manifold}
\quad\par
Consider $B = \mathbb{T}^2 := \mathbb{R}^2 / \mathbb{Z}^2$. The standard periodic coordinates $\widetilde{x} = (\widetilde{x}_1, \widetilde{x}_2)$ on $B$, together with an integral affine coordinate change $\widetilde{x}' = \widetilde{x} + m$ for each $m \in \mathbb{Z}^2$, define the standard integral affine structure on $B$. Let $a \in \mathbb{N}$. We consider another integral affine structure on $B$ which is determined by the new coordinates
\begin{equation*}
	(x_1, x_2) = (\widetilde{x}_1, \widetilde{x}_2 + a (\widetilde{x}_1)^2)
\end{equation*}
and an integral affine coordinate change for each $(m_1, m_2) \in \mathbb{Z}^2$:
\begin{equation*}
	\begin{pmatrix}
		x'_1\\
		x'_2
	\end{pmatrix}
	=
	\begin{pmatrix}
		1 & 0\\
		2am_1 & 1
	\end{pmatrix}
	\begin{pmatrix}
		x_1\\
		x_2
	\end{pmatrix}
	+
	\begin{pmatrix}
		m_1\\
		m_2 +a (m_1)^2
	\end{pmatrix}.
\end{equation*}
The cotangent bundle $T^*X$ then admits a global frame
\begin{equation*}
	(\alpha_1, \alpha_2, \alpha_3, \alpha_4) := (dx_1, dx_2 - 2ax_1 dx_1, dy^1 + 2ax_1 dy^2, dy^2)
\end{equation*}
satisfying $d\alpha_1 = d\alpha_2 = d\alpha_4 = 0$ and $d\alpha_3 = 2a\alpha_1 \wedge \alpha_4$. When $a = 0$, the integral affine structure reduces to the standard one and $X \cong \mathbb{T}^4$; when $a > 0$, $X$ is diffeomorphic to the Kodaira--Thurston manifold \cite{Thu1976} (which gives an important example of non-K\"ahler mirror symmetry; see e.g., \cite{Abo2014, LauTseYau2015, Pop2020}).\par
Consider the quotient bundle associated with the unitary connection
\begin{equation*}
	d - 2\pi\sqrt{-1} (x_1 dx_2 + y^2 dy^1)
\end{equation*}
and the fiberwise factor of automorphy
\begin{equation*}
	\lambda \bullet (x, y, v) = (x, y + \lambda, e^{2\pi\sqrt{-1} \lambda^2 y^1} v).
\end{equation*}
Note that
\begin{align*}
	& d - 2\pi\sqrt{-1} (x'_1 dx'_2 + (y')^2 d(y')^1)\\
	= & d - 2\pi\sqrt{-1} (x_1dx_2 + y^2 dy^1 + d(am_1(x_1)^2 + 2a(m_1)^2 x_1 + m_1 x_2 - am_2 (y^2)^2))
\end{align*}
and
\begin{align*}
	& \lambda \bullet' (x, y, v) = (x, y + \lambda, e^{2\pi\sqrt{-1} \lambda^2 (y^1 - 2am_2y^2)}).
\end{align*}
Then via the gauge transformation
\begin{equation*}
	\exp \left( 2\pi\sqrt{-1} \left( am_1(x_1)^2 + 2a(m_1)^2 x_1 + m_1 x_2 - am_2 (y^2)^2 \right) \right),
\end{equation*}
we can glue the local quotient bundles to a global Hermitian line bundle with unitary connection $(L, \nabla^L)$ over $X$ such that $\mathcal{B}_{\operatorname{cc}} = (X, L, \nabla^L)$ is a rank-$1$ coisotropic A-brane on $(X, \omega)$ satisfying Assumption \ref{Setup}. The invariant factor matrix of $\mathcal{B}_{\operatorname{cc}}$ is $\boldsymbol{H} = \begin{pmatrix}
	1
\end{pmatrix}$.

\subsubsection{Ooguri--Vafa metric, away from the singularity}
\quad\par
\label{Subsubsection: Ooguri--Vafa metric, away from the singularity}
We give an example that $B$ is the smooth locus of an SYZ fibration with an affine singularity (cf. \cite{Cha2010, Cha2013}). Consider $B = \mathbb{C}^\times$ covered by three contractible open subsets $U_\alpha, U_\beta, U_\gamma$, where
\begin{align*}
	U_\alpha = & \{ re^{\sqrt{-1}\theta}: r > 0, \theta \in (0, \tfrac{3\pi}{2}) \},\\
	U_\beta = & \{ re^{\sqrt{-1}\theta}: r > 0, \theta \in (\pi, 2\pi) \},\\
	U_\gamma = & \{ re^{\sqrt{-1}\theta}: r > 0, \theta \in (-\tfrac{\pi}{2}, \pi) \}.
\end{align*}
For $\eta \in \{\alpha, \beta, \gamma\}$, let $(x_{\eta, 1}, x_{\eta, 2})$ denote the restriction of the standard real coordinates on $\mathbb{C} \cong \mathbb{R}^2$ to $U_\eta$, and let $(L_\eta, \nabla^{L_\eta})$ denote the quotient bundle on $U_\eta \times \mathbb{T}^2$ associated with the unitary connection
\begin{equation*}
	d - 2\pi\sqrt{-1} \left( \tfrac{1}{2} x_{\eta, 1} dx_{\eta, 2} + 2 y_\eta^2 dy_\eta^1 \right)
\end{equation*}
and the fiberwise factor of automorphy
\begin{equation*}
	\lambda_\eta \bullet_\eta (x_\eta, y_\eta, v) = (x_\eta, y_\eta + \lambda, e^{4\pi\sqrt{-1} \lambda_\eta^2 y_\eta^1} v).
\end{equation*}
Equip $B$ with the integral affine structure given by the integral affine coordinate changes:
\begin{equation*}
	x_\alpha = x_\beta \quad \text{on } U_{\alpha\beta}; \quad x_\beta = x_\gamma \quad \text{on } U_{\beta\gamma}; \quad (x_{\gamma, 1}, x_{\gamma, 2}) = (x_{\alpha, 1} + x_{\alpha, 2}, x_{\alpha, 2}) \quad \text{on } U_{\gamma\alpha}.
\end{equation*}
Let $\phi_{\alpha\beta}: (L_\beta, \nabla^{L_\beta}) \vert_{U_{\alpha\beta}} \to (L_\alpha, \nabla^{L_\alpha}) \vert_{U_{\alpha\beta}}$ and $\phi_{\beta\gamma}: (L_\gamma, \nabla^{L_\gamma}) \vert_{U_{\beta\gamma}} \to (L_\beta, \nabla^{L_\beta}) \vert_{U_{\beta\gamma}}$ be the identity maps. Note that with respect to the coordinates $(x_\alpha, y_\alpha)$, $(L_\gamma, \nabla^{L_\gamma}) \vert_{U_{\gamma\alpha}}$ is the quotient bundle associated with the unitary connection
\begin{equation*}
	d - 2\pi\sqrt{-1} \left( \tfrac{1}{2} (x_{\alpha, 1} + x_{\alpha, 2}) dx_{\alpha, 2} + 2(-y_\alpha^1 + y_\alpha^2) dy_\alpha^1 \right)
\end{equation*}
and the fiberwise factor of automorphy
\begin{equation*}
	\lambda_\alpha \bullet_\gamma (x_\alpha, y_\alpha, v) = (x_\alpha, y_\alpha + \lambda_\alpha, e^{4\pi\sqrt{-1} (-\lambda_\alpha^1 + \lambda_\alpha^2) y_\alpha^1} v).
\end{equation*}
Let $\phi_{\gamma\alpha}: (L_\alpha, \nabla^{L_\alpha}) \vert_{U_{\gamma\alpha}} \to (L_\gamma, \nabla^{L_\gamma}) \vert_{U_{\gamma\alpha}}$ be the isomorphism of Hermitian line bundles with unitary connection induced by the gauge transformation $\exp \left( 2\pi\sqrt{-1} \left( \tfrac{1}{4} (x_{\alpha, 2})^2 - (y_\alpha^1)^2 \right) \right)$. Then the data glue to a global Hermitian line bundle with unitary connection $(L, \nabla^L)$ over $B$ such that $\mathcal{B}_{\operatorname{cc}} = (X, L, \nabla^L)$ is a rank-$1$ coisotropic A-brane on $(X, \omega)$ satisfying Assumption \ref{Setup}. The invariant factor matrix of this brane is $\boldsymbol{H} = \begin{pmatrix}
	2
\end{pmatrix}$.

\subsection{The proof of the existence of skew-Smith forms}
\quad\par
\label{Subsection: proof of skew smith form}
In this last subsection, we give a detailed proof of Proposition \ref{Proposition: skew smith form of A brane}.

\begin{proof}[\myproof{Proposition}{\ref{Proposition: skew smith form of A brane}}]
	We start with the data $(x, f, \widetilde{g}, H, \chi)$ in \eqref{Equation: original data}.\par
	\textbf{Step (1):} By Assumption \ref{Setup}, $H$ is invertible. We claim that $H$ can be chosen such that
	\begin{equation}
		\label{Equation: Skew-Smith form of H}
		H = \begin{pmatrix}
			0 & -\boldsymbol{H}\\
			\boldsymbol{H} & 0
		\end{pmatrix},
	\end{equation}
	where $\boldsymbol{H}$ denotes the diagonal matrix $\operatorname{diag}(h_1, ... ,h_n)$ and $h_1, ... ,h_n \in \mathbb{Z}_{>0}$ are the elementary divisors of $H$ with $h_1 \mid h_2 \mid \cdots \mid h_n$. This is because there exists $A \in \operatorname{GL}(2n, \mathbb{Z})$ such that
	\begin{equation*}
		H' := A H A^T = \begin{pmatrix}
			0 & -\boldsymbol{H}\\
			\boldsymbol{H} & 0
		\end{pmatrix}.
	\end{equation*}
	Performing an integral affine coordinate change $x' = Ax$ (hence $y' = A^{-T} y$) and defining $f'(x') = A^{-T} f(x)$, $\widetilde{g}'(x') = A \widetilde{g}(x)$ and $\chi'(\lambda) = \chi(A^T\lambda)$, we obtain a new set of data $(x', f', \tilde{g}', H', \chi')$ of the same kind. Hence, our claim holds.\par
	Now, given $H$ being chosen of the form \eqref{Equation: Skew-Smith form of H}, there is a canonical semi-character with respect to $H$, namely $\chi_0(\lambda) = (-1)^{\boldsymbol{\lambda}^2 \cdot \boldsymbol{H} \boldsymbol{\lambda}^1}$. Hence, $\chi$ must be of the form
	\begin{equation}
		\chi(\lambda) = \chi_0(\lambda) e^{2\pi\sqrt{-1} \check{g} \cdot \lambda}, \quad \text{for some } \check{g} \in \mathbb{R}^{2n}.
	\end{equation}
	
	\textbf{Step (2):} Let $g = H^{-T} \widetilde{g}$ so that \eqref{Equation: first connection for local A brane} becomes
	\begin{equation*}
		d - 2\pi\sqrt{-1} \left( \tfrac{1}{2} f \cdot dx + g \cdot H dy + \tfrac{1}{2} y \cdot H dy \right).
	\end{equation*}
	Via a gauge transformation by $e^{-\pi\sqrt{-1} g \cdot H y}$, $(L, \nabla^L) \vert_{X_U}$ is isomorphic to the quotient bundle on $U \times \mathbb{T}^{2n}$ associated with the unitary connection
	\begin{equation*}
		d - \pi\sqrt{-1} \left( f \cdot dx - g \cdot H dg + (y + g) \cdot H d(y + g) \right)
	\end{equation*}
	and the fiberwise factor of automorphy
	\begin{equation}
		\label{Equation: fibrewise factor of automorphy for local A brane}
		\lambda \bullet (x, y, v) = (x, y + \lambda, \chi(\lambda) e^{\pi\sqrt{-1} \cdot \lambda \cdot H (y + g) } v).
	\end{equation}
	
	\textbf{Step (3):} Define $F := \tfrac{1}{2} ( Df - (Df)^T)$ and $G := Dg$. We claim that $H^{-1} = -(F + GHG)$ and $G^T = G$, leaving its proof to Lemma \ref{Lemma: expression of inverse of H}. Noting that $F^T = -F$, this claim implies
	\begin{equation*}
		d \left( f \cdot dx - g \cdot H dg \right) = -dx \cdot \left( F + GHG \right) dx = dx \cdot H^{-1} dx.
	\end{equation*}
	Shrinking $U$ if necessary, the $1$-form $f \cdot dx - g \cdot H dg - x \cdot H^{-1} dx$ is exact. Hence, via a suitable gauge transformation, $(L, \nabla^L) \vert_{X_U}$ is isomorphic to the quotient bundle on $U \times \mathbb{T}^{2n}$ associated with the fiberwise factor of automorphy \eqref{Equation: fibrewise factor of automorphy for local A brane} and the unitary connection
	\begin{equation}
		d - \pi\sqrt{-1} ( x \cdot H^{-1} dx + (y + g) \cdot H d(y + g) ).
	\end{equation}
	
	\textbf{Step (4):} Via a final gauge transformation $e^{\pi\sqrt{-1} (\boldsymbol{x}_1 \cdot \boldsymbol{H}^{-1} \boldsymbol{x}_2 + (\boldsymbol{y}^2 + \boldsymbol{g}^2) \cdot \boldsymbol{H} (\boldsymbol{y}^1 + \boldsymbol{g}^1))}$, $(L, \nabla^L) \vert_{X_U}$ is isomorphic to the desired quotient bundle on $U \times \mathbb{T}^{2n}$.
\end{proof}

\begin{lemma}
	\label{Lemma: expression of inverse of H}
	Let $F, G, H$ be as in Step (3) of the proof of Proposition \ref{Proposition: skew smith form of A brane}. Then
	\begin{equation*}
		H^{-1} = -(F + GHG) \quad \text{and} \quad G = G^T.
	\end{equation*}
\end{lemma}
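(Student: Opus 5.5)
The plan is to compute the curvature of the connection obtained in Step (2) explicitly. We then impose the two defining conditions of a space-filling coisotropic A-brane from Definition \ref{definition : coisotropic A brane}: $I = \omega^{-1}F_{\nabla^L}$ satisfies $I^2 = -\operatorname{Id}$, and the compatibility this forces between $F_{\nabla^L}$ and $\omega = dx\wedge dy$.

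Up to the factor $-2\pi\sqrt{-1}$, the connection form is $\tfrac12\big(f\cdot dx - g\cdot H dg + (y+g)\cdot H d(y+g)\big)$. Using $d(y+g) = dy + G\,dx$, its differential is
\begin{equation*}
F_{\nabla^L} = -\tfrac12\, dx\wedge F\,dx - \tfrac12\, dx\wedge G^THG\,dx + \tfrac12\, (dy+G\,dx)\wedge H(dy+G\,dx).
\end{equation*}
Here $d(f\cdot dx) = -dx\wedge F dx$ up to the sign conventions. The term $d(g\cdot H dg)$ equals $dg\wedge H dg = dx\wedge G^THG\,dx$. We need to keep the sign conventions $d(a\cdot db) = da\wedge db$ consistent throughout.

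Next we write $F_{\nabla^L}$ in block form with respect to the frame $(dx, dy)$. The $dy$–$dy$ block is $H$. The mixed block is $HG$ (possibly up to transpose). The $dx$–$dx$ block is $M := -F - G^THG + G^THG$ corrected by the $\tfrac12(G dx)\wedge H(G dx)$ term. The two $G^THG$-type terms need careful bookkeeping. We then compute $I = \omega^{-1}F$ with $\omega$ having the standard block form $\begin{pmatrix}0&\operatorname{Id}\\-\operatorname{Id}&0\end{pmatrix}$. This gives $I$ as a $2\times2$ block matrix whose entries are built from $H$, $HG$, $G^TH$ and $M$.

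Imposing $I^2 = -\operatorname{Id}$ block by block gives the result. The off-diagonal blocks yield an identity of the form $H(G - G^T)=0$ or $HG H = H G^T H$. Since $H$ is invertible by Assumption \ref{Setup}, this gives $G = G^T$. The diagonal blocks then give $HM = -\operatorname{Id}$ after substituting $G = G^T$, so $M = -H^{-1}$. Unwinding $M$ yields $H^{-1} = -(F + GHG)$.

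The only real obstacle is sign and transpose bookkeeping: the skew-symmetry convention for $a\wedge Hb$, the factor $\tfrac12$, and the orientation of $\omega^{-1}$. A wrong sign would give $+H^{-1}$ instead of $-H^{-1}$. To fix conventions I would first check the formula against Example \ref{Subsubsection: transcendental mirror symmetry}, where $g = 0$ and $F$ is known explicitly.
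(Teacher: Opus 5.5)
Your route is the paper's: write $F_{\nabla^L}=\operatorname{Re}\Omega_X$ as a block matrix in the frame $(dx,dy)$, form $I=\omega^{-1}F_{\nabla^L}$, and read both identities off $I^2=-\operatorname{Id}$. Your treatment of the off-diagonal blocks is correct: they give $HGH=HG^TH$, hence $G=G^T$ because $H$ is invertible.

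The diagonal step, however, is mis-described, and followed literally it would not produce the stated formula. If you expand your own expression, the two $dx\wedge G^THG\,dx$ terms cancel exactly, so
\[
F_{\nabla^L}=-\tfrac12\,dx\wedge F\,dx+dx\wedge G^TH\,dy+\tfrac12\,dy\wedge H\,dy .
\]
The $dx$--$dx$ block is therefore just $-F$, with no $G$ in it. So there is no matrix $M$ built from that block which ``unwinds'' to $F+GHG$, and the diagonal blocks of $I^2$ are not $H$ times that block.

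The term $GHG$ enters only when you square $I$. With $\omega^{-1}=\begin{pmatrix}0&-\operatorname{Id}\\ \operatorname{Id}&0\end{pmatrix}$ one gets
\[
I=\begin{pmatrix}-HG&-H\\ -F&G^TH\end{pmatrix},\qquad (I^2)_{11}=(HG)^2+HF .
\]
So $GHG$ comes from squaring the diagonal block $-HG$ of $I$, which arises from the mixed $dx$--$dy$ part of the curvature, not from the $dx$--$dx$ part. Setting $(I^2)_{11}=-\operatorname{Id}$ gives $H(F+GHG)=-\operatorname{Id}$, i.e. $H^{-1}=-(F+GHG)$, without even using $G=G^T$. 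The $(2,2)$ block, $FH+G^THG^TH=-\operatorname{Id}$, gives the same identity once $G=G^T$ is substituted. With this correction your proof is exactly the paper's.

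Two further remarks. Your proposed sanity check on the example with $g=0$ cannot detect this issue, since $G=0$ there. Also, the overall sign convention for $\omega^{-1}$ is harmless, because $I^2=-\operatorname{Id}$ is invariant under $I\mapsto -I$; only the relative signs of the blocks and the factor $\tfrac12$ matter.
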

\begin{proof}
	The holomorphic symplectic form $\Omega_X$ on $X$ can be expressed as
	\begin{equation*}
		\Omega_X = -\tfrac{1}{2} dx \wedge F dx + dx \wedge G^TH dy + \tfrac{1}{2} dy \wedge H dy + \sqrt{-1} dx \wedge dy.
	\end{equation*}
	It implies that, locally, the complex structure on $X$ induced by $\mathcal{B}_{\operatorname{cc}}$ is given by
	\begin{equation}
		\label{Equation: matrix form of complex structure of X}
		\begin{pmatrix}
			0 & -I_{2n}\\
			I_{2n} & 0
		\end{pmatrix}
		\begin{pmatrix}
			-F & G^TH\\
			-(G^TH)^T & H
		\end{pmatrix}
		=
		\begin{pmatrix}
			-HG & -H\\
			-F & G^TH
		\end{pmatrix}
	\end{equation}
	as a matrix relative to the frame $(\tfrac{\partial}{\partial x_1}, ..., \tfrac{\partial}{\partial x_{2n}}, \tfrac{\partial}{\partial y^1}, ..., \tfrac{\partial}{\partial y^{2n}})$. Denote by $I_m$ the $m \times m$ identity matrix for any $m \in \mathbb{Z}_{>0}$. As the square of \eqref{Equation: matrix form of complex structure of X} is equal to $-I_{4n}$, we obtain 
	\begin{equation*}
		(HG)^2 + HF = -I_{2n}, \quad G = G^T, \quad FHG = G^THF \quad \text{and} \quad FH + (G^TH)^2 = -I_{2n}
	\end{equation*}
	(indeed, the last two equations can be deduced from the first two). In particular, the first equation implies that $H^{-1} = -(F + GHG)$.
\end{proof}

\section{The mirror B-brane $\check{\mathcal{B}}_{\operatorname{cc}}$ via Fourier--Mukai type transforms}
\label{Section: The mirror B-brane via Fourier--Mukai type transforms}

Throughout this section, we fix a positive integer $k \in \mathbb{Z}_{>0}$.\par

In \cite{ChaLeuZha2018}, K.-L. Chan, Zhang and the second author of this paper generalized the real Fourier--Mukai transforms (see {\cite{AriPol2001, LeuYauZas2000}) of Lagrangian A-branes on $X$ to arbitrary semi-affine coisotropic A-branes on $X$. For the canonical coisotropic A-brane $\mathcal{B}_{\operatorname{cc}}^{(k)}$ on $(X,k\omega)$, their construction of the mirror B-brane proceeds by twisting the tensor product bundle $\pi^*L^{\otimes k} \otimes \mathcal{P}$ with a fiberwise spinor bundle and then taking the kernel of the associated family Dirac operator.\par
	In this paper, we slightly modify this construction to better align with the brane quantization proposal of Gukov--Witten. For each sufficiently small open subset $U$ of $B$, we first construct a local mirror B-brane on $\check{X}_U^{(k)}$ associated with $\mathcal{B}_{\operatorname{cc}}^{(k)}$ by taking a \emph{family geometric quantization} of $\pi^*L^{\otimes k} \otimes \mathcal{P}$. We then glue these local objects using family Blattner--Kostant--Sternberg pairing maps to obtain a global mirror B-brane, denoted by $\check{\mathcal{B}}_{\operatorname{cc}}^{(k)}$.\par
	As will be shown in Theorem \ref{Proposiiton: Global mirror B-brane}, under the standing assumption \ref{Setup}, the B-brane $\check{\mathcal{B}}_{\operatorname{cc}}^{(k)}$ on the mirror manifold $\check{X}^{(k)}$ is semi-affine and space-filling, i.e. of the form
	\begin{equation*}
		\check{\mathcal{B}}_{\operatorname{cc}}^{(k)} = (\check{X}^{(k)}, \check{E}^{(k)}, \nabla^{\check{E}^{(k)}}),
	\end{equation*}
	where $(\check{E}^{(k)}, \nabla^{\check{E}^{(k)}})$ is a Hermitian vector bundle on $\check{X}^{(k)}$ with a unitary connection. In general, $\check{E}^{(k)}$ has rank greater than one, except when $k$ and all invariant factors of $\mathcal{B}_{\operatorname{cc}}$ are equal to $1$.
	
	\subsection{Local construction of the mirror B-brane: outline}
	\quad\par
	\label{Subsection: Local construction of mirror brane: outline}
	Consider a fixed open subset $U \subset B$ for which $(L, \nabla^L)$ has a skew-Smith form determined by data $x, g, \check{g}$ described in Proposition \ref{Proposition: skew smith form of A brane}.
	
	\subsubsection{An explicit local form of $L^{\otimes k} \otimes \mathcal{P}$}
	\quad\par
	\label{Subsubsection: An explicit local form}
	Identify $U$ with an open subset of $\mathbb{R}^{2n}$ via the integral affine chart $x$. Via the coordinates $(x, y, \check{y})$, we obtain a local trivialization $T^*U \oplus TU \cong U \times \mathbb{R}^{4n}$ which identifies the lattice bundle $\Lambda_U \oplus \check{\Lambda}_U$ with $U \times \mathbb{Z}^{4n}$ and hence descends to a diffeomorphism $X_U \times_U \check{X}_U \cong U \times \mathbb{T}^{4n}$. To avoid confusion, we henceforth let $\check{\mathbb{T}}^{2n}$ be another copy of $\mathbb{T}^{2n}$ and write
	\begin{equation}
		\label{Equation: coordinate description of fibre product}
		X_U \times_U \check{X}_U \cong U \times \mathbb{T}^{2n} \times \check{\mathbb{T}}^{2n}.
	\end{equation}
	
	On this manifold, it is convenient to introduce the new coordinates $(x, u, \check{u})$, where
	\begin{equation*}
		u = (u^1, ..., u^{2n}) := y + g(x) \quad \text{and} \quad \check{u} = (\check{u}_1, ..., \check{u}_{2n}) := \check{y} - k\check{g},
	\end{equation*}
	and take the quotient bundle $(\mathbf{L}, \nabla^\mathbf{L})$ associated with the unitary connection
	\begin{equation}
		\label{Equation: unitary connection over fibre product}
		d - 2\pi\sqrt{-1} ( k \boldsymbol{x}_1 \cdot \boldsymbol{H}^{-1} d\boldsymbol{x}_2 + k \boldsymbol{u}^2 \cdot \boldsymbol{H} d\boldsymbol{u}^1 + \check{y} \cdot dy)
	\end{equation}
	and the fiberwise factor of automorphy
	\begin{equation*}
		(\lambda, \check{\lambda}) \bullet (x, y, \check{y}, v) = (x, y + \lambda, \check{y} + \check{\lambda}, e^{2\pi\sqrt{-1} (k\check{g} \cdot \lambda + k \boldsymbol{\lambda}^2 \cdot \boldsymbol{H} \boldsymbol{u}^1 + \check{\lambda} \cdot y)} v).
	\end{equation*}
	By Proposition \ref{Proposition: skew smith form of A brane} and the construction of the Poincar\'e bundle, there exists an isomorphism $\pi^*L^{\otimes k} \otimes \mathcal{P} \vert_{X_U \times_U \check{X}_U} \cong \mathbf{L}$ of Hermitian line bundles with unitary connection, covering the diffeomorphism \eqref{Equation: coordinate description of fibre product}, so that
	\begin{equation*}
		\Gamma(X_U \times_U \check{X}_U, \pi^*L^{\otimes k} \otimes \mathcal{P}) \cong \Gamma(U \times \mathbb{T}^{2n} \times \check{\mathbb{T}}^{2n}, \mathbf{L}).
	\end{equation*}
	
	\subsubsection{Prequantum line bundles on a symplectic torus parametrized by $\check{X}_U$}
	\quad\par
	\label{Subsubsection: Prequantum line bundles on a symplectic torus parametrized by mirror manifold}
	The manifold $\check{X}_U \cong U \times \check{\mathbb{T}}^{2n}$ parametrizes a class of prequantum line bundles of the symplectic torus $(\mathbb{T}^{2n}, d\boldsymbol{y}^2 \wedge \boldsymbol{H} d\boldsymbol{y}^1)$ as follows. Given a point in $U \times \check{\mathbb{T}}^{2n}$, we identity the fiber of the canonical projection $U \times \mathbb{T}^{2n} \times \check{\mathbb{T}}^{2n} \to U \times \check{\mathbb{T}}^{2n}$ over this point with $\mathbb{T}^{2n}$. Then the restriction of $\mathbf{L}$ onto this fiber is a prequantum line bundle of $(\mathbb{T}^{2n}, d\boldsymbol{y}^2 \wedge \boldsymbol{H} d\boldsymbol{y}^1)$.\par
	One can obtain an explicit description of this prequantum line bundle via a fixed value of $(x, \check{y})$ representing the given point -- such a value is unique up to transforms $(x, \check{y}) \mapsto (x, \check{y} + \check{\lambda})$ for $\check{\lambda} \in \mathbb{Z}^{2n}$. The prequantum line bundle is indeed isomorphic to the quotient bundle $\mathbf{L}_{(x, \check{y})}$ of the trivial Hermitian line bundle $\mathbb{R}^{2n} \times \mathbb{C}$ equipped with the unitary connection
	\begin{equation*}
		d - 2\pi\sqrt{-1} \left( k \boldsymbol{u}^2 \cdot \boldsymbol{H} d\boldsymbol{u}^1 + \check{y} \cdot dy \right)
	\end{equation*}
	by the factor of automorphy
	\begin{equation*}
		\lambda \bullet (y, v) = \left(y + \lambda, e^{2\pi \sqrt{-1} k \left( \check{g} \cdot \lambda + \boldsymbol{\lambda}^2 \cdot \boldsymbol{H} \boldsymbol{u}^1 \right)} v \right).
	\end{equation*}
	
	\subsubsection{Invariant K\"ahler polarizations and half-form bundles}
	\quad\par
	\label{Subsection: Invariant polarizarions and half form bundles}
	Every element $\Omega = (\Omega_{ij}) \in \mathbb{H}_n$ in the Segal upper half space determines a $\mathbb{T}^{2n}$-invariant complex structure on $\mathbb{T}^{2n}$ via the complex coordinates
	\begin{equation}
		\label{Equation: complex coordinates on fibre torus}
		w = (w^1, ..., w^n) := \boldsymbol{H} \boldsymbol{y}^1 - \Omega \boldsymbol{y}^2,
	\end{equation}
	which is compatible with the symplectic form $d\boldsymbol{y}^2 \wedge \boldsymbol{H} d\boldsymbol{y}^1 = \tfrac{\sqrt{-1}}{2} dw \wedge (\operatorname{Im} \Omega)^{-1} d\overline{w}$. Hence the antiholomorphic tangent bundle $T^{0, 1}\mathbb{T}^{2n}$ defines a K\"ahler polarization. It admits the global frame
	\begin{equation*}
		\frac{\partial}{\partial \overline{w}} := \left( \frac{\partial}{\partial \overline{w}^1}, ..., \frac{\partial}{\partial \overline{w}^n} \right)= \frac{1}{2} (\operatorname{Im} \Omega)^{-1} \left( \frac{\partial}{\partial \boldsymbol{y}^2} + \Omega \boldsymbol{H}^{-1} \frac{\partial}{\partial \boldsymbol{y}^1} \right).
	\end{equation*}
	
	We briefly review the construction of half-form bundles for such polarizations (cf. \cite{BaiMouNun2010}). The canonical bundle $K := \bigwedge^n T^{*(1, 0)} \mathbb{T}^{2n}$ admits the holomorphic frame $d^n w$. For another element $\Omega' \in \mathbb{H}_n$, let $w'$ and $K'$ denote the corresponding complex coordinates and canonical bundle. There is a natural sesquilinear pairing
	\begin{equation*}
		h^{K, K'}: \Gamma(\mathbb{T}^{2n}, K) \times \Gamma(\mathbb{T}^{2n}, K') \to \mathcal{C}^\infty(\mathbb{T}^{2n})
	\end{equation*}
	defined by
	\begin{equation*}
		h^{K, K'} (\eta, \eta') \cdot \det (-\boldsymbol{H}) \cdot dy^1 \wedge \cdots \wedge dy^{2n} = \left( \sqrt{-1} \right)^{n^2} \eta \wedge \overline{\eta}'.
	\end{equation*}
	In particular, $h^{K, K'} ( d^nw, d^nw' ) = \det \left( \tfrac{1}{\sqrt{-1}} (\Omega - \overline{\Omega'}) \right)$.\par
	Indeed, one can choose a family of holomorphic line bundles over $\mathbb{T}^{2n}$ parametrized by $\mathbb{H}_n$ such that, for each $\Omega \in \mathbb{H}_n$, the corresponding bundle $\sqrt{K}$ is a square root of $K$ and admits a global holomorphic frame $\sqrt{d^n w}$ satisfying $d^nw = \sqrt{d^nw} \otimes \sqrt{d^nw}$. For each pair $\Omega, \Omega' \in \mathbb{H}_n$, this determines a sesquilinear pairing
	\begin{equation*}
		h^{\sqrt{K}, \sqrt{K'}} : \Gamma(\mathbb{T}^{2n}, \sqrt{K}) \times \Gamma(\mathbb{T}^{2n}, \sqrt{K'}) \to \mathcal{C}^\infty(\mathbb{T}^{2n}),
	\end{equation*}
	defined on generators by
	\begin{equation*}
		h^{\sqrt{K}, \sqrt{K'}} \left( \sqrt{d^nw}, \sqrt{d^nw'} \right) = \sqrt{\det \left( \tfrac{1}{\sqrt{-1}} (\Omega - \overline{\Omega'}) \right)},
	\end{equation*}
	where the square root is taken in the principal branch.\par
	For each $\Omega$, the pairing $h^K := h^{K,K}$ defines a Hermitian metric on $K$ for which $d^n w$ is parallel with respect to the Chern connection $\nabla^K$. Similarly, $h^{\sqrt{K}} := h^{\sqrt{K},\sqrt{K}}$ defines a Hermitian metric on $\sqrt{K}$ with Chern connection $\nabla^{\sqrt{K}}$, under which $\sqrt{d^n w}$ is parallel. In this way, $(\sqrt{K}, h^{\sqrt{K}}, \nabla^{\sqrt{K}})$ is a square root of $(K, h^{K}, \nabla^{K})$. We continue to denote by $(\sqrt{K}, h^{\sqrt{K}}, \nabla^{\sqrt{K}})$, as well as by $\sqrt{d^n w}$, their pullbacks along the projection
	\begin{equation*}
		U \times \mathbb{T}^{2n} \times \check{\mathbb{T}}^{2n} \to \mathbb{T}^{2n}.
	\end{equation*}
	
	For any $\Omega, \Omega' \in \mathbb{H}_n$, the Hermitian metric on $\mathbf{L}$ together with $h^{\sqrt{K},\sqrt{K'}}$ induces a fiberwise sesquilinear pairing
	\begin{equation}
		\label{Equation: fibrewise pairing of prequantum spaces}
		\langle \, \cdot \, , \, \cdot \, \rangle: \Gamma(U \times \mathbb{T}^{2n} \times \check{\mathbb{T}}^{2n}, \mathbf{L} \otimes \sqrt{K}) \times \Gamma(U \times \mathbb{T}^{2n} \times \check{\mathbb{T}}^{2n}, \mathbf{L} \otimes \sqrt{K'}) \to \mathcal{C}^\infty(U \times \check{\mathbb{T}}^{2n})
	\end{equation}
	given by
	\begin{equation*}
		\langle s \otimes \sqrt{d^nw}, s' \otimes \sqrt{d^nw'} \rangle (x, \check{y}) := \sqrt{\det \left( \tfrac{1}{\sqrt{-1}} (\Omega - \overline{\Omega'}) \right)} \int_{[0, 1]^{2n}} s(x, y, \check{y}) \overline{s'(x, y, \check{y})} d^{2n}y.
	\end{equation*}
	In particular, this defines a fiberwise inner product on $\Gamma(U \times \mathbb{T}^{2n} \times \check{\mathbb{T}}^{2n}, \mathbf{L} \otimes \sqrt{K})$.
	
	\subsubsection{Local mirror B-branes as bundles of quantum Hilbert spaces}
	\quad\par
	\label{Subsubsection: Local mirror B-branes as bundles of quantum Hilbert spaces}
	For a given $\Omega \in \mathbb{H}_n$, the pullback of $T^{0, 1}\mathbb{T}^{2n}$ defines a CR structure on $U \times \mathbb{T}^{2n} \times \check{\mathbb{T}}^{2n}$. The induced unitary connection on $\mathbf{L} \otimes \sqrt{K}$ is flat along this CR structure, allowing us to define
	\begin{equation*}
		H^0(U \times \mathbb{T}^{2n} \times \check{\mathbb{T}}^{2n}, \mathbf{L} \otimes \sqrt{K})
	\end{equation*}
	as the space of smooth sections of $\mathbf{L} \otimes \sqrt{K}$ that are flat along this CR structure.\par
	We then construct a complex vector bundle $\mathbf{\check{E}}$ over $U \times \check{\mathbb{T}}^{2n}$ by specifying its fiber over any point in $U \times \check{\mathbb{T}}^{2n}$, represented by $(x, \check{y}) \in U \times \check{\mathbb{R}}^{2n}$, to be the space
	\begin{equation*}
		H^0(\mathbb{T}^{2n}, \mathbf{L}_{(x, \check{y})} \otimes \sqrt{K})
	\end{equation*}
	of holomorphic sections of $\mathbf{L}_{(x, \check{y})} \otimes \sqrt{K}$ with respect to the complex structure associated with $\Omega$. This construction yields a natural identification
	\begin{equation*}
		\Gamma(U \times \check{\mathbb{T}}^{2n}, \mathbf{\check{E}}) \cong H^0(U \times \mathbb{T}^{2n} \times \check{\mathbb{T}}^{2n}, \mathbf{L} \otimes \sqrt{K}).
	\end{equation*}
	Specifically, this identification induces a continuous $\mathcal{C}^\infty(U \times \check{\mathbb{T}}^{2n})$-linear embedding
	\begin{equation*}
		\Gamma(U \times \check{\mathbb{T}}^{2n}, \mathbf{\check{E}}) \hookrightarrow \Gamma(U \times \mathbb{T}^{2n} \times \check{\mathbb{T}}^{2n}, \mathbf{L} \otimes \sqrt{K}),
	\end{equation*}
	where both of the spaces are equipped with the compact-open $\mathcal{C}^\infty$-topology. In this context, $\mathcal{C}^\infty(U \times \check{\mathbb{T}}^{2n})$ acts on the right-hand space via pullback.\par
	The fiberwise inner product \eqref{Equation: fibrewise pairing of prequantum spaces} on $\Gamma(U \times \mathbb{T}^{2n} \times \check{\mathbb{T}}^{2n}, \mathbf{L} \otimes \sqrt{K})$ gives rise to a Hermitian metric on $\mathbf{\check{E}}$ and a family orthogonal projection
	\begin{equation}
		\label{Equation: family orthogonal projection}
		\Pi: \Gamma(U \times \mathbb{T}^{2n} \times \check{\mathbb{T}}^{2n}, \mathbf{L} \otimes \sqrt{K}) \to \Gamma(U \times \check{\mathbb{T}}^{2n}, \mathbf{\check{E}}),
	\end{equation}
	which is $\mathcal{C}^\infty(U \times \check{\mathbb{T}}^{2n})$-linear. Denote by $\nabla$ the tensor product of the unitary connections on $\mathbf{L}$ and $\sqrt{K}$. For any complex vector field $\xi$ on $U \times \check{\mathbb{T}}^{2n}$, define
	\begin{equation}
		\label{Equation: B brane connectino}
		\check{\nabla}_\xi := \Pi \circ \nabla_{\widehat{\xi}}: \Gamma(U \times \check{\mathbb{T}}^{2n}, \mathbf{\check{E}}) \to \Gamma(U \times \check{\mathbb{T}}^{2n}, \mathbf{\check{E}}),
	\end{equation}
	where $\widehat{\xi}$ denotes the canonical lift of $\xi$ with respect to the canonical decomposition
	\begin{equation*}
		T(U \times \mathbb{T}^{2n} \times \check{\mathbb{T}^{2n}}) \cong T(U \times \check{\mathbb{T}^{2n}}) \times T\mathbb{T}^{2n}.
	\end{equation*}
	
	While our construction closely parallels that of \cite{ChaLeuZha2018}, it yields the following result.
	
	\begin{theorem}
		\label{Theorem: unitary connection on the local mirror B brane}
		The triple $(U \times \check{\mathbb{T}}^{2n}, \mathbf{\check{E}}, \check{\nabla})$ defines a semi-affine B-brane on the complex manifold $U \times \check{\mathbb{T}}^{2n}$, endowed with the complex structure determined by the coordinates $\check{z}$. We call it a \emph{local mirror B-brane} of $\mathcal{B}_{\operatorname{cc}}$. Moreover, the curvature of $\check{\nabla}$ is locally given by
		\begin{equation*}
			-2\pi\sqrt{-1} \cdot d\check{z} \wedge \tfrac{k}{2} \left( H^{-1} + \sqrt{-1} Dg \right) d\overline{\check{z}} \otimes \operatorname{Id}_{\mathbf{\check{E}}},
		\end{equation*}
		where $H$ is as in \eqref{Equation: Skew-Smith form of H} and $Dg$ is the Jacobian matrix of $g$ with respect to $x$.
	\end{theorem}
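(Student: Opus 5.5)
The plan is to make everything explicit with theta functions, i.e.\ a family Weil--Brezin transform, and then compute $\check{\nabla}$ directly. First, fix $(x,\check{y})$ and describe $H^0(\mathbb{T}^{2n},\mathbf{L}_{(x,\check{y})}\otimes\sqrt{K})$. In the gauge given by the connection $d-2\pi\sqrt{-1}(k\boldsymbol{u}^2\cdot\boldsymbol{H}d\boldsymbol{u}^1+\check{y}\cdot dy)$, the $\overline{w}$-direction covariant derivative is $\frac{\partial}{\partial\overline{w}}-2\pi\sqrt{-1}\,\iota_{\partial/\partial\overline{w}}(\cdots)$. Solving $\nabla_{\partial/\partial\overline{w}}s=0$ with the factor of automorphy should give a basis
\[
s_\mu(x,y,\check{y})=\sum_{\lambda\in\mathbb{Z}^n}\exp\big(\text{quadratic in }(\boldsymbol{u}^2+\lambda),\ \text{linear in }\boldsymbol{u}^1,\check{y}\big)\otimes\sqrt{d^nw},
\qquad \mu\in\mathbb{Z}^n/k\boldsymbol{H}\mathbb{Z}^n.
\]
These are Gaussians in $\boldsymbol{u}^2$ with covariance $\Omega$, shifted by a linear combination of $\check{y}$ (after completing the square, a term like $(k\boldsymbol{H})^{-1}\check{\boldsymbol{y}}_1$), and multiplied by characters $e^{2\pi\sqrt{-1}(\cdots)\cdot\boldsymbol{u}^1}$. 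This identifies $\mathbf{\check{E}}$ as a vector bundle of rank $\det(k\boldsymbol{H})=k^n h_1\cdots h_n$. It is smooth and trivialized by the $s_\mu$, up to the twisting by $\check{\lambda}$-translations, which act by permuting and multiplying by phases. The Hermitian metric comes from \eqref{Equation: fibrewise pairing of prequantum spaces}, and a Gaussian integral shows the $s_\mu$ are orthogonal with norms independent of $\mu$. Since the norm depends on $(x,\check{y})$ only through a Gaussian normalization, I will rescale each $s_\mu$ by the same explicit factor to make the frame unitary.

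Next, compute $\check{\nabla}=\Pi\circ\nabla_{\widehat{\xi}}$ on this frame for $\xi=\partial/\partial x_i$ and $\partial/\partial\check{y}_i$. The lifted derivative $\nabla_{\widehat{\xi}}s_\mu$ is $s_\mu$ times a polynomial of degree at most one in the fiber variables $\boldsymbol{u}$, coming from differentiating the exponent, using $u=y+g(x)$ and the connection terms $k\boldsymbol{x}_1\cdot\boldsymbol{H}^{-1}d\boldsymbol{x}_2$ and $\check{y}\cdot dy$. Terms linear in $\boldsymbol{u}^2-(\text{center})$ multiplying a Gaussian are orthogonal to holomorphic sections. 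I expect this to follow from writing them as $\overline{w}$-derivatives, or $\nabla_{\partial/\partial w}$ adjoints, of holomorphic sections, so they are killed by $\Pi$. Terms linear in $\boldsymbol{u}^1$ should be absorbed by the $\check{y}$-dependence of the characters. The outcome should be $\check{\nabla}s_\mu=s_\mu\otimes\theta$, with a \emph{scalar} connection 1-form $\theta$ independent of $\mu$, linear in $x,\check{y}$, and involving $Dg$. This already gives semi-affineness, because the curvature is a multiple of $\operatorname{Id}$ and constant along the $\check{\mathbb{T}}^{2n}$-fibers, though not necessarily constant in $x$ because of the $Dg$ term.

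Finally, compute $d\theta$ and check that it equals the displayed form $-2\pi\sqrt{-1}\,d\check{z}\wedge\tfrac{k}{2}(H^{-1}+\sqrt{-1}Dg)\,d\overline{\check{z}}$. Here $d\check{z}=dx+\tfrac{\sqrt{-1}}{k}d\check{y}$ at level $k$. The curvature is then type $(1,1)$ with respect to $\check{z}$. This uses $H^{-T}=-H^{-1}$ and $Dg=(Dg)^T$ (Proposition~\ref{Proposition: skew smith form of A brane}), which are exactly what make the $(2,0)$ and $(0,2)$ parts vanish. So $\check{\nabla}^{0,1}$ is a holomorphic structure, and $(\mathbf{\check{E}},\check{\nabla})$ is a Hermitian holomorphic bundle with Chern connection. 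The main obstacle is the projection step: showing that $\Pi\nabla_{\widehat{\xi}}$ is diagonal and scalar on the theta basis, and keeping track of the $\Omega$-dependent quadratic terms so that $\Omega$ drops out of the curvature. I would do this first in the case $n=1$, $g=0$, then add $g$ via the shift $u=y+g(x)$, whose $x$-derivative produces the $Dg$ contributions.
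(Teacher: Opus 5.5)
Your plan is correct and is essentially the paper's proof. Like you, the paper uses the orthonormal Gaussian theta basis $\sigma_l$ and computes $\check{\nabla}=\Pi\circ\nabla_{\widehat{\xi}}$ explicitly via the key identity $\Pi(\nabla_{\partial_{y^j}}s)=0$, which is your observation that centered linear terms times Gaussians are killed by $\Pi$. It then obtains the $(1,1)$-type and the stated curvature from $Dg=(Dg)^T$; the only difference is that it tracks sections through the coefficient map $s\mapsto\langle s\rangle$ rather than through a frame on the cover $U\times\check{\mathbb{R}}^{2n}$.

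One small correction: in the natural theta frame the diagonal connection form is not literally $\mu$-independent. For the paper's $\sigma_l$ it carries the extra constant-coefficient term $\frac{2\pi\sqrt{-1}}{k}\sum_{i} h_i^{-1} l_i\, d\check{y}_{n+i}$, which reflects that the frame is permuted by deck transformations in $\boldsymbol{\check{y}}_1$. This term is closed, so the curvature is still the scalar form you want.
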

	
	We will provide a computational proof of this theorem in the next subsection.
	
	\subsection{Local construction of the mirror B-brane: computational proof}
	\quad\par
	\label{Subsection: Local construction of mirror brane: computational proof}
	We continue to adopt notations introduced in the previous subsection. The key idea in the proof of Theorem \ref{Theorem: unitary connection on the local mirror B brane} is to transform the space of sections $\Gamma(U \times \check{\mathbb{T}}^{2n}, \mathbf{\check{E}})$ into a more tractable space in which all relevant data of $\mathbf{\check{E}}$ become explicitly computable.\par
	To this end, we first identify $\Gamma(U \times \mathbb{T}^{2n} \times \check{\mathbb{T}}^{2n}, \mathbf{L})$ with the space of smooth $\mathbb{C}$-valued functions $s \in \mathcal{C}^\infty(U \times \mathbb{R}^{4n})$ satisfying the following quasi-periodicity condition: for all $\lambda, \check{\lambda} \in \mathbb{Z}^{2n}$,
	\begin{equation}
		\label{Equation: quasi periodicity of sections in prequantum space}
		s(x, y + \lambda, \check{y} + \check{\lambda}) = e^{2\pi\sqrt{-1} (k\check{g} \cdot \lambda + k \boldsymbol{\lambda}^2 \cdot \boldsymbol{H} \boldsymbol{u}^1 + \check{\lambda} \cdot y)} s(x, y, \check{y}).
	\end{equation}
	
	\subsubsection{Family Weil--Brezin transform}
	\quad\par
	\label{Subsubsection: Family Weil--Brezin transform}
	We adapt a standard technique from geometric quantization on symplectic tori --- namely, the \emph{Weil--Brezin transform} \cite{BaiMouNun2010} --- to a family setting. This yields an isomorphism of topological vector spaces
	\begin{equation*}
		\Gamma(U \times \mathbb{T}^{2n} \times \check{\mathbb{T}}^{2n}, \mathbf{L} \otimes \sqrt{K})
		\;\cong\;
		\mathcal{C}^\infty\bigl(U, \mathcal{S}(\mathbb{R}^n \times \check{\mathbb{R}}^n)\bigr).
	\end{equation*}
	Here, the space $\Gamma(U \times \mathbb{T}^{2n} \times \check{\mathbb{T}}^{2n}, \mathbf{L} \otimes \sqrt{K})$ is endowed with the compact-open $\mathcal{C}^\infty$-topology. On the right-hand side, $\check{\mathbb{R}}^n$ denotes an additional copy of $\mathbb{R}^n$, and $\mathcal{C}^\infty(U, \mathcal{S}(\mathbb{R}^n \times \check{\mathbb{R}}^n))$ consists of smooth $\mathbb{C}$-valued functions on $U \times \mathbb{R}^n \times \check{\mathbb{R}}^n$ that are rapidly decreasing in the $\mathbb{R}^n \times \check{\mathbb{R}}^n$ variables. This space is endowed with its natural Fr\'echet topology, defined by seminorms controlling uniform convergence on compact subsets of $U$ together with the Schwartz decay of all derivatives in the $\mathbb{R}^n \times \check{\mathbb{R}}^n$ directions (see Appendix \ref{Sectoin: Remarks on fibrewise Schwartz spaces}).\par
	To facilitate the construction, we introduce a complete set of representatives for the quotient $\mathbb{Z}^n / (k\boldsymbol{H}\mathbb{Z}^n)$:
	\begin{equation*}
		\mathbb{Z}_{k\boldsymbol{H}}^n
		:=
		\{0,\dots,kh_1-1\} \times \cdots \times \{0,\dots,kh_n-1\}.
	\end{equation*}
	
	\begin{proposition}
		\label{Proposition: fibrewise Weil Brezin transform}
		Let $\Omega \in \mathbb{H}_n$. Then there exists an isomorphism of topological vector spaces
		\begin{equation}
			\Gamma(U \times \mathbb{T}^{2n} \times \check{\mathbb{T}}^{2n}, \mathbf{L} \otimes \sqrt{K}) \to \mathcal{C}^\infty(U, \mathcal{S}(\mathbb{R}^n \times \check{\mathbb{R}}^n)), \quad s \mapsto [s],
		\end{equation}
		uniquely characterized by the following condition. Writing $s = \underline{s} \otimes \sqrt{d^nw}$, we require that for all $(x, y, \check{y}) \in U \times \mathbb{R}^{4n}$,
		\begin{align*}
			\underline{s}(x, y, \check{y}) = & \sum_{l \in \mathbb{Z}_{k\boldsymbol{H}}^n} \sum_{q, \check{q} \in \mathbb{Z}^n} [s] (x, \boldsymbol{y}^2 - q + \check{q}, \boldsymbol{\check{u}}_1 - l - k\boldsymbol{H}\check{q})\\
			& \cdot \exp \left( 2\pi\sqrt{-1} \left( k\check{g} \cdot y + \boldsymbol{\check{u}}_2 \cdot (\boldsymbol{y}^2 - q + \check{q}) - \boldsymbol{\check{u}}_1 \cdot \boldsymbol{g}^1(x) + (l + k\boldsymbol{H}q) \cdot \boldsymbol{u}^1 \right) \right).
		\end{align*}
		Moreover, this isomorphism is $\mathcal{C}^\infty(U)$-linear.
	\end{proposition}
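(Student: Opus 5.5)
The plan is to build the map in two stages, a partial Fourier series in the periodic variables followed by a re-indexing of lattice sums into real variables, and then to check that the formula in the statement inverts it. Writing $s=\underline{s}\otimes\sqrt{d^nw}$, the function $\underline{s}\in\mathcal{C}^\infty(U\times\mathbb{R}^{4n})$ satisfies the quasi-periodicity \eqref{Equation: quasi periodicity of sections in prequantum space}. First strip off the phase $e^{2\pi\sqrt{-1}k\check{g}\cdot y}$, which absorbs the constant character $k\check{g}\cdot\lambda$. Then use the remaining conditions in the $\lambda^1$ and $\check{\lambda}$ directions to reduce $\underline{s}$ to a family of functions of fewer variables.

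The conditions in the $\check{\lambda}$ and $\boldsymbol{\lambda}^1$ directions are plain periodicity after suitable phases. In $\check{y}$ the factor is $e^{2\pi\sqrt{-1}\check{\lambda}\cdot y}$. In $\boldsymbol{y}^1$ there is no twist beyond $k\check{g}$, since only $\boldsymbol{\lambda}^2$ enters the $\boldsymbol{H}$-term. So I would Fourier expand $\underline{s}\,e^{-2\pi\sqrt{-1}(\cdots)}$ in $\boldsymbol{y}^1$ with frequency $p\in\mathbb{Z}^n$. I would also Fourier expand in $\check{\boldsymbol{y}}_2$, after factoring $e^{2\pi\sqrt{-1}\check{\boldsymbol{u}}_2\cdot\boldsymbol{y}^2}$. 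The quasi-periodicity in $\boldsymbol{\lambda}^2$ (resp.\ $\check{\boldsymbol{\lambda}}_1$) then shifts the frequency $p$ by $k\boldsymbol{H}\boldsymbol{\lambda}^2$ (resp.\ by $\check{\boldsymbol\lambda}_1$ paired with $\boldsymbol{y}^1$). The $\check{\boldsymbol{\lambda}}_2$ relation couples the $\check{\boldsymbol{y}}_2$-frequency with a shift of $\boldsymbol{y}^2$.

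Write $p=l+k\boldsymbol{H}q$ with $l\in\mathbb{Z}^n_{k\boldsymbol{H}}$. All Fourier coefficients are then determined by one function of $(x,\boldsymbol{y}^2,\check{\boldsymbol{y}}_1)$ per residue $l$. Folding the residue $l$ and the $\check{\boldsymbol{y}}_1$-periodicity into a single real variable $\check{\boldsymbol u}_1-l-k\boldsymbol{H}\check q$ unwraps everything into one function $[s](x,t,\check t)$ on $U\times\mathbb{R}^n\times\check{\mathbb{R}}^n$. This is the standard Weil--Brezin trick of converting a lattice sum plus a periodic variable into one real line. Next I would verify that the displayed formula reproduces \eqref{Equation: quasi periodicity of sections in prequantum space}. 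This is direct: under $y\mapsto y+\lambda$ or $\check y\mapsto\check y+\check\lambda$, reindex $q,\check q,l$ and compare exponents, using that $\boldsymbol{u}^1=\boldsymbol{y}^1+\boldsymbol{g}^1$ and $\check u=\check y-k\check g$. Conversely, $[s]$ is recovered from $\underline{s}$ by an explicit integral of $\underline{s}$ against the conjugate exponential over a fundamental domain in $\boldsymbol{y}^1$ and $\check{\boldsymbol{y}}_2$. This gives uniqueness and injectivity. $\mathcal{C}^\infty(U)$-linearity is evident because $x$ enters only as a parameter.

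The main obstacle is the topological part: showing that smoothness of $\underline{s}$ on $U\times\mathbb{R}^{4n}$ corresponds exactly to $[s]$ lying in $\mathcal{C}^\infty(U,\mathcal{S}(\mathbb{R}^n\times\check{\mathbb{R}}^n))$, with continuity in both directions. For one direction I would take $[s]$ Schwartz. Derivatives of the series in $y,\check y,x$ produce polynomial factors in $q,l$, together with derivatives of $\boldsymbol{g}^1(x)$. These are dominated by Schwartz seminorms of $[s]$ in the shifted arguments, so the series converges absolutely in every $\mathcal{C}^k$ on compacta. For the other direction, smoothness of $\underline{s}$ gives rapid decay of its Fourier coefficients in $p$, uniformly on compacta of $U$, and hence Schwartz decay of $[s]$ in the unwrapped variable. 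Decay in the other real variable comes from smoothness in $\boldsymbol{y}^2$ combined with the $\check q$-shift structure. These estimates give continuity of the map and its inverse in the Fréchet topologies of Appendix \ref{Sectoin: Remarks on fibrewise Schwartz spaces}, and the open mapping theorem can serve as a backup for the inverse.
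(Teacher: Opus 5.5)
Your proposal is correct and follows essentially the paper's argument. The paper shifts $\check{y}\mapsto\check{y}+k\check{g}$ and multiplies by $e^{2\pi\sqrt{-1}(\boldsymbol{\check{y}}_1\cdot\boldsymbol{g}^1(x)-\boldsymbol{\check{y}}_2\cdot\boldsymbol{y}^2-k\check{g}\cdot y)}$ to get a function periodic in $(\boldsymbol{y}^1,\boldsymbol{\check{y}}_2)$. It then cites the standard fiberwise Weil--Brezin transform, using that $(\boldsymbol{y}^1,\boldsymbol{\check{y}}_2)\mapsto(\boldsymbol{y}^1,k\boldsymbol{H}\boldsymbol{y}^1-\boldsymbol{\check{y}}_2)$ is a lattice automorphism, and re-indexes via $\boldsymbol{\check{\lambda}}_1=l+k\boldsymbol{H}\check{q}$, $q=\boldsymbol{\lambda}^2+\check{q}$; your partial Fourier expansion and unwrapping do exactly this by hand, while the paper takes the topological isomorphism from the standard transform instead of proving estimates.

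One slip in your estimates: the coefficient of $e^{2\pi\sqrt{-1}(p\cdot\boldsymbol{u}^1+r\cdot\boldsymbol{\check{u}}_2)}$ in $\underline{s}\,e^{-2\pi\sqrt{-1}(k\check{g}\cdot y+\boldsymbol{\check{u}}_2\cdot\boldsymbol{y}^2-\boldsymbol{\check{u}}_1\cdot\boldsymbol{g}^1(x))}$ is $[s](x,\boldsymbol{y}^2+r,\boldsymbol{\check{u}}_1-p-k\boldsymbol{H}r)$. So decay of $[s]$ in its first slot comes from rapid decay in the $\boldsymbol{\check{y}}_2$-frequency $r$, i.e.\ smoothness in $\boldsymbol{\check{y}}_2$, not from smoothness in $\boldsymbol{y}^2$, which only controls derivatives of $[s]$. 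Because of the coupling $-p-k\boldsymbol{H}r$ in the second slot, this decay in $r$ must be used jointly with the decay in $p$ coming from smoothness in $\boldsymbol{y}^1$.
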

	\begin{proof}
		We begin by transforming $s$ into a function $\widetilde{s} \in \mathcal{C}^\infty(U \times \mathbb{R}^{4n})$ via a coordinate shift and phase factor:
		\begin{equation}
			\label{Equation: transform of sections in prequantum space}
			\widetilde{s}(x, y, \check{y}) := \underline{s}(x, y, \check{y} + k\check{g}) \cdot \exp\left( 2\pi\sqrt{-1} (\boldsymbol{\check{y}}_1 \cdot \boldsymbol{g}^1(x) - \boldsymbol{\check{y}}_2 \cdot \boldsymbol{y}^2 - k\check{g} \cdot y) \right).
		\end{equation}
		This transformation converts the quasi-periodicity condition into a real-polarized form suitable for the application of the (fiberwise) Weil--Brezin transform: for all $\lambda, \check{\lambda} \in \mathbb{Z}^{2n}$,
		\begin{align*}
			\widetilde{s}(x, \boldsymbol{y}^1 + \boldsymbol{\lambda}^1, \boldsymbol{y}^2, \boldsymbol{\check{y}}_1, \boldsymbol{\check{y}}_2 + \boldsymbol{\check{\lambda}}_2) = & \widetilde{s}(x, y, \check{y}),\\
			\widetilde{s}(x, \boldsymbol{y}^1, \boldsymbol{y}^2 + \boldsymbol{\lambda}^2, \boldsymbol{\check{y}}_1 + \boldsymbol{\check{\lambda}}_1, \boldsymbol{\check{y}}_2) = & \exp\left( 2\pi\sqrt{-1} (\boldsymbol{\check{\lambda}}_1 \cdot \boldsymbol{u}^1 + \boldsymbol{\lambda}^2 \cdot (k \boldsymbol{H} \boldsymbol{u}^1 - \boldsymbol{\check{y}}_2)) \right) \cdot \widetilde{s}(x, y, \check{y}).
		\end{align*}
		The map $s \mapsto \widetilde{s}$ is an isomorphism of topological vector spaces with respect to the chosen $\mathcal{C}^\infty$-topologies. Observe that the map $(\boldsymbol{y}^1, \boldsymbol{\check{y}}_2) \mapsto (\boldsymbol{y}^1, k\boldsymbol{H} \boldsymbol{y}^1 - \boldsymbol{\check{y}}_2)$ induces an automorphism of $\mathbb{Z}^{2n}$. Using this observation and applying the fiberwise Weil--Brezin transform, we obtain a unique function $[s] \in \mathcal{C}^\infty(U, \mathcal{S}(\mathbb{R}^n \times \check{\mathbb{R}}^n))$ such that
		\begin{equation*}
			\widetilde{s}(x, y, \check{y}) = \sum_{\boldsymbol{\lambda}^2, \boldsymbol{\check{\lambda}}_1 \in \mathbb{Z}^n} [s](x, \boldsymbol{y}^2 - \boldsymbol{\lambda}^2, \boldsymbol{\check{y}}_1 - \boldsymbol{\check{\lambda}}_1) \cdot \exp \left( 2\pi\sqrt{-1}(\boldsymbol{\check{\lambda}}_1 \cdot \boldsymbol{u}^1 + \boldsymbol{\lambda}^2 \cdot (k \boldsymbol{H} \boldsymbol{u}^1 - \boldsymbol{\check{y}}_2)) \right).
		\end{equation*}
		Decomposing $\boldsymbol{\check{\lambda}}_1 = l + k\boldsymbol{H}\check{q}$ with $l \in \mathbb{Z}_{k\boldsymbol{H}}^n$ and $\check{q} \in \mathbb{Z}^n$, and performing the change of variables $(q,\check{q}) = (\boldsymbol{\lambda}^2 + \check{q}, \check{q})$, we obtain
		\begin{equation*}
			\widetilde{s}(x, y, \check{y}) = \sum_{l \in \mathbb{Z}_{k\boldsymbol{H}}^n} \sum_{q, \check{q} \in \mathbb{Z}^n} [s] (x, \boldsymbol{y}^2 - q + \check{q}, \boldsymbol{\check{y}}_1 - l - k\boldsymbol{H} \check{q}) \cdot e^{2\pi\sqrt{-1}( \boldsymbol{\check{y}}_2 \cdot (-q + \check{q}) + (l + k \boldsymbol{H} q) \cdot \boldsymbol{u}^1)}.
		\end{equation*}
		Finally, substituting back using \eqref{Equation: transform of sections in prequantum space} yields the stated formula for $s_\Omega$.\par
		Since both the transformation $s \mapsto \widetilde{s}$ and the fiberwise Weil--Brezin transform are isomorphisms of topological vector spaces, their composition defines the claimed isomorphism. Finally, the $\mathcal{C}^\infty(U)$-linearity is evident.
	\end{proof}
	
	Based on the expression of $\underline{s}$ stated in Proposition \ref{Proposition: fibrewise Weil Brezin transform}, a direct computation yields the following formula of the fiberwise sesquilinear pairing \eqref{Equation: fibrewise pairing of prequantum spaces}, which we omit the proof.
	
	\begin{proposition}
		\label{Proposition: fibrewsie sesquilinear pairing}
		Let $\Omega, \Omega' \in \mathbb{H}_n$. Let $s \mapsto [s]_\Omega$ and $s' \mapsto [s']_{\Omega'}$ denote the isomorphisms corresponding to $\Omega, \Omega'$, respectively, as in Proposition \ref{Proposition: fibrewise Weil Brezin transform}. Then
		\begin{align*}
			\langle s, s' \rangle (x, \check{y}) = & \sqrt{\det \left( \tfrac{1}{\sqrt{-1}} (\Omega - \overline{\Omega'}) \right)} \sum_{\check{m} \in \mathbb{Z}^{2n}} e^{2\pi\sqrt{-1} \boldsymbol{\check{m}}_2 \cdot \boldsymbol{\check{u}}_2}\\
			& \cdot \int_{\mathbb{R}^n} [s]_\Omega \left( x, \boldsymbol{y}^2 + \boldsymbol{\check{m}}_2, \boldsymbol{\check{u}}_1 - \boldsymbol{\check{m}}_1 - k\boldsymbol{H} \boldsymbol{\check{m}}_2 \right) \overline{[s']_{\Omega'} \left( x, \boldsymbol{y}^2, \boldsymbol{\check{u}}_1 - \boldsymbol{\check{m}}_1 \right)} d^n \boldsymbol{y}^2.
		\end{align*}
	\end{proposition}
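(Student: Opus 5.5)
We prove the pairing formula by direct substitution of the Weil--Brezin expansions of Proposition \ref{Proposition: fibrewise Weil Brezin transform} into the definition \eqref{Equation: fibrewise pairing of prequantum spaces}, followed by the standard unfolding argument.

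\textbf{Step 1: reduce to a pairing of $\widetilde{s}$ and $\widetilde{s}'$.} Write $s = \underline{s}\otimes\sqrt{d^nw}$ and $s' = \underline{s}'\otimes\sqrt{d^nw'}$. By definition,
\begin{equation*}
\langle s, s'\rangle = \sqrt{\det\left(\tfrac{1}{\sqrt{-1}}(\Omega-\overline{\Omega'})\right)}\int_{[0,1]^{2n}}\underline{s}\,\overline{\underline{s}'}\,d^{2n}y .
\end{equation*}
Pass from $\underline{s},\underline{s}'$ to the real-polarized functions $\widetilde{s},\widetilde{s}'$ of \eqref{Equation: transform of sections in prequantum space}, with $\check{y}$ replaced by $\check{u}=\check{y}-k\check{g}$. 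The phase factor $\exp\bigl(2\pi\sqrt{-1}(\boldsymbol{\check{y}}_1\cdot\boldsymbol{g}^1-\boldsymbol{\check{y}}_2\cdot\boldsymbol{y}^2-k\check{g}\cdot y)\bigr)$ is unimodular and identical for both sections, so it cancels in $\underline{s}\,\overline{\underline{s}'}$. We are left with $\int_{[0,1]^{2n}}\widetilde{s}\,\overline{\widetilde{s}'}\,d^{2n}y$, evaluated at $\check{u}$.

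\textbf{Step 2: integrate out $\boldsymbol{y}^1$.} Insert the intermediate expansion
\begin{equation*}
\widetilde{s}=\sum_{\boldsymbol{\lambda}^2,\boldsymbol{\check{\lambda}}_1}[s](x,\boldsymbol{y}^2-\boldsymbol{\lambda}^2,\boldsymbol{\check{u}}_1-\boldsymbol{\check{\lambda}}_1)\,e^{2\pi\sqrt{-1}(\boldsymbol{\check{\lambda}}_1+k\boldsymbol{H}\boldsymbol{\lambda}^2)\cdot\boldsymbol{u}^1}\,e^{-2\pi\sqrt{-1}\boldsymbol{\lambda}^2\cdot\boldsymbol{\check{u}}_2}
\end{equation*}
and the analogous expansion for $\widetilde{s}'$, with indices $\boldsymbol{\mu}^2,\boldsymbol{\check{\mu}}_1$. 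The only dependence on $\boldsymbol{y}^1$ is through $\boldsymbol{u}^1=\boldsymbol{y}^1+\boldsymbol{g}^1(x)$ in the exponentials. Integrating over $\boldsymbol{y}^1\in[0,1]^n$ therefore forces
\begin{equation*}
\boldsymbol{\check{\lambda}}_1+k\boldsymbol{H}\boldsymbol{\lambda}^2=\boldsymbol{\check{\mu}}_1+k\boldsymbol{H}\boldsymbol{\mu}^2 .
\end{equation*}
After this, the $\boldsymbol{g}^1(x)$ phases also cancel.

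\textbf{Step 3: unfold the $\boldsymbol{y}^2$-integral.} Set $\boldsymbol{\check{m}}_2:=\boldsymbol{\mu}^2-\boldsymbol{\lambda}^2$ and $\boldsymbol{\check{m}}_1:=\boldsymbol{\check{\mu}}_1$. The constraint from Step 2 then gives $\boldsymbol{\check{\lambda}}_1=\boldsymbol{\check{m}}_1+k\boldsymbol{H}\boldsymbol{\check{m}}_2$, and the residual phase is $e^{2\pi\sqrt{-1}\boldsymbol{\check{m}}_2\cdot\boldsymbol{\check{u}}_2}$. The remaining free sum over $\boldsymbol{\mu}^2$, combined with the integral over $\boldsymbol{y}^2\in[0,1]^n$, unfolds to an integral over $\mathbb{R}^n$ after the shift $\boldsymbol{y}^2\mapsto\boldsymbol{y}^2+\boldsymbol{\mu}^2$. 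This produces
\begin{equation*}
[s]_\Omega\bigl(x,\boldsymbol{y}^2+\boldsymbol{\check{m}}_2,\boldsymbol{\check{u}}_1-\boldsymbol{\check{m}}_1-k\boldsymbol{H}\boldsymbol{\check{m}}_2\bigr)\,\overline{[s']_{\Omega'}\bigl(x,\boldsymbol{y}^2,\boldsymbol{\check{u}}_1-\boldsymbol{\check{m}}_1\bigr)},
\end{equation*}
summed over $\check{m}=(\boldsymbol{\check{m}}_1,\boldsymbol{\check{m}}_2)\in\mathbb{Z}^{2n}$, which is the stated formula.

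\textbf{Main obstacle.} The difficulty lies less in the algebra than in the bookkeeping: the signs and the exact linear recombination of lattice indices must produce the precise arguments in the statement. The plan is to track the $\boldsymbol{u}^1$-frequency as the single conserved quantity, which is what makes the reindexing above work. Interchanging the sums with the integrals is justified by the Schwartz decay of $[s]$ and $[s']$ (Appendix \ref{Sectoin: Remarks on fibrewise Schwartz spaces}), which gives absolute convergence uniformly on compact subsets of $U$.
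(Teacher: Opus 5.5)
Your proposal is correct, and it is the direct computation the paper has in mind; the paper omits the proof, saying only that the formula follows from the Weil--Brezin expansion. You cancel the common unimodular phase, use orthogonality of the $\boldsymbol{u}^1$-Fourier modes, reindex by $\boldsymbol{\check{m}}_2=\boldsymbol{\mu}^2-\boldsymbol{\lambda}^2$, $\boldsymbol{\check{m}}_1=\boldsymbol{\check{\mu}}_1$, and unfold the $\boldsymbol{y}^2$-integral over $\mathbb{Z}^n$-translates, which yields exactly the stated arguments and the phase $e^{2\pi\sqrt{-1}\boldsymbol{\check{m}}_2\cdot\boldsymbol{\check{u}}_2}$.
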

	
	\subsubsection{An explicit description of local mirror B-branes}
	\quad\par
	\label{Subsubection: An explicit description of local mirror B-branes}
	Since $\Gamma(U \times \check{\mathbb{T}}^{2n}, \mathbf{\check{E}})$ is constructed from a family of geometric quantizations in K\"ahler polarization (namely, the fibers of $\mathbf{\check{E}}$), a natural first attempt to describe this space is to construct a smooth family of orthonormal bases for these quantizations, parametrized by $U \times \check{\mathbb{T}}^{2n}$, thereby yielding a unitary frame of $\mathbf{\check{E}}$. However, due to monodromies along the fiber $\check{\mathbb{T}}^{2n}$, such a globally smooth choice does not exist in general.\par
	As we shall see, it is nevertheless convenient to introduce a distinguished family of orthonormal bases of the fibers of $\mathbf{\check{E}}$, parametrized by $U \times \check{\mathbb{R}}^{2n}$, as follows.
	
	\begin{definition}
		\label{Definition: Guassian base}
		For each $l \in \mathbb{Z}_{k\boldsymbol{H}}^n$ and $(x, y, \check{y}) \in U \times \mathbb{R}^{4n}$, define
		\begin{equation*}
			\sigma_l(x, y, \check{y}) := \sum_{q \in \mathbb{Z}^n} N_\Omega(\boldsymbol{u}^2 - q, \boldsymbol{\check{u}}_1 - l) \cdot e^{2\pi\sqrt{-1}( k \check{g} \cdot y - \boldsymbol{g}^1(x) \cdot \boldsymbol{\check{u}}_1 + \boldsymbol{\check{u}}_2 \cdot (\boldsymbol{y}^2 - q) + (l + k\boldsymbol{H}q) \cdot \boldsymbol{u}^1)} \otimes \sqrt{d^nw},
		\end{equation*}
		where for all $a, b \in \mathbb{C}^n$,
		\begin{equation}
			\label{Equation: Gaussian distribution}
			N_\Omega(a, b) := k^{\frac{n}{4}} \exp \left( \pi\sqrt{-1} (a + (k\boldsymbol{H})^{-1} b) \cdot (k\Omega) (a + (k\boldsymbol{H})^{-1} b) \right).
		\end{equation}
	\end{definition}
	
	For each fixed $(x, \check{y})$, $\sigma_l(x, \,\cdot\,, \check{y})$ defines a smooth section of $\mathbf{L}_{(x, \check{y})} \otimes \sqrt{K}$, which is essentially a classical theta function.
	
	\begin{proposition}
		\label{Proposition: orthonormal basis of geometric quantization}
		For each $\Omega \in \mathbb{H}_n$, the collection $\{ \sigma_l(x, \,\cdot\,, \check{y}) \}_{l \in \mathbb{Z}_{k\boldsymbol{H}}^n}$ forms an orthonormal basis of the Hilbert space $H^0(\mathbb{T}^{2n}, \mathbf{L}_{(x, \check{y})} \otimes \sqrt{K})$. If $\Omega' \in \mathbb{H}_n$, with corresponding orthonormal basis $\{ \sigma'_l(x, \,\cdot\,, \check{y}) \}_{l \in \mathbb{Z}_{k\boldsymbol{H}}^n}$, then for all $l, l' \in \mathbb{Z}_{k\boldsymbol{H}}^n$,
		\begin{equation*}
			\left\langle \sigma_l(x, \,\cdot\,, \check{y}), \sigma'_{l'}(x, \,\cdot\,, \check{y}) \right\rangle = \delta_{ll'},
		\end{equation*}
		where
		\begin{equation*}
			\langle \,\cdot\,,\,\cdot\, \rangle: H^0(\mathbb{T}^{2n}, \mathbf{L}_{(x, \check{y})} \otimes \sqrt{K}) \times H^0(\mathbb{T}^{2n}, \mathbf{L}_{(x, \check{y})} \otimes \sqrt{K'}) \to \mathbb{C}
		\end{equation*}
		denotes the sesquilinear pairing induced by \eqref{Equation: fibrewise pairing of prequantum spaces}.
	\end{proposition}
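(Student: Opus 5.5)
The plan is to work entirely through the family Weil--Brezin transform of Proposition \ref{Proposition: fibrewise Weil Brezin transform}, fixing a point $(x,\check{y})$ throughout. Comparing Definition \ref{Definition: Guassian base} with the characterizing formula of that proposition, $\sigma_l$ is the section whose transform is
$$[\sigma_l](x,a,b)=\sum_{\check{q}}\delta\text{-type pieces}\ \longrightarrow\ [\sigma_l](x,a,b)=N_\Omega(a+\boldsymbol{g}^2,\, b-l)\cdot \mathbb{1}_{\{b-l\in \text{fundamental slot}\}}\ \text{(morally)}.$$
More precisely, I will collapse the $\check{q}$-sum by taking the Weil--Brezin image on the $\boldsymbol{\check{u}}_1$-variable as a Gaussian concentrated at the single lattice translate labelled by $l$. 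I would rather avoid distributions, so I will verify holomorphicity, orthonormality and spanning directly on the theta series.

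\textbf{Step 1 (holomorphicity).} I will apply the frame $\partial/\partial\overline{w}=\tfrac12(\operatorname{Im}\Omega)^{-1}(\partial_{\boldsymbol{y}^2}+\Omega\boldsymbol{H}^{-1}\partial_{\boldsymbol{y}^1})$, together with the connection $d-2\pi\sqrt{-1}(k\boldsymbol{u}^2\cdot\boldsymbol{H}d\boldsymbol{u}^1+\check{y}\cdot dy)$, to each summand of $\sigma_l$. Since $\sqrt{d^nw}$ is parallel, only the scalar part matters. The exponent of $N_\Omega$ is quadratic in $\boldsymbol{u}^2$ with matrix $k\Omega$, and the phase is linear in $\boldsymbol{u}^1$ with coefficient $l+k\boldsymbol{H}q$. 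A short computation shows that the $\boldsymbol{y}^2$-derivative of the Gaussian cancels $\Omega\boldsymbol{H}^{-1}$ times the $\boldsymbol{y}^1$-derivative of the phase, once the connection terms $k\boldsymbol{H}\boldsymbol{u}^2$ and $\check{y}$ are included. The shift $(k\boldsymbol{H})^{-1}b$ inside $N_\Omega$ is there precisely for this purpose. Quasi-periodicity \eqref{Equation: quasi periodicity of sections in prequantum space} in $\lambda$ follows by reindexing $q\mapsto q+\boldsymbol{\lambda}^2$, and $l$ remains a class in $\mathbb{Z}^n/k\boldsymbol{H}\mathbb{Z}^n$.

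\textbf{Step 2 (pairing).} I will compute $\langle\sigma_l,\sigma'_{l'}\rangle$ by unfolding: integrate over $\boldsymbol{y}^1\in[0,1]^n$ first. The phases $e^{2\pi\sqrt{-1}(l+k\boldsymbol{H}q-l'-k\boldsymbol{H}q')\cdot\boldsymbol{u}^1}$ force $l=l'$ and $q=q'$, because distinct representatives in $\mathbb{Z}^n_{k\boldsymbol{H}}$ are incongruent. The remaining sum over $q$ combined with the integral over $\boldsymbol{y}^2\in[0,1]^n$ becomes a single integral over $\mathbb{R}^n$ of $N_\Omega(a,c)\overline{N_{\Omega'}(a,c)}$. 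This is a Gaussian integral with quadratic form $\pi\sqrt{-1}k(\Omega-\overline{\Omega'})$. It equals $k^{n/2}\det\bigl(-\sqrt{-1}k(\Omega-\overline{\Omega'})\bigr)^{-1/2}=\det\bigl(\tfrac{1}{\sqrt{-1}}(\Omega-\overline{\Omega'})\bigr)^{-1/2}$. This cancels the prefactor in \eqref{Equation: fibrewise pairing of prequantum spaces}, giving $\delta_{ll'}$. The hard point of this step is branch bookkeeping: both square roots are principal, and $\tfrac{1}{\sqrt{-1}}(\Omega-\overline{\Omega'})$ has positive definite real part, so the Gaussian formula is valid with the principal branch. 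I will need to check that the determinant factor $\det(-\boldsymbol{H})$ in $h^{K,K'}$ and the Jacobian $\det\boldsymbol{H}$ of $y\mapsto w$ cancel. Taking $\Omega'=\Omega$ gives orthonormality.

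\textbf{Step 3 (basis).} The sections are orthonormal, hence linearly independent. By Riemann--Roch, equivalently the Weil--Brezin picture, $\dim H^0=\int ch(\mathbf{L}_{(x,\check{y})})=\det(k\boldsymbol{H})=|\mathbb{Z}^n_{k\boldsymbol{H}}|$, which gives spanning. For a self-contained argument, I will instead show directly that any holomorphic $s$ has a Weil--Brezin transform satisfying the first-order ODE $(\partial_a - 2\pi\sqrt{-1}k\Omega(\cdot))[s]=0$, coming from the $\partial_{\overline{w}}$-equation. This forces $[s]$ to be a combination of the $kh$ Gaussians. I expect Step 2's Gaussian normalization and branch check to be the main obstacle; everything else is reindexing.
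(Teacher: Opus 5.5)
Your plan is the standard theta-function computation, and it is the same route the paper has in mind. The paper omits the proof, calling it standard and referring to \cite{And2005, BaiMouNun2010}.

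\textbf{Steps 1 and 3 are fine.} Write $v=\boldsymbol{u}^2-q+(k\boldsymbol{H})^{-1}(\boldsymbol{\check{u}}_1-l)$. Each summand $\phi_q$ of $\sigma_l$ satisfies $\nabla_{\partial_{y^j}}\phi_q=-2\pi\sqrt{-1}\,kh_jv_j\,\phi_q$ and $\nabla_{\partial_{y^{n+j}}}\phi_q=2\pi\sqrt{-1}\,(k\Omega v)_j\,\phi_q$ for $1\le j\le n$. Hence $\nabla_{\partial_{y^{n+i}}}+\sum_j h_j^{-1}\Omega_{ij}\nabla_{\partial_{y^j}}$ annihilates it. Reindexing $q\mapsto q+\boldsymbol{\lambda}^2$ gives the quasi-periodicity. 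For the Riemann--Roch count, add that the curvature $k\,d\boldsymbol{y}^2\wedge\boldsymbol{H}\,d\boldsymbol{y}^1$ is a positive $(1,1)$-form, so higher cohomology vanishes. The $\det(-\boldsymbol{H})$ check you plan is unnecessary, since the pairing \eqref{Equation: fibrewise pairing of prequantum spaces} is already written with $d^{2n}y$.

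\textbf{The branch argument in Step 2 is wrong.} Your unfolding and reduction to $\int_{\mathbb{R}^n}k^{n/2}e^{-\pi k\,v\cdot Mv}\,dv$ with $M=\tfrac{1}{\sqrt{-1}}(\Omega-\overline{\Omega'})$ are correct. However, this integral equals $\prod_j\mu_j^{-1/2}$, where the $\mu_j$ are the eigenvalues of $M$ and each root is principal. Equivalently, it is the reciprocal of the \emph{continuous} branch of $\sqrt{\det M}$ on the convex set $\{\operatorname{Re}M>0\}$.

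Positivity of $\operatorname{Re}M$ only gives $\lvert\arg\mu_j\rvert<\pi/2$. For $n\ge 3$ the sum $\sum_j\arg\mu_j$ can therefore leave $(-\pi,\pi)$, and then the principal value of $\sqrt{\det M}$ is the negative of the continuous one. For example, take $n=3$, $\Omega=(2+\sqrt{-1})I_3$ and $\Omega'=(-2+\sqrt{-1})I_3$. Then $M=(2-4\sqrt{-1})I_3$ and $3\arg(2-4\sqrt{-1})<-\pi$, so the principal-branch prefactor gives $\langle\sigma_l,\sigma'_l\rangle=-1$.

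Your argument is valid as written for $n\le 2$, and for $\Omega'=\Omega$, where $M=2\operatorname{Im}\Omega$; so orthonormality is unaffected. In general, you must read the ``principal branch'' of $\sqrt{\det(\cdot)}$ in $h^{\sqrt{K},\sqrt{K'}}$ as the continuous branch on $\mathbb{H}_n\times\mathbb{H}_n$ that is positive on the diagonal. Under that convention the cross-pairing identity $\langle\sigma_l,\sigma'_{l'}\rangle=\delta_{ll'}$ holds. It is also what the later statement $\operatorname{BKS}_{\Omega,\Omega'}(\sigma'_l)=\sigma_l$ in Proposition \ref{Proposition: family BKS pairing map} requires.
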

	
	This result is essentially contained in \cite{And2005, BaiMouNun2010}, where the authors treat the case of unimodular symplectic forms on $\mathbb{T}^{2n}$. Our setting is only slightly more general, allowing for \emph{non-unimodular} integral symplectic forms and a more intricate parametrization of the prequantum line bundle $\mathbf{L}_{(x, \check{y})}$. The proof of the above proposition is nevertheless standard, and we therefore omit the details.\par
	Using this family of orthonormal bases, together with the fiberwise Weil–Brezin transform established in Proposition \ref{Proposition: fibrewise Weil Brezin transform}, we obtain a characterization of the space $\Gamma(U \times \check{\mathbb{T}}^{2n}, \mathbf{\check{E}})$ as
	\begin{align*}	
		\mathcal{C}^\infty(U, \mathcal{S}(\check{\mathbb{R}}^n)),
	\end{align*}
	the space of $\mathbb{C}$-valued smooth functions on $U \times \check{\mathbb{R}}^n$ that are rapidly decreasing in the $\check{\mathbb{R}}^n$-variables, endowed with its standard Fr\'echet topology (see Appendix \ref{Sectoin: Remarks on fibrewise Schwartz spaces}).
	
	\begin{proposition}
		\label{Proposition: sections of local mirror B-brane}
		Let $\Omega \in \mathbb{H}_n$. There exists an isomorphism of topological vector spaces
		\begin{equation}
			\label{Equation: charaterization of sections of mirror brane}
			\Gamma(U \times \check{\mathbb{T}}^{2n}, \mathbf{\check{E}}) \to \mathcal{C}^\infty(U, \mathcal{S}(\check{\mathbb{R}}^n)), \quad s \mapsto \langle s \rangle,
		\end{equation}
		uniquely characterized by the following condition: for all $(x, y, \check{y}) \in U \times \mathbb{R}^{4n}$,
		\begin{equation}
			\label{Equation: sections in term of non-smooth unitary frame}
			s(x, y, \check{y}) = \sum_{l \in \mathbb{Z}_{k\boldsymbol{H}}^n} \sigma_l(x, y, \check{y}) \sum_{\check{q} \in \mathbb{Z}^n} \langle s \rangle \left( x, \boldsymbol{\check{u}}_1 - l - k\boldsymbol{H}\check{q} \right) \cdot e^{2\pi\sqrt{-1} \check{q} \cdot \boldsymbol{\check{u}}_2},
		\end{equation}
		where $s$ is identified as a section in $H^0(U \times \mathbb{T}^{2n} \times \check{\mathbb{T}}^{2n}, \mathbf{L} \otimes \sqrt{K})$. Moreover, this isomorphism is $\mathcal{C}^\infty(U)$-linear.
	\end{proposition}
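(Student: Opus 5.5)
The plan is to compose the family Weil--Brezin transform of Proposition \ref{Proposition: fibrewise Weil Brezin transform} with the identification $\Gamma(U \times \check{\mathbb{T}}^{2n}, \mathbf{\check{E}}) \cong H^0(U \times \mathbb{T}^{2n} \times \check{\mathbb{T}}^{2n}, \mathbf{L} \otimes \sqrt{K})$. The first step is to compute the Weil--Brezin images $[\sigma_l \cdot \phi]$, where $\phi$ ranges over smooth functions on $U \times \check{\mathbb{R}}^{2n}$. Comparing Definition \ref{Definition: Guassian base} with the inversion formula of Proposition \ref{Proposition: fibrewise Weil Brezin transform}, the $q$-sum and the phase factors in $\sigma_l$ already have the shape of the $q$-sum there, with $\check{q}=0$.

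Concretely, I would write a general smooth section $s$ of $\mathbf{L}\otimes\sqrt{K}$ through its Weil--Brezin image $[s](x,\boldsymbol{y}^2,\boldsymbol{\check{y}}_1)$. Since $\boldsymbol{u}^2 = \boldsymbol{y}^2 + \boldsymbol{g}^2(x)$, the CR-holomorphicity condition $\nabla_{\partial/\partial\overline{w}} s = 0$ translates into a first-order ODE in the $\boldsymbol{y}^2$-variable (the $\boldsymbol{y}^1$-dependence having been converted into the lattice phases). Solving this ODE should force
\begin{equation*}
	[s](x, \boldsymbol{y}^2, \boldsymbol{\check{y}}_1) = \langle s\rangle(x,\boldsymbol{\check{u}}_1)\, N_\Omega\bigl(\boldsymbol{y}^2+\boldsymbol{g}^2(x), \boldsymbol{\check{u}}_1\bigr),
\end{equation*}
up to the fixed bookkeeping of $\check{u}=\check{y}-k\check{g}$. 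This is a Gaussian in the combined variable $\boldsymbol{u}^2 + (k\boldsymbol{H})^{-1}\boldsymbol{\check{u}}_1$, multiplied by an arbitrary coefficient function of $(x,\boldsymbol{\check{u}}_1)$. Substituting this into the formula of Proposition \ref{Proposition: fibrewise Weil Brezin transform} and splitting the shifts $\boldsymbol{\check{u}}_1 - l - k\boldsymbol{H}\check{q}$ reproduces exactly \eqref{Equation: sections in term of non-smooth unitary frame}. Here one uses that the shift $\boldsymbol{y}^2 \mapsto \boldsymbol{y}^2 + \check{q}$ inside $N_\Omega$ is compensated by the shift $\boldsymbol{\check{u}}_1 \mapsto \boldsymbol{\check{u}}_1 - k\boldsymbol{H}\check{q}$, since $N_\Omega$ depends only on $a+(k\boldsymbol{H})^{-1}b$. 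The leftover phase $e^{2\pi\sqrt{-1}\check{q}\cdot\boldsymbol{\check{u}}_2}$ comes from $\boldsymbol{\check{u}}_2\cdot(\boldsymbol{y}^2-q+\check{q})$.

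Uniqueness of $\langle s\rangle$ follows from uniqueness in Proposition \ref{Proposition: fibrewise Weil Brezin transform}, because $\langle s\rangle$ is recovered from $[s]$, for instance by evaluating at a fixed value of $\boldsymbol{y}^2$ and dividing by the nonvanishing Gaussian. Alternatively, it can be recovered by integrating against $\overline{N_\Omega}$ in $\boldsymbol{y}^2$. The $\mathcal{C}^\infty(U)$-linearity is inherited from Proposition \ref{Proposition: fibrewise Weil Brezin transform}.

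The main obstacle is the topological statement: that $s \mapsto \langle s\rangle$ lands in $\mathcal{C}^\infty(U,\mathcal{S}(\check{\mathbb{R}}^n))$ and is a homeomorphism onto it. For surjectivity, given $\phi \in \mathcal{C}^\infty(U,\mathcal{S}(\check{\mathbb{R}}^n))$, I would check that $\phi(x,\boldsymbol{\check{u}}_1)N_\Omega(\boldsymbol{u}^2,\boldsymbol{\check{u}}_1)$ is Schwartz in $(\boldsymbol{y}^2,\boldsymbol{\check{y}}_1)$ locally uniformly in $x$. This holds because $\operatorname{Im}\Omega>0$ gives Gaussian decay in $\boldsymbol{u}^2+(k\boldsymbol{H})^{-1}\boldsymbol{\check{u}}_1$, while $\phi$ supplies decay in $\boldsymbol{\check{u}}_1$; moreover $\boldsymbol{g}^2(x)$ and $k\check{g}$ only shift these variables by amounts bounded on compact subsets of $U$. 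For continuity of the inverse, I would use the integral recovery formula together with Proposition \ref{Proposition: fibrewsie sesquilinear pairing}, so that seminorms of $\langle s\rangle$ are bounded by Schwartz seminorms of $[s]$, uniformly on compacta. Continuity in both directions can then also be obtained from the open mapping theorem between Fr\'echet spaces, once the closed-subspace property of the holomorphic sections is noted.
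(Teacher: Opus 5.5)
Your proposal is correct and takes essentially the paper's route. The paper likewise sets $\langle s\rangle := [s]\,\bigl(N_\Omega(\boldsymbol{u}^2,\boldsymbol{\check{y}}_1)\bigr)^{-1}$ and computes the action of $\nabla_{\partial_{y^j}}$ on Weil--Brezin coefficients, using the symmetry of $\Omega$, to show that CR-holomorphicity is equivalent to $\langle s\rangle$ being independent of $\boldsymbol{y}^2$ (this is your first-order ODE in $\boldsymbol{y}^2$), and it gets the topological statement from the open mapping theorem for closed subspaces of Fr\'echet spaces. Your recovery of $\langle s\rangle$ by integrating against $\overline{N_\Omega}$ is a somewhat more explicit way to see the Schwartz decay in $\boldsymbol{\check{y}}_1$ than the paper's lemma on the image of $\iota$, but it plays the same role.
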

	
	\begin{remark}
		Formula \eqref{Equation: sections in term of non-smooth unitary frame} shows that for any open subset $V\subset\mathbb{R}^{2n}$ contained in the interior of a fundamental domain of $\check{\mathbb{T}}^{2n}$, one can construct a unitary frame $\{ s_l \}_{l \in \mathbb{Z}_{k\boldsymbol{H}}^n}$ of $\mathbf{\check{E}}$ over $U \times V$ by choosing $\langle s_l \rangle$ to be suitable bump functions in the $\boldsymbol{\check{y}}_1$-direction. However, such a frame cannot be extended to $U \times \check{\mathbb{T}}^{2n}$ while preserving smoothness of the associated functions $\langle s_l \rangle$; this reflects the nontrivial topology of $\mathbf{\check{E}}$ along the $\check{\mathbb{T}}^{2n}$-direction.
	\end{remark}
	
	Observe that the coordinate transformation $(x, \boldsymbol{y}^2, \boldsymbol{\check{y}}_1) \mapsto (x, \boldsymbol{u}^2 + (k \boldsymbol{H})^{-1} \boldsymbol{\check{y}}_1, \boldsymbol{\check{y}}_1)$ induces an automorphism of the topological vector space $\mathcal{C}^\infty(U, \mathcal{S}(\mathbb{R}^n \times \check{\mathbb{R}}^n))$. Under this transformation, the Gaussian factor $N_\Omega(\boldsymbol{u}^2, \boldsymbol{\check{y}}_1)$ becomes independent of $\boldsymbol{\check{y}}_1$ and is rapidly decreasing with respect to the variable $\boldsymbol{u}^2 + (k \boldsymbol{H})^{-1} \boldsymbol{\check{y}}_1$. Consequently, for any smooth function $s$ on $U \times \check{\mathbb{R}}^n$ that is rapidly decreasing in the variable $\boldsymbol{\check{y}}_1$, its product with the Gaussian factor resides in the space $\mathcal{C}^\infty(U, \mathcal{S}(\mathbb{R}^n \times \check{\mathbb{R}}^n))$. This allows us to define a continuous $\mathbb{C}$-linear embedding $\iota: \mathcal{C}^\infty(U, \mathcal{S}(\check{\mathbb{R}}^n)) \to \mathcal{C}^\infty(U \times \mathbb{R}^n \times \check{\mathbb{R}}^n)$ as:
	\begin{equation*}
		(\iota(s))(x, \boldsymbol{y}^2, \boldsymbol{\check{y}}_1) := s(x, \boldsymbol{\check{y}}_1) \cdot N_\Omega(\boldsymbol{u}^2, \boldsymbol{\check{y}}_1).
	\end{equation*}
	
	\begin{lemma}
		\label{Lemma: two kinds of fibrewise Schwartz functions}
		A function $s \in \mathcal{C}^\infty(U, \mathcal{S}(\mathbb{R}^n \times \check{\mathbb{R}}^n))$ lies in the image of $\iota$ if and only if the expression $s(x, \boldsymbol{y}^2, \boldsymbol{\check{y}_1}) \cdot \left( N_\Omega(\boldsymbol{u}^2, \boldsymbol{\check{y}}_1) \right)^{-1}$ is independent of the variable $\boldsymbol{y}^2$.
	\end{lemma}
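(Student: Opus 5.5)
The forward direction is immediate from the definition of $\iota$. If $s = \iota(t)$ for some $t \in \mathcal{C}^\infty(U, \mathcal{S}(\check{\mathbb{R}}^n))$, then
\begin{equation*}
	s(x, \boldsymbol{y}^2, \boldsymbol{\check{y}}_1) \cdot \left( N_\Omega(\boldsymbol{u}^2, \boldsymbol{\check{y}}_1) \right)^{-1} = t(x, \boldsymbol{\check{y}}_1).
\end{equation*}
Here we use that $N_\Omega$ is a nowhere-vanishing exponential, so its inverse makes sense pointwise. The right-hand side is manifestly independent of $\boldsymbol{y}^2$.

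For the converse, take $s \in \mathcal{C}^\infty(U, \mathcal{S}(\mathbb{R}^n \times \check{\mathbb{R}}^n))$ for which the quotient $t(x, \boldsymbol{\check{y}}_1) := s \cdot N_\Omega^{-1}$ is independent of $\boldsymbol{y}^2$. Then $t$ is a smooth function on $U \times \check{\mathbb{R}}^n$, being a quotient of smooth functions. It remains to show that $t$ is rapidly decreasing in $\boldsymbol{\check{y}}_1$, locally uniformly in $x$, together with all derivatives. Once that is done, $s = \iota(t)$ holds by construction, and $t$ is unique, so $\iota$ is injective.

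To get the decay, I would evaluate $t$ along a slice on which the Gaussian factor is harmless. Set $\boldsymbol{u}^2 = \boldsymbol{y}^2 + \boldsymbol{g}^2(x)$ and choose
\begin{equation*}
	\boldsymbol{y}^2 = \phi(x, \boldsymbol{\check{y}}_1) := -\boldsymbol{g}^2(x) - (k\boldsymbol{H})^{-1}\boldsymbol{\check{y}}_1,
\end{equation*}
so that $\boldsymbol{u}^2 + (k\boldsymbol{H})^{-1}\boldsymbol{\check{y}}_1 = 0$ and $N_\Omega = k^{n/4}$ is constant. On this slice,
\begin{equation*}
	t(x, \boldsymbol{\check{y}}_1) = k^{-n/4} \, s\bigl(x, \phi(x, \boldsymbol{\check{y}}_1), \boldsymbol{\check{y}}_1\bigr).
\end{equation*}
The map $\boldsymbol{\check{y}}_1 \mapsto (\phi, \boldsymbol{\check{y}}_1)$ is affine in $\boldsymbol{\check{y}}_1$, with coefficients depending smoothly on $x$. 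Along it, $|\boldsymbol{\check{y}}_1|$ is comparable to $|(\phi, \boldsymbol{\check{y}}_1)|$ up to constants uniform on compact sets of $x$. Hence the Schwartz decay of $s$ in $(\boldsymbol{y}^2, \boldsymbol{\check{y}}_1)$ transfers to decay of $t$ in $\boldsymbol{\check{y}}_1$.

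The derivative estimates follow from the chain rule. A $\boldsymbol{\check{y}}_1$-derivative of the composition is a constant linear combination of $\partial_{\boldsymbol{y}^2}$ and $\partial_{\boldsymbol{\check{y}}_1}$ applied to $s$. An $x$-derivative additionally produces factors $D\boldsymbol{g}^2(x)$ multiplying $\partial_{\boldsymbol{y}^2} s$, and these factors are bounded on compact sets. Each seminorm of $t$ is therefore dominated by finitely many seminorms of $s$, which puts $t$ in $\mathcal{C}^\infty(U, \mathcal{S}(\check{\mathbb{R}}^n))$. Equivalently, one can use the automorphism of $\mathcal{C}^\infty(U, \mathcal{S}(\mathbb{R}^n \times \check{\mathbb{R}}^n))$ induced by $(\boldsymbol{y}^2, \boldsymbol{\check{y}}_1) \mapsto (\boldsymbol{u}^2 + (k\boldsymbol{H})^{-1}\boldsymbol{\check{y}}_1, \boldsymbol{\check{y}}_1)$, noted before the lemma, and then restrict to the zero value of the first variable.

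The main point to watch is uniformity in $x$ of these bounds, given the $x$-dependent shift $\boldsymbol{g}^2(x)$. This is handled by working on compact subsets of $U$, which is exactly the kind of seminorm used in the Fréchet topology.
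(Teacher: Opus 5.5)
Your proposal is correct and follows the paper's proof: the forward direction is immediate, and for the converse the paper likewise uses the change of variables $(\boldsymbol{y}^2, \boldsymbol{\check{y}}_1) \mapsto (\boldsymbol{u}^2 + (k\boldsymbol{H})^{-1}\boldsymbol{\check{y}}_1, \boldsymbol{\check{y}}_1)$, under which $N_\Omega$ depends only on the first variable, so that the rapid decay of $s$ passes to the quotient. Restricting to the slice $\boldsymbol{u}^2 + (k\boldsymbol{H})^{-1}\boldsymbol{\check{y}}_1 = 0$, where $N_\Omega \equiv k^{n/4}$, and bounding seminorms by the chain rule on compact subsets of $U$ just spells out what the paper asserts in one line.
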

	\begin{proof}
		The ``only if'' part follows directly from the definition of $\iota$. For the ``if'' part, suppose the given expression is independent of $\boldsymbol{y}^2$ and denote it by $f(x, \boldsymbol{\check{y}}_1)$. In the transformed coordinates introduced above, the Gaussian factor $N_\Omega(\boldsymbol{u}^2, \boldsymbol{\check{y}}_1)$ depends solely on the coordinate $\boldsymbol{u}^2 + (k \boldsymbol{H})^{-1} \boldsymbol{\check{y}}_1$ and is independent of $\boldsymbol{\check{y}}_1$. Since $s$ is rapidly decreasing in all fiber variables, it follows that $f$ inherits the necessary rapid decay in the $\boldsymbol{\check{y}}_1$-variable. Thus, $f \in \mathcal{C}^\infty(U, \mathcal{S}(\check{\mathbb{R}}^n))$ and $s = \iota(f)$, which completes the proof.
	\end{proof}
	
	\begin{proof}[\myproof{Proposition}{\ref{Proposition: sections of local mirror B-brane}}]
		For each $s \in \Gamma(U \times \mathbb{T}^{2n} \times \check{\mathbb{T}}^{2n}, \mathbf{L} \otimes \sqrt{K})$, assign it to a smooth function $\langle s \rangle$ on $U \times \mathbb{R}^n \times \check{\mathbb{R}}^n$ by
		\begin{equation}
			\label{Equation: modified Weil-Brezin coefficient of sections over fibre product}
			\langle s \rangle (x, \boldsymbol{y}^2, \boldsymbol{\check{y}}_1) := [s](x, \boldsymbol{y}^2, \boldsymbol{\check{y}}_1) \left( N_\Omega(\boldsymbol{u}^2, \boldsymbol{\check{y}}_1) \right)^{-1}.
		\end{equation}
		We claim that $s$ lies in $\Gamma(U \times \check{\mathbb{T}}^{2n}, \mathbf{\check{E}}) = H^0(U \times \mathbb{T}^{2n} \times \check{\mathbb{T}}^{2n}, \mathbf{L} \otimes \sqrt{K})$ if and only if $\langle s \rangle$ is independent of the variable $\boldsymbol{y}^2$. Once this claim holds, by Lemma \ref{Lemma: two kinds of fibrewise Schwartz functions}, the assignment $s \mapsto \langle s \rangle$ defines a bijective $\mathbb{C}$-linear map $\Gamma(U \times \check{\mathbb{T}}^{2n}, \mathbf{\check{E}}) \to \mathcal{C}^\infty(U, \mathcal{S}(\check{\mathbb{R}}^n))$.\par
		To establish that this is a topological isomorphism, we note that $\Gamma(U \times \check{\mathbb{T}}^{2n}, \mathbf{\check{E}})$ and $\mathcal{C}^\infty(U, \mathcal{S}(\check{\mathbb{R}}^n))$ embed into the Fr\'echet spaces $\Gamma(U \times \mathbb{T}^{2n} \times \check{\mathbb{T}}^{2n}, \mathbf{L} \otimes \sqrt{K})$ and $\mathcal{C}^\infty(U, \mathcal{S}(\mathbb{R}^n \times \check{\mathbb{R}}^n))$, respectively, as closed subspaces. It follows from the construction \eqref{Equation: modified Weil-Brezin coefficient of sections over fibre product} that the assignment $s \mapsto \langle s \rangle$ is continuous with respect to the compact-open $\mathcal{C}^\infty$-topology and the standard Fr\'echet topology on these spaces. By the Open Mapping Theorem for Fr\'echet spaces, this continuous linear bijection is necessarily a homeomorphism.\par
		Now, we prove the claim. Recall that $\nabla$ denotes the tensor product of the unitary connections on $\mathbf{L}$ and $\sqrt{K}$. Since the required CR structure on $U \times \mathbb{T}^{2n} \times \check{\mathbb{T}}^{2n}$ is the pullback of $T^{0, 1} \mathbb{T}^{2n}$, the section $s$ lies in $\Gamma(U \times \check{\mathbb{T}}^{2n}, \mathbf{\check{E}})$ if and only if 
		\begin{equation*}
			\left\langle \textstyle\sum_{j=1}^n h_j^{-1} \Omega_{ji} \nabla_{\partial_{y^j}} s + \nabla_{\partial_{y^{n+i}}} s \right\rangle (x, \boldsymbol{y}^2, \boldsymbol{\check{y}}_1) = 0 \quad \text{for all } 1 \leq i \leq n.
		\end{equation*}
		Fix $1 \leq i \leq n$. It follows from \eqref{Equation: unitary connection over fibre product} that
		\begin{equation*}
			\nabla_{\partial_{y^i}} = \tfrac{\partial}{\partial y^i} - 2\pi\sqrt{-1} (\check{y}_i + kh_i u^{n+i}) \quad \text{and} \quad \nabla_{\partial_{y^{n+i}}} = \tfrac{\partial}{\partial y^{n+i}} - 2\pi\sqrt{-1} \check{y}_{n+i}.
		\end{equation*}
		Direct computations using Proposition \ref{Proposition: fibrewise Weil Brezin transform} and \eqref{Equation: modified Weil-Brezin coefficient of sections over fibre product} yield
		\begin{align*}
			\left\langle \nabla_{\partial_{y^i}} s \right\rangle (x, \boldsymbol{y}^2, \boldsymbol{\check{y}}_1) = & -2\pi\sqrt{-1} (\check{y}_i + kh_iu^{n+i}) \cdot \langle s \rangle (x, \boldsymbol{y}^2, \boldsymbol{\check{y}}_1),\\
			\left\langle \nabla_{\partial_{y^{n+i}}} s \right\rangle (x, \boldsymbol{y}^2, \boldsymbol{\check{y}}_1) = & \left( \tfrac{\partial}{\partial y^{n+i}} + 2\pi\sqrt{-1} \textstyle\sum_{j=1}^n \Omega_{ij} \left( \tfrac{\check{y}_j}{h_j} + ku^{n+j} \right) \right) \langle s \rangle (x, \boldsymbol{y}^2, \boldsymbol{\check{y}}_1).
		\end{align*}
		Since $\Omega$ is symmetric, i.e. $\Omega_{ij} = \Omega_{ji}$, the above equalities imply that $s \in \Gamma(U \times \check{\mathbb{T}}^{2n}, \mathbf{\check{E}})$ if and only if $\tfrac{\partial \langle s \rangle}{\partial y^{n+i}} (x, \boldsymbol{y}^2, \boldsymbol{\check{y}}_1) = 0$ for all $1 \leq i \leq n$, i.e. $\langle s \rangle$ is independent of $\boldsymbol{y}^2$. Therefore, our claim holds. Moreover, the desired formula follows from Proposition \ref{Proposition: fibrewise Weil Brezin transform}, \eqref{Equation: modified Weil-Brezin coefficient of sections over fibre product} and Definition \ref{Definition: Guassian base}. The $\mathcal{C}^\infty(U)$-linearity is then immediate.
	\end{proof}
	
	For $\Omega, \Omega' \in \mathbb{H}_n$, let $\mathbf{\check{E}}$ and $\mathbf{\check{E}}'$ denote the corresponding bundles. The fiberwise pairing $\langle \, \cdot \, , \, \cdot \, \rangle$ in \eqref{Equation: fibrewise pairing of prequantum spaces} restricts to a pairing
	\begin{equation*}
		\Gamma(U \times \check{\mathbb{T}}^{2n}, \mathbf{\check{E}}) \times \Gamma(U \times \check{\mathbb{T}}^{2n}, \mathbf{\check{E}}') \to \mathcal{C}^\infty(U \times \check{\mathbb{T}}^{2n}).
	\end{equation*}
	Let $s \mapsto \langle s \rangle_\Omega$ and $s' \mapsto \langle s' \rangle_{\Omega'}$ denote the isomorphisms corresponding to $\Omega, \Omega'$, respectively, as in Proposition \ref{Proposition: sections of local mirror B-brane}. Combining Proposition \ref{Proposition: fibrewsie sesquilinear pairing} with standard Gaussian integral computations, one obtains, for $s \in \Gamma(U \times \check{\mathbb{T}}^{2n}, \mathbf{\check{E}})$ and $s' \in \Gamma(U \times \check{\mathbb{T}}^{2n}, \mathbf{\check{E}}')$, the formula
	\begin{equation}
		\label{Equation: Hermitian metric on local mirror B-brane}
		\langle s, s' \rangle (x, \check{y}) = \sum_{\check{m} \in \mathbb{Z}^{2n}} e^{2\pi\sqrt{-1} \boldsymbol{\check{m}}_2 \cdot \boldsymbol{\check{u}}_2} \cdot \langle s \rangle_\Omega \left( x, \boldsymbol{\check{u}}_1 - \boldsymbol{\check{m}}_1 - k\boldsymbol{H} \boldsymbol{\check{m}}_2 \right) \overline{\langle s' \rangle_{\Omega'} \left( x, \boldsymbol{\check{u}}_1 - \boldsymbol{\check{m}}_1 \right)}.
	\end{equation}
	In particular, this defines a Hermitian metric on $\mathbf{\check{E}}$.
	
	\subsubsection{Proof of Theorem \ref{Theorem: unitary connection on the local mirror B brane}}
	\quad\par
	\label{Subsubsection: Proof of Theorem one}
	Our strategy for proving Theorem \ref{Theorem: unitary connection on the local mirror B brane} is to compute the connection $\check{\nabla}$ explicitly. This requires a detailed understanding of the family orthogonal projection $\Pi$ defined in \eqref{Equation: family orthogonal projection}. Since $\Pi$ is $\mathcal{C}^\infty(U \times \check{\mathbb{T}}^{2n})$-linear, it suffices to carry out the computation fiberwise using an orthonormal basis of each fiber of $\mathbf{\check{E}}$.\par
	More precisely, at each point $(x, \check{y})$, the operator $\Pi$ restricts to the orthogonal projection
	\begin{equation*}
		\Gamma(\mathbb{T}^{2n}, \mathbf{L}_{(x, \check{y})} \otimes \sqrt{K}) \to H^0(\mathbb{T}^{2n}, \mathbf{L}_{(x, \check{y})} \otimes \sqrt{K}) \cong \mathbf{\check{E}}_{(x, \check{y})},
	\end{equation*}
	In practice, for any section $s \in \Gamma(\mathbb{T}^{2n}, \mathbf{L}_{(x, \check{y})} \otimes \sqrt{K})$, the projection $\Pi s$ is given by
	\begin{equation}
		\label{Equation: orthogonal projection in terms of orthonormal basis}
		\Pi s = \sum_{l \in \mathbb{Z}_{k\boldsymbol{H}}^n} \langle s, \sigma_l(x, \,\cdot\,, \check{y}) \rangle \cdot \sigma_l(x, \,\cdot\,, \check{y}),
	\end{equation}
	where $\sigma_l(x, \,\cdot\,, \check{y})$ are defined in Definition \ref{Definition: Guassian base}, forming an orthonormal basis of $\mathbf{\check{E}}_{(x, \check{y})}$.
	
	\begin{proof}[\myproof{Theorem}{\ref{Theorem: unitary connection on the local mirror B brane}}]
		The proof consists of explicit local computations of the connection $\check{\nabla}$. We outline the main steps.\par
		\textbf{Step 1.} 
		Let $s \in \Gamma(U \times \check{\mathbb{T}}^{2n}, \mathbf{\check{E}})$. Using \eqref{Equation: unitary connection over fibre product} together with Proposition \ref{Proposition: fibrewise Weil Brezin transform}, we compute the terms $[ \nabla_Y s ]$ for vector fields of the form $Y = \tfrac{\partial}{\partial x_j}$, $\tfrac{\partial}{\partial y^j}$ or $\tfrac{\partial}{\partial \check{y}_j}$.\par
		\textbf{Step 2.} 
		Using the results of Step 1 and \eqref{Equation: modified Weil-Brezin coefficient of sections over fibre product}, we compute
		$\langle \nabla_{Z_j} s \rangle$ and $\langle \nabla_{\overline{Z}_j} s \rangle$, where
		\begin{equation*}
			Z_j = \frac{\partial}{\partial \check{z}_j} = \frac{1}{2} \left( \frac{\partial}{\partial x_j} - \sqrt{-1} k \frac{\partial}{\partial \check{y}_j} \right) \quad \text{and} \quad \overline{Z}_j = \frac{\partial}{\partial \overline{\check{z}}_j} = \frac{1}{2} \left( \frac{\partial}{\partial x_j} + \sqrt{-1} k \frac{\partial}{\partial \check{y}_j} \right),
		\end{equation*}
		for $1 \leq j \leq 2n$. These expressions involve the terms $\langle \nabla_{\partial_{y^j}} s \rangle$.\par
		\textbf{Step 3.} 
		Combining Step 2 with the definition of $\check{\nabla}$ in \eqref{Equation: B brane connectino}, which involves the fiberwise orthogonal projection $\Pi$, we obtain for $1 \leq i \leq n$:
		\begin{align*}
			\left\langle \check{\nabla}_{Z_i} s \right\rangle = \left( \frac{\partial}{\partial \check{z}_i} - k\pi g^i \right) \langle s \rangle, \quad & \left\langle \check{\nabla}_{Z_{n+i}} s \right\rangle = \left( \frac{1}{2} \frac{\partial}{\partial x_{n+i}} - \pi\sqrt{-1} \frac{k}{h_i} \overline{\check{z}}_i - k\pi g^{n+i} \right) \langle s \rangle,\\
			\left\langle \check{\nabla}_{\overline{Z}_i} s \right\rangle = \left( \frac{\partial}{\partial \overline{\check{z}}_i} + k\pi g^i \right) \langle s \rangle, \quad & \left\langle \check{\nabla}_{\overline{Z}_{n+i}} s \right\rangle = \left( \frac{1}{2} \frac{\partial}{\partial x_{n+i}} - \pi\sqrt{-1} \frac{k}{h_i} \check{z}_i + k\pi g^{n+i} \right) \langle s \rangle.
		\end{align*}
		A key point in this step is the identity
		\begin{equation*}
			\Pi (\nabla_{\partial_{y^j}} s) = 0, \quad 1 \leq j \leq 2n.
		\end{equation*}
		This identity follows from the explicit expressions for $\langle \nabla_{\partial_{y^j}} s \rangle$ (see the proof of Proposition \ref{Proposition: sections of local mirror B-brane}), together with \eqref{Equation: orthogonal projection in terms of orthonormal basis} and standard Gaussian integral computations.\par
		\textbf{Conclusion.}
		The above formulas, combined with \eqref{Equation: Hermitian metric on local mirror B-brane}, show that $\check{\nabla}$ is unitary. Moreover, by Proposition \ref{Proposition: skew smith form of A brane}, we have $\tfrac{\partial g^j}{\partial x_i} = \tfrac{\partial g^i}{\partial x_j}$ for all $i,j$, which implies that the $(2,0)$- and $(0,2)$-components of the curvature vanish, and that the curvature has the stated form. Hence $\mathbf{\check{E}}$ is a Hermitian holomorphic vector bundle with Chern connection $\check{\nabla}$. Therefore, $(U \times \check{\mathbb{T}}^{2n}, \mathbf{\check{E}}, \check{\nabla})$ defines a B-brane. Since it is space-filling and its curvature is of the above form, it is semi-affine. This completes the proof.
	\end{proof}
	
	\subsection{Global construction of the mirror B-brane}
	\quad\par
	\label{Subsection: Global construction of mirror brane via gluing}
	In this subsection, we show that the local mirror B-branes glue to form a global one.\par
	We start with fixing an open cover $\{U_\alpha\}_{\alpha \in \mathcal{I}}$ of the base manifold $B$ together with a collection of triples $(x_\alpha, g_\alpha, \check{g}_\alpha)$ given as in Proposition \ref{Proposition: skew smith form of A brane} with respect to each $U_\alpha$. In particular, $x_\alpha$ are integral affine coordinates on $U_\alpha$. When $U_{\alpha\beta} := U_\alpha \cap U_\beta \neq \emptyset$, the coordinate change from $x_\alpha$ to $x_\beta$ induces a matrix $A_{\alpha\beta} \in \operatorname{GL}(2n, \mathbb{Z})$ with $A_{\alpha\beta} = A_{\beta\alpha}^{-1}$ such that $dx_\beta = A_{\alpha\beta} dx_\alpha$ (and thus $y_\alpha = A_{\alpha\beta}^{-T} y_\beta$). Furthermore, the cocycle condition
	\begin{equation*}
		A_{\gamma\alpha} A_{\beta\gamma} A_{\alpha\beta} = \operatorname{Id}
	\end{equation*}
	holds on each $U_{\alpha\beta\gamma} := U_\alpha \cap U_\beta \cap U_\gamma \neq \emptyset$.\par
	For each index $\alpha$, let $\mathbf{L}_\alpha$ be the local model of $\pi^*L^{\otimes k} \otimes \mathcal{P}$ over $X_{U_\alpha} \times_{U_\alpha} \check{X}_{U_\alpha}$ given as in the last subsection. Transition maps between the skew-Smith forms of $L$ induce a collection $\left\{\phi_{\alpha\beta}\right\}$ with $(\alpha, \beta)$ ranging over pairs of indices satisfying $U_{\alpha\beta} \neq \emptyset$, where \begin{equation*}
		\phi_{\alpha\beta}: \mathbf{L}_\beta \vert_{U_{\alpha\beta} \times \mathbb{T}^{2n} \times \check{\mathbb{T}}^{2n}} \to \mathbf{L}_\alpha \vert_{U_{\alpha\beta} \times \mathbb{T}^{2n} \times \check{\mathbb{T}}^{2n}}
	\end{equation*}
	is an isomorphism of Hermitian line bundles with unitary connection covering the coordinate change from $(x_\beta, y_\beta, \check{y}_\beta)$ to $(x_\alpha, y_\alpha, \check{y}_\alpha)$, such that $\phi_{\alpha\beta} = \phi_{\beta\alpha}^{-1}$ and that for each triple of indices $(\alpha, \beta, \gamma)$ with $U_{\alpha\beta\gamma} \neq \emptyset$,
	\begin{equation*}
		\phi_{\alpha\beta} \circ \phi_{\beta\gamma} \circ \phi_{\gamma\alpha} = \operatorname{Id}.
	\end{equation*}
	We refine $\{U_\alpha\}_{\alpha \in \mathcal{I}}$ to a new open cover $\{U_{\alpha; \Omega}\}_{\alpha \in \mathcal{I}, \Omega \in \mathbb{H}_n}$ of $B$, where we simply set $U_{\alpha; \Omega} := U_\alpha$. The additional index $\Omega$ serves to record the auxiliary data used in the construction of local models. For each $\alpha \in \mathcal{I}$ and each $\Omega \in \mathbb{H}_n$, let $\left( \mathbf{\check{E}}_{\alpha, \Omega}, \check{\nabla}^{\alpha, \Omega} \right)$ denote the corresponding local mirror B-brane constructed in the previous section.\par
	The construction of transition maps between these local mirror B-branes proceeds in two steps. In the first step, we fix an index $\alpha$ and consider two arbitrary elements $\Omega, \Omega' \in \mathbb{H}_n$.
	
	\begin{definition}
		\label{Definition: family BKS pairing map}
		Define the \emph{family Blattner--Kostant--Sternberg (BKS) pairing map}
		\begin{equation*}
			\operatorname{BKS}_{\Omega, \Omega'}^\alpha: \left( \Gamma\left(U_\alpha \times \check{\mathbb{T}}^{2n}, \mathbf{\check{E}}_{\alpha, \Omega'}\right), \check{\nabla}^{\alpha, \Omega'} \right) \to \left( \Gamma\left(U_\alpha \times \check{\mathbb{T}}^{2n}, \mathbf{\check{E}}_{\alpha, \Omega}\right), \check{\nabla}^{\alpha, \Omega} \right)
		\end{equation*}
		to be the $\mathcal{C}^\infty(U_\alpha \times \check{\mathbb{T}}^{2n})$-linear map uniquely determined by the following property: for each $(x, \check{y}) \in U_\alpha \times \mathbb{R}^{2n}$, the induced $\mathbb{C}$-linear map on fibers:
		\begin{equation*}
			\left( \mathbf{\check{E}}_{\alpha, \Omega'} \right)_{(x, \check{y})} = H^0(\mathbb{T}^{2n}, \mathbf{L}_{(x, \check{y})} \otimes \sqrt{K_{\Omega'}}) \to H^0(\mathbb{T}^{2n}, \mathbf{L}_{(x, \check{y})} \otimes \sqrt{K_\Omega}) = \left( \mathbf{\check{E}}_{\alpha, \Omega} \right)_{(x, \check{y})},
		\end{equation*}
		is the Blattner--Kostant--Sternberg pairing map.
	\end{definition}
	
	Similar to the family orthogonal projection \eqref{Equation: family orthogonal projection}, for any $s \in \left( \mathbf{\check{E}}_{\alpha, \Omega'} \right)_{(x, \check{y})}$,
	\begin{equation*}
		\operatorname{BKS}_{\Omega, \Omega'}^\alpha(s) = \sum_{l \in \mathbb{Z}_{k\boldsymbol{H}}^n} \langle s, \sigma_{\alpha, \Omega, l}(x, \,\cdot\,, \check{y}) \rangle \cdot \sigma_{\alpha, \Omega, l}(x, \,\cdot\,, \check{y}),
	\end{equation*}
	where $\sigma_{\alpha, \Omega, l}(x, \,\cdot\,, \check{y})$ are defined in Definition \ref{Definition: Guassian base}, forming an orthonormal basis of $\left( \mathbf{\check{E}}_{\alpha, \Omega'} \right)_{(x, \check{y})}$, and $\langle \,\cdot\,,\,\cdot\, \rangle$ denotes the sesquilinear pairing of $\left( \mathbf{\check{E}}_{\alpha, \Omega'} \right)_{(x, \check{y})}$ and $\left( \mathbf{\check{E}}_{\alpha, \Omega} \right)_{(x, \check{y})}$.
	
	\begin{proposition}
		\label{Proposition: family BKS pairing map}
		For all index $\alpha$ and elements $\Omega, \Omega' \in \mathbb{H}_n$, $\operatorname{BKS}_{\Omega, \Omega'}^\alpha$ is an isomorphism of Hermitian vector bundles with unitary connection.
	\end{proposition}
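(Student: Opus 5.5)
The plan is to reduce everything to the explicit coordinates $\langle\,\cdot\,\rangle_\Omega$ of Proposition \ref{Proposition: sections of local mirror B-brane}. The key claim is that in these coordinates the family BKS map is the identity:
\begin{equation*}
\langle \operatorname{BKS}^\alpha_{\Omega,\Omega'}(s)\rangle_\Omega=\langle s\rangle_{\Omega'}.
\end{equation*}
Proving this claim will give bijectivity, smoothness, and compatibility with metrics and connections all at once.

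\textbf{Step 1 (the BKS map is the identity in coordinates).} Fix $(x,\check{y})\in U_\alpha\times\mathbb{R}^{2n}$. Expand $s$ in the basis $\{\sigma'_{l}\}$ by \eqref{Equation: sections in term of non-smooth unitary frame}, with coefficients
\begin{equation*}
c_l(x,\check{y})=\sum_{\check q}\langle s\rangle_{\Omega'}(x,\boldsymbol{\check u}_1-l-k\boldsymbol H\check q)\,e^{2\pi\sqrt{-1}\check q\cdot\boldsymbol{\check u}_2}.
\end{equation*}
Apply the formula for $\operatorname{BKS}$ in terms of the basis $\{\sigma_{l}\}$. By the mixed orthonormality $\langle\sigma'_{l'},\sigma_l\rangle=\delta_{ll'}$ of Proposition \ref{Proposition: orthonormal basis of geometric quantization}, we get
\begin{equation*}
\operatorname{BKS}(s)=\sum_l c_l\,\sigma_l .
\end{equation*}
Comparing with \eqref{Equation: sections in term of non-smooth unitary frame} for $\Omega$ and invoking the uniqueness in Proposition \ref{Proposition: sections of local mirror B-brane} yields the claim. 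The same argument with $\Omega,\Omega'$ swapped shows that $\operatorname{BKS}_{\Omega',\Omega}$ is a two-sided inverse. Hence the map is a $\mathcal{C}^\infty(U_\alpha\times\check{\mathbb T}^{2n})$-linear topological isomorphism of section spaces, that is, a bundle isomorphism.

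\textbf{Step 2 (metrics).} Formula \eqref{Equation: Hermitian metric on local mirror B-brane} expresses the Hermitian metric purely through $\langle s\rangle$. It has the same form for every $\Omega$, with no $\Omega$-dependence once written in these coordinates. By Step 1, the map therefore preserves the metric.

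\textbf{Step 3 (connections).} The formulas of Step 3 in the proof of Theorem \ref{Theorem: unitary connection on the local mirror B brane} express $\langle\check\nabla_{Z_j}s\rangle$ and $\langle\check\nabla_{\overline Z_j}s\rangle$ as differential operators acting on $\langle s\rangle$. These operators involve only $x$, $\check z$, $g$ and $\boldsymbol H$, and not $\Omega$. Combined with Step 1, this gives
\begin{equation*}
\operatorname{BKS}\circ\check\nabla^{\alpha,\Omega'}=\check\nabla^{\alpha,\Omega}\circ\operatorname{BKS}.
\end{equation*}

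\textbf{Main obstacle.} The step I expect to be delicate is the normalization in Step 1. It depends on the mixed pairing $\langle\sigma'_{l'},\sigma_l\rangle$ equalling exactly $\delta_{ll'}$, including the half-form factor $\sqrt{\det(\tfrac{1}{\sqrt{-1}}(\Omega-\overline{\Omega'}))}$ with principal branch. This is where the Gaussian normalization $k^{n/4}$ in $N_\Omega$ and the half-form correction cancel. My first approach is to rely on Proposition \ref{Proposition: orthonormal basis of geometric quantization} as stated. To double-check it, I would apply Proposition \ref{Proposition: fibrewsie sesquilinear pairing} to $\sigma_l$ and evaluate a single Gaussian integral in $\boldsymbol y^2$.

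I also need to check that the basis $\sigma_l$ depends on $\check y\in\mathbb{R}^{2n}$ rather than on the torus. This is harmless: the coefficient identity holds pointwise on the cover $U_\alpha\times\mathbb{R}^{2n}$, and $\langle s\rangle$ is independent of that choice.
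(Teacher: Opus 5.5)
Your proposal is correct and follows essentially the same route as the paper. The paper likewise uses the mixed orthonormality of Proposition~\ref{Proposition: orthonormal basis of geometric quantization} to get $\operatorname{BKS}^\alpha_{\Omega,\Omega'}(\sigma'_l)=\sigma_l$, deduces $\langle \operatorname{BKS}^\alpha_{\Omega,\Omega'} s\rangle_\Omega=\langle s\rangle_{\Omega'}$ from Proposition~\ref{Proposition: sections of local mirror B-brane}, and then reads off preservation of the metric and the connection from \eqref{Equation: Hermitian metric on local mirror B-brane} and the $\Omega$-independent connection formulas in the proof of Theorem~\ref{Theorem: unitary connection on the local mirror B brane}.
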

	\begin{proof}
		By Proposition \ref{Proposition: orthonormal basis of geometric quantization} (c.f. Subsection 4.1 of \cite{BaiMouNun2010}), for all $l \in \mathbb{Z}_{k\boldsymbol{H}}^n$ and all point $(x, \check{y})$,
		\begin{equation*}
			\operatorname{BKS}_{\Omega, \Omega'}^\alpha(\sigma_{\alpha, \Omega', l}(x, \,\cdot\,, \check{y})) = \sigma_{\alpha, \Omega, l}(x, \,\cdot\,, \check{y}).
		\end{equation*}
		It then follows from Proposition \ref{Proposition: sections of local mirror B-brane} that, for all $s \in \Gamma\left(U_\alpha \times \check{\mathbb{T}}^{2n}, \mathbf{\check{E}}_{\alpha, \Omega'}\right)$,
		\begin{equation*}
			\left\langle \operatorname{BKS}_{\Omega, \Omega'}^\alpha s \right\rangle_\Omega = \langle s \rangle_{\Omega'}.
		\end{equation*}
		Then applying \eqref{Equation: Hermitian metric on local mirror B-brane} and Theorem \ref{Theorem: unitary connection on the local mirror B brane} completes the proof.
	\end{proof}
	
	In the second step, we fix $\Omega \in \mathbb{H}_n$ and consider two indices $\alpha, \beta$ such that $U_{\alpha\beta} \neq \emptyset$. Let $A_{\alpha\beta} \cdot \Omega$ be the action of $A_{\alpha\beta}$ on $\Omega$ given as in \eqref{Equation: twisted action on upper half space}. The tensor product of $\phi_{\alpha\beta}$ with the transform:
	\begin{equation*}
		\sqrt{K_{\beta, \Omega}} \to \sqrt{K_{\alpha, A_{\alpha\beta} \cdot \Omega}}, \quad \frac{\sqrt{d^n w_{\beta, \Omega}}}{\sqrt[4]{\det \left( 2\operatorname{Im} \Omega \right)}} \mapsto \frac{\sqrt{d^n w_{\alpha, A_{\alpha\beta} \cdot \Omega}}}{\sqrt[4]{\det \left( 2\operatorname{Im} (A_{\alpha\beta} \cdot \Omega) \right)}},
	\end{equation*}
	restricts to an isomorphism of Hermitian vector bundles with unitary connections:
	\begin{equation*}
		\phi_{\alpha\beta; \Omega}: \left( \Gamma\left( U_{\alpha\beta} \times \check{\mathbb{T}}^{2n}, \mathbf{\check{E}}_{\beta, \Omega} \right), \check{\nabla}^{\beta, \Omega} \right) \to \left( \Gamma\left( U_{\alpha\beta} \times \check{\mathbb{T}}^{2n}, \mathbf{\check{E}}_{\alpha, A_{\alpha\beta} \cdot \Omega} \right), \check{\nabla}^{\alpha, A_{\alpha\beta} \cdot \Omega} \right)
	\end{equation*}
	Now, we combine the two steps to construct a general transition map with the aid of the following proposition.
	
	\begin{proposition}
		\label{Proposition: commutativity of transitions}
		Suppose $\alpha, \beta$ are indices with $U_{\alpha\beta} \neq \emptyset$ and $\Omega, \Omega' \in \mathbb{H}_n$. Then the following diagram commutes:
		\begin{center}
			\begin{tikzcd}
				\Gamma\left( U_{\alpha\beta} \times \check{\mathbb{T}}^{2n}, \mathbf{\check{E}}_{\beta, \Omega'} \right) \ar[rr, "\operatorname{BKS}_{A_{\alpha\beta}^{-1} \cdot \Omega, \Omega'}^\beta"] \ar[d, "\phi_{\alpha\beta; \Omega'}"'] && \Gamma\left( U_{\alpha\beta} \times \check{\mathbb{T}}^{2n}, \mathbf{\check{E}}_{\beta, A_{\alpha\beta}^{-1} \cdot \Omega} \right) \ar[d, "\phi_{\alpha\beta; A_{\alpha\beta}^{-1} \cdot \Omega}"] \\
				\Gamma\left( U_{\alpha\beta} \times \check{\mathbb{T}}^{2n}, \mathbf{\check{E}}_{\alpha, A_{\alpha\beta} \cdot \Omega'} \right) \ar[rr, "\operatorname{BKS}_{\Omega, A_{\alpha\beta} \cdot \Omega'}^\alpha"'] && \Gamma\left( U_{\alpha\beta} \times \check{\mathbb{T}}^{2n}, \mathbf{\check{E}}_{\alpha, \Omega} \right)
			\end{tikzcd}
		\end{center}
	\end{proposition}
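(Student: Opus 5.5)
The plan is to compare the two composites on the distinguished orthonormal frames $\sigma_{\beta,\Omega',l}$ from Definition \ref{Definition: Guassian base}. All four maps are $\mathcal{C}^\infty(U_{\alpha\beta}\times\check{\mathbb{T}}^{2n})$-linear isomorphisms of Hermitian bundles. Hence it suffices to check, fiberwise at each $(x,\check{y})$, that both composites send every element of the basis $\{\sigma_{\beta,\Omega',l}(x,\cdot,\check{y})\}_l$ to the same vector. In the proof of Proposition \ref{Proposition: family BKS pairing map} we already saw that BKS maps send $\sigma_{\cdot,\Omega',l}\mapsto\sigma_{\cdot,\Omega,l}$. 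So the upper-right path sends $\sigma_{\beta,\Omega',l}$ to $\phi_{\alpha\beta;A^{-1}_{\alpha\beta}\cdot\Omega}(\sigma_{\beta,A^{-1}_{\alpha\beta}\cdot\Omega,l})$, while the left-lower path sends it to $\operatorname{BKS}^\alpha_{\Omega,A_{\alpha\beta}\cdot\Omega'}(\phi_{\alpha\beta;\Omega'}(\sigma_{\beta,\Omega',l}))$.

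The key step is a lemma describing $\phi_{\alpha\beta;\Omega}$ on the theta basis. For every $\Omega$, I would show that
\begin{equation*}
\phi_{\alpha\beta;\Omega}(\sigma_{\beta,\Omega,l}) = \sum_{l'} c_{ll'}(x,\check{y})\,\sigma_{\alpha,A_{\alpha\beta}\cdot\Omega,l'}
\end{equation*}
with a unitary matrix $c=(c_{ll'})$ that is \emph{independent of $\Omega$}. If this holds, the left-lower path gives $\sum c_{ll'}\operatorname{BKS}(\sigma_{\alpha,A\cdot\Omega',l'})=\sum c_{ll'}\sigma_{\alpha,\Omega,l'}$, and the upper-right path gives $\sum c_{ll'}\sigma_{\alpha,A\cdot(A^{-1}\cdot\Omega),l'}=\sum c_{ll'}\sigma_{\alpha,\Omega,l'}$. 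The two paths therefore agree.

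To prove the lemma, I would avoid explicit theta transformation formulas. Instead I would use the intrinsic characterization of BKS: for fixed $(x,\check y)$, $\operatorname{BKS}_{\Omega_1,\Omega_2}$ is defined by the pairing $\langle\cdot,\cdot\rangle$ of \eqref{Equation: fibrewise pairing of prequantum spaces}, i.e.\ it is determined by $\langle \operatorname{BKS}(s),t\rangle_{\Omega_1}=\langle s,t\rangle_{\Omega_2,\Omega_1}$ for all $t$, with the principal-branch half-form pairing. The gluing map $\phi_{\alpha\beta}\otimes(\sqrt{K}\text{-transform})$ is induced by a fiberwise symplectic affine change $y_\beta=A^T y_\alpha$ of the torus preserving $H$, together with a unitary gauge isomorphism of $\mathbf{L}$. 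So it intertwines the mixed pairings, provided that
\begin{equation*}
\frac{\sqrt{\det\tfrac{1}{\sqrt{-1}}(\Omega-\overline{\Omega'})}}{\sqrt[4]{\det 2\operatorname{Im}\Omega}\,\sqrt[4]{\det 2\operatorname{Im}\Omega'}}
\end{equation*}
transforms with the same factor as the normalized half-forms under $\Omega\mapsto A\cdot\Omega$. Here the factors $\sqrt{d^nw}/\sqrt[4]{\det 2\operatorname{Im}\Omega}$ built into $\phi_{\alpha\beta;\Omega}$ are exactly what makes this hold, up to a sign ambiguity. Granting intertwining, the compatibility of $\phi$ with the pairings yields $\phi_{\alpha\beta;\Omega}\circ\operatorname{BKS}^\beta_{\Omega,\Omega'}=\operatorname{BKS}^\alpha_{A\cdot\Omega,A\cdot\Omega'}\circ\phi_{\alpha\beta;\Omega'}$. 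Taking $\Omega\to A^{-1}\cdot\Omega$, this is exactly the diagram. The same argument also yields the lemma above.

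The main obstacle I expect is the branch of the square root. One must check that the principal-branch value $\sqrt{\det\frac{1}{\sqrt{-1}}(\Omega-\overline{\Omega'})}$ transforms consistently with the chosen $\sqrt{d^nw}$ under the action \eqref{Equation: twisted action on upper half space}, with no stray sign, i.e.\ no metaplectic $\pm1$. I would handle this by a continuity argument. $\mathbb{H}_n$ is connected and simply connected, both sides are continuous in $(\Omega,\Omega')$, and they agree on the diagonal $\Omega=\Omega'$, where both maps are unitary and fix the normalized frames. The ratio is a continuous function with values in $\{\pm1\}$ on $\mathbb{H}_n\times\mathbb{H}_n$, hence identically $1$.
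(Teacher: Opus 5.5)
Your reduction is sound, and it is the precise form of the paper's purely verbal naturality argument. Since $\operatorname{BKS}$ is characterized by $\langle \operatorname{BKS}(s),t\rangle=\langle s,t\rangle$ and each $\phi_{\alpha\beta;\Omega}$ is a unitary bijection, the square commutes if and only if the maps $\phi_{\alpha\beta;\cdot}$ preserve the mixed pairings. That is, iff your quantity $Q(\Omega,\Omega'):=\sqrt{\det\tfrac{1}{\sqrt{-1}}(\Omega-\overline{\Omega'})}\,\big/\bigl(\sqrt[4]{\det 2\operatorname{Im}\Omega}\,\sqrt[4]{\det 2\operatorname{Im}\Omega'}\bigr)$ satisfies $Q(g\Omega,g\Omega')=Q(\Omega,\Omega')$, where $g=\begin{pmatrix} a & b\\ c & d\end{pmatrix}$ is the image of $A_{\alpha\beta}$ in $\operatorname{Sp}(2n,\mathbb{R})$.

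The gap is that this identity is false, and not only up to a sign. From $g\Omega-\overline{g\Omega'}=(c\overline{\Omega'}+d)^{-T}(\Omega-\overline{\Omega'})(c\Omega+d)^{-1}$ and $\det\operatorname{Im}(g\Omega)=\det\operatorname{Im}\Omega/\lvert\det(c\Omega+d)\rvert^{2}$ one gets
\[
Q(g\Omega,g\Omega')^{2}=e^{\sqrt{-1}\,(\theta(\Omega')-\theta(\Omega))}\,Q(\Omega,\Omega')^{2},\qquad \theta(\Omega):=\arg\det(c\Omega+d).
\]
The ratio is $\operatorname{U}(1)$-valued and equals $1$ on the diagonal, but it is non-constant in general (for $n=1$, as soon as $c\neq 0$). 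Your continuity argument only disposes of $\{\pm1\}$-valued ratios, so it proves nothing here.

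Equivalently, your lemma fails. Write $d^nw_{\alpha,A_{\alpha\beta}\cdot\Omega}=j(\Omega)\,d^nw_{\beta,\Omega}$. The prescription ``unit frame $\mapsto$ unit frame'' discards the phase of $j(\Omega)^{1/2}$. So $\phi_{\alpha\beta;\Omega}$ acts on the theta bases by a constant matrix times the non-constant phase $j(\Omega)^{1/2}/\lvert j(\Omega)\rvert^{1/2}$. With the normalized formula taken literally, the square does not commute. This already happens over $U_{\gamma\alpha}$ in the paper's Ooguri--Vafa example, where $dw_{\gamma,\Omega}=\tfrac{2+\Omega}{2}\,dw_{\alpha,2\Omega/(2+\Omega)}$.

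The repair is to let the half-form part of $\phi_{\alpha\beta;\Omega}$ be the canonical identification of square roots of the same canonical bundle: $\sqrt{d^nw_{\beta,\Omega}}\mapsto j(\Omega)^{-1/2}\sqrt{d^nw_{\alpha,A_{\alpha\beta}\cdot\Omega}}$, with a continuous branch of $j^{-1/2}$ on the simply connected $\mathbb{H}_n$. This is what the paper's appeal to ``the natural transformation of half-forms'' actually requires.

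This map is still unitary, because $\lvert j(\Omega)\rvert^{2}=\det\operatorname{Im}(A_{\alpha\beta}\cdot\Omega)/\det\operatorname{Im}\Omega$. The squared pairing identity becomes $\det(A_{\alpha\beta}\cdot\Omega-\overline{A_{\alpha\beta}\cdot\Omega'})=j(\Omega)\,\overline{j(\Omega')}\,\det(\Omega-\overline{\Omega'})$. This is just the naturality of the intrinsically defined $h^{K,K'}$, equivalently the Siegel identity used above.

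Now the ratio really is continuous and $\{\pm1\}$-valued, and it equals $+1$ on the diagonal. So your continuity argument closes the proof. Your constant-matrix lemma becomes the standard fact that half-form corrected theta functions transform by the Weil representation.

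One further caution for the continuity step: for $n\geq 3$ the principal branch of $\sqrt{\det\tfrac{1}{\sqrt{-1}}(\Omega-\overline{\Omega'})}$ is not continuous on $\mathbb{H}_n\times\mathbb{H}_n$, because the argument of that determinant ranges over $(-n\pi/2,n\pi/2)$. You must use the continuous branch that is positive on the diagonal.
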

	\begin{proof}
		By construction, the transition map $\phi_{\alpha\beta;\Omega'}$ is induced from the isomorphism $\phi_{\alpha\beta}$ of the underlying family of prequantum line bundles --- covering the symplectomorphism of the torus determined by the coordinate change --- together with the natural transformation of half-forms corresponding to $\Omega' \mapsto A_{\alpha\beta}\cdot \Omega'$. The same holds for $\phi_{\alpha\beta; A_{\alpha\beta}^{-1} \cdot \Omega}$. In particular, these maps are unitary, preserve the connections, and respect the spaces of holomorphic sections.\par
		On the other hand, the operators $\operatorname{BKS}_{\Omega, A_{\alpha\beta} \cdot \Omega'}^\alpha$ and $\operatorname{BKS}_{A_{\alpha\beta}^{-1} \cdot \Omega, \Omega'}^\beta$ are defined fiberwise via the sesquilinear pairing induced by the Hermitian and holomorphic structures on the half-form–corrected prequantum line bundles. Consequently, each map is canonical and completely determined by these structures.\par
		Combining these two facts, the diagram commutes: applying a BKS pairing map followed by the half-form–corrected prequantum line bundle isomorphism for the target complex structure gives the same result as first applying the isomorphism for the original complex structure and then the corresponding BKS pairing map. This holds because the isomorphisms intertwine the Hermitian structures, the connections, and the spaces of holomorphic sections.
	\end{proof}
	
	\begin{definition}
		For all indices $\alpha, \beta$ with $U_{\alpha\beta} \neq \emptyset$ and $\Omega, \Omega' \in \mathbb{H}_n$, define the isomorphism of Hermitian vector bundles with unitary connections
		\begin{equation*}
			\check{\phi}_{\alpha\beta; \Omega, \Omega'}: \left( \Gamma\left( U_{\alpha\beta} \times \check{\mathbb{T}}^{2n}, \mathbf{\check{E}}_{\beta, \Omega'} \right), \check{\nabla}^{\beta, \Omega'} \right) \to \left( \Gamma\left( U_{\alpha\beta} \times \check{\mathbb{T}}^{2n}, \mathbf{\check{E}}_{\alpha, \Omega} \right), \check{\nabla}^{\alpha, \Omega} \right)
		\end{equation*}
		by
		\begin{equation*}
			\check{\phi}_{\alpha\beta; \Omega, \Omega'} := \phi_{\alpha\beta; A_{\alpha\beta}^{-1} \cdot \Omega} \circ \operatorname{BKS}_{A_{\alpha\beta}^{-1} \cdot \Omega, \Omega'}^\beta = \operatorname{BKS}_{\Omega, A_{\alpha\beta} \cdot \Omega'}^\alpha \circ \phi_{\alpha\beta; \Omega'}.
		\end{equation*}
	\end{definition}
	
	In the statement of the following theorem, we explicitly include the superscript $(k)$ to highlight the dependence of all relevant objects on the level $k$.
	
	\begin{theorem}
		\label{Proposiiton: Global mirror B-brane}
		For each level $k \in \mathbb{Z}_{>0}$, the isomorphisms $\check{\phi}_{\alpha\beta; \Omega, \Omega'}^{(k)}$ satisfy the cocycle condition; hence the local data $(\mathbf{\check{E}}_{\alpha, \Omega}^{(k)}, \check{\nabla}^{(k), \alpha, \Omega})$ glue to a semi-affine B-brane
		\begin{equation*}
			\check{\mathcal{B}}_{\operatorname{cc}}^{(k)} := (\check{X}^{(k)}, \check{E}^{(k)}, \nabla^{\check{E}^{(k)}})
		\end{equation*}
		on $\check{X}^{(k)}$, which we call the \emph{mirror B-brane}  of $\mathcal{B}_{\operatorname{cc}}^{(k)}$.
	\end{theorem}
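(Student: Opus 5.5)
The plan is to verify the cocycle condition
\begin{equation*}
\check{\phi}_{\alpha\beta; \Omega, \Omega'} \circ \check{\phi}_{\beta\gamma; \Omega', \Omega''} = \check{\phi}_{\alpha\gamma; \Omega, \Omega''}
\end{equation*}
on $U_{\alpha\beta\gamma} \times \check{\mathbb{T}}^{2n}$, together with $\check{\phi}_{\alpha\alpha; \Omega, \Omega} = \operatorname{Id}$. The conclusion then follows from the standard gluing of Hermitian vector bundles with unitary connections. Since each local piece is a semi-affine B-brane whose complex structure comes from the coordinates $\check{z}$ (Theorem \ref{Theorem: unitary connection on the local mirror B brane}), and the gluing maps preserve connections (Propositions \ref{Proposition: family BKS pairing map} and \ref{Proposition: commutativity of transitions}), the glued bundle is a Hermitian holomorphic bundle. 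Its Chern connection has curvature of type $(1,1)$ and is fiberwise constant, so the result is semi-affine. We should also check that the transitions of $\check{z}$ are affine-holomorphic: $\check{y}_\beta = A_{\alpha\beta}\check{y}_\alpha$, matching $dx_\beta = A_{\alpha\beta}dx_\alpha$. Hence $\check{X}^{(k)}$ is well defined and the glued bundle lives on it.

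The first reduction uses the two expressions for $\check{\phi}$ in its definition to move all BKS maps to one side. Writing
\begin{equation*}
\check{\phi}_{\alpha\beta;\Omega,\Omega'} = \operatorname{BKS}^\alpha_{\Omega, A_{\alpha\beta}\cdot\Omega'}\circ\phi_{\alpha\beta;\Omega'}
\qquad\text{and}\qquad
\check{\phi}_{\beta\gamma;\Omega',\Omega''} = \phi_{\beta\gamma;A_{\beta\gamma}^{-1}\cdot\Omega'}\circ \operatorname{BKS}^\gamma_{A_{\beta\gamma}^{-1}\cdot\Omega',\Omega''},
\end{equation*}
the composite becomes
\begin{equation*}
\operatorname{BKS}^\alpha\circ \phi_{\alpha\beta;\Omega'}\circ\phi_{\beta\gamma;A_{\beta\gamma}^{-1}\cdot\Omega'}\circ\operatorname{BKS}^\gamma .
\end{equation*}
Two things must then be established.

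\textbf{(i)} The composite $\phi_{\alpha\beta;\Omega'}\circ\phi_{\beta\gamma;\Omega'''} = \phi_{\alpha\gamma;\Omega'''}$ for the appropriate $\Omega'''$. This follows from the cocycle $\phi_{\alpha\beta}\circ\phi_{\beta\gamma}\circ\phi_{\gamma\alpha} = \operatorname{Id}$ of the prequantum bundles. It also requires that the twisted action of $\operatorname{GL}(2n,\mathbb{Z})$ on $\mathbb{H}_n$ is a genuine action, compatible with $A_{\gamma\alpha}A_{\beta\gamma}A_{\alpha\beta} = \operatorname{Id}$. Finally, the normalized half-form maps $\sqrt{d^nw}/\sqrt[4]{\det(2\operatorname{Im}\Omega)}$ must compose without a sign ambiguity.

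\textbf{(ii)} The composition law $\operatorname{BKS}^\alpha_{\Omega,\Omega'}\circ \operatorname{BKS}^\alpha_{\Omega',\Omega''} = \operatorname{BKS}^\alpha_{\Omega,\Omega''}$, with $\operatorname{BKS}_{\Omega,\Omega} = \operatorname{Id}$. This is immediate from the proof of Proposition \ref{Proposition: family BKS pairing map}: in the Weil--Brezin picture, $\langle \operatorname{BKS}^\alpha_{\Omega,\Omega'}s\rangle_\Omega = \langle s\rangle_{\Omega'}$, so every BKS map is the identity on $\mathcal{C}^\infty(U,\mathcal{S}(\check{\mathbb{R}}^n))$ and the composition law holds trivially. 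This is precisely why the Gaussian frames $\sigma_l$ were normalized with the half-form factor.

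Combining (i) and (ii) with one more application of Proposition \ref{Proposition: commutativity of transitions}, which moves the middle BKS map past a $\phi$, yields the cocycle identity.

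The main obstacle is (i), specifically the half-form part. One must check that the chosen square roots $\sqrt{d^nw_{\alpha,A\cdot\Omega}}$ transform multiplicatively under composition of the $A$'s. A priori the transformation factor of $d^nw$ is a determinant such as $\det(C\Omega+D)$, whose square root carries a $\pm$ ambiguity, the metaplectic obstruction. My first attempt would be to avoid the ambiguity by expressing everything through orthonormal frames: $\phi_{\alpha\beta;\Omega}$ should send $\sigma_{\beta,\Omega,l}$ to $\sigma_{\alpha,A\cdot\Omega,\tau(l)}$ for a permutation $\tau$ of $\mathbb{Z}^n_{k\boldsymbol{H}}$, possibly twisted by a phase determined by $\phi_{\alpha\beta}$ and $\check g$. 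That would make the transition act on $\langle s\rangle$ by an explicit, evidently cocyclic formula. Should residual signs appear, I would instead fix the branch of the square root continuously in $\Omega$ on the contractible space $\mathbb{H}_n$. Since the glued object is insensitive to the choice of $\Omega$ by (ii), the sign is then a constant cocycle; one would then show that it is trivial, or absorb it into $\phi_{\alpha\beta}$, using that $A_{\alpha\beta}$ preserves $H$.
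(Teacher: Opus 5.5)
Your proposal is correct and is essentially the paper's argument. Using Proposition~\ref{Proposition: commutativity of transitions}, the composite splits into a product of family BKS maps, which is trivial because $\langle \operatorname{BKS}^\alpha_{\Omega,\Omega'} s\rangle_\Omega=\langle s\rangle_{\Omega'}$, and a product of the maps $\phi_{\cdot\,;\Omega}$, which is trivial by the cocycle condition for the $\phi_{\alpha\beta}$ and the fact that $\operatorname{Sp}(\mathbb{Z}^{2n},H)$ (not $\operatorname{GL}(2n,\mathbb{Z})$) genuinely acts on $\mathbb{H}_n$; checking $\check{\phi}_{\alpha\beta}\circ\check{\phi}_{\beta\gamma}=\check{\phi}_{\alpha\gamma}$ rather than the paper's triple composite is only a bookkeeping difference.

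The half-form sign problem you single out does not arise in your step (i) with the paper's definitions. There $\phi_{\alpha\beta;\Omega}$ sends the normalized frame $\sqrt{d^nw_{\beta,\Omega}}/\sqrt[4]{\det(2\operatorname{Im}\Omega)}$ directly to the normalized frame for $(\alpha,A_{\alpha\beta}\cdot\Omega)$, so no square root of an automorphy factor $\det(C\Omega+D)$ is ever taken, and your speculative fallbacks are unnecessary. The $\Omega$-dependent phase of that factor is a genuine subtlety, but with this definition it has to be handled inside Proposition~\ref{Proposition: commutativity of transitions}, which you and the paper both use as a black box.
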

	\begin{proof}
		Let $\alpha, \beta, \gamma$ be indices with $U_{\alpha\beta\gamma} \neq \emptyset$ and $\Omega, \Omega', \Omega'' \in \mathbb{H}_n$. Using the compatibility of the family BKS pairing maps with the transition maps (Proposition \ref{Proposition: commutativity of transitions}) together with the cocycle condition $A_{\alpha\beta} A_{\beta\gamma} A_{\gamma\alpha} = \operatorname{Id}$, the triple composition
		\begin{equation}
			\label{Equation: cocyle condition of mirror B brane}
			\check{\phi}_{\alpha\beta; \Omega, \Omega'} \circ \check{\phi}_{\beta\gamma; \Omega', \Omega''} \circ \check{\phi}_{\gamma\alpha; \Omega'', \Omega}
		\end{equation}
		decomposes as the composition of the following two terms:
		\begin{enumerate}
			\item the triple composition of family BKS pairing maps
			\begin{equation*}
				\operatorname{BKS}_{\Omega, A_{\alpha\beta} \cdot \Omega'}^\alpha \circ \operatorname{BKS}_{A_{\alpha\beta} \cdot \Omega', A_{\alpha\beta} A_{\beta\gamma} \cdot \Omega''}^\alpha \circ \operatorname{BKS}_{A_{\alpha\beta} A_{\beta\gamma} \cdot \Omega'', \Omega}^\alpha,
			\end{equation*}
			which is the identity by the proof of Proposition \ref{Proposition: family BKS pairing map}; and
			\item the triple composition $\phi_{\alpha\beta; A_{\beta\gamma} A_{\gamma\alpha} \cdot \Omega} \circ \phi_{\beta\gamma; A_{\gamma\alpha} \cdot \Omega} \circ \phi_{\gamma\alpha; \Omega}$, which is also the identity due to the cocycle condition $\phi_{\alpha\beta} \circ \phi_{\beta\gamma} \circ \phi_{\gamma\alpha} = \operatorname{Id}$.
		\end{enumerate}
		Therefore, the composition \eqref{Equation: cocyle condition of mirror B brane} is the identity, as required.
	\end{proof}
	
	\section{The mirror theorem}
	\label{Section: Brane quantization via SYZ transforms}
	This section is devoted to the main objective of the paper: establishing a mirror statement asserting the existence of a graded algebra isomorphism
	\begin{center}
		\begin{tikzcd}
			\operatorname{Hom}_A(\mathcal{B}_{\operatorname{cc}}^{(k)}, \mathcal{B}_{\operatorname{cc}}^{(k)}) \ar[r, "\cong"] & \operatorname{Hom}_B(\check{\mathcal{B}}_{\operatorname{cc}}^{(k)}, \check{\mathcal{B}}_{\operatorname{cc}}^{(k)}).
		\end{tikzcd}
	\end{center}
	Recall from Theorem \ref{Proposiiton: Global mirror B-brane} that the mirror B-brane $\check{\mathcal{B}}_{\operatorname{cc}}^{(k)} = (\check{X}^{(k)}, \check{E}^{(k)})$ is obtained by gluing local data defined on open subsets of the form $X_U \subset X$, where $U \subset B$ is as in Proposition \ref{Proposition: skew smith form of A brane}. For each such subset, one chooses a parameter $\Omega \in \mathbb{H}_n$, which determines a family of torus-invariant K\"ahler polarizations and thereby provides an explicit local model of the B-brane.\par
	A natural first approach in constructing the desired algebra isomorphism is via a family of Toeplitz operators, which encode --- at least asymptotically --- the action of deformation quantization on geometric quantization in K\"ahler geometry \cite{BorMeiSch1994, Gui1995, Sch2000}.\par
	However, the non-formal quantization constructed in Section \ref{Section: Holomorphic deformation quantization}, which realizes the A-model morphism space $\operatorname{Hom}_A(\mathcal{B}_{\operatorname{cc}}^{(k)}, \mathcal{B}_{\operatorname{cc}}^{(k)})$, is of Moyal type. As shown in \cite{And2005} (see also \cite{LeuYau2023}), the Berezin–Toeplitz star product and the Moyal product on a symplectic torus are gauge equivalent, with the gauge transformation governed by the Dolbeault Laplacian. This observation motivates our actual construction: we obtain the desired isomorphism by introducing a suitable twist of the family of Toeplitz operators. Our main result is formulated as follows.
	
	\begin{theorem}
		\label{Theorem: mirror statement}
		Let $k \in \mathbb{Z}_{>0}$. Then the mirror transform constructed via family twisted Toeplitz operators
		\begin{equation*}
			\Phi_U^{(k)}: \left( \Omega^{0, *}(X_U), \star_{k^{-1}}, \overline{\partial}_I \right) \to \left( \Omega^{0, *}(\check{X}_U^{(k)}, \operatorname{End}(\check{E}^{(k)})), \circ, \overline{\partial}_{\operatorname{End}( \check{E}^{(k)})} \right)
		\end{equation*}
		defines an isomorphism of sheaves of differential graded algebras on $B$. Consequently, this induces a graded algebra isomorphism
		\begin{equation*}
			(H^*(X,\mathcal{O}_X), \star_{k^{-1}}) \cong (H^*(\check{X},\operatorname{End}(\check{E}^{(k)})), \circ).
		\end{equation*}
	\end{theorem}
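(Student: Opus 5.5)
Everything is reduced to explicit computations on Fourier modes, using the family Weil--Brezin picture of Proposition \ref{Proposition: sections of local mirror B-brane}. Fix a chart $U$ with skew-Smith data $(x,g,\check g,\boldsymbol{H})$ and a parameter $\Omega\in\mathbb{H}_n$. Both sides are $\mathcal{C}^\infty(U)$-linear: $\star_{k^{-1}}$ by base-locality, and the Toeplitz construction because $\Pi$ is $\mathcal{C}^\infty(U\times\check{\mathbb{T}}^{2n})$-linear. It therefore suffices to understand $\Phi_U^{(k)}=T_U^{(k)}\circ e^{\frac1k\Delta_\Omega}$ on a single fiberwise Fourier mode $e_m:=e^{2\pi\sqrt{-1}m\cdot y}$, $m\in\mathbb{Z}^{2n}$.

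\textbf{Mode computation.} Multiplication by $e_m$ acts on the Weil--Brezin coefficients $[s]$ of Proposition \ref{Proposition: fibrewise Weil Brezin transform} as a Heisenberg operator. The $\boldsymbol m^2$-part shifts the $\boldsymbol y^2$-variable, and through the factor $(l+k\boldsymbol{H}q)\cdot\boldsymbol u^1$ the $\boldsymbol m^1$-part shifts $\boldsymbol{\check u}_1$ by an integer vector while producing a phase. After projecting with $\Pi$ via \eqref{Equation: orthogonal projection in terms of orthonormal basis} and a Gaussian integral against $N_\Omega$, $T^{(k)}(e_m)$ becomes, in terms of $\langle s\rangle$, a translation $\boldsymbol{\check y}_1\mapsto\boldsymbol{\check y}_1+(\text{integer shift depending on }m)$. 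It is multiplied by a character in $x,\check u$ and by a Gaussian damping factor $\exp(-\tfrac{\pi}{2k}\,\mu\cdot(\operatorname{Im}\Omega)^{-1}\bar\mu)$, where $\mu$ is the complex vector $\boldsymbol{H}^{-1}\boldsymbol m^1$ combined with $\boldsymbol m^2$ through $\Omega$. The exponent $e^{\frac1k\Delta_\Omega}$ acts on $e_m$ by the inverse of exactly this Gaussian, since $e_m$ is an eigenfunction of $\Delta_\Omega$. This cancellation is what makes $\Phi_U^{(k)}$ well defined on all of $\mathcal{C}^\infty(X_U)$ even though $e^{\frac1k\Delta_\Omega}$ alone is not. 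Hence $\Phi_U^{(k)}(e_m)=:W_m$ is a pure Weyl-type operator: an integer translation times a phase, independent of $\Omega$. Extending to $\Phi_U^{(k)}(f)=\sum_m\widehat f_m(x)W_m$ converges in the Fr\'echet topology of Appendix \ref{Sectoin: Remarks on fibrewise Schwartz spaces} by Schwartz decay of $\widehat f_m$.

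\textbf{Algebra, differential, gluing.} A direct computation then gives the Weyl relation
\[
W_m\circ W_{m'}=e^{-\frac{\pi\sqrt{-1}}{k}m\cdot H^{-1}m'}W_{m+m'}.
\]
Comparing with \eqref{Equation: quantum tori} at $\hbar=k^{-1}$ yields multiplicativity on functions, generalizing the $\mathbb{R}^2\times\mathbb{T}^2$ example in the introduction. For forms, $d\bar z^i\mapsto d\bar{\check z}_i$ is multiplicative and graded, so multiplicativity extends to $\Omega^{0,*}$.

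For the cochain property, I would invoke Lemma \ref{Lemma: holomorphicity} (Proposition \ref{proposition : holomorphicity_intro}) for $T_U^{(k)}$. Since $\Delta_\Omega$ has constant coefficients in $y$ and commutes with $\partial_{x_i}$ and $\partial_{y^j}$, it commutes with $\bar\partial_I$ by \eqref{Equation: first complex coordinate derivative}--\eqref{Equation: second complex coordinate derivative}. Alternatively, I would check directly that $\partial/\partial\bar{\check z}_i$ of $W_m$, computed with Step 3 of the proof of Theorem \ref{Theorem: unitary connection on the local mirror B brane}, matches $\Phi(\partial f/\partial\bar z^i)$.

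Independence of $\Omega$ and compatibility with the transitions $\check\phi_{\alpha\beta;\Omega,\Omega'}$ follow because $W_m$ is independent of $\Omega$ in the $\langle\cdot\rangle$-picture. Indeed $\langle\operatorname{BKS}s\rangle_\Omega=\langle s\rangle_{\Omega'}$ by the proof of Proposition \ref{Proposition: family BKS pairing map}, and $\phi_{\alpha\beta}$ acts covariantly by $A_{\alpha\beta}$ on $m$, matching Corollary \ref{Corollary: affine change in complex coorindates}. This yields a morphism of sheaves on $B$.

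\textbf{Bijectivity, and the expected obstacle.} Injectivity follows because the $W_m$ are linearly independent: they have distinct shift/character pairs. For surjectivity I would expand an arbitrary smooth section of $\operatorname{End}(\check{E})$, written as a kernel in $\langle\cdot\rangle$ coordinates that is quasi-periodic in $\check u$, into Fourier modes along $\check u_2$ and shifts along $\check u_1$. This should recover Schwartz coefficients $\widehat f_m(x)$ bijectively, since the rank $\prod kh_i$ matches the index of $k\boldsymbol{H}\mathbb{Z}^n$.

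The cohomology statement then follows because a sheaf isomorphism of dgas induces an isomorphism on global cohomology. I expect the main obstacle to be the Gaussian bookkeeping: showing that the twist $e^{\frac1k\Delta_\Omega}$ exactly cancels the Toeplitz damping, including the half-form normalization. Surjectivity with Fr\'echet control is the secondary difficulty.
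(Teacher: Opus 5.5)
Your treatment of multiplicativity, $\overline{\partial}$-compatibility and gluing follows the paper's. Lemma~\ref{Lemma: family Toeplitz operator} gives exactly your $\Omega$-independent $W_m$: the shift $\operatorname{S}_{\boldsymbol{m}_1}$ followed by multiplication by $e^{-\frac{2\pi\sqrt{-1}}{k}\boldsymbol{m}_2\cdot\boldsymbol{H}^{-1}(\boldsymbol{\check{y}}_1+\frac{1}{2}\boldsymbol{m}_1)}$. (In the Weil--Brezin picture the $\boldsymbol{m}_2$-part acts by a character, not a shift.) Your Weyl relation is correct and reproduces \eqref{Equation: quantum tori} at $\hbar=k^{-1}$, and your gluing argument is Propositions~\ref{Proposition: Commutativity one} and~\ref{Proposition: Commutativity two}.

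Two small points in this part. First, invoking Lemma~\ref{Lemma: holomorphicity} is circular, since it is one of the steps in the proof of this very theorem. What you actually need is your alternative, the direct computation with the formulas of Step~3 in the proof of Theorem~\ref{Theorem: unitary connection on the local mirror B brane}, which is what the paper does. Second, you should check that the limit $\sum_m\widehat{f}_m W_m$ is again a section of $\operatorname{End}(\check{E})$; the paper does this via the criterion of Lemma~\ref{Lemma: characterization of sections of endo-bundle}.

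The genuine gap is bijectivity.

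For injectivity, linear independence of the $W_m$ only rules out finite relations. To deduce $f=0$ from $\Phi_f=0$ you must separate infinitely many modes. The paper does this by testing on a section with $\langle s\rangle(x,\boldsymbol{\check{y}}_1+\boldsymbol{m}_1)=1$ and $\langle s\rangle=0$ at the other points of $\boldsymbol{\check{y}}_1+\mathbb{Z}^n$, which isolates $\check{f}_{\boldsymbol{m}_1}(x,\boldsymbol{\check{y}}_1)$ (Lemma~\ref{Lemma: injectivity}).

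For surjectivity, ``the rank matches the index'' is only a count: a $\mathcal{C}^\infty$-linear injection between section spaces of equal-rank bundles need not be onto. Expanding $\langle\Psi\rangle$ ``into shifts'' presupposes exactly what has to be shown, namely that an arbitrary $\Psi$ is a rapidly convergent combination of shifts with periodic coefficients. Your labelling of the variables is also off. In the $\langle\cdot\rangle$-picture the $\check{u}_2$-modes are themselves the shifts $\operatorname{S}_{k\boldsymbol{H}\check{q}}$, while $\boldsymbol{m}_2$ sits in the $\boldsymbol{\check{y}}_1$-Fourier expansion of $k\boldsymbol{H}\mathbb{Z}^n$-periodic coefficients.

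The paper supplies the missing step by locality. $\mathcal{C}^\infty(U\times\check{\mathbb{T}}^{2n})$-linearity forces $(\langle\Psi\rangle s)(x,\boldsymbol{\check{y}}_1)$ to depend only on the values of $s$ on $\boldsymbol{\check{y}}_1+\mathbb{Z}^n$ (Lemma~\ref{Lemma: dependence on lattice}). Hence $\langle\Psi\rangle=\sum_{\boldsymbol{m}_1}\check{f}_{\boldsymbol{m}_1}\operatorname{S}_{\boldsymbol{m}_1}$ with smooth $k\boldsymbol{H}\mathbb{Z}^n$-periodic $\check{f}_{\boldsymbol{m}_1}$. A uniform decay estimate in $\boldsymbol{m}_1$ (Lemma~\ref{Lemma: Fibrewise Schwartz property}) then lets one reassemble a smooth $f$.

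Alternatively, your counting idea can be made rigorous as follows.
\begin{enumerate}
\item By Lemma~\ref{Lemma: Toeplitz operator}, at each point the $W_m$, with $m$ running over representatives of $\mathbb{Z}^{2n}/(k\boldsymbol{H}\mathbb{Z}^n\times k\boldsymbol{H}\mathbb{Z}^n)$, act on $\{\sigma_l\}$ as twisted clock-and-shift matrices. They therefore form a global frame of $\operatorname{End}(\check{E})$, of rank $\bigl(\prod_i kh_i\bigr)^2$.
\item For $m\in k\boldsymbol{H}\mathbb{Z}^n\times k\boldsymbol{H}\mathbb{Z}^n$, the operator $W_m$ is, up to a unimodular factor depending only on $x$, multiplication by a character of $\check{\mathbb{T}}^{2n}$, and every character arises this way.
\item Fourier-expanding the smooth frame coefficients of $\Psi$ then yields rapidly decreasing $\widehat{f}_m$ with $\Phi_f=\Psi$. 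The same regrouping gives injectivity.
\end{enumerate}
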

	
	The construction proceeds via a local-to-global argument (local in the base direction). In Subsection \ref{Subsection: local construction of mirror map}, we construct the mirror transform $\Phi^{(k)}$ locally. In Subsection \ref{Subsection: global construction of mirror map}, we glue these local constructions to obtain a global transform. Finally, in Subsection \ref{Subsection: proof of mirror statement}, we complete the proof of Theorem \ref{Theorem: mirror statement}.
	
	\subsection{Local construction of the mirror transform}
	\quad\par
	\label{Subsection: local construction of mirror map}
	Throughout this subsection, we restrict attention to an open subset $U \subset B$ over which $\mathcal{B}_{\operatorname{cc}}$ admits a skew-Smith form on $X_U \cong U \times \mathbb{T}^{2n}$, and we fix a choice $\Omega \in \mathbb{H}_n$. It is instructive to first examine the construction of family Toeplitz operators.
	
	\subsubsection{Motivation: family Toeplitz operators}
	\quad\par
	
	\begin{definition}
		Let $f \in \mathcal{C}^\infty(U \times \mathbb{T}^{2n})$. The \emph{family Toeplitz operator} associated with $f$ is defined by
		\begin{equation*}
			T_f := \Pi \circ \operatorname{m}_{(\pi_U^*f)}: \Gamma(U \times \check{\mathbb{T}}^{2n}, \mathbf{\check{E}}) \to \Gamma(U \times \check{\mathbb{T}}^{2n}, \mathbf{\check{E}}),
		\end{equation*}
		where $\Pi$ denotes the family orthogonal projection and $\operatorname{m}_{(\pi_U^*f)}$ is multiplication by the pullback of $f$ along the canonical projection $\pi_U: U \times \mathbb{T}^{2n} \times \check{\mathbb{T}}^{2n} \to U \times \mathbb{T}^{2n}$.
	\end{definition}
	
	By construction, for each $f \in \mathcal{C}^\infty(U \times \mathbb{T}^{2n})$, the operator $T_f$ defines a smooth section of $\operatorname{End}(\mathbf{\check{E}})$ with the property that, for every $(x, \check{y})$, its action on the fiber $\mathbf{\check{E}}_{(x, \check{y})}$ coincides with the Toeplitz operator on $H^0(\mathbb{T}^{2n}, \mathbf{L}_{(x, \check{y})} \otimes \sqrt{K})$ associated with the restriction of $f$ to the fiber $\{x\} \times \mathbb{T}^{2n}$. Consequently, we obtain a $\mathcal{C}^\infty(U)$-linear map
	\begin{equation*}
		T: \mathcal{C}^\infty(U \times \mathbb{T}^{2n}) \to \Gamma(U \times \check{\mathbb{T}}^{2n}, \operatorname{End}(\mathbf{\check{E}}))
	\end{equation*}
	given by $f \mapsto T_f$.\par
	Recall that each fiber $\{x\} \times \mathbb{T}^{2n}$ carries the symplectic form $\tfrac{\sqrt{-1}}{2} dw \wedge (\operatorname{Im} \Omega)^{-1} d\overline{w}$, where $w$ are the complex coordinates described in \eqref{Equation: complex coordinates on fibre torus}. To define a gauge transformation connecting the non-formal Moyal product $\star_{k^{-1}}$ to the fiberwise Berezin--Toeplitz product, we introduce the following second order differential operator on $U \times \mathbb{T}^{2n}$:
	\begin{equation}
		\label{Equation: fibrewise Dolbeault Laplacian}
		\Delta = -\frac{1}{2\pi} \frac{\partial}{\partial w} \cdot (\operatorname{Im} \Omega) \frac{\partial}{\partial \overline{w}},
	\end{equation}
	viewed as a family of Dolbeault Laplacians up to normalization. For each $k \in \mathbb{Z}_{>0}$, the expected non-formal gauge transformation is $e^{\frac{1}{k} \Delta}$. However, this operator corresponds to a backward heat flow and is ill-posed as a map $\mathcal{C}^\infty(U \times \mathbb{T}^{2n}) \to \mathcal{C}^\infty(U \times \mathbb{T}^{2n})$.\par
	Nonetheless, its composition with the family Toeplitz map, $T \circ e^{\frac{1}{k} \Delta}$, can be computed explicitly on individual Fourier modes.
	
	\begin{lemma}
		\label{Lemma: Toeplitz operator}
		Let $m = (m_1, ..., m_{2n}) \in \mathbb{Z}^{2n}$ and $f \in \mathcal{C}^\infty(U \times \mathbb{T}^{2n})$ be defined by
		\begin{equation*}
			f(x, y) = \exp \left( 2\pi\sqrt{-1} m \cdot u \right).
		\end{equation*}
		Let $l, \widetilde{l} \in \mathbb{Z}_{k\boldsymbol{H}}^n$ be such that $\widetilde{q} := (k\boldsymbol{H})^{-1} (l + \boldsymbol{m}_1 - \widetilde{l}) \in \mathbb{Z}^n$. Then for all $(x, \check{y})$,
		\begin{equation*}
			T_{\exp \left( \frac{1}{k} \Delta \right)(f)} (\sigma_l(x, \,\cdot\,, \check{y})) = \exp \left( \tfrac{2\pi\sqrt{-1}}{k} \left( -\boldsymbol{m}_2 \cdot \boldsymbol{H}^{-1} (\boldsymbol{\check{u}}_1 + \tfrac{1}{2} \boldsymbol{m}_1 - \widetilde{l}) + k \widetilde{q} \cdot \boldsymbol{\check{u}}_2 \right) \right) \cdot \sigma_{\widetilde{l}}(x, \,\cdot\,, \check{y}),
		\end{equation*}
		where $\sigma_l(x, \,\cdot\,, \check{y})$ is given in Definition \ref{Definition: Guassian base}.
	\end{lemma}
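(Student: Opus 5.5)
The computation is fiberwise, since $T$, $\Delta$ and $\Pi$ are all $\mathcal{C}^\infty(U)$-linear and act only along $\{x\}\times\mathbb{T}^{2n}$. The first point is that $e^{\frac1k\Delta}$ is meaningful on the single mode $f=e^{2\pi\sqrt{-1}m\cdot u}$, because $f$ is an eigenfunction of $\Delta$. Using \eqref{Equation: complex coordinates on fibre torus}, I would express $\partial_w$ and $\partial_{\overline w}$ through $\partial_{\boldsymbol{y}^1},\partial_{\boldsymbol{y}^2}$, as was done for $\partial_{\overline w}$ in Subsection \ref{Subsection: Invariant polarizarions and half form bundles}. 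Then $\Delta f=\mu_m f$, where $\mu_m$ is an explicit quadratic expression in $(\boldsymbol m_1,\boldsymbol m_2)$ involving $\Omega$, $\overline\Omega$, $(\operatorname{Im}\Omega)^{-1}$ and $\boldsymbol H^{-1}$. This gives
\[
e^{\frac1k\Delta}f=e^{\mu_m/k}f,
\]
so the statement reduces to computing $e^{\mu_m/k}\,T_f(\sigma_l)$. The variable $u=y+g(x)$ differs from $y$ by a base-dependent shift, which only contributes a phase constant along fibers.

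Next I would compute $f\cdot\sigma_l$ from Definition \ref{Definition: Guassian base}. The factor $e^{2\pi\sqrt{-1}\boldsymbol m_1\cdot\boldsymbol u^1}$ shifts the label $l$ in the exponent $(l+k\boldsymbol Hq)\cdot\boldsymbol u^1$ to $l+\boldsymbol m_1$. I would rewrite $l+\boldsymbol m_1=\widetilde l+k\boldsymbol H\widetilde q$ and absorb $\widetilde q$ by reindexing $q\mapsto q-\widetilde q$. This reindexing moves the argument of $N_\Omega$ and produces the phase $e^{2\pi\sqrt{-1}\widetilde q\cdot\boldsymbol{\check u}_2}$ from the term $\boldsymbol{\check u}_2\cdot(\boldsymbol y^2-q)$. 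The factor $e^{2\pi\sqrt{-1}\boldsymbol m_2\cdot\boldsymbol u^2}$ multiplies each Gaussian $N_\Omega(\boldsymbol u^2-q,\boldsymbol{\check u}_1-l)$ by a linear exponential in $\boldsymbol u^2$. Since all summands are indexed by the same lattice shifts, $f\sigma_l$ becomes a $\theta$-type sum with the same quasi-periodicity as $\sigma_{\widetilde l}$, but with a modified (non-holomorphic) Gaussian profile. Consequently $\Pi(f\sigma_l)$ has a nonzero component only along $\sigma_{\widetilde l}$.

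I would then compute that coefficient $\langle f\sigma_l,\sigma_{\widetilde l}\rangle$ via \eqref{Equation: orthogonal projection in terms of orthonormal basis}, using the Weil--Brezin form of the pairing (Proposition \ref{Proposition: fibrewsie sesquilinear pairing}). After unfolding the sum over $q$ against the integral over $[0,1]^{n}$, this becomes a single Gaussian integral over $\mathbb{R}^n$ in $\boldsymbol u^2$, of the form
\[
\int e^{\pi\sqrt{-1}k(a\cdot\Omega a-\overline{b\cdot\Omega b})+2\pi\sqrt{-1}\boldsymbol m_2\cdot\boldsymbol u^2}\,d^n\boldsymbol u^2,
\]
where $a$ and $b$ differ by the shift $(k\boldsymbol H)^{-1}\boldsymbol m_1$. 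Completing the square, together with the normalization $\sqrt{\det(\tfrac{1}{\sqrt{-1}}(\Omega-\overline\Omega))}$ and $k^{n/2}$, gives three contributions:
\begin{itemize}
\item the phase $e^{-\frac{2\pi\sqrt{-1}}{k}\boldsymbol m_2\cdot\boldsymbol H^{-1}(\boldsymbol{\check u}_1+\frac12\boldsymbol m_1-\widetilde l)}$, coming from the midpoint of the two Gaussian centers;
\item a real Gaussian damping factor $e^{-\nu_m/k}$, quadratic in $m$;
\item the phase $e^{2\pi\sqrt{-1}\widetilde q\cdot\boldsymbol{\check u}_2}$ from the reindexing, which matches $e^{\frac{2\pi\sqrt{-1}}{k}k\widetilde q\cdot\boldsymbol{\check u}_2}$ in the claimed formula.
\end{itemize}

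The main obstacle is showing $\nu_m=\mu_m$, i.e.\ that the Toeplitz damping factor is exactly cancelled by the eigenvalue of the backward heat operator. This identity is the non-formal version of Andersen's statement that Berezin--Toeplitz equals $e^{\Delta/k}$-conjugated Moyal. I would verify it by writing both quadratic forms in the complex variable $\zeta=\boldsymbol H^{-1}\boldsymbol m_1\cdot{}$ (combined with $\Omega$ and $\boldsymbol m_2$) and using $\Omega-\overline\Omega=2\sqrt{-1}\operatorname{Im}\Omega$. The normalization of $\Delta$ with the factor $-\frac1{2\pi}$ and $\operatorname{Im}\Omega$ in \eqref{Equation: fibrewise Dolbeault Laplacian} is presumably chosen precisely so that this works. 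A secondary check is the half-form factor $\sqrt{d^nw}$, which is parallel and therefore passes through the computation unchanged.
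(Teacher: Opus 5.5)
Your plan is correct and follows essentially the same route as the paper, which reduces the lemma to the matrix coefficients $\langle f_\Omega\sigma_l,\sigma_{\widetilde l}\rangle$, evaluates them as Gaussian integrals in the manner of Andersen's Theorem 5, and omits the details. One small correction: the components of $\Pi(f\sigma_l)$ along $\sigma_{l'}$, $l'\neq\widetilde l$, vanish because the $\boldsymbol y^1$-integral forces the $\boldsymbol u^1$-exponents $l+\boldsymbol m_1+k\boldsymbol Hq$ into the class of $\widetilde l$ modulo $k\boldsymbol H\mathbb{Z}^n$, not because of quasi-periodicity, which every section shares. The identity you flag as the main obstacle does hold: both exponents equal $\frac{\pi}{2k}\,\overline{\zeta}\cdot(\operatorname{Im}\Omega)^{-1}\zeta$ with $\zeta=\boldsymbol m_2+\Omega\boldsymbol H^{-1}\boldsymbol m_1$, and the prefactor $k^{n/2}\sqrt{\det(2\operatorname{Im}\Omega)}$ cancels the Gaussian determinant exactly.
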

	
	By the definition of the Toeplitz operator,
	\begin{equation*}
		T_{f_\Omega} (\sigma_l) = \sum_{\widetilde{l} \in \mathbb{Z}_{k\boldsymbol{H}}^n} \left\langle f_\Omega \sigma_l, \sigma_{\widetilde{l}} \right\rangle \sigma_{\widetilde{l}},
	\end{equation*}
	where, for brevity, we write $\sigma_l = \sigma_l(x, \,\cdot\,, \check{y})$ and $f_\Omega = \exp \left( \frac{1}{k} \Delta \right)(f)$. Hence, the statement reduces to the computation of the matrix coefficients $\left\langle f_\Omega \sigma_l, \sigma_{\widetilde{l}} \right\rangle$, which are expressed as Gaussian integrals. These computations are analogous to those carried out in the proof of Theorem 5 in \cite{And2005}, although the present setting involves additional parameters and more cumbersome notations. We therefore omit the details.\par
	We also provide an explicit formula for the action of a single Fourier mode on sections of $\mathbf{\check{E}}$ via the composition $T \circ e^{\frac{1}k\Delta}$.
	
	\begin{lemma}
		\label{Lemma: family Toeplitz operator}
		Let $m \in \mathbb{Z}^{2n}$ and define the function $f \in \mathcal{C}^\infty(U \times \mathbb{T}^{2n})$ by
		\begin{equation*}
			f(x, y) = \exp \left( 2\pi\sqrt{-1} m \cdot u \right).
		\end{equation*}
		Then for all $s \in \Gamma(U \times \check{\mathbb{T}}^{2n}, \mathbf{\check{E}})$,
		\begin{equation}
			\label{Equation: family Toeplitz operator}
			\left\langle T_{\exp\left(\frac{1}{k} \Delta\right)(f)} s \right\rangle (x, \boldsymbol{\check{y}}_1) = \exp \left( \tfrac{-2\pi\sqrt{-1}}{k} \boldsymbol{m}_2 \cdot \boldsymbol{H}^{-1} \left(\boldsymbol{\check{y}}_1 + \tfrac{1}{2} \boldsymbol{m}_1 \right) \right) \cdot \langle s \rangle (x, \boldsymbol{\check{y}}_1 + \boldsymbol{m}_1).
		\end{equation}
	\end{lemma}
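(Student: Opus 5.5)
The plan is to derive this lemma from Lemma~\ref{Lemma: Toeplitz operator} together with the expansion of sections in Proposition~\ref{Proposition: sections of local mirror B-brane}. The key input is that $T_{\exp(\frac1k\Delta)(f)}$ is $\mathcal{C}^\infty(U\times\check{\mathbb{T}}^{2n})$-linear, since the family projection $\Pi$ is. Moreover, Lemma~\ref{Lemma: Toeplitz operator} computes its action on each $\sigma_l(x,\cdot,\check{y})$ pointwise in $(x,\check{y})\in U\times\mathbb{R}^{2n}$. So I will take a section $s$ and write it via \eqref{Equation: sections in term of non-smooth unitary frame} as
\[
s=\sum_{l}\sigma_l\sum_{\check q\in\mathbb{Z}^n}\langle s\rangle(x,\boldsymbol{\check u}_1-l-k\boldsymbol{H}\check q)\,e^{2\pi\sqrt{-1}\check q\cdot\boldsymbol{\check u}_2}.
\]
I then apply the operator term by term. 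The coefficients are functions of $(x,\check y)$ only, so they pass through the operator. Each $\sigma_l$ is sent to the phase from Lemma~\ref{Lemma: Toeplitz operator} times $\sigma_{\widetilde l}$, where $\widetilde l\in\mathbb{Z}^n_{k\boldsymbol{H}}$ is the unique representative of $l+\boldsymbol{m}_1$ modulo $k\boldsymbol{H}\mathbb{Z}^n$, and $\widetilde q=(k\boldsymbol{H})^{-1}(l+\boldsymbol{m}_1-\widetilde l)$. Convergence and the interchange of sum and operator are justified by the rapid decay of $\langle s\rangle$ and the continuity of $\Pi$.

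Next I reindex. The map $l\mapsto\widetilde l$ is a bijection of $\mathbb{Z}^n_{k\boldsymbol{H}}$. Setting $\check q'=\check q+\widetilde q$, the argument $\boldsymbol{\check u}_1-l-k\boldsymbol{H}\check q$ becomes $\boldsymbol{\check u}_1+\boldsymbol{m}_1-\widetilde l-k\boldsymbol{H}\check q'$. The phase $e^{2\pi\sqrt{-1}\widetilde q\cdot\boldsymbol{\check u}_2}$ from Lemma~\ref{Lemma: Toeplitz operator} combines with $e^{2\pi\sqrt{-1}\check q\cdot\boldsymbol{\check u}_2}$ to give $e^{2\pi\sqrt{-1}\check q'\cdot\boldsymbol{\check u}_2}$. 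The remaining factor is $\exp\bigl(-\tfrac{2\pi\sqrt{-1}}{k}\boldsymbol{m}_2\cdot\boldsymbol{H}^{-1}(\boldsymbol{\check u}_1+\tfrac12\boldsymbol{m}_1-\widetilde l)\bigr)$. I rewrite it in terms of the new argument $\boldsymbol{\check y}_1':=\boldsymbol{\check u}_1-\widetilde l-k\boldsymbol{H}\check q'$, using that $\boldsymbol{m}_2\cdot\boldsymbol{H}^{-1}k\boldsymbol{H}\check q'/k=\boldsymbol{m}_2\cdot\check q'\in\mathbb{Z}$. The factor therefore becomes $\exp\bigl(-\tfrac{2\pi\sqrt{-1}}{k}\boldsymbol{m}_2\cdot\boldsymbol{H}^{-1}(\boldsymbol{\check y}_1'+\tfrac12\boldsymbol{m}_1)\bigr)$, which depends only on the combined variable $\boldsymbol{\check y}_1'$.

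The resulting expression has exactly the shape \eqref{Equation: sections in term of non-smooth unitary frame} with coefficient function
\[
\boldsymbol{\check y}_1\mapsto \exp\bigl(-\tfrac{2\pi\sqrt{-1}}{k}\boldsymbol{m}_2\cdot\boldsymbol{H}^{-1}(\boldsymbol{\check y}_1+\tfrac12\boldsymbol{m}_1)\bigr)\langle s\rangle(x,\boldsymbol{\check y}_1+\boldsymbol{m}_1).
\]
This function lies in $\mathcal{C}^\infty(U,\mathcal{S}(\check{\mathbb{R}}^n))$. The uniqueness in Proposition~\ref{Proposition: sections of local mirror B-brane} then gives \eqref{Equation: family Toeplitz operator}.

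The main obstacle is the bookkeeping in this reindexing step. The parameter $\check u_1=\check y_1-k\check g_1$ and the shift $\boldsymbol{m}_1$ must be tracked carefully, and the leftover phase must be genuinely a function of the single variable $\boldsymbol{\check y}_1'$, with the integer term $\boldsymbol{m}_2\cdot\check q'$ disappearing. If a residual constant phase involving $\check g$ appears, I would absorb it by checking the convention identifying $\langle s\rangle$'s argument with $\boldsymbol{\check u}_1$ rather than $\boldsymbol{\check y}_1$.
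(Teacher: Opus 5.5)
Your proposal is correct and takes essentially the same route as the paper: expand $s$ in the Gaussian frame $\{\sigma_l\}$, apply Lemma~\ref{Lemma: Toeplitz operator} to each $\sigma_l$, shift $\check q$ by $\widetilde q(l)$, drop the trivial phase $e^{-2\pi\sqrt{-1}\boldsymbol{m}_2\cdot\check q'}=1$, and relabel using the bijection $l\mapsto\widetilde l$. The residual $\check g$-phase you worried about does not occur, because the leftover phase is written in the same variable $\boldsymbol{\check u}_1-\widetilde l-k\boldsymbol{H}\check q'$ that appears as the argument of $\langle s\rangle$.
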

	\begin{proof}
		Recall from Proposition \ref{Proposition: sections of local mirror B-brane} that
		\begin{equation*}
			s(x, y, \check{y}) = \sum_{l \in \mathbb{Z}_{k\boldsymbol{H}}^n} \sigma_l(x, y, \check{y}) \sum_{\check{q} \in \mathbb{Z}^n} \langle s \rangle \left( x, \boldsymbol{\check{u}}_1 - l - k\boldsymbol{H}\check{q} \right) \cdot e^{2\pi\sqrt{-1} \check{q} \cdot \boldsymbol{\check{u}}_2}.
		\end{equation*}
		For all $l \in \mathbb{Z}_{k\boldsymbol{H}}^n$, let $\widetilde{l}$ be the unique element in $\mathbb{Z}_{k\boldsymbol{H}}^n$ such that $l + \boldsymbol{m}_1 - \widetilde{l} \in k\boldsymbol{H}\mathbb{Z}^n$, and let $\widetilde{q}(l) = (k\boldsymbol{H})^{-1} (l + \boldsymbol{m}_1 - \widetilde{l}) \in \mathbb{Z}^n$. By Lemma \ref{Lemma: Toeplitz operator},
		\begin{align*}
			\left( T_{\exp\left(\frac{1}{k} \Delta\right)(f)} s \right) (x, y, \check{y}) = & \sum_{l \in \mathbb{Z}_{k\boldsymbol{H}}^n} \sigma_{\widetilde{l}}(x, y, \check{y}) \sum_{\check{q} \in \mathbb{Z}^n} \langle s \rangle \left( x, \boldsymbol{\check{u}}_1 - l - k\boldsymbol{H}\check{q} \right)\\
			& \cdot e^{\frac{2\pi\sqrt{-1}}{k} \left( -\boldsymbol{m}_2 \cdot \boldsymbol{H}^{-1} (\boldsymbol{\check{u}}_1 + \frac{1}{2} \boldsymbol{m}_1 - \widetilde{l}) + k \widetilde{q}(l) \cdot \boldsymbol{\check{u}}_2 \right) + 2\pi\sqrt{-1} \check{q} \cdot \boldsymbol{\check{u}}_2}.
		\end{align*}
		For the right hand side of the above equality, we first replace the index $\check{q}$ by $\check{q} - \widetilde{q}(l)$, then apply the equality $- l + k\boldsymbol{H} \widetilde{q}(l) = \boldsymbol{m}_1 - \widetilde{l}$, and finally multiply the summand by the factor $e^{-2\pi\sqrt{-1} \boldsymbol{m}_2 \cdot (k\boldsymbol{H})^{-1} (-k\boldsymbol{H}\check{q})}$, which is indeed equal to $1$. As a result, it yields the term
		\begin{align*}
			& \sum_{l \in \mathbb{Z}_{k\boldsymbol{H}}^n} \sigma_{\widetilde{l}}(x, y, \check{y}) \sum_{\check{q} \in \mathbb{Z}^n} \langle s \rangle \left( x, \boldsymbol{\check{u}}_1 + \boldsymbol{m}_1 - \widetilde{l} - k\boldsymbol{H}\check{q} \right) e^{\frac{2\pi\sqrt{-1}}{k} \left( -\boldsymbol{m}_2 \cdot \boldsymbol{H}^{-1} (\boldsymbol{\check{u}}_1 + \frac{1}{2} \boldsymbol{m}_1 - \widetilde{l} - k\boldsymbol{H} \check{q}) \right) + 2\pi\sqrt{-1} \check{q} \cdot \boldsymbol{\check{u}}_2}.
		\end{align*}
		Since $l \mapsto \widetilde{l}$ forms a bijection $\mathbb{Z}_{k\boldsymbol{H}}^n \to \mathbb{Z}_{k\boldsymbol{H}}^n$, we can replace $\widetilde{l}$ by $l$ in the summand of the above term, concluding the proof.
	\end{proof}
	
	In particular, for $m \in \mathbb{Z}^{2n}$ and the function $f \in \mathcal{C}^\infty(U \times \mathbb{T}^{2n})$ defined by
	\begin{equation*}
		f(z) = e^{2\pi (-\boldsymbol{m}_2 \cdot \boldsymbol{H}^{-1} \boldsymbol{z}^1 + \boldsymbol{m}_1 \cdot \boldsymbol{H}^{-1} \boldsymbol{z}^2)},
	\end{equation*}
	we have for all $s \in \Gamma(U \times \check{\mathbb{T}}^{2n}, \mathbf{\check{E}})$,
	\begin{equation*}
		\left\langle T_{\exp\left(\frac{1}{k} \Delta\right)(f)} s \right\rangle (x, \boldsymbol{\check{y}}_1) = e^{2\pi \left( -\boldsymbol{m}_2 \cdot \boldsymbol{H}^{-1} \left( \boldsymbol{\check{z}}_1 + \frac{\sqrt{-1}}{2k} \boldsymbol{m}_1 \right) + \boldsymbol{m}_1 \cdot \boldsymbol{H}^{-1} \boldsymbol{x}_2 \right)} \cdot \langle s \rangle (x, \boldsymbol{\check{y}}_1 + \boldsymbol{m}_1).
	\end{equation*}
	
	Our next goal is to show that the composition $T \circ e^{\frac{1}k\Delta}$ extends to a map from $\mathcal{C}^\infty(U \times \mathbb{T}^{2n})$ to $\Gamma(U \times \check{\mathbb{T}}^{2n}, \operatorname{End}(\mathbf{\check{E}}))$. We begin with some preparatory observations.
	
	\subsubsection{Operator-theoretic characterization of $\Gamma(U \times \check{\mathbb{T}}^{2n}, \operatorname{End}(\mathbf{\check{E}}))$}
	\quad\par
	In Proposition \ref{Proposition: sections of local mirror B-brane}, we established an isomorphism of topological vector spaces
	\begin{equation*}
		\Gamma( U \times \check{\mathbb{T}}^{2n}, \mathbf{\check{E}} ) \to \mathcal{C}^\infty(U, \mathcal{S}(\check{\mathbb{R}}^n)), \quad s \mapsto \langle s \rangle.
	\end{equation*}
	This induces an injective $\mathbb{C}$-linear map
	\begin{equation}
		\label{Equation: characterization of sectino of endo-bundle}
		\Gamma(U \times \check{\mathbb{T}}^{2n}, \operatorname{End}(\mathbf{\check{E}})) \to \operatorname{End}_\mathbb{C} ( \mathcal{C}^\infty(U, \mathcal{S}(\check{\mathbb{R}}^n)) ), \quad \Psi \mapsto \langle \Psi \rangle,
	\end{equation}
	where $\langle \Psi \rangle$ is the unique $\mathbb{C}$-linear operator satisfying $\langle \Psi \rangle \langle s \rangle = \langle \Psi(s) \rangle$ for all $s \in \Gamma( U \times \check{\mathbb{T}}^{2n}, \mathbf{\check{E}} )$.\par
	We now reformulate Lemma \ref{Lemma: family Toeplitz operator}. For $\boldsymbol{m}_1 \in \mathbb{Z}^n$, define the shift operator
	\begin{equation}
		\label{Equation: shift operator}
		\operatorname{S}_{\boldsymbol{m}_1}: \mathcal{C}^\infty(U \times \check{\mathbb{R}}^n) \to \mathcal{C}^\infty(U \times \check{\mathbb{R}}^n), \quad \operatorname{S}_{\boldsymbol{m}_1}(s)(x, \boldsymbol{\check{y}}_1) = s(x, \boldsymbol{\check{y}}_1 + \boldsymbol{m}_1).
	\end{equation}
	For a function $f \in \mathcal{C}^\infty(U \times \mathbb{T}^{2n})$, consider its fiberwise Fourier expansion
	\begin{equation}
		\label{Equation: half-fibrewise Fourier expansion}
		f(x, y) = \sum_{\boldsymbol{m}_1 \in \mathbb{Z}^n} f_{\boldsymbol{m}_1}(x, \boldsymbol{u}^2) \cdot e^{2\pi\sqrt{-1} \boldsymbol{m}_1 \cdot \boldsymbol{u}^1},
	\end{equation}
	where $f_{\boldsymbol{m}_1} \in \mathcal{C}^\infty(U \times \mathbb{T}^n)$. Define the rescaled torus $\check{\mathbb{T}}_{k\boldsymbol{H}}^n := \check{\mathbb{R}}^n / (k\boldsymbol{H} \mathbb{Z}^n)$, and associate to each $f_{\boldsymbol{m}_1}$ a function $\check{f}_{\boldsymbol{m}_1} \in \mathcal{C}^\infty(U \times \check{\mathbb{T}}_{k\boldsymbol{H}}^n)$ by
	\begin{equation}
		\label{Equation: half-fibrewise Fourier coefficient}
		\check{f}_{\boldsymbol{m}_1}(x, \boldsymbol{\check{y}}_1) := f_{\boldsymbol{m}_1}( x, -\tfrac{1}{k} \boldsymbol{H}^{-1} (\boldsymbol{\check{y}}_1 + \tfrac{1}{2} \boldsymbol{m}_1) ).
	\end{equation}
	By Lemma \ref{Lemma: family Toeplitz operator}, if only finitely many $f_{\boldsymbol{m}_1}$'s are nonzero and each has finitely many nonzero fiberwise Fourier modes in the $\boldsymbol{u}^2$-variable, then
	\begin{equation}
		\label{Equation: series of shift operators}
		\left\langle T_{\exp\left(\frac{1}{k} \Delta\right)(f)} \right\rangle = \sum_{\boldsymbol{m}_1 \in \mathbb{Z}^n} \check{f}_{\boldsymbol{m}_1} \operatorname{S}_{\boldsymbol{m}_1}.
	\end{equation}
	Here, by a slight abuse of notation, $\check{f}_{\boldsymbol{m}_1}$ denotes the operator of multiplication by this function, and $\operatorname{S}_{\boldsymbol{m}_1}$ its restriction to $\mathcal{C}^\infty(U, \mathcal{S}(\check{\mathbb{R}}^n))$.\par
	Our strategy for constructing the local mirror transform is to show that, for a general function $f \in \mathcal{C}^\infty(U \times \mathbb{T}^{2n})$, the right-hand side of \eqref{Equation: series of shift operators} remains well-defined and lies in the image of $\Gamma(U \times \check{\mathbb{T}}^{2n}, \operatorname{End}(\mathbf{\check{E}}))$ under the injective map \eqref{Equation: characterization of sectino of endo-bundle}. To this end, we first characterize this image. As before, we identify functions $f \in \mathcal{C}^\infty(U \times \check{\mathbb{T}}^n)$ with periodic smooth $\mathbb{C}$-valued functions on $U \times \check{\mathbb{R}}^n$.
	
	\begin{lemma}
		\label{Lemma: characterization of sections of endo-bundle}
		Let $\Xi: \mathcal{C}^\infty(U, \mathcal{S}(\check{\mathbb{R}}^n)) \to \mathcal{C}^\infty(U, \mathcal{S}(\check{\mathbb{R}}^n))$ be a $\mathbb{C}$-linear operator. Then there exists $\Psi \in \Gamma(U \times \check{\mathbb{T}}^{2n}, \operatorname{End}(\mathbf{\check{E}}))$ such that $\Xi = \langle \Psi \rangle$ if and only if $\Xi$ satisfies the following conditions:
		\begin{itemize}
			\item $\Xi$ is continuous;
			\item for all $f \in \mathcal{C}^\infty(U \times \check{\mathbb{T}}^n)$ and $s \in \mathcal{C}^\infty(U, \mathcal{S}(\check{\mathbb{R}}^n))$, $\Xi(f \cdot s) = f \cdot \Xi(s)$;
			\item for all $\check{q} \in \mathbb{Z}^n$, $\Xi \circ \operatorname{S}_{k\boldsymbol{H}\check{q}} = \operatorname{S}_{k\boldsymbol{H}\check{q}} \circ \Xi$, where $\operatorname{S}_{k\boldsymbol{H}\check{q}}$ is the shift operator defined in \eqref{Equation: shift operator}.
		\end{itemize}
	\end{lemma}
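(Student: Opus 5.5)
We need to characterize the image of $\Psi\mapsto\langle\Psi\rangle$. The plan is to transport everything through the topological isomorphism $s\mapsto\langle s\rangle$ of Proposition \ref{Proposition: sections of local mirror B-brane}. First we record how the $\mathcal{C}^\infty(U\times\check{\mathbb{T}}^{2n})$-module structure on $\Gamma(U\times\check{\mathbb{T}}^{2n},\mathbf{\check{E}})$ looks on the Schwartz side. By \eqref{Equation: sections in term of non-smooth unitary frame}, multiplying $s$ by a function $\phi(x,\check{y})$ that depends only on $\boldsymbol{\check{y}}_1$ (and is $\mathbb{Z}^n$-periodic in it) corresponds to multiplying $\langle s\rangle$ by $\phi(x,\boldsymbol{\check{u}}_1)$. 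Here a $\mathbb{Z}^n$-periodic function of $\boldsymbol{\check{y}}_1$ is in particular $k\boldsymbol{H}\mathbb{Z}^n$-periodic, which fits the second condition with $\check{\mathbb{T}}^n$. Multiplying $s$ by $e^{2\pi\sqrt{-1}\check{p}\cdot\boldsymbol{\check{u}}_2}$ with $\check{p}\in\mathbb{Z}^n$ corresponds, after reindexing $\check{q}\mapsto\check{q}-\check{p}$ in the $\check{q}$-sum, to the shift $\operatorname{S}_{-k\boldsymbol{H}\check{p}}$ on $\langle s\rangle$. Since a section $\Psi$ of $\operatorname{End}(\mathbf{\check{E}})$ is exactly a $\mathcal{C}^\infty(U\times\check{\mathbb{T}}^{2n})$-linear endomorphism of the section module, the \emph{only if} direction follows. $\Psi$ commutes with multiplication by functions of $(x,\boldsymbol{\check{y}}_1)$, which gives the second condition. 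It also commutes with the characters $e^{2\pi\sqrt{-1}\check{p}\cdot\boldsymbol{\check{y}}_2}$, which gives the third condition. Continuity holds because $\Psi$ is a smooth bundle endomorphism, hence continuous on sections in the $\mathcal{C}^\infty$ topology, and the identification is a homeomorphism.

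For the \emph{if} direction, define $\Psi:=\langle\cdot\rangle^{-1}\circ\Xi\circ\langle\cdot\rangle$, a continuous linear endomorphism of $\Gamma(U\times\check{\mathbb{T}}^{2n},\mathbf{\check{E}})$. By the dictionary above, $\Psi$ commutes with multiplication by $\mathcal{C}^\infty(U\times\check{\mathbb{T}}^n_{\boldsymbol{\check{y}}_1})$ and by the characters $e^{2\pi\sqrt{-1}\check{p}\cdot\boldsymbol{\check{y}}_2}$. It also commutes with multiplication by functions of $x$: we view $\mathcal{C}^\infty(U\times\check{\mathbb{T}}^n)$ as containing $\mathcal{C}^\infty(U)$, or otherwise use that the identification is $\mathcal{C}^\infty(U)$-linear. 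Multiplication by such characters and such functions generates a dense subalgebra of $\mathcal{C}^\infty(U\times\check{\mathbb{T}}^{2n})$: we take finite Fourier sums in $\boldsymbol{\check{y}}_2$ with coefficients in $\mathcal{C}^\infty(U\times\check{\mathbb{T}}^n)$, and density follows from Fourier series convergence in the compact-open $\mathcal{C}^\infty$ topology. Multiplication is jointly continuous on the module and $\Psi$ is continuous, so $\Psi$ is $\mathcal{C}^\infty(U\times\check{\mathbb{T}}^{2n})$-linear.

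The last step is the standard fact that a $\mathcal{C}^\infty$-linear endomorphism of the module of smooth sections of a finite-rank vector bundle is given by a smooth section of $\operatorname{End}$. This follows from the usual localization argument: the value at a point depends only on the value of the section there, via bump functions and a local frame. Smoothness comes from applying $\Psi$ to local frames, which exist by the remark following Proposition \ref{Proposition: sections of local mirror B-brane}, extended by cutoffs. Thus $\Xi=\langle\Psi\rangle$.

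The main obstacle is the density and continuity argument. One must check that the module action on the Schwartz side really is continuous in the Fréchet topology, and that finite Fourier truncations in $\boldsymbol{\check{y}}_2$ converge in the topology used. I would handle this via the explicit formula \eqref{Equation: sections in term of non-smooth unitary frame} together with the Schwartz seminorm estimates of the Appendix.
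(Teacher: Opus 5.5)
Your proposal is correct and follows essentially the same route as the paper. Via the frame expansion of Proposition \ref{Proposition: sections of local mirror B-brane}, you translate multiplication by $\boldsymbol{\check{y}}_2$-independent functions and by the characters $e^{2\pi\sqrt{-1}\check{p}\cdot\boldsymbol{\check{y}}_2}$ into the $\mathcal{C}^\infty(U\times\check{\mathbb{T}}^n)$-module structure and the shifts $\operatorname{S}_{k\boldsymbol{H}\check{p}}$, and you get the converse by density and continuity. You also spell out the continuity of the module action and the tensor-characterization step, which the paper leaves implicit.

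Two harmless slips:
\begin{itemize}
\item The reindexing actually produces $\operatorname{S}_{+k\boldsymbol{H}\check{p}}$, not $\operatorname{S}_{-k\boldsymbol{H}\check{p}}$. This is irrelevant because $\check{p}$ ranges over $\mathbb{Z}^n$.
\item The $\check{\mathbb{T}}^n$ in the second condition is $\check{\mathbb{R}}^n/\mathbb{Z}^n$, not the rescaled torus $\check{\mathbb{T}}^n_{k\boldsymbol{H}}$. That is exactly the class your dictionary produces, so drop the aside about $k\boldsymbol{H}\mathbb{Z}^n$-periodicity. With the rescaled torus the ``only if'' direction would actually fail, since $\langle\Psi\rangle$ generally involves shifts $\operatorname{S}_{\boldsymbol{m}_1}$ with arbitrary $\boldsymbol{m}_1\in\mathbb{Z}^n$.
\end{itemize}
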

	\begin{proof}
		We first record a common observation. Using \eqref{Equation: sections in term of non-smooth unitary frame}, commutation with multiplication by smooth $\mathbb{C}$-valued functions on $U \times \check{\mathbb{T}}^{2n}$ independent of $\boldsymbol{\check{y}}_2$ together with commutation with multiplication by functions of the form $e^{2\pi\sqrt{-1} \boldsymbol{\check{m}}^2 \cdot \boldsymbol{\check{y}}_2}$ with $\boldsymbol{\check{m}}^2 \in \mathbb{Z}^n$ is equivalent to the module property with respect to $\mathcal{C}^\infty(U \times \check{\mathbb{T}}^n)$ and to commutation with the shift operators $\operatorname{S}_{k\boldsymbol{H}\check{q}}$.\par
		Suppose $\Psi \in \Gamma(U \times \check{\mathbb{T}}^{2n}, \operatorname{End}(\mathbf{\check{E}}))$. Via the isomorphism \eqref{Equation: charaterization of sections of mirror brane}, the operator
		\begin{equation*}
			\Psi: \Gamma(U \times \check{\mathbb{T}}^{2n}, \mathbf{\check{E}}) \to \Gamma(U \times \check{\mathbb{T}}^{2n}, \mathbf{\check{E}})
		\end{equation*}
		induces a continuous operator $\Xi := \langle \Psi \rangle$. Moreover, $\Psi$ is $\mathcal{C}^\infty(U \times \check{\mathbb{T}}^{2n})$-linear. it commutes with multiplication by all smooth functions, and in particular with those appearing in the above observation. The desired properties of $\Xi$ therefore follow.\par
		Conversely, suppose $\Xi$ satisfies the stated conditions, and let $\Psi$ be the corresponding operator under \eqref{Equation: charaterization of sections of mirror brane}. By the assumptions on $\Xi$ and the observation above, $\Psi$ commutes with multiplication by the same class of functions. Since these functions generate a dense subalgebra of $\mathcal{C}^\infty(U \times \check{\mathbb{T}}^{2n})$, continuity implies that $\Psi$ commutes with multiplication by all smooth functions. Hence $\Psi$ is $\mathcal{C}^\infty(U \times \check{\mathbb{T}}^{2n})$-linear, so $\Psi \in \Gamma(U \times \check{\mathbb{T}}^{2n}, \operatorname{End}(\mathbf{\check{E}}))$ and $\Xi = \langle \Psi \rangle$.
	\end{proof}
	
	\subsubsection{Family twisted Toeplitz operators}
	\quad\par
	\label{Subsubsection: family twisted Toeplitz operators on local branes}
	
	We equip both $\mathcal{C}^\infty(U \times \mathbb{T}^{2n})$ and $\Gamma(U \times \check{\mathbb{T}}^{2n}, \mathbf{\check{E}})$ with the compact-open $\mathcal{C}^\infty$-topology. On $\mathcal{C}^\infty(U \times \mathbb{T}^{2n})$, this topology may be described in terms of the fiberwise Fourier transform. It is induced by the family of seminorms defined as follows: for every compact subset $K \subset U$ and multi-indices $\mu \in \mathbb{N}^{2n}$ and $\nu, \eta \in \mathbb{N}^n$,
	\begin{equation*}
		\lVert f \rVert_{K, \mu, \nu, \eta} := \sup_{(x, \boldsymbol{\check{y}}_1, \boldsymbol{m}_1) \in K \times \check{\mathbb{R}}^n \times \mathbb{Z}^n} \left\lvert (\boldsymbol{m}_1)^\eta \cdot \frac{\partial^{\lvert \mu \rvert + \lvert \nu \rvert} \check{f}_{\boldsymbol{m}_1}}{\partial x^\mu \partial (\boldsymbol{\check{y}}_1)^\nu} (x, \boldsymbol{\check{y}}_1) \right\rvert.
	\end{equation*}
	On the other hand, recall that the standard Fr\'echet topology on $\mathcal{C}^\infty(U, \mathcal{S}(\check{\mathbb{R}}^n))$ is induced by the seminorms: for every compact subset $K \subset U$ and multi-indices $\mu \in \mathbb{N}^{2n}$ and $\nu, \eta \in \mathbb{N}^n$,
	\begin{equation*}
		\lVert s \rVert_{K, \mu, \nu, \eta} := \sup_{(x, \boldsymbol{\check{y}}_1) \in K \times \check{\mathbb{R}}^n} \left\lvert (\boldsymbol{\check{y}}_1)^\eta \cdot \frac{\partial^{\lvert \mu \rvert + \lvert \nu \rvert} s}{\partial x^\mu \partial (\boldsymbol{\check{y}}_1)^\nu} (x, \boldsymbol{\check{y}}_1) \right\rvert.
	\end{equation*}
	
	\begin{proposition}
		\label{Proposition: absolute convergence}
		Let $f \in \mathcal{C}^\infty(U \times \mathbb{T}^{2n})$ and $s \in \Gamma(U \times \check{\mathbb{T}}^{2n}, \mathbf{\check{E}})$. Then the series
		\begin{equation}
			\label{Equation: absolutely convergent series}
			\sum_{m \in \mathbb{Z}^{2n}}  \check{f}_{\boldsymbol{m}_1} \cdot \operatorname{S}_{\boldsymbol{m}_1}(\langle s \rangle)
		\end{equation}
		converges absolutely in $\mathcal{C}^\infty(U, \mathcal{S}(\check{\mathbb{R}}^n))$, where $\check{f}_{\boldsymbol{m}_1}$ are defined in \eqref{Equation: half-fibrewise Fourier coefficient}. Consequently, there is a well defined $\mathcal{C}^\infty(U)$-linear map
		\begin{equation}
			\label{Equation: quantization map}
			\Phi: \mathcal{C}^\infty(U \times \mathbb{T}^{2n}) \to \Gamma(U \times \check{\mathbb{T}}^{2n}, \operatorname{End}(\mathbf{\check{E}})), \quad f \mapsto \Phi_f,
		\end{equation}
		where for all $s \in \Gamma(U \times \check{\mathbb{T}}^{2n}, \mathbf{\check{E}})$, $\langle \Phi_f (s) \rangle$ is given by \eqref{Equation: absolutely convergent series}, and the map
		\begin{equation}
			\label{Equation: continuity of mirror map}
			\mathcal{C}^\infty(U \times \mathbb{T}^{2n}) \times \Gamma(U \times \check{\mathbb{T}}^{2n}, \mathbf{\check{E}}) \to \Gamma(U \times \check{\mathbb{T}}^{2n}, \mathbf{\check{E}}), \quad (f, s) \mapsto \Phi_f s,
		\end{equation}
		is continuous.
	\end{proposition}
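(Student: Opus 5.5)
The plan is to reduce everything to seminorm estimates on the two Fréchet spaces introduced just before the statement, and then to invoke Lemma \ref{Lemma: characterization of sections of endo-bundle} to see that the limit operator is a section of $\operatorname{End}(\mathbf{\check{E}})$. Since the sum in \eqref{Equation: absolutely convergent series} effectively runs only over $\boldsymbol{m}_1 \in \mathbb{Z}^n$ (the $\boldsymbol{m}_2$-dependence is already absorbed into $\check{f}_{\boldsymbol{m}_1}$), I would first fix a compact $K \subset U$ and multi-indices $\mu,\nu,\eta$. Using the Leibniz rule, I would then bound $\lVert \check{f}_{\boldsymbol{m}_1} \operatorname{S}_{\boldsymbol{m}_1}\langle s\rangle \rVert_{K,\mu,\nu,\eta}$ by a finite sum of products
\begin{equation*}
\sup_{K\times\check{\mathbb{R}}^n}\bigl|\partial^{\mu'}_x\partial^{\nu'}_{\boldsymbol{\check{y}}_1}\check{f}_{\boldsymbol{m}_1}\bigr| \cdot \sup_{K\times\check{\mathbb{R}}^n}\bigl|(\boldsymbol{\check{y}}_1)^\eta\,\partial^{\mu''}_x\partial^{\nu''}_{\boldsymbol{\check{y}}_1}\langle s\rangle(x,\boldsymbol{\check{y}}_1+\boldsymbol{m}_1)\bigr|.
\end{equation*}
The weight $(\boldsymbol{\check{y}}_1)^\eta$ must be transported across the shift. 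I would write $\boldsymbol{\check{y}}_1 = (\boldsymbol{\check{y}}_1+\boldsymbol{m}_1)-\boldsymbol{m}_1$ and expand binomially, which bounds the second factor by $C\sum_{\eta'\le\eta}|\boldsymbol{m}_1^{\eta-\eta'}|\,\lVert\langle s\rangle\rVert_{K,\mu'',\nu'',\eta'}$. This is a polynomial in $\boldsymbol{m}_1$ times seminorms of $\langle s\rangle$, and it is finite because $\langle s\rangle\in\mathcal{C}^\infty(U,\mathcal{S}(\check{\mathbb{R}}^n))$ by Proposition \ref{Proposition: sections of local mirror B-brane}.

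For the first factor I use the seminorms on $\mathcal{C}^\infty(U\times\mathbb{T}^{2n})$ stated above. Rapid decay of the partial Fourier coefficients $f_{\boldsymbol{m}_1}$ in $\boldsymbol{m}_1$, uniformly together with all derivatives in $x$ and $\boldsymbol{u}^2$, gives $\sup|\partial\check{f}_{\boldsymbol{m}_1}|\le C_N(1+|\boldsymbol{m}_1|)^{-N}\lVert f\rVert_{K,\mu',\nu',\eta_N}$. Two points need care here. First, the change of variables \eqref{Equation: half-fibrewise Fourier coefficient} rescales $\boldsymbol{\check{y}}_1$ by $\frac1k\boldsymbol{H}^{-1}$, which only introduces constants. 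Second, the chain rule in $x$ through $\boldsymbol{u}^2=\boldsymbol{y}^2+\boldsymbol{g}^2(x)$ brings in derivatives of $g$, and these are bounded on $K$. Choosing $N$ larger than $\deg(\boldsymbol{m}_1^{\eta-\eta'})+n+1$ makes the series over $\boldsymbol{m}_1$ summable. This gives absolute convergence, together with an estimate
\begin{equation*}
\Bigl\lVert \sum_{\boldsymbol{m}_1}\check f_{\boldsymbol{m}_1}\operatorname{S}_{\boldsymbol{m}_1}\langle s\rangle\Bigr\rVert_{K,\mu,\nu,\eta}\le C\,\lVert f\rVert_{\ast}\,\lVert\langle s\rangle\rVert_{\ast\ast}
\end{equation*}
by finitely many seminorms. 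Bilinearity plus this estimate yields joint continuity of \eqref{Equation: continuity of mirror map}, once we compose with the topological isomorphism $s\mapsto\langle s\rangle$.

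It remains to show that $\Xi:=\sum\check f_{\boldsymbol{m}_1}\operatorname{S}_{\boldsymbol{m}_1}$ equals $\langle\Phi_f\rangle$ for some $\Phi_f\in\Gamma(U\times\check{\mathbb{T}}^{2n},\operatorname{End}(\mathbf{\check{E}}))$. I check the three conditions of Lemma \ref{Lemma: characterization of sections of endo-bundle} in turn.

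\textbf{Continuity.} This follows from the estimate above.

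\textbf{Module property.} Let $h\in\mathcal{C}^\infty(U\times\check{\mathbb{T}}^n)$. Each term satisfies $\operatorname{S}_{\boldsymbol{m}_1}(h\,s)=\operatorname{S}_{\boldsymbol{m}_1}(h)\operatorname{S}_{\boldsymbol{m}_1}(s)$, so commuting $h$ past the shift requires $h$ to be $\mathbb{Z}^n$-periodic. This is the subtle point, and I expect it to be the main obstacle. I would resolve it by checking the convention that identifies $\check{\mathbb{T}}^n$ with the $\boldsymbol{\check{y}}_1$-torus of period $\mathbb{Z}^n$, so that invariance under integer shifts holds. If the convention were instead $k\boldsymbol{H}\mathbb{Z}^n$-periodicity, I would go back to the observation in the proof of Lemma \ref{Lemma: characterization of sections of endo-bundle} and re-derive which functions must commute.

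\textbf{Commutation with $\operatorname{S}_{k\boldsymbol{H}\check q}$.} Shifts commute with each other. Moreover $\check f_{\boldsymbol{m}_1}$ is $k\boldsymbol{H}\mathbb{Z}^n$-periodic by construction, since it lives on $\check{\mathbb{T}}^n_{k\boldsymbol{H}}$, so each term commutes with $\operatorname{S}_{k\boldsymbol{H}\check q}$.

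Finally, $\mathcal{C}^\infty(U)$-linearity is immediate, because the coefficients $\check f_{\boldsymbol{m}_1}$ depend $\mathcal{C}^\infty(U)$-linearly on $f$.
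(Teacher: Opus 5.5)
Your proposal is correct and follows essentially the same route as the paper. The paper also uses a Leibniz-rule bound, carries the weight $(\boldsymbol{\check{y}}_1)^\eta$ across the shift by a binomial expansion, absorbs the resulting polynomial growth in $\boldsymbol{m}_1$ into the rapid decay of $\check{f}_{\boldsymbol{m}_1}$ (packaged as the summable factor $1/\lvert (\boldsymbol{m}_1)^{2\boldsymbol{1}_{\boldsymbol{m}_1}}\rvert$), and then checks the three criteria of Lemma \ref{Lemma: characterization of sections of endo-bundle}. The point you flagged resolves as you hoped: $\check{\mathbb{T}}^n$ here means $\check{\mathbb{R}}^n/\mathbb{Z}^n$, kept distinct from $\check{\mathbb{T}}^n_{k\boldsymbol{H}}$, so $\operatorname{S}_{\boldsymbol{m}_1}(h)=h$ exactly as the paper uses.
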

	
	\begin{definition}
		\label{Definition: family twisted Toeplitz operator}
		Let $f \in \mathcal{C}^\infty(U \times \mathbb{T}^{2n})$. The \emph{family twisted Toeplitz operator associated with} $f$ is the $\mathcal{C}^\infty(U \times \check{\mathbb{T}}^{2n})$-linear map
		\begin{equation*}
			\Phi_f: \Gamma(U \times \check{\mathbb{T}}^{2n}, \mathbf{\check{E}}) \to \Gamma(U \times \check{\mathbb{T}}^{2n}, \mathbf{\check{E}})
		\end{equation*}
		defined in \eqref{Equation: quantization map}.
	\end{definition}
	
	\begin{proof}[\myproof{Proposition}{\ref{Proposition: absolute convergence}}]
		Fix a compact subset $K \subset U$ and multi-indices $\mu \in \mathbb{N}^{2n}$ and $\nu, \eta \in \mathbb{N}^n$. For each $q \in \mathbb{Z}^n$, define a multi-index $\boldsymbol{1}_q \in \mathbb{N}^n$ by $\boldsymbol{1}_q(i) = 1$ for $q_i \neq 0$; $\boldsymbol{1}_q(i) = 0$ otherwise.\par
		Let $C_f$ be the maximum of all seminorms $\lVert f \rVert_{K, \mu', \nu', \eta'}$ with $\mu' \leq \mu$, $\nu' \leq \nu$ and $\lvert \eta' \rvert \leq \lvert \eta \rvert + 2n$. Let $C_s$ be the maximum of all seminorms $\lVert \langle s \rangle \rVert_{K, \mu', \nu', \eta'}$ with $\mu' \leq \mu$, $\nu' \leq \nu$ and $\eta' \leq \eta$. Then
		\begin{equation*}
			\sup_{(x, \boldsymbol{\check{y}}_1) \in K \times \mathbb{R}^n} \left\lvert (\boldsymbol{m}_1)^{\eta'} \cdot \frac{\partial^{\lvert \mu' \rvert + \lvert \nu' \rvert} \check{f}_{\boldsymbol{m}_1}}{\partial x^{\mu'} \partial (\boldsymbol{\check{y}}_1)^{\nu'}} (x, \boldsymbol{\check{y}}_1) \right\rvert \leq \frac{1}{\lvert (\boldsymbol{m}_1)^{2\boldsymbol{1}_{\boldsymbol{m}_1}} \rvert} \cdot C_f
		\end{equation*}
		for all $\boldsymbol{m}_1 \in \mathbb{Z}^n$ and all multi-indices $\mu' \in \mathbb{N}^{2n}$ and $\nu', \eta' \in \mathbb{N}^n$ with $\mu' \leq \mu$, $\nu' \leq \nu$ and $\eta' \leq \eta$. Then by Leibniz rule and Multinomial Theorem, we can show that
		\begin{equation*}
			\lVert \check{f}_{\boldsymbol{m}_1} \cdot \operatorname{S}_{\boldsymbol{m}_1}(\langle s \rangle) \rVert_{K, \mu, \nu, \eta} \leq \frac{2^{\lvert \mu \rvert + \lvert \nu \rvert + \lvert \eta \rvert}}{\lvert (\boldsymbol{m}_1)^{2\boldsymbol{1}_{\boldsymbol{m}_1}} \rvert} \cdot C_f \cdot C_s.
		\end{equation*}
		Then the fact that
		\begin{equation*}
			\sum_{\boldsymbol{m}_1 \in \mathbb{Z}^n} \frac{2^{\lvert \mu \rvert + \lvert \nu \rvert + \lvert \eta \rvert}}{\lvert (\boldsymbol{m}_1)^{2\boldsymbol{1}_{\boldsymbol{m}_1}} \rvert} < \infty,
		\end{equation*}
		implies that
		\begin{equation*}
			\sum_{\boldsymbol{m}_1 \in \mathbb{Z}^n} \left\lVert \check{f}_{\boldsymbol{m}_1} \cdot \operatorname{S}_{\boldsymbol{m}_1}(\langle s \rangle) \right\rVert_{K, \mu, \nu, \eta} < \infty.
		\end{equation*}
		Thus, \eqref{Equation: absolutely convergent series} converges absolutely (and hence unconditionally) in the Fr\'echet space $\mathcal{C}^\infty(U, \mathcal{S}(\check{\mathbb{R}}^n))$. We can then well define an element $\Phi_f(s) \in \Gamma(U \times \check{\mathbb{T}}^{2n}, \mathbf{\check{E}})$ by setting $\langle \Phi_f(s) \rangle$ equal to \eqref{Equation: absolutely convergent series}. The above estimate also proves the continuity of \eqref{Equation: continuity of mirror map}.\par
		Now, fix $f \in \mathcal{C}^\infty(U \times \mathbb{T}^{2n})$ again. We show that $\Phi_f$ defines a section of $\operatorname{End}(\mathbf{\check{E}})$ by checking the criterion in Lemma \ref{Lemma: characterization of sections of endo-bundle}. Clearly, the map $\langle \Phi_f \rangle: \langle s \rangle \mapsto \langle \Phi_f(s) \rangle$ is $\mathbb{C}$-linear. The continuity condition in Lemma \ref{Lemma: characterization of sections of endo-bundle} is already verified. Next, fix $s \in \mathcal{C}^\infty(U, \mathcal{S}(\check{\mathbb{R}}^n))$, $g \in \mathcal{C}^\infty(U \times \check{\mathbb{T}}^n)$ and $\check{q} \in \mathbb{Z}^n$. We have
		\begin{equation*}
			\langle \Phi_f \rangle(g \cdot s) = \sum_{\boldsymbol{m}_1 \in \mathbb{Z}^n} \check{f}_{\boldsymbol{m}_1} \cdot \operatorname{S}_{\boldsymbol{m}_1}(g \cdot s) = \sum_{\boldsymbol{m}_1 \in \mathbb{Z}^n} \check{f}_{\boldsymbol{m}_1} \cdot \operatorname{S}_{\boldsymbol{m}_1}(g) \cdot \operatorname{S}_{\boldsymbol{m}_1}(s) = g \cdot \langle \Phi_f \rangle(s),
		\end{equation*}
		since $\operatorname{S}_{\boldsymbol{m}_1}(g) = g$. On the other hand,
		\begin{align*}
			(\langle \Phi_f \rangle \circ \operatorname{S}_{k\boldsymbol{H}\check{q}})(s) = & \sum_{\boldsymbol{m}_1 \in \mathbb{Z}^n} \check{f}_{\boldsymbol{m}_1} \cdot (\operatorname{S}_{\boldsymbol{m}_1} \circ \operatorname{S}_{k\boldsymbol{H}\check{q}})(s)\\
			= & \sum_{\boldsymbol{m}_1 \in \mathbb{Z}^n} \operatorname{S}_{k\boldsymbol{H}\check{q}}(\check{f}_{\boldsymbol{m}_1} \cdot \operatorname{S}_{\boldsymbol{m}_1}(s)) = (\operatorname{S}_{k\boldsymbol{H}\check{q}} \circ \langle \Phi_f \rangle)(s),
		\end{align*}
		since $\operatorname{S}_{k\boldsymbol{H}\check{q}}(\check{f}_{\boldsymbol{m}_1}) = \check{f}_{\boldsymbol{m}_1}$. To conclude, by Lemma \ref{Lemma: characterization of sections of endo-bundle} and injectivity of the map $\Psi \to \langle \Psi \rangle$ , $\Phi_f \in \Gamma(U \times \check{\mathbb{T}}^{2n}, \operatorname{End}(\mathbf{\check{E}}))$.
		The proof is complete.
	\end{proof}
	
	Extend the map $\Phi$ in \eqref{Equation: quantization map} to a map $\Omega^{0, *}(U \times \mathbb{T}^{2n}) \to \Omega^{0, *}(U \times \check{\mathbb{T}}^{2n}, \operatorname{End} (\mathbf{\check{E}}))$ by setting
	\begin{equation*}
		\Phi(d\overline{z}^{i_1} \wedge \cdots \wedge d\overline{z}^{i_l}) = d\overline{\check{z}}_{i_1} \wedge \cdots \wedge d\overline{\check{z}}_{i_l} \otimes \operatorname{Id}_{\mathbf{\check{E}}}.
	\end{equation*}
	Here, $\check{z}_i = x_i + \tfrac{\sqrt{-1}}{k} \check{y}_i$ where we have suppressed the superscript $(k)$ in the coordinates $\check{z}$.
	
	\subsection{Global construction of the mirror transform}
	\quad\par
	\label{Subsection: global construction of mirror map}
	Fix $k \in \mathbb{Z}_{>0}$ and work in the setting of Subsection \ref{Subsection: Global construction of mirror brane via gluing}. For each index $\alpha$ and $\Omega \in \mathbb{H}_n$, we have constructed the family twisted Toeplitz operator
	\begin{equation*}
		\Phi_{\alpha, \Omega}: \Omega^{0, *}(U_\alpha \times\mathbb{T}^{2n}) \to \Omega^{0, *}\left(U_\alpha \times \check{\mathbb{T}}^{2n}, \operatorname{End}(\mathbf{\check{E}}_{\alpha, \Omega}) \right).
	\end{equation*}
	On the other hand, we extend the family BKS pairing map $\operatorname{BKS}_{\Omega, \Omega'}^\alpha$ (Definition \ref{Definition: family BKS pairing map}) to a linear map
	\begin{equation}
		\Omega^{0, *}\left(U_\alpha \times \check{\mathbb{T}}^{2n}, \mathbf{\check{E}}_{\alpha, \Omega'}\right) \to  \Omega^{0, *}\left(U_\alpha \times \check{\mathbb{T}}^{2n}, \mathbf{\check{E}}_{\alpha, \Omega}\right)
	\end{equation}
	by its action on the $(0, *)$-forms of the form $d\overline{z}_\alpha^{i_1} \wedge \cdots \wedge d\overline{z}_\alpha^{i_l}$ as the identity map.
	
	\begin{proposition}
		\label{Proposition: Commutativity one}
		For all index $\alpha$ and $\Omega, \Omega' \in \mathbb{H}_n$, the following diagram commutes:
		\begin{center}
			\begin{tikzcd}
				\Omega^{0, *}(U_\alpha \times\mathbb{T}^{2n}) \ar[rr, "\Phi_{\alpha, \Omega'}"] \ar[d, "="'] && \Omega^{0, *}\left(U_\alpha \times \check{\mathbb{T}}^{2n}, \operatorname{End}(\mathbf{\check{E}}_{\alpha, \Omega'})\right) \ar[d, "\text{conjugation by } \operatorname{BKS}_{\Omega, \Omega'}^\alpha"]\\
				\Omega^{0, *}(U_\alpha \times\mathbb{T}^{2n}) \ar[rr, "\Phi_{\alpha, \Omega}"'] && \Omega^{0, *}\left(U_\alpha \times \check{\mathbb{T}}^{2n}, \operatorname{End}(\mathbf{\check{E}}_{\alpha, \Omega})\right)
			\end{tikzcd}
		\end{center}
	\end{proposition}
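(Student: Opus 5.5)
The plan is to reduce everything to the explicit coordinate descriptions $\langle\,\cdot\,\rangle_\Omega$ and $\langle\,\cdot\,\rangle_{\Omega'}$ of Proposition \ref{Proposition: sections of local mirror B-brane}, in which both the twisted Toeplitz operators and the BKS map have closed forms. The first step is to dispose of the form degree. By definition, $\Phi_{\alpha,\Omega}$ and $\Phi_{\alpha,\Omega'}$ send $d\overline{z}^{i_1}\wedge\cdots\wedge d\overline{z}^{i_l}$ to the same form $d\overline{\check{z}}_{i_1}\wedge\cdots\wedge d\overline{\check{z}}_{i_l}\otimes\operatorname{Id}$. The extended BKS map acts as the identity on these forms. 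Conjugation by $\operatorname{BKS}^\alpha_{\Omega,\Omega'}$ therefore fixes the form part and sends $\operatorname{Id}_{\mathbf{\check{E}}_{\alpha,\Omega'}}$ to $\operatorname{Id}_{\mathbf{\check{E}}_{\alpha,\Omega}}$. By $\mathcal{C}^\infty$-linearity of all maps involved, it then suffices to prove, for every $f\in\mathcal{C}^\infty(U_\alpha\times\mathbb{T}^{2n})$,
\begin{equation*}
\operatorname{BKS}^\alpha_{\Omega,\Omega'}\circ\Phi_{\alpha,\Omega'\,f}=\Phi_{\alpha,\Omega\,f}\circ\operatorname{BKS}^\alpha_{\Omega,\Omega'}.
\end{equation*}

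The key input is the identity from the proof of Proposition \ref{Proposition: family BKS pairing map}: $\langle\operatorname{BKS}^\alpha_{\Omega,\Omega'}s\rangle_\Omega=\langle s\rangle_{\Omega'}$. In other words, in the coordinates $\langle\,\cdot\,\rangle$, the BKS map is the identity of $\mathcal{C}^\infty(U,\mathcal{S}(\check{\mathbb{R}}^n))$. The claim therefore reduces to showing that the operators $\langle\Phi_{\alpha,\Omega\,f}\rangle_\Omega$ and $\langle\Phi_{\alpha,\Omega'\,f}\rangle_{\Omega'}$ on this space coincide. By Proposition \ref{Proposition: absolute convergence}, both are given by the series $\sum_{\boldsymbol{m}_1}\check{f}_{\boldsymbol{m}_1}\operatorname{S}_{\boldsymbol{m}_1}$. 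The coefficients $\check{f}_{\boldsymbol{m}_1}$ in \eqref{Equation: half-fibrewise Fourier coefficient} are built only from the half-fiberwise Fourier expansion of $f$ in the coordinates $u$, together with $k$ and $\boldsymbol{H}$. None of these depend on $\Omega$, so the two series agree term by term and hence coincide. The conclusion is that conjugating $\Phi_{\alpha,\Omega'}(f)$ by BKS yields $\Phi_{\alpha,\Omega}(f)$.

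The main point needing care is that the $\Omega$-dependence truly cancels. The twist $e^{\frac1k\Delta}$ depends on $\Omega$ through \eqref{Equation: fibrewise Dolbeault Laplacian}, and the Gaussian frame $\sigma_l$ also depends on $\Omega$. Lemma \ref{Lemma: family Toeplitz operator} is exactly the statement that these dependences cancel, producing $\Omega$-free coefficients. I would double-check that lemma's formula on a single Fourier mode: via Lemma \ref{Lemma: Toeplitz operator}, the matrix coefficients of $T_{e^{\frac1k\Delta}f}$ in the basis $\{\sigma_l\}$ carry no $\Omega$. Since BKS sends $\sigma'_l$ to $\sigma_l$ (Proposition \ref{Proposition: orthonormal basis of geometric quantization}), the two operators then have identical matrices in corresponding bases. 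For general $f$, I would pass from trigonometric polynomials in $u$ to arbitrary smooth $f$ using the continuity statement \eqref{Equation: continuity of mirror map} together with the continuity of the BKS map, which is an isomorphism of Fréchet spaces.
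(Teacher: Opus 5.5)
Your proof is correct, and it is organized somewhat differently from the paper's.

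The paper reduces to single Fourier modes $f = e^{2\pi\sqrt{-1} m\cdot u}$ by $\mathcal{C}^\infty(U)$-linearity and continuity. It then compares both sides on the theta bases, using $\operatorname{BKS}_{\Omega,\Omega'}(\sigma'_l)=\sigma_l$ and the fact that the matrix coefficients in Lemma \ref{Lemma: Toeplitz operator} contain no $\Omega$.

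You instead work directly in the $\langle\,\cdot\,\rangle$-coordinates of Proposition \ref{Proposition: sections of local mirror B-brane}, where two things hold:
\begin{itemize}
\item $\operatorname{BKS}_{\Omega,\Omega'}$ is the identity, since $\langle \operatorname{BKS}_{\Omega,\Omega'} s\rangle_\Omega=\langle s\rangle_{\Omega'}$;
\item $\Phi_f$ is, by Definition \ref{Definition: family twisted Toeplitz operator}, the series $\sum_{\boldsymbol{m}_1}\check{f}_{\boldsymbol{m}_1}\operatorname{S}_{\boldsymbol{m}_1}$, whose coefficients are built from $f$, $u$, $k$ and $\boldsymbol{H}$ only.
\end{itemize}
Your two-line computation then gives $\langle \operatorname{BKS}\,\Phi_{\Omega',f}s\rangle_\Omega=\langle \Phi_{\Omega,f}\operatorname{BKS}\,s\rangle_\Omega$ for every smooth $f$ at once. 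Injectivity of $\langle\,\cdot\,\rangle_\Omega$ finishes the argument.

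What your route gains is that it needs no density or continuity step. It also makes transparent that, once $\Phi$ is defined by the $\Omega$-free series, the proposition is essentially built into the definitions. The real content sits upstream, in Lemma \ref{Lemma: family Toeplitz operator}, which justifies identifying the series with $T\circ e^{\frac{1}{k}\Delta}$. The paper's route keeps that twisted-Toeplitz interpretation in view: twisted Toeplitz operators for different polarizations have the same matrix in BKS-related theta bases.

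Two small points. Your final paragraph, passing from trigonometric polynomials to general $f$, is needed only for your basis-level cross-check, not for your main argument. And reducing from forms to functions uses only the componentwise definition of $\Phi$ on the basis $d\overline{z}^{i_1}\wedge\cdots\wedge d\overline{z}^{i_l}$, not any $\mathcal{C}^\infty$-linearity.
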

	\begin{proof}
		Throughout the proof, we suppress the index $\alpha$. By construction of the family twisted Toeplitz operators and the family BKS pairing maps, the statement is immediate on the differential form component. It therefore suffices to verify the claim on the function component.\par
		By continuity and $\mathcal{C}^\infty(U)$-linearity, it is enough to check that
		\begin{equation}
			\Phi_{\Omega, f} \circ \operatorname{BKS}_{\Omega, \Omega'} = \operatorname{BKS}_{\Omega, \Omega'} \circ \Phi_{\Omega', f}
		\end{equation}
		for functions of the form
		\begin{equation*}
			f(x, y) = e^{2\pi\sqrt{-1} m \cdot u} \quad \text{with } m \in \mathbb{Z}^{2n}.
		\end{equation*}
		It suffices to compare both sides on an orthonormal basis of each fiber. Fix $(x, \check{y})$ and consider the bases $\{ \sigma_l(x, \,\cdot\,, \check{y}) \}_{l \in \mathbb{Z}_{k\boldsymbol{H}}^n}$ and $\{ \sigma'_l(x, \,\cdot\,, \check{y}) \}_{l \in \mathbb{Z}_{k\boldsymbol{H}}^n}$ corresponding to $\Omega$ and $\Omega'$, respectively, as given in Proposition \ref{Proposition: orthonormal basis of geometric quantization}. For simplicity, write $\sigma
		_l = \sigma_l(x, \,\cdot\,, \check{y})$ and $\sigma'
		_l = \sigma'_l(x, \,\cdot\,, \check{y})$.\par
		Fix $l \in \mathbb{Z}_{k\boldsymbol{H}}^n$. Let $\widetilde{l} \in \mathbb{Z}_{k\boldsymbol{H}}^n$ be such that $\widetilde{q} := (k\boldsymbol{H})^{-1} (l + \boldsymbol{m}_1 - \widetilde{l}) \in \mathbb{Z}^n$. Let $\Delta$ and $\Delta'$ denote the differential operators defined in \eqref{Equation: fibrewise Dolbeault Laplacian} corresponding to $\Omega$ and $\Omega'$, respectively, and set
		\begin{equation*}
			f_\Omega = \exp\left(\tfrac{1}{k} \Delta \right) f \quad \text{and} \quad f_{\Omega'} = \exp\left(\tfrac{1}{k} \Delta' \right) f.
		\end{equation*}
		By the definitions of $\Phi_{\Omega}, \Phi_{\Omega'}$ and $\operatorname{BKS}_{\Omega, \Omega'}$, together with Lemma \ref{Lemma: Toeplitz operator}, we compute
		\begin{align*}
			({\Phi_{\Omega, f}} \circ \operatorname{BKS}_{\Omega, \Omega'}) (\sigma'_l) = & \Phi_{\Omega, f} ( \sigma_l ) = \langle f_\Omega \sigma_l, \sigma_{\widetilde{l}} \rangle \sigma_{\widetilde{l}},\\
			(\operatorname{BKS}_{\Omega, \Omega'} \circ \Phi_{\Omega', f}) (\sigma'_l) = & \operatorname{BKS}_{\Omega, \Omega'} \left( \langle f_{\Omega'} \sigma'_l, \sigma'_{\widetilde{l}} \rangle \sigma'_{\widetilde{l}} \right) = \langle f_{\Omega'} \sigma'_l, \sigma'_{\widetilde{l}} \rangle \sigma_{\widetilde{l}}.
		\end{align*}
		The explicit formula in Lemma \ref{Lemma: Toeplitz operator} shows that the coefficients $\langle f_\Omega \sigma_l, \sigma_{\widetilde{l}} \rangle$ and $\langle f_{\Omega'} \sigma'_l, \sigma'_{l'} \rangle$ coincide. Hence the two compositions agree on a basis, and the diagram commutes.
	\end{proof}
	
	In what follows, for indices $\alpha, \beta$ with $U_{\alpha\beta} \neq \emptyset$ and $\Omega \in \mathbb{H}_n$, we denote by
	\begin{equation*}
		\operatorname{Ad}(\phi_{\alpha\beta; \Omega}): \Gamma(U_{\alpha\beta} \times \check{\mathbb{T}}^{2n}, \operatorname{End}(\mathbf{\check{E}}_{\beta, \Omega})) \to \Gamma(U_{\alpha\beta} \times \check{\mathbb{T}}^{2n}, \operatorname{End}(\mathbf{\check{E}}_{\alpha, A_{\alpha\beta} \cdot \Omega}))
	\end{equation*}
	the conjugation by $\phi_{\alpha\beta; \Omega}$. We extend this map to
	\begin{equation*}
		\operatorname{Ad}(\phi_{\alpha\beta; \Omega}): \Omega^{0, *}(U_{\alpha\beta} \times \check{\mathbb{T}}^{2n}, \operatorname{End}(\mathbf{\check{E}}_{\beta, \Omega})) \to \Omega^{0, *}(U_{\alpha\beta} \times \check{\mathbb{T}}^{2n}, \operatorname{End}(\mathbf{\check{E}}_{\alpha, A_{\alpha\beta} \cdot \Omega}))
	\end{equation*}
	by combining conjugation on the endomorphism part with the tensorial transformation of
	\begin{equation*}
		d\overline{\check{z}}_{\beta, i_1} \wedge \cdots \wedge d\overline{\check{z}}_{\beta, i_l}
	\end{equation*}
	under the coordinate change $(x_\beta, y_\beta) \mapsto (x_\alpha, y_\alpha)$.
	
	\begin{proposition}
		\label{Proposition: Commutativity two}
		For all indices $\alpha, \beta$ with $U_{\alpha\beta} \neq \emptyset$ and $\Omega \in \mathbb{H}_n$, the following diagram commutes:
		\begin{center}
			\begin{tikzcd}
				\Omega^{0, *}(U_{\alpha\beta} \times \mathbb{T}^{2n}) \ar[rr, "\Phi_{\beta, \Omega}"] \ar[d] && \Omega^{0, *}(U_{\alpha\beta} \times \check{\mathbb{T}}^{2n}, \operatorname{End}(\mathbf{\check{E}}_{\beta, \Omega})) \ar[d, "\operatorname{Ad}(\phi_{\alpha\beta; \Omega})"]\\
				\Omega^{0, *}(U_{\alpha\beta} \times \mathbb{T}^{2n}) \ar[rr, "\Phi_{\alpha, A_{\alpha\beta} \cdot \Omega}"'] && \Omega^{0, *}(U_{\alpha\beta} \times \check{\mathbb{T}}^{2n}, \operatorname{End}(\mathbf{\check{E}}_{\alpha, A_{\alpha\beta} \cdot \Omega}))
			\end{tikzcd}
		\end{center}
		where the left vertical arrow is the transform under the coordinate change $(x_\beta, y_\beta) \mapsto (x_\alpha, y_\alpha)$.
	\end{proposition}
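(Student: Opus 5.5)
The plan is to reduce the commutativity of the diagram to a check on Fourier modes and on a basis of each fiber, as in Proposition \ref{Proposition: Commutativity one}. The $(0,*)$-form component is handled first. On the A-side, the left vertical arrow sends $d\overline{z}_\beta$ to $A_{\alpha\beta}\, d\overline{z}_\alpha$, by Corollary \ref{Corollary: affine change in complex coorindates}. On the B-side, $\check{z}_\beta = x_\beta + \tfrac{\sqrt{-1}}{k}\check{y}_\beta$ transforms by the same linear part, since $\check{y}$ transforms as $x$ does. Hence $\operatorname{Ad}(\phi_{\alpha\beta;\Omega})$ acts on $d\overline{\check{z}}_\beta$ by the same matrix, and $\Phi$ intertwines the two. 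Since both $\Phi$ maps are $\mathcal{C}^\infty(U_{\alpha\beta})$-linear and are compatible with the form part, it remains to treat functions.

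For functions, continuity from Proposition \ref{Proposition: absolute convergence} lets us reduce to single Fourier modes $f = e^{2\pi\sqrt{-1} m\cdot u_\beta}$, with $m\in\mathbb{Z}^{2n}$. The first step is to rewrite such a mode in $\alpha$-coordinates. We have $y_\beta = A_{\alpha\beta}^{T} y_\alpha$, so $u_\beta = A^T u_\alpha + (\text{function of } x)$; the shift comes from $g_\beta$ and $g_\alpha$ differing by a term whose differential is $d(A^{-T}g)$, plus a constant. Thus $f = c(x)\, e^{2\pi\sqrt{-1} (Am)\cdot u_\alpha}$ for a smooth phase $c(x)$, and $c(x)$ passes through both sides by $\mathcal{C}^\infty(U)$-linearity. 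We must also check that $e^{\frac1k\Delta}$ is natural: the Laplacian \eqref{Equation: fibrewise Dolbeault Laplacian} for $\Omega$ in $\beta$-coordinates should equal the one for $A_{\alpha\beta}\cdot\Omega$ in $\alpha$-coordinates. This holds because $A\cdot\Omega$ is defined so that the linear symplectomorphism $y_\beta\mapsto y_\alpha$ carries the $\Omega$-complex structure to the $A\cdot\Omega$-complex structure. Indeed, $\Delta$ is intrinsically the fiberwise $\bar\partial$-Laplacian determined by the K\"ahler structure, which is preserved, so the eigenvalue of the mode is unchanged.

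The key step is then conceptual. Before the twist, $T_{f}$ is defined intrinsically as $\Pi\circ \operatorname{m}_f$. Here $\phi_{\alpha\beta;\Omega}$ is induced by a unitary, connection-preserving isomorphism $\phi_{\alpha\beta}$ of prequantum line bundles, tensored with the matching half-form isomorphism. It therefore maps holomorphic sections to holomorphic sections and intertwines the orthogonal projections, by the argument in Proposition \ref{Proposition: commutativity of transitions}. It also intertwines multiplication by $f$ with multiplication by its transform. Hence
\[
\phi_{\alpha\beta;\Omega}\circ \Pi_\beta(f\cdot s)=\Pi_\alpha\bigl((f\circ\text{coord change})\cdot \phi_{\alpha\beta;\Omega}s\bigr).
\]
Combining this with the naturality of $e^{\frac1k\Delta}$ gives the identity on trigonometric polynomials of each mode: on single modes, $\Phi_f = T_{e^{\frac1k\Delta}f}$ by Lemma \ref{Lemma: family Toeplitz operator} and \eqref{Equation: series of shift operators}. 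The identity then extends by continuity and density.

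I expect the main obstacle to be the rigorous justification of this conceptual step. The twisted operator $\Phi_f$ is defined through the $\langle\cdot\rangle$-coordinates, which depend on the chart and on $\Omega$, and the phase factors introduced by $\phi_{\alpha\beta}$, including the gauge transformation of the skew-Smith forms, must be matched correctly. If the abstract intertwining argument is not clean enough, the fallback is a direct computation. In that computation one applies both sides to $\sigma_{\beta,\Omega,l}$, uses Lemma \ref{Lemma: Toeplitz operator}, and checks that $\phi_{\alpha\beta;\Omega}$ sends the basis $\{\sigma_{\beta,\Omega,l}\}$ to a unitarily related basis $\{\sigma_{\alpha,A\cdot\Omega,l'}\}$ up to phases, in a way consistent with the explicit matrix coefficients.
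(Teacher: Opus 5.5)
Your proposal is correct and follows essentially the same route as the paper. The form part is handled tensorially via the constant matrix $A_{\alpha\beta}$, and the function part is reduced to single Fourier modes. The identity then follows from the naturality of the untwisted family Toeplitz operators under $\phi_{\alpha\beta;\Omega}$, together with the fact that $e^{\frac{1}{k}\Delta}$ multiplies corresponding modes by the same scalar, and it extends by $\mathcal{C}^\infty(U_{\alpha\beta})$-linearity and continuity. Your tracking of the phase that relates the $\beta$-mode to an $\alpha$-mode is a harmless refinement that the paper leaves implicit.
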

	\begin{proof}
		We first observe that the Dolbeault form components transform tensorially under the coordinate change associated with the constant transition matrix $A_{\alpha\beta} \in \operatorname{GL}(2n, \mathbb{Z})$:
		\begin{equation*}
			d\overline{z}_\beta = A_{\alpha\beta} d\overline{z}_\alpha \quad \text{and} \quad d\overline{\check{z}}_\beta = A_{\alpha\beta} d\overline{\check{z}}_\alpha.
		\end{equation*}
		It follows that both vertical arrows act on the $(0, *)$-form part in the same way. Therefore, it suffices to verify the commutativity of the diagram on the level of functions.\par
		Let $f \in \mathcal{C}^\infty(U_{\alpha\beta} \times \mathbb{T}^{2n})$, and denote by $f'$ its transform under the coordinate change. By $\mathcal{C}^\infty(U_{\alpha\beta})$-linearity and continuity, we may assume that $f$ is a single Fourier mode:
		\begin{equation*}
			f(x_\beta, y_\beta) = e^{2\pi\sqrt{-1} m_\beta \cdot u_\beta}, \quad m_\beta \in \mathbb{Z}^{2n}.
		\end{equation*}
		Then $f'$ is again a single Fourier mode in the $(x_\alpha, y_\alpha)$-coordinates. By construction of the family Toeplitz operators, together with the compatibility of the transition isomorphisms $\phi_{\alpha\beta; \Omega}$ with the Hermitian structures, connections, and polarizations, we have
		\begin{equation*}
			T_{\alpha, \Omega', f'} \circ \phi_{\alpha\beta; \Omega} = \phi_{\alpha\beta; \Omega} \circ T_{\beta, \Omega, f},
		\end{equation*}
		where $\Omega' := A_{\alpha\beta} \cdot \Omega$. Similar to the proof of Proposition \ref{Proposition: commutativity of transitions}, this follows from the naturality of the Toeplitz construction, since $\phi_{\alpha\beta; \Omega}$ intertwines all geometric data entering the fiberwise definition.\par
		Consider the operators $\Delta$ and $\Delta'$ defined in \eqref{Equation: fibrewise Dolbeault Laplacian} corresponding to $\Omega$ and $\Omega'$, respectively. The operator $\Delta$ transforms naturally under the coordinate change, so that $\Delta'$ is its pushforward. Consequently, $\widetilde{f}' := \exp (\tfrac{1}{k} \Delta') f'$ is the transform of $\widetilde{f} := \exp (\tfrac{1}{k} \Delta) f$. In particular, both $\widetilde{f}$ and $\widetilde{f}'$ are scalar multiples of $f$ and $f'$ by the same nonzero constant. By definition,
		\begin{equation*}
			\Phi_{\beta, \Omega, f} = T_{\beta, \Omega, \widetilde{f}} \quad \text{and} \quad \Phi_{\alpha, \Omega', f'} = T_{\beta, \Omega, \widetilde{f}'}.
		\end{equation*}
		Combining this with the intertwining relation above, we obtain
		\begin{equation*}
			\Phi_{\alpha, \Omega', f'} \circ \phi_{\alpha\beta; \Omega} = \phi_{\alpha\beta; \Omega} \circ \Phi_{\beta, \Omega, f}.
		\end{equation*}
		Finally, extending by $\mathcal{C}^\infty(U_{\alpha\beta})$-linearity and continuity to arbitrary functions $f$, we conclude that the diagram commutes.
	\end{proof}
	
	Combining Propositions \ref{Proposition: Commutativity one} and \ref{Proposition: Commutativity two}, we deduce that the maps $\Phi_{\alpha, \Omega}$ are compatible on overlaps and therefore glue to a family of maps
	\begin{equation*}
		\Phi_U: \Omega^{0, *}(X_U) \to \Omega^{0, *}(\check{X}_U, \operatorname{End}(\check{E})),
	\end{equation*}
	for each open subset $U \subset B$, defining a morphism of sheaves of $\mathcal{C}_B^\infty$-modules.
	
	\subsection{Proof of the mirror theorem}
	\quad\par
	\label{Subsection: proof of mirror statement}
	Finally, we prove our main result, Theorem \ref{Theorem: mirror statement}. Since the statement is sheaf-theoretic over the base $B$, it suffices to carry out the argument on an open subset $U \subset B$ of the type considered in Proposition \ref{Proposition: skew smith form of A brane}. We also fix, once and for all, an auxiliary choice of $\Omega \in \mathbb{H}_n$. The proof is organized into a sequence of lemmas.
	
	\begin{lemma}\label{Lemma: holomorphicity}
		The map $\Phi$ is a homomorphism of differential graded algebras:
		\begin{equation*}
			\Phi: (\Omega^{0, *}(U \times \mathbb{T}^{2n}), \star_{k^{-1}}, \overline{\partial}) \to ( \Omega^{0, *}(U \times \check{\mathbb{T}}^{2n}, \operatorname{End}(\mathbf{\check{E}})), \circ, \overline{\partial}).
		\end{equation*}
	\end{lemma}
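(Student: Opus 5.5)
The plan is to reduce everything to single fiberwise Fourier modes. On such modes $\Phi$ is completely explicit by Lemma \ref{Lemma: family Toeplitz operator}: a phase multiplication composed with a shift. The general case then follows by $\mathcal{C}^\infty(U)$-linearity and continuity.

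\textbf{Reduction to Fourier modes.} Both sides of the two identities to be proved, namely multiplicativity $\Phi(\alpha\star_{k^{-1}}\beta)=\Phi(\alpha)\circ\Phi(\beta)$ and the chain-map property $\Phi(\overline{\partial}\alpha)=\overline{\partial}_{\operatorname{End}}\Phi(\alpha)$, are compatible with the $(0,*)$-form parts.
\begin{itemize}
\item Both $\star_\hbar$ and $\circ$ are extended by wedging the constant frames $d\overline{z}^{I}$ and $d\overline{\check{z}}_{I}\otimes\operatorname{Id}$, and $\Phi$ sends one frame to the other. So multiplicativity reduces to functions; the Koszul signs agree because both sides use the same wedge convention.
\item Since $\overline{\partial}(f\,d\overline{z}^I)=\sum_j \partial_{\overline{z}^j}f\, d\overline{z}^j\wedge d\overline{z}^I$, and similarly on the B-side with the closed frame $d\overline{\check{z}}_j$, the chain-map property reduces to showing
\begin{equation*}
\Phi\bigl(\tfrac{\partial f}{\partial \overline{z}^j}\bigr)=\bigl[\check{\nabla}_{\overline{Z}_j},\Phi_f\bigr],\qquad 1\le j\le 2n,
\end{equation*}
for functions $f$.
\end{itemize}
For functions, $\star_{k^{-1}}$ is $\mathcal{C}^\infty(U)$-bilinear and jointly continuous, and $\Phi$ is $\mathcal{C}^\infty(U)$-linear and jointly continuous by Proposition \ref{Proposition: absolute convergence}. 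Hence it suffices to take $f=a(x)e^{2\pi\sqrt{-1}m\cdot u}$ and $f'=b(x)e^{2\pi\sqrt{-1}m'\cdot u}$. Writing modes in $u=y+g(x)$ instead of $y$ only changes the coefficient $\widehat f_m$ by the $x$-dependent phase $e^{2\pi\sqrt{-1}m\cdot g}$, which is absorbed into $\mathcal{C}^\infty(U)$-linearity. The $\mathcal{C}^\infty(U)$-linear combinations of such modes are dense, since the Fourier coefficients are Schwartz in $m$.

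\textbf{Multiplicativity.} By Lemma \ref{Lemma: family Toeplitz operator}, $\langle\Phi_{e^{2\pi\sqrt{-1}m\cdot u}}\rangle=P_m\operatorname{S}_{\boldsymbol{m}_1}$, where
\begin{equation*}
P_m(\boldsymbol{\check{y}}_1)=\exp\bigl(-\tfrac{2\pi\sqrt{-1}}{k}\boldsymbol{m}_2\cdot\boldsymbol{H}^{-1}(\boldsymbol{\check{y}}_1+\tfrac12\boldsymbol{m}_1)\bigr).
\end{equation*}
Composing gives $P_m(\boldsymbol{\check{y}}_1)\,P_{m'}(\boldsymbol{\check{y}}_1+\boldsymbol{m}_1)\,\operatorname{S}_{\boldsymbol{m}_1+\boldsymbol{m}'_1}$. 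Comparing with $P_{m+m'}\operatorname{S}_{\boldsymbol{m}_1+\boldsymbol{m}'_1}$, the ratio of the phases is
\begin{equation*}
\exp\bigl(-\tfrac{\pi\sqrt{-1}}{k}(\boldsymbol{m}_1\cdot\boldsymbol{H}^{-1}\boldsymbol{m}'_2-\boldsymbol{m}_2\cdot\boldsymbol{H}^{-1}\boldsymbol{m}'_1)\bigr).
\end{equation*}
Using the block form \eqref{Equation: Skew-Smith form of H}, this is exactly $e^{-\frac{\pi\sqrt{-1}}{k}m\cdot H^{-1}m'}$, the twisting cocycle in \eqref{Equation: quantum tori} at $\hbar=k^{-1}$. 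I will double-check the sign convention for $H^{-1}$ at this point. Injectivity of $\Psi\mapsto\langle\Psi\rangle$ then gives $\Phi_{f\star f'}=\Phi_f\circ\Phi_{f'}$.

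\textbf{Compatibility with $\overline{\partial}$ (main obstacle).} Here I would use the explicit formulas from Step 3 of the proof of Theorem \ref{Theorem: unitary connection on the local mirror B brane}. In the $\langle\cdot\rangle$-picture, $\check{\nabla}_{\overline{Z}_i}=\partial_{\overline{\check{z}}_i}+k\pi g^i$ and $\check{\nabla}_{\overline{Z}_{n+i}}=\tfrac12\partial_{x_{n+i}}-\pi\sqrt{-1}\tfrac{k}{h_i}\check{z}_i+k\pi g^{n+i}$. I would then compute the commutator of each of these with $a(x)P_m\operatorname{S}_{\boldsymbol{m}_1}$:
\begin{itemize}
\item the terms involving $g$ cancel, because they are multiplications by functions of $x$ alone;
\item the term $\partial_{\overline{\check{z}}_i}$ (recall $\partial_{\overline{\check{z}}_i}=\tfrac12(\partial_{x_i}+\sqrt{-1}k\,\partial_{\check{y}_i})$) hits $a$ and $P_m$, producing $\tfrac12\partial_{x_i}a$ together with a factor linear in $\boldsymbol{m}_2$;
\item the multiplication by $\check{z}_i$ fails to commute with the shift, producing a factor linear in $\boldsymbol{m}_1$.
\end{itemize}
On the A-side, \eqref{Equation: first complex coordinate derivative}--\eqref{Equation: second complex coordinate derivative} (conjugated) give $\partial_{\overline{z}^j}(a e^{2\pi\sqrt{-1}m\cdot u})$ as $\tfrac12\partial_{x_j}a$ plus $\pm\pi h_j^{-1}m_{n+j}$ or $\pm\pi h_j^{-1}m_j$ times $a$, times the mode. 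The $\partial g/\partial x$ terms there disappear precisely because the mode is written in $u$.

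The main work is matching these constants, including the $\sqrt{-1}k$ scaling in $\check{z}=x+\tfrac{\sqrt{-1}}{k}\check{y}$ and the factor $\tfrac12$ from the half-shift in $P_m$. I expect this bookkeeping, rather than any conceptual step, to be the obstacle. If the naive match fails by a constant, I would recheck it against the worked example $e^{2\pi z^1}$, $e^{2\pi z^2}$ from the introduction, where both sides are holomorphic and must therefore be annihilated.
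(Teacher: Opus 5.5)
Your proposal is correct and follows essentially the same route as the paper's proof. Both reduce to single modes $a(x)e^{2\pi\sqrt{-1}m\cdot u}$ by $\mathcal{C}^\infty(U)$-linearity and continuity, get multiplicativity by composing the phase-times-shift operators $P_m\operatorname{S}_{\boldsymbol{m}_1}$ from Lemma \ref{Lemma: family Toeplitz operator}, and obtain the chain-map property by commuting the Step 3 expressions for $\check{\nabla}_{\overline{Z}_j}$ with $aP_m\operatorname{S}_{\boldsymbol{m}_1}$. Your phase ratio is indeed $e^{-\frac{\pi\sqrt{-1}}{k}m\cdot H^{-1}m'}$, since $H^{-1}$ has off-diagonal blocks $\boldsymbol{H}^{-1}$ (upper right) and $-\boldsymbol{H}^{-1}$ (lower left). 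The deferred bookkeeping also closes with no constant mismatch: the commutators give $(\tfrac12\partial_{x_i}a+\pi h_i^{-1}m_{n+i}a)P_m\operatorname{S}_{\boldsymbol{m}_1}$ and $(\tfrac12\partial_{x_{n+i}}a-\pi h_i^{-1}m_i a)P_m\operatorname{S}_{\boldsymbol{m}_1}$. These are exactly $\Phi$ of $\partial_{\overline{z}^i}$ and $\partial_{\overline{z}^{n+i}}$ applied to the mode, with $\partial_{x_j}$ taken at fixed $u$ as you note.
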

	\begin{proof}
		By construction, $\Phi$ preserves degree. We first verify that it is an algebra homomorphism. It is immediate that $\Phi$ sends the constant one function to the identity map. Thus, it suffices to prove that
		\begin{equation}
			\label{Equation: algebra_hom}
			\Phi_f \circ \Phi_g = \Phi_{f \star_{k^{-1}} g}
		\end{equation}
		for all $f, g \in \mathcal{C}^\infty(U \times \mathbb{T}^{2n})$. Indeed, once \eqref{Equation: algebra_hom} holds for functions, the general case for forms $\alpha, \beta \in \Omega^{0, *}(U \times \mathbb{T}^{2n})$ follows from the formula $\Phi(d\overline{z}^i) = d\overline{\check{z}}_i$.
		By continuity and $\mathcal{C}^\infty(U)$-(bi)linearity, it is enough to verify \eqref{Equation: algebra_hom} on Fourier modes of the form
		\begin{equation*}
			f(x, y) = e^{2\pi\sqrt{-1} m \cdot u}, \quad g(x, y) = e^{2\pi\sqrt{-1} m' \cdot u},
		\end{equation*}
		with $m, m' \in \mathbb{Z}^{2n}$. In this case, \eqref{Equation: algebra_hom} follows by a direct computation using \eqref{Equation: family Toeplitz operator} ((see also \cite{And2005}, Theorem 5).\par
		It remains to to show that $\Phi$ intertwines the Dolbeault operators. For similar reasons, it suffices to check that for each $1 \leq i \leq 2n$,
		\begin{equation}
			\Phi_{\frac{\partial f}{\partial z^i}} = \check{\nabla}_{\partial_{\overline{\check{z}_i}}} \circ \Phi_f - \Phi_f \circ \check{\nabla}_{\partial_{\overline{\check{z}_i}}}
		\end{equation}
		for functions of the form $f(x, y) = f_m(x) e^{2\pi\sqrt{-1} m \cdot u}$ with $m \in \mathbb{Z}^{2n}$ and a function $f_m \in \mathcal{C}^\infty(U)$. Fix $s \in \Gamma(U \times \check{\mathbb{T}}^{2n}, \mathbf{\check{E}})$. By Lemma \ref{Lemma: family Toeplitz operator},
		\begin{equation*}
			\left\langle \Phi_f s \right\rangle (x, \boldsymbol{\check{y}}_1) = F_m(x, \boldsymbol{\check{y}}_1) \cdot \langle s \rangle (x, \boldsymbol{\check{y}}_1 + \boldsymbol{m}_1),
		\end{equation*}
		where
		\begin{equation*}
			F_m(x, \boldsymbol{\check{y}}_1) := f_m(x) \exp \left( -\tfrac{2\pi\sqrt{-1}}{k} \boldsymbol{m}_2 \cdot \boldsymbol{H}^{-1} \left(\boldsymbol{\check{y}}_1 + \tfrac{1}{2} \boldsymbol{m}_1 \right) \right).
		\end{equation*}
		Using the formulas established in the proof of Theorem \ref{Theorem: unitary connection on the local mirror B brane}, we compute for $1 \leq i \leq n$:
		\begin{equation*}
			\left\langle \check{\nabla}_{\partial_{\overline{\check{z}_i}}} \Phi_f s - \Phi_f \check{\nabla}_{\partial_{\overline{\check{z}_i}}} s \right\rangle(x, \boldsymbol{\check{y}}_1) = \frac{\partial F_m}{\partial \overline{\check{z}_i}} (x, \boldsymbol{\check{y}}_1) \cdot \langle s \rangle (x, \boldsymbol{\check{y}}_1 + \boldsymbol{m}_1),
		\end{equation*}
		and
		\begin{align*}
			& \left\langle \check{\nabla}_{\partial_{\overline{\check{z}_{n+i}}}} \Phi_f s - \Phi_f \check{\nabla}_{\partial_{\overline{\check{z}_{n+i}}}} s \right\rangle (x, \boldsymbol{\check{y}}_1)\\
			= & \left( \frac{1}{2} \frac{\partial F_m}{\partial x_{n+i}} (x, \boldsymbol{\check{y}}_1) - \frac{k\pi\sqrt{-1}}{h_i} \left( - \sqrt{-1} \frac{m_i}{k} \right) \cdot F_m(x, \boldsymbol{\check{y}}_1) \right) \cdot \langle s \rangle (x, \boldsymbol{\check{y}}_1 + \boldsymbol{m}_1)\\
			= & \left( \frac{1}{2} \frac{\partial F_m}{\partial x_{n+i}} (x, \boldsymbol{\check{y}}_1) - \frac{\pi m_i}{h_i} \cdot F_m(x, \boldsymbol{\check{y}}_1) \right) \cdot \langle s \rangle (x, \boldsymbol{\check{y}}_1 + \boldsymbol{m}_1).
		\end{align*}
		A direct computation using \eqref{Equation: first complex coordinate derivative} and \eqref{Equation: second complex coordinate derivative} shows that for $j = i$ and $j = n + i$,
		\begin{equation*}
			\left\langle \Phi_{\partial_{\overline{z}^j} f} s \right\rangle(x, \boldsymbol{\check{y}}_1) = \left\langle \check{\nabla}_{\partial_{\overline{\check{z}_j}}} \Phi_f s - \Phi_f \check{\nabla}_{\partial_{\overline{\check{z}_j}}} s \right\rangle(x, \boldsymbol{\check{y}}_1).
		\end{equation*}
		Therefore, $\Phi$ is a morphism of differential graded algebras.
	\end{proof}
	
	\begin{lemma}
		\label{Lemma: injectivity}
		The map $\Phi: \Omega^{0, *}(U \times \mathbb{T}^{2n}) \to \Omega^{0, *}(U \times \check{\mathbb{T}}^{2n}, \operatorname{End}(\mathbf{\check{E}}))$ is injective.
	\end{lemma}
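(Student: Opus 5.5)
The plan is to reduce injectivity to functions and then recover each fiberwise Fourier coefficient of $f$ from the operator $\langle \Phi_f \rangle$, using the shift-operator description from Proposition \ref{Proposition: absolute convergence}.

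First, the form part. Write $\alpha \in \Omega^{0,*}(U \times \mathbb{T}^{2n})$ in the basis $d\overline{z}^{I} = d\overline{z}^{i_1}\wedge\cdots\wedge d\overline{z}^{i_l}$ as $\alpha = \sum_I f_I\, d\overline{z}^I$ with $f_I \in \mathcal{C}^\infty(U \times \mathbb{T}^{2n})$. By construction,
\[
\Phi(\alpha) = \sum_I d\overline{\check{z}}_I \otimes \Phi_{f_I}.
\]
The forms $d\overline{\check{z}}_I$ are linearly independent over $\mathcal{C}^\infty(U\times\check{\mathbb{T}}^{2n})$, so $\Phi(\alpha)=0$ forces $\Phi_{f_I}=0$ for every $I$. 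It therefore suffices to show that $\Phi_f = 0$ implies $f = 0$ for functions.

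Second, suppose $\Phi_f = 0$. Proposition \ref{Proposition: absolute convergence} and the isomorphism of Proposition \ref{Proposition: sections of local mirror B-brane} give
\[
0 = \sum_{\boldsymbol{m}_1 \in \mathbb{Z}^n} \check{f}_{\boldsymbol{m}_1}(x,\boldsymbol{\check{y}}_1)\, \langle s \rangle(x, \boldsymbol{\check{y}}_1 + \boldsymbol{m}_1)
\]
for every $\langle s\rangle \in \mathcal{C}^\infty(U, \mathcal{S}(\check{\mathbb{R}}^n))$. Fix $(x_0, \boldsymbol{\check{y}}_1^0)$ and $\boldsymbol{m}_1^0$, and choose $\langle s \rangle$ to be a bump function of the form $\rho(x)\,\chi(\boldsymbol{\check{y}}_1)$, where $\chi$ is supported in a ball of radius $<1/2$ around $\boldsymbol{\check{y}}_1^0 + \boldsymbol{m}_1^0$ and satisfies $\chi(\boldsymbol{\check{y}}_1^0+\boldsymbol{m}_1^0)=1$. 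Evaluating the series at $(x_0,\boldsymbol{\check{y}}_1^0)$, only the term with $\boldsymbol{m}_1 = \boldsymbol{m}_1^0$ survives, since the shifted points are unit-separated. This yields $\check{f}_{\boldsymbol{m}_1^0}(x_0, \boldsymbol{\check{y}}_1^0) = 0$. Hence every $\check{f}_{\boldsymbol{m}_1}$ vanishes identically on $U\times\check{\mathbb{R}}^n$.

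Third, return to $f$. By \eqref{Equation: half-fibrewise Fourier coefficient}, $\check{f}_{\boldsymbol{m}_1}$ is $f_{\boldsymbol{m}_1}$ precomposed with the affine surjection $\boldsymbol{\check{y}}_1 \mapsto -\tfrac1k \boldsymbol{H}^{-1}(\boldsymbol{\check{y}}_1 + \tfrac12\boldsymbol{m}_1)$ onto $\mathbb{R}^n$, so its vanishing gives $f_{\boldsymbol{m}_1}(x,\boldsymbol{u}^2) = 0$ for all $\boldsymbol{u}^2$. The half-fiberwise expansion \eqref{Equation: half-fibrewise Fourier expansion} then gives $f = 0$, by uniqueness of Fourier coefficients in the $\boldsymbol{u}^1$-variable. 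The change of variables $u = y + g(x)$ is a fiberwise translation, so it causes no trouble.

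The main point requiring care is the second step. The bump-function argument relies on evaluating the absolutely convergent series pointwise, which is justified because convergence in $\mathcal{C}^\infty(U,\mathcal{S}(\check{\mathbb{R}}^n))$ implies pointwise convergence. It also relies on the fact that a compactly supported smooth $\langle s\rangle$ genuinely comes from a section $s$ of $\mathbf{\check{E}}$, which is exactly the surjectivity in Proposition \ref{Proposition: sections of local mirror B-brane}.
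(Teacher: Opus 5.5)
Your proposal is correct and follows essentially the same route as the paper: reduce to functions, then take a section whose $\langle s \rangle$ is localized at the single shifted point $(x,\boldsymbol{\check{y}}_1+\boldsymbol{m}_1)$, which gives $\check{f}_{\boldsymbol{m}_1}(x,\boldsymbol{\check{y}}_1)=\langle \Phi_f(s)\rangle(x,\boldsymbol{\check{y}}_1)=0$. You also spell out details the paper leaves implicit, namely the linear independence of the $d\overline{\check{z}}_I$, the radius-$<1/2$ bump, surjectivity of $s\mapsto\langle s\rangle$, and inverting the affine reparametrization to recover $f_{\boldsymbol{m}_1}$; all of these are fine.
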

	\begin{proof}
		It suffices to prove injectivity on functions. Let $f \in \mathcal{C}^\infty(U \times \mathbb{T}^{2n})$ and assume $\Phi_f = 0$. We show that $f = 0$. By \eqref{Equation: half-fibrewise Fourier expansion}, it is enough to prove that $\check{f}_{\boldsymbol{m}_1} = 0$ for all $\boldsymbol{m}_1 \in \mathbb{Z}^n$, where $\check{f}_{\boldsymbol{m}_1}$ is defined in \eqref{Equation: half-fibrewise Fourier coefficient}.\par
		Fix $\boldsymbol{m}_1 \in \mathbb{Z}^n$ and $(x, \boldsymbol{\check{y}}_1) \in U \times \check{\mathbb{R}}^n$. Choose a section 
		$s \in \Gamma(U \times \check{\mathbb{T}}^{2n}, \mathbf{\check{E}})$ 
		such that its associated function $\langle s \rangle \in \mathcal{C}^\infty(U, \mathcal{S}(\check{\mathbb{R}}^n))$ satisfies $\langle s \rangle(x,\boldsymbol{\check{y}}_1 + \boldsymbol{m}_1) = 1$ and $\langle s \rangle(x,\boldsymbol{\check{y}}_1 + \check{q}) = 0$ for all $\check{q} \in \mathbb{Z}^n$ with $\check{q} \neq \boldsymbol{m}_1$. Then Proposition \ref{Proposition: absolute convergence} implies
		\begin{equation*}
			\check{f}_{\boldsymbol{m}_1}(x, \boldsymbol{\check{y}}_1) = \langle \Phi_f(s) \rangle (x, \boldsymbol{\check{y}}_1) = 0.
		\end{equation*}
		Since $(x, \boldsymbol{\check{y}}_1)$ and $\boldsymbol{m}_1$ were arbitrary, \eqref{Equation: half-fibrewise Fourier expansion} yields $f=0$.
	\end{proof}
	
	\begin{lemma}
		\label{Lemma: surjectivity}
		The map $\Phi: \Omega^{0, *}(U \times \mathbb{T}^{2n}) \to \Omega^{0, *}(U \times \check{\mathbb{T}}^{2n}, \operatorname{End}(\mathbf{\check{E}}))$ is surjective.
	\end{lemma}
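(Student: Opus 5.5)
Since $\Phi$ acts on the form part by the bijection $d\overline{z}^{i_1}\wedge\cdots\wedge d\overline{z}^{i_l}\mapsto d\overline{\check{z}}_{i_1}\wedge\cdots\wedge d\overline{\check{z}}_{i_l}\otimes\operatorname{Id}$, and both sides are free over these frames, it suffices to prove surjectivity on functions. Concretely, given $\Psi\in\Gamma(U\times\check{\mathbb{T}}^{2n},\operatorname{End}(\mathbf{\check{E}}))$, we must produce $f\in\mathcal{C}^\infty(U\times\mathbb{T}^{2n})$ with $\Phi_f=\Psi$. The plan is to read off the coefficients $\check{f}_{\boldsymbol{m}_1}$ from the operator $\langle\Psi\rangle$, using the operator-theoretic characterization in Lemma \ref{Lemma: characterization of sections of endo-bundle}. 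We then invert \eqref{Equation: half-fibrewise Fourier coefficient} and \eqref{Equation: half-fibrewise Fourier expansion}.

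\textbf{Step 1: extract a kernel.} By Lemma \ref{Lemma: characterization of sections of endo-bundle}, $\Xi:=\langle\Psi\rangle$ is continuous on $\mathcal{C}^\infty(U,\mathcal{S}(\check{\mathbb{R}}^n))$. It is linear over $\mathcal{C}^\infty(U\times\check{\mathbb{T}}^n)$, where the torus is $\check{\mathbb{R}}^n/\mathbb{Z}^n$, and it commutes with $\operatorname{S}_{k\boldsymbol{H}\check{q}}$. Linearity over $\mathbb{Z}^n$-periodic functions should force $\Xi$ to be "supported on the lattice translates". That is, $(\Xi s)(x,\boldsymbol{\check{y}}_1)$ should depend only on the values $s(x,\boldsymbol{\check{y}}_1+\boldsymbol{m}_1)$ for $\boldsymbol{m}_1\in\mathbb{Z}^n$. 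The argument: if $s$ vanishes near the orbit $\boldsymbol{\check{y}}_1+\mathbb{Z}^n$, write $s=\chi s$ with $\chi$ periodic and vanishing near that orbit, so $\Xi s=\chi\,\Xi s$ vanishes at the point. A Hadamard/Taylor argument then removes the "near" and gives dependence on values only, because multiplying by a periodic function vanishing on the orbit kills the output there. Continuity, together with a partition of unity on $\check{\mathbb{R}}^n$ subordinate to translates of a fundamental domain, then yields a representation
\[
(\Xi s)(x,\boldsymbol{\check{y}}_1)=\sum_{\boldsymbol{m}_1\in\mathbb{Z}^n}c_{\boldsymbol{m}_1}(x,\boldsymbol{\check{y}}_1)\,s(x,\boldsymbol{\check{y}}_1+\boldsymbol{m}_1).
\]
Here $c_{\boldsymbol{m}_1}(x,\boldsymbol{\check{y}}_1)=(\Xi s_{\boldsymbol{m}_1})(x,\boldsymbol{\check{y}}_1)$ for a bump $s_{\boldsymbol{m}_1}$ equal to $1$ near $\boldsymbol{\check{y}}_1+\boldsymbol{m}_1$ and $0$ near the other orbit points, exactly as in the proof of Lemma \ref{Lemma: injectivity}. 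Well-definedness and smoothness of $c_{\boldsymbol{m}_1}$ follow from the locality just proved. Commutation with $\operatorname{S}_{k\boldsymbol{H}\check{q}}$ gives $c_{\boldsymbol{m}_1}(x,\boldsymbol{\check{y}}_1+k\boldsymbol{H}\check{q})=c_{\boldsymbol{m}_1}(x,\boldsymbol{\check{y}}_1)$, so each $c_{\boldsymbol{m}_1}$ lies in $\mathcal{C}^\infty(U\times\check{\mathbb{T}}^n_{k\boldsymbol{H}})$.

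\textbf{Step 2: decay estimate (main obstacle).} We need the seminorms $\sup_{K\times\check{\mathbb{R}}^n}|(\boldsymbol{m}_1)^\eta\partial_x^\mu\partial_{\boldsymbol{\check{y}}_1}^\nu c_{\boldsymbol{m}_1}|$ to be finite, i.e.\ rapid decay in $\boldsymbol{m}_1$ uniformly in $\boldsymbol{\check{y}}_1$. The tool is continuity of $\Xi$. Fix a bump $\rho$ supported in a small ball around $0$. For $\boldsymbol{\check{y}}_1$ in a fundamental domain $D$ of $k\boldsymbol{H}\mathbb{Z}^n$, the value $c_{\boldsymbol{m}_1}(x,\boldsymbol{\check{y}}_1)$ equals $(\Xi\,\operatorname{S}_{-\boldsymbol{m}_1-\boldsymbol{\check{y}}_1^0}\rho)$ evaluated near $\boldsymbol{\check{y}}_1$. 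By $k\boldsymbol{H}$-periodicity, only $\boldsymbol{\check{y}}_1\in D$ matters. The idea is to pair this with a weight: consider $s=\sum_{\boldsymbol{m}_1}a_{\boldsymbol{m}_1}\rho(\cdot-\boldsymbol{\check{y}}_1^0-\boldsymbol{m}_1)$ for a rapidly decreasing sequence $a$. This is a Schwartz function whose seminorms are controlled by those of $a$. Its image satisfies $\Xi s(\boldsymbol{\check{y}}_1)=\sum a_{\boldsymbol{m}_1}c_{\boldsymbol{m}_1}$, and this must be bounded by $\Xi$'s continuity; we use the output only on the compact $K\times D$. Taking $a$ to concentrate at a single index shows $|c_{\boldsymbol{m}_1}|$ is bounded by a finite sum of Schwartz seminorms of a translated bump. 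That alone gives only polynomial \emph{growth}, not decay. To get decay I would instead evaluate the output at $\boldsymbol{\check{y}}_1$ with $|\boldsymbol{\check{y}}_1|$ large, using $k\boldsymbol{H}$-periodicity to move the point. Write $c_{\boldsymbol{m}_1}(\boldsymbol{\check{y}}_1)=c_{\boldsymbol{m}_1}(\boldsymbol{\check{y}}_1-k\boldsymbol{H}\check{q})$ and choose $\check{q}$ so that the input bump sits near the origin while the output point is at distance $\sim|\boldsymbol{m}_1|$. The Schwartz decay of the output, $\sup|\boldsymbol{\check{y}}_1^\eta\,\Xi s|\le C\cdot(\text{seminorms of a fixed bump})$, then yields $|c_{\boldsymbol{m}_1}|\lesssim|\boldsymbol{m}_1|^{-|\eta|}$. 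Derivatives are handled identically. I expect this bookkeeping to be the delicate point.

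\textbf{Step 3: reconstruct $f$.} Define $f_{\boldsymbol{m}_1}(x,\boldsymbol{u}^2):=c_{\boldsymbol{m}_1}\bigl(x,-k\boldsymbol{H}\boldsymbol{u}^2-\tfrac12\boldsymbol{m}_1\bigr)$. This is $\mathbb{Z}^n$-periodic in $\boldsymbol{u}^2$ precisely because $c_{\boldsymbol{m}_1}$ is $k\boldsymbol{H}\mathbb{Z}^n$-periodic. Set $f:=\sum_{\boldsymbol{m}_1}f_{\boldsymbol{m}_1}e^{2\pi\sqrt{-1}\boldsymbol{m}_1\cdot\boldsymbol{u}^1}$. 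The Step 2 estimates are exactly the seminorms defining the topology on $\mathcal{C}^\infty(U\times\mathbb{T}^{2n})$, so $f$ is smooth. By construction $\check{f}_{\boldsymbol{m}_1}=c_{\boldsymbol{m}_1}$, and Proposition \ref{Proposition: absolute convergence} gives $\langle\Phi_f\rangle=\Xi$. Injectivity of $\Psi\mapsto\langle\Psi\rangle$ then yields $\Phi_f=\Psi$.
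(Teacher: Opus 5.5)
Your proposal is correct and follows essentially the same route as the paper. The coefficients $\check{f}_{\boldsymbol{m}_1}$ are read off by applying $\langle\Psi\rangle$ to bumps localized at a single lattice translate; they are well defined by lattice dependence via the module property over periodic functions, and $k\boldsymbol{H}\mathbb{Z}^n$-periodic by commutation with $\operatorname{S}_{k\boldsymbol{H}\check{q}}$. Their rapid decay comes from evaluating the output of finitely many fixed bumps at far-away points $\boldsymbol{\check{y}}_1 - k\boldsymbol{H}\check{q}$ and using periodicity, exactly as in the paper's fiberwise Schwartz estimate, and $f$ is then reassembled from the half-fiberwise Fourier series. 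Your Step 1 is somewhat more explicit than the paper about the identity $(\langle\Psi\rangle s)(x,\boldsymbol{\check{y}}_1)=\sum_{\boldsymbol{m}_1}\check{f}_{\boldsymbol{m}_1}(x,\boldsymbol{\check{y}}_1)\, s(x,\boldsymbol{\check{y}}_1+\boldsymbol{m}_1)$, which the paper leaves implicit in its closing ``by construction''.
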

	\begin{proof}
		It suffices to prove surjectivity on functions. Fix $\Psi \in \Gamma(U \times \check{\mathbb{T}}^{2n}, \operatorname{End}(\mathbf{\check{E}}))$. For each $\boldsymbol{m}_1 \in \mathbb{Z}^n$, define a function $\check{f}_{\boldsymbol{m}_1}: U \times \check{\mathbb{R}}^n \to \mathbb{C}$ by
		\begin{equation}
			\check{f}_{\boldsymbol{m}_1}(x, \boldsymbol{\check{y}}_1) := (\langle \Psi \rangle s) (x, \boldsymbol{\check{y}}_1),
		\end{equation}
		where $s \in \mathcal{C}^\infty(U, \mathcal{S}(\check{\mathbb{R}}^n))$ is any function satisfying $\langle s \rangle(x,\boldsymbol{\check{y}}_1 + \boldsymbol{m}_1) = 1$ and $\langle s \rangle(x,\boldsymbol{\check{y}}_1 + \check{q}) = 0$ for all $\check{q} \in \mathbb{Z}^n$ with $\check{q} \neq \boldsymbol{m}_1$. By Lemma \ref{Lemma: dependence on lattice}, $\check{f}_{\boldsymbol{m}_1}$ is well defined.\par
		We first show that $\check{f}_{\boldsymbol{m}_1}$ is smooth. Fix $(x, \boldsymbol{\check{y}}_1) \in U \times \check{\mathbb{R}}^n$. Choose a function $s \in \mathcal{C}^\infty(U, \mathcal{S}(\check{\mathbb{R}}^n))$ which equals $1$ on a sufficiently small neighbourhood of $(x, \boldsymbol{\check{y}}_1 + \boldsymbol{m}_1)$ and vanishes away from a slightly larger neighbourhood. Then on a sufficiently small neighbourhood of $(x, \boldsymbol{\check{y}}_1)$,
		\begin{equation*}
			\check{f}_{\boldsymbol{m}_1} = \langle \Psi \rangle (s).
		\end{equation*}
		Since the right-hand side is smooth at $(x, \boldsymbol{\check{y}}_1)$, so is $\check{f}_{\boldsymbol{m}_1}$.\par
		Next, $\check{f}_{\boldsymbol{m}_1}$ is invariant under the lattice $k \boldsymbol{H} \mathbb{Z}^n$. Indeed, fix $(x, \boldsymbol{\check{y}}_1) \in U \times \check{\mathbb{R}}^n$ and $\check{q} \in \mathbb{Z}^n$. Choosing $s \in \mathcal{C}^\infty(U, \mathcal{S}(\check{\mathbb{R}}^n))$ which equals $1$ at $(x, \boldsymbol{\check{y}}_1 + k\boldsymbol{H} \check{q} + \boldsymbol{m}_1)$ and vanishes away from a sufficiently small neighbourhood of $(x, \boldsymbol{\check{y}}_1 + k\boldsymbol{H} \check{q} + \boldsymbol{m}_1)$, we obtain
		\begin{equation*}
			\check{f}_{\boldsymbol{m}_1}(x, \boldsymbol{\check{y}}_1 + k\boldsymbol{H} \check{q}) = (\operatorname{S}_{k\boldsymbol{H} \check{q}} \circ \langle \Psi \rangle)(s)(x, \boldsymbol{\check{y}}_1) = (\langle \Psi \rangle \circ \operatorname{S}_{k\boldsymbol{H} \check{q}})(s)(x, \boldsymbol{\check{y}}_1) = \check{f}_{\boldsymbol{m}_1}(x, \boldsymbol{\check{y}}_1)
		\end{equation*}
		by Lemma \ref{Lemma: characterization of sections of endo-bundle}, where $\operatorname{S}_{k\boldsymbol{H} \check{q}}$ denotes the shift operator defined in \eqref{Equation: shift operator}. So $\check{f}_{\boldsymbol{m}_1}$ descends to a smooth function on $U \times \check{\mathbb{T}}_{k\boldsymbol{H}}^n$, denoted by the same symbol.\par
		Moreover, Lemma \ref{Lemma: Fibrewise Schwartz property}, together with the finiteness of $\mathbb{Z}_{k\boldsymbol{H}}^n$ and the $k\boldsymbol{H}\mathbb{Z}^n$-invariance of $\check{f}$, implies that for all compact subset $K \subset U$ and multi-indices $\mu \in \mathbb{N}^{2n}$ and $\nu, \eta \in \mathbb{N}^n$, $\lVert \check{f} \rVert_{K, \mu, \nu, \eta} < \infty$. Consequently, the Fourier series
		\begin{equation*}
			f(x, y) := \sum_{\boldsymbol{m}_1 \in \mathbb{Z}^n} \check{f}_{\boldsymbol{m}_1}(x, -k\boldsymbol{H}\boldsymbol{u}^2 - \tfrac{1}{2} \boldsymbol{m}_1) \cdot e^{2\pi\sqrt{-1} \boldsymbol{m}_1 \cdot \boldsymbol{u}^1}
		\end{equation*}
		defines a smooth function $f \in \mathcal{C}^\infty(U \times \mathbb{T}^{2n})$. By construction, the associated section $\Phi_f$ coincides with $\Psi$. Hence $\Phi$ is surjective.
	\end{proof}
	
	\begin{lemma}[Lattice dependence]
		\label{Lemma: dependence on lattice}
		Let $\Psi \in \Gamma(U \times \check{\mathbb{T}}^{2n}, \operatorname{End}(\mathbf{\check{E}}))$ and $(x, \boldsymbol{\check{y}}_1) \in U \times \check{\mathbb{R}}^n$. If $s \in \mathcal{C}^\infty(U, \mathcal{S}(\check{\mathbb{R}}^n))$ satisfies $s(x, \boldsymbol{\check{y}}_1 + \boldsymbol{m}_1) = 0$ for all $\boldsymbol{m}_1 \in \mathbb{Z}^n$, then $(\langle \Psi \rangle s)(x, \boldsymbol{\check{y}}_1) = 0$.
	\end{lemma}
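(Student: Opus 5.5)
The strategy is to exploit that $\langle \Psi \rangle$ is $\mathcal{C}^\infty(U\times\check{\mathbb{T}}^n)$-linear and commutes with the shifts $\operatorname{S}_{k\boldsymbol{H}\check{q}}$ (Lemma \ref{Lemma: characterization of sections of endo-bundle}). The aim is to factor $s$ through a function that vanishes at the point $(x,\boldsymbol{\check{y}}_1)$ and is $\mathbb{Z}^n$-periodic in the $\boldsymbol{\check{y}}_1$ variable. Periodic functions only see the lattice translates $\boldsymbol{\check{y}}_1+\mathbb{Z}^n$, which is exactly the hypothesis on $s$.

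First I reduce to a statement about vanishing along a periodic set. The lattice $\mathbb{Z}^n \supset k\boldsymbol{H}\mathbb{Z}^n$, so a $\mathbb{Z}^n$-periodic function in $\boldsymbol{\check{y}}_1$ lies in $\mathcal{C}^\infty(U\times\check{\mathbb{T}}^n)$ (identified with $k\boldsymbol{H}\mathbb{Z}^n$-periodic functions, the torus there being $\check{\mathbb{R}}^n/k\boldsymbol{H}\mathbb{Z}^n$; if the lemma's module is literally $\check{\mathbb{T}}^n=\check{\mathbb{R}}^n/\mathbb{Z}^n$, even better). Hence multiplication by such a function commutes with $\langle\Psi\rangle$.

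Next I use a Hadamard-type decomposition. Write $p=(x,\boldsymbol{\check{y}}_1)$ with $x=(x_1,\dots,x_{2n})$. Consider the smooth $\mathbb{Z}^n$-periodic functions
$$\phi_j(x',\boldsymbol{\check{y}}'_1)=\sin\bigl(\pi(\check{y}'_j-\check{y}_j)\bigr)\quad (1\le j\le n),\qquad \psi_i(x')=x'_i-x_i \quad (1\le i\le 2n),$$
whose common zero set is exactly $\{x\}\times(\boldsymbol{\check{y}}_1+\mathbb{Z}^n)$. I would show that any $s\in\mathcal{C}^\infty(U,\mathcal{S}(\check{\mathbb{R}}^n))$ vanishing on this set can be written as $s=\sum_j \phi_j a_j+\sum_i \psi_i b_i$ with $a_j,b_i\in\mathcal{C}^\infty(U,\mathcal{S}(\check{\mathbb{R}}^n))$ (possibly after shrinking $U$; this is harmless by locality, or else one can use a cutoff in $x$). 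Then
$$\langle\Psi\rangle s=\sum_j\phi_j\langle\Psi\rangle a_j+\sum_i\psi_i\langle\Psi\rangle b_i,$$
and evaluating at $p$ gives $0$, since every $\phi_j(p)=\psi_i(p)=0$.

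To build the decomposition, I first handle the $x$-direction by Taylor's formula: $s(x',\cdot)=s(x,\cdot)+\sum_i\psi_i(x')\int_0^1\partial_{x_i}s(x+t(x'-x),\cdot)\,dt$. The integrals remain Schwartz in $\boldsymbol{\check{y}}_1$ locally uniformly, provided $U$ is convex near $x$. This reduces the problem to a single Schwartz function $\sigma=s(x,\cdot)\in\mathcal{S}(\check{\mathbb{R}}^n)$ vanishing on $\boldsymbol{\check{y}}_1+\mathbb{Z}^n$, regarded as constant in $x$. For $\sigma$, I use a partition of unity $\{\chi_q\}$ subordinate to unit cubes around the lattice points $\boldsymbol{\check{y}}_1+q$, with uniformly bounded derivatives. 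On the support of $\chi_q$, only the point $\boldsymbol{\check{y}}_1+q$ is a common zero of the $\phi_j$. A local division of the form $\chi_q\sigma=\sum_j\phi_j a_{j,q}$ follows from Hadamard's lemma in the coordinates $\sin(\pi(\check{y}'_j-\check{y}_j))$, which are local coordinates near each lattice point.

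The main obstacle is this final step: keeping the Schwartz seminorms of $a_j=\sum_q a_{j,q}$ under control uniformly in $q$. The Hadamard coefficients for different $q$ are all obtained from the same translated model with bounded geometry, so their $C^N$ norms are bounded by $C^{N+1}$ norms of $\sigma$ on the corresponding cube. The rapid decay of $\sigma$ then transfers to $a_j$. Outside a neighbourhood of the lattice, the $\phi_j$ do not all vanish simultaneously, so one divides by $\sum_j\phi_j^2$, which is bounded below there, and sets $a_j$ to $\phi_j\sigma/\sum\phi_j^2$ on that region.
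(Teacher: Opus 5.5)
Your overall strategy is right and matches the paper's at its core: write $s$ as $\sum f_i s_i$ with $s_i\in\mathcal{C}^\infty(U,\mathcal{S}(\check{\mathbb{R}}^n))$ and $f_i\in\mathcal{C}^\infty(U\times\check{\mathbb{T}}^n)$ vanishing at $(x,\boldsymbol{\check{y}}_1)$, then apply the module property of Lemma \ref{Lemma: characterization of sections of endo-bundle}. However, the multipliers you chose are not in that module. Since $\sin(\pi(t+1))=-\sin(\pi t)$, the function $\phi_j=\sin(\pi(\check{y}'_j-\check{y}_j))$ is anti-periodic under $\mathbb{Z}$ and only $2\mathbb{Z}$-periodic. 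The module consists of $\mathbb{Z}^n$-periodic functions: the paper distinguishes $\check{\mathbb{T}}^n=\check{\mathbb{R}}^n/\mathbb{Z}^n$ from $\check{\mathbb{T}}^n_{k\boldsymbol{H}}$, and uses $\operatorname{S}_{\boldsymbol{m}_1}(g)=g$ for every $\boldsymbol{m}_1\in\mathbb{Z}^n$. Even under your alternative reading with period lattice $k\boldsymbol{H}\mathbb{Z}^n$, periodicity fails as soon as some $kh_j$ is odd, e.g.\ $k=1$, $\boldsymbol{H}=I$. So the step $\langle\Psi\rangle(\phi_j a_j)=\phi_j\,\langle\Psi\rangle a_j$, on which your conclusion rests, is not justified as written.

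The repair is immediate. Use the complex-valued $\mathbb{Z}$-periodic functions $\phi_j=e^{2\pi\sqrt{-1}(\check{y}'_j-\check{y}_j)}-1$, which vanish exactly on $\check{y}_j+\mathbb{Z}$ with nonzero derivative there. Near a lattice point $c+q$ (with $c=\boldsymbol{\check{y}}_1$), the quotient $(t_j-c_j-q_j)/\phi_j(t)$ is a translate of the single function $\tau\mapsto\tau/(e^{2\pi\sqrt{-1}\tau}-1)$, which is smooth for $|\tau|<1$. Hence your uniform-in-$q$ Hadamard estimates go through unchanged. Away from the lattice, take $a_j=\overline{\phi_j}\,\sigma/\sum_i|\phi_i|^2$; here $\sum_i|\phi_i|^2=4\sum_i\sin^2(\pi(t_i-c_i))$ is periodic and bounded below. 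The real pair $\sin(2\pi\,\cdot)$, $1-\cos(2\pi\,\cdot)$ would also work.

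With that fix your proof is correct. It differs from the paper's mainly in how it handles non-compactness in $\boldsymbol{\check{y}}_1$.

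The paper first approximates $s$ by compactly supported functions with the same vanishing property. Then only finitely many lattice points lie in the support, and $s=\sum_{i=1}^r f_i s_i$ comes from a finite, purely local Hadamard argument. It passes to the limit using continuity of $\langle\Psi\rangle$ (part of Lemma \ref{Lemma: characterization of sections of endo-bundle}) and of point evaluation.

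Your route avoids the density-and-continuity step and uses only the algebraic $\mathcal{C}^\infty(U\times\check{\mathbb{T}}^n)$-linearity. The cost is controlling Schwartz seminorms of the quotients uniformly over all lattice points, which your bounded-geometry and bounded-overlap argument handles. Your treatment of the $x$-direction via $x'_i-x_i\in\mathcal{C}^\infty(U)$ and a cutoff is what the paper leaves implicit.
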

	\begin{proof}
		Approximate $s$ in $\mathcal{C}^\infty(U, \mathcal{S}(\check{\mathbb{R}}^n))$ by compactly supported functions with the same vanishing property. By continuity, it suffices to treat the compactly supported case.\par
		In this case, write $s = \sum_{i=1}^r f_i \cdot s_i$, where each $s_i$ is compactly supported (and hence in $\mathcal{C}^\infty(U, \mathcal{S}(\check{\mathbb{R}}^n))$) and each $f_i \in \mathcal{C}^\infty(U \times \check{\mathbb{T}}^n)$ vanishes at $(x, \boldsymbol{\check{y}}_1)$. By Lemma \ref{Lemma: characterization of sections of endo-bundle},
		\begin{equation*}
			(\langle \Psi \rangle s)(x, \boldsymbol{\check{y}}_1) = \sum_{i=1}^r f_i(x, \boldsymbol{\check{y}}_1) \cdot (\langle \Psi \rangle s_i)(x, \boldsymbol{\check{y}}_1) = 0.
		\end{equation*}
	\end{proof}
	
	\begin{lemma}[Fiberwise Schwartz estimate]
		\label{Lemma: Fibrewise Schwartz property}
		Let $K \subset U$ be a compact subset, and $\mu \in \mathbb{N}^{2n}$ and $\nu, \eta \in \mathbb{N}^n$ be multi-indices. Let $\boldsymbol{\check{y}}_1 \in \check{\mathbb{R}}^n$ and $l \in \mathbb{Z}_{k\boldsymbol{H}}^n$. Then there exists $\varepsilon > 0$ such that
		\begin{equation}
			\label{Equation: seminorm estimate}
			\sup_{(x, \boldsymbol{\check{y}'}_1, \check{q}) \in K \times \overline{\operatorname{B}}(\boldsymbol{\check{y}}_1, \varepsilon) \times \mathbb{Z}^n} \left\lvert (l + k\boldsymbol{H} \check{q})^\eta \cdot \frac{\partial^{\lvert \mu \rvert + \lvert \nu \rvert} \check{f}_{l + k\boldsymbol{H} \check{q}}}{\partial x^\mu \partial (\boldsymbol{\check{y}'}_1)^\nu} (x, \boldsymbol{\check{y}'}_1) \right\rvert < \infty,
		\end{equation}
		where $\overline{\operatorname{B}}(\boldsymbol{\check{y}}_1, \varepsilon)$ denotes the closed ball in $\check{\mathbb{R}}^n$ of radius $\varepsilon$ centred at $\boldsymbol{\check{y}}_1$.
	\end{lemma}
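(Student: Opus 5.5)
The plan is to express each derivative of $\check{f}_{l+k\boldsymbol{H}\check{q}}$ as the value of $\langle \Psi \rangle$ applied to a suitably shifted test function. The needed Schwartz bound then follows from the continuity of $\langle \Psi \rangle$ together with its commutation with the shifts $\operatorname{S}_{k\boldsymbol{H}\check{q}}$ (Lemma \ref{Lemma: characterization of sections of endo-bundle}).

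\textbf{Step 1: a single test function.} Fix a cutoff $\rho \in \mathcal{C}_c^\infty(\check{\mathbb{R}}^n)$ with $\rho \equiv 1$ on $\overline{\operatorname{B}}(\boldsymbol{\check{y}}_1 + l, 2\varepsilon)$. Choose $\varepsilon < 1/4$, so that the support of $\rho$ can be taken inside a ball of radius $<1/2$ and meets no other translate $\boldsymbol{\check{y}}'_1 + \check{q}'$ for $\boldsymbol{\check{y}}'_1 \in \overline{\operatorname{B}}(\boldsymbol{\check{y}}_1,\varepsilon)$. Set $s_0(x,\cdot) := \rho$, constant in $x$. By the definition of $\check{f}$ and Lemma \ref{Lemma: dependence on lattice}, for every $\boldsymbol{\check{y}}'_1$ in the ball we get $\check{f}_{l}(x,\boldsymbol{\check{y}}'_1) = (\langle\Psi\rangle s_0)(x,\boldsymbol{\check{y}}'_1)$. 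This identity holds on an open set, so derivatives in $x$ and $\boldsymbol{\check{y}}'_1$ are also obtained from $\langle\Psi\rangle s_0$.

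\textbf{Step 2: moving to the index $l + k\boldsymbol{H}\check{q}$.} The section $s_{\check{q}} := \operatorname{S}_{-k\boldsymbol{H}\check{q}}\,s_0$ is the cutoff centred at $\boldsymbol{\check{y}}_1 + l + k\boldsymbol{H}\check{q}$. Hence, near the ball, $\check{f}_{l+k\boldsymbol{H}\check{q}} = \langle\Psi\rangle(s_{\check{q}})$ restricted to the ball. The factor $(l + k\boldsymbol{H}\check{q})^\eta$ is comparable to a polynomial in the position of the bump, that is, to multiplication by $(\boldsymbol{\check{y}} - \boldsymbol{\check{y}}_1)^\eta$ evaluated near the support of $s_{\check{q}}$. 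I would rewrite
\[
(l+k\boldsymbol{H}\check{q})^\eta\,\partial_x^\mu\partial^\nu\check{f}_{l+k\boldsymbol{H}\check{q}}(x,\boldsymbol{\check{y}}'_1) = \partial_x^\mu\partial^\nu\bigl(\langle\Psi\rangle (p_{\eta}\, s_{\check{q}})\bigr)(x,\boldsymbol{\check{y}}'_1) + \text{(lower order terms)},
\]
where $p_\eta$ is the polynomial weight $\boldsymbol{\check{y}}\mapsto(\boldsymbol{\check{y}}-\boldsymbol{\check{y}}_1)^\eta$. The error terms come from the bounded offset between $\boldsymbol{\check{y}}$ and $l+k\boldsymbol{H}\check{q}$ on the support, and they are handled by induction on $|\eta|$. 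Lemma \ref{Lemma: dependence on lattice} applied to the vanishing pattern justifies replacing the constant $(l+k\boldsymbol{H}\check{q})^\eta$ by the function $p_\eta$, since $p_\eta$ is evaluated only at the lattice point inside the support.

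\textbf{Step 3: uniformity in $\check{q}$ (the main obstacle).} We need $\sup_{\check{q}}$ to be finite, but the family $p_\eta s_{\check{q}}$ is not bounded in $\mathcal{C}^\infty(U,\mathcal{S}(\check{\mathbb{R}}^n))$. I would handle this by summing instead of bounding individually. Form $S := \sum_{\check{q}} (1+|l+k\boldsymbol{H}\check{q}|^2)^{-N} p_\eta s_{\check{q}}$ with $N$ large; this is a legitimate element of $\mathcal{C}^\infty(U,\mathcal{S}(\check{\mathbb{R}}^n))$, being a lattice sum of disjoint bumps with polynomially decaying weights. By continuity, $\langle\Psi\rangle S$ lies in the Schwartz space. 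Lemma \ref{Lemma: dependence on lattice} shows that, on the ball, the value of $\langle\Psi\rangle S$ at $\boldsymbol{\check{y}}'_1 + k\boldsymbol{H}\check{q}$ only sees the $\check{q}$-th bump. Using shift-commutation, $\operatorname{S}_{k\boldsymbol{H}\check{q}}\langle\Psi\rangle S = \langle\Psi\rangle \operatorname{S}_{k\boldsymbol{H}\check{q}}S$, so the $\check{q}$-th contribution is read off at the fixed ball, multiplied by the weight. Choosing instead weights $(1+|l+k\boldsymbol{H}\check{q}|^2)^{+N}$ is not admissible. So the Schwartz decay of $\langle\Psi\rangle S$ in the shifted variable, which gives decay like $|\check{q}|^{-M}$ for any $M$, must be compared with the fixed weight. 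Precisely, $\sup_{\check{q}} |\check{q}|^{M}\,|\partial^{\mu,\nu}(\langle\Psi\rangle S)(x,\boldsymbol{\check{y}}'_1+k\boldsymbol{H}\check{q})|<\infty$, and taking $M \ge 2N+|\eta|$ absorbs the weight. The care needed here is the justification that evaluation at the shifted point isolates a single bump uniformly in $\check{q}$. This follows from Lemma \ref{Lemma: dependence on lattice} applied with the remaining bumps, which vanish on the relevant lattice translates, and it completes the estimate \eqref{Equation: seminorm estimate}.
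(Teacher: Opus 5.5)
Steps 1 and 2 are fine; the polynomial weight $p_\eta$ in Step 2 is harmless but unnecessary. Step 3, which you rightly call the main obstacle, fails for two reasons.

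First, $S=\sum_{\check{q}}(1+|l+k\boldsymbol{H}\check{q}|^2)^{-N}p_\eta s_{\check{q}}$ with a fixed $N$ is not rapidly decreasing. Near the centre $\boldsymbol{\check{y}}_1+l+k\boldsymbol{H}\check{q}$ of the $\check{q}$-th bump it has size of order $(1+|l+k\boldsymbol{H}\check{q}|^2)^{-N}|p_\eta|$, which decays only polynomially in $\check{q}$. So $S\notin\mathcal{C}^\infty(U,\mathcal{S}(\check{\mathbb{R}}^n))$, and $\langle\Psi\rangle S$ is not defined.

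Second, and more fundamentally, the isolation claim is false. Your bumps are translates of one another by vectors in $k\boldsymbol{H}\mathbb{Z}^n\subset\mathbb{Z}^n$. Hence for any point $p$, the set $p+\mathbb{Z}^n$ meets the support of either every bump or none, so Lemma \ref{Lemma: dependence on lattice} can never separate one bump from the rest. Concretely, at $p=\boldsymbol{\check{y}}'_1+k\boldsymbol{H}\check{q}$ the $\check{q}'$-th bump is reached by the integer shift $l+k\boldsymbol{H}(\check{q}'-\check{q})$. So $\langle\Psi\rangle S(p)$ would be a weighted sum of $\check{f}_{l+k\boldsymbol{H}(\check{q}'-\check{q})}(x,\boldsymbol{\check{y}}'_1)$ over all $\check{q}'$. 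The $\check{q}$-th bump itself contributes $\check{f}_l$ there, not $\check{f}_{l+k\boldsymbol{H}\check{q}}$.

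The missing idea is simpler than any summation, and you already have both ingredients. In Step 2 you showed $\check{f}_{l+k\boldsymbol{H}\check{q}}=\langle\Psi\rangle s_{\check{q}}$ near the ball. Shift commutation gives $\langle\Psi\rangle s_{\check{q}}=\langle\Psi\rangle\operatorname{S}_{-k\boldsymbol{H}\check{q}}s_0=\operatorname{S}_{-k\boldsymbol{H}\check{q}}\langle\Psi\rangle s_0$. Together,
\[
\check{f}_{l+k\boldsymbol{H}\check{q}}(x,\boldsymbol{\check{y}}'_1)=(\langle\Psi\rangle s_0)(x,\boldsymbol{\check{y}}'_1-k\boldsymbol{H}\check{q})
\]
on a neighbourhood of the ball, so derivatives transfer; your choice $s_0\equiv 1$ on the ball of radius $2\varepsilon$ handles this neatly.

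Thus every coefficient is read off from the single Schwartz function $\langle\Psi\rangle s_0$ at the far-away points $\boldsymbol{\check{y}}'_1-k\boldsymbol{H}\check{q}$. For $\boldsymbol{\check{y}}'_1$ in the bounded ball, each factor satisfies $\lvert(l+k\boldsymbol{H}\check{q})_i\rvert\le C\,(1+\lvert(\boldsymbol{\check{y}}'_1-k\boldsymbol{H}\check{q})_i\rvert)$. Combined with finiteness of $\lVert\langle\Psi\rangle s_0\rVert_{K,\mu,\nu,\eta'}$ for $\eta'\le\eta$, this gives the estimate immediately.

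This is exactly the paper's proof, which does it via the definition of $\check{f}_{l+k\boldsymbol{H}\check{q}}$ and its $k\boldsymbol{H}\mathbb{Z}^n$-periodicity. The unboundedness of the family $p_\eta s_{\check{q}}$ that worried you is irrelevant: no family of inputs is needed, only the decay of one output at translated points.
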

	\begin{proof}
		Fix $\varepsilon = \tfrac{1}{4}$ and choose a fiberwise cutoff function $s \in \mathcal{C}^\infty(U, \mathcal{S}(\check{\mathbb{R}}^n))$ such that
		\begin{equation*}
			s \equiv 1 \quad \text{on } U \times \overline{\operatorname{B}}(\boldsymbol{\check{y}}_1 + l, \varepsilon) \quad \text{and} \quad \operatorname{supp} (s) \subset U \times \overline{\operatorname{B}}(\boldsymbol{\check{y}}_1 + l, 2\varepsilon).
		\end{equation*}
		Then for all $(x, \boldsymbol{\check{y}'}_1, \check{q}) \in K \times \overline{\operatorname{B}}(\boldsymbol{\check{y}}_1, \varepsilon) \times \mathbb{Z}^n$,
		\begin{equation*}
			(\langle \Psi \rangle s)(x, \boldsymbol{\check{y}'}_1 - k\boldsymbol{H}\check{q}) = \check{f}_{l + k\boldsymbol{H}\check{q}}(x, \boldsymbol{\check{y}'}_1 - k\boldsymbol{H}\check{q}) = \check{f}_{l + k\boldsymbol{H}\check{q}}(x, \boldsymbol{\check{y}'}_1).
		\end{equation*}
		Hence the seminorm bound $\lVert \langle \Psi \rangle s \rVert_{K, \mu, \nu, \eta} < \infty$ for $\langle \Psi \rangle s$ implies
		\begin{equation*}
			\sup_{(x, \boldsymbol{\check{y}'}_1, \check{q}) \in K \times \overline{\operatorname{B}}(\boldsymbol{\check{y}}_1, \varepsilon) \times \mathbb{Z}^n} \left\lvert (\boldsymbol{\check{y}'}_1 - k\boldsymbol{H} \check{q})^\eta \cdot \frac{\partial^{\lvert \mu \rvert + \lvert \nu \rvert} \check{f}_{l + k\boldsymbol{H} \check{q}}}{\partial x^\mu \partial (\boldsymbol{\check{y}'}_1)^\nu} (x, \boldsymbol{\check{y}'}_1) \right\rvert < \infty.
		\end{equation*}
		Since $\boldsymbol{\check{y}'}_1$ ranges in a bounded set, the weights $(\boldsymbol{\check{y}'}_1 - k\boldsymbol{H} \check{q})^\eta$ are comparable to $(l + k\boldsymbol{H} \check{q})^\eta$ outside a finite subset of $\mathbb{Z}^n$. The desired estimate follows by combining this comparison with compactness.
	\end{proof}
	
	\begin{proof}[\myproof{Theorem}{\ref{Theorem: mirror statement}}]
		By construction, the collection $\{ \Phi_U^{(k)} \}$ defines a morphism of sheaves of $\mathcal{C}_B^\infty$-modules. For any open subset $U \subset B$, Lemma \ref{Lemma: holomorphicity} ensures that $\Phi_U^{(k)}$ is a morphism of differential graded algebras. Since $\Phi_U^{(k)}$ is bijective (Lemmas \ref{Lemma: injectivity} and \ref{Lemma: surjectivity}), it constitutes an isomorphism of sheaves of DGAs. This isomorphism naturally induces the claimed isomorphism on the resulting cohomology groups, which completes the proof.
	\end{proof}

	\appendix
	
	\section{Function Spaces with Fiberwise Rapid Decay}
	\label{Sectoin: Remarks on fibrewise Schwartz spaces}
	Let $(x^1, ..., x^n)$ be the standard coordinates on $\mathbb{R}^n$, and let $U \subset \mathbb{R}^n$ be an open subset. Let $(y^1, ..., y^m)$ denote the standard coordinate on $\mathbb{R}^m$, and write $\mathcal{S}(\mathbb{R}^m)$ for the Schwartz space on $\mathbb{R}^m$. We define
	\begin{equation*}
		\mathcal{C}^\infty(U, \mathcal{S}(\mathbb{R}^m))
	\end{equation*}
	to be the space of smooth $\mathbb{C}$-valued functions $s \in \mathcal{C}^\infty(U \times \mathbb{R}^m)$ satisfying the following conditions: for all compact subset $K \subset U$ and multi-indices $\mu \in \mathbb{N}^n$ and $\nu, \eta \in \mathbb{N}^m$,
	\begin{equation*}
		\lVert s \rVert_{K, \mu, \nu, \eta} := \sup_{(x, y) \in K \times \mathbb{R}^m} \left\lvert y^\eta \cdot \frac{\partial^{\lvert \mu \rvert + \lvert \nu \rvert} s}{\partial x^\mu \partial y^\nu} (x, y) \right\rvert < \infty.
	\end{equation*}
	We refer to $\mathcal{C}^\infty(U, \mathcal{S}(\mathbb{R}^m))$ as the \emph{space of smooth $\mathbb{C}$-valued functions on $U \times \mathbb{R}^m$ that are rapidly decreasing in the $\mathbb{R}^m$-variables}. We equip this space with the locally convex topology generated by the seminorms $\lVert \,\cdot\, \rVert_{K, \mu, \nu, \eta}$.
	
	\begin{remark}
		This space can be identified with the completed projective tensor product $\mathcal{C}^\infty(U) \widehat{\otimes}_\pi \mathcal{S}(\mathbb{R}^m)$, where $\mathcal{C}^\infty(U)$ is equipped with the usual compact-open $\mathcal{C}^\infty$-topology and $\mathcal{S}(\mathbb{R}^m)$ is equipped with its Schwartz topology.
	\end{remark}

	\section{Twisted integral symplectic groups}
	\label{Section: Twisted integral symplectic groups}
	Let $n \in \mathbb{Z}$, $I_n$ denote the $n \times n$ identity matrix and
	\begin{equation}
		\label{Equation: symplectic involution}
		J_n := \begin{pmatrix}
			0 & -I_n\\
			I_n & 0
		\end{pmatrix}.
	\end{equation}
	Recall that the real symplectic group $\operatorname{Sp}(2n, \mathbb{R}) = \left\{ A \in \operatorname{GL}(2n, \mathbb{R}): A J_n A^T = J_n \right\}$ has a canonical action on the Segal upper half space $\mathbb{H}_n$, which is given by
	\begin{equation}
		\label{Equation: standard action of symplectic group on upper half space}
		A \cdot \Omega = 
		(\boldsymbol{A}_1^1\Omega + \boldsymbol{A}_1^2)(\boldsymbol{A}_2^1\Omega + \boldsymbol{A}_2^2)^{-1}, \quad \text{where }
		A =
		\begin{pmatrix}
			\boldsymbol{A}_1^1 & \boldsymbol{A}_1^2\\
			\boldsymbol{A}_2^1 & \boldsymbol{A}_2^2
		\end{pmatrix}.
	\end{equation}
	We are interested in a variant of the standard integral symplectic group $\operatorname{Sp}(2n, \mathbb{Z}) < \operatorname{Sp}(2n, \mathbb{R})$, determined by a finite sequence $h_1, ..., h_n \in \mathbb{Z}_{>0}$ with $h_1 \mid h_2 \mid \cdots \mid h_n$ (for instance, $h_1, ..., h_n$ are invariant factors of $\mathcal{B}_{\operatorname{cc}}$), and in the induced action of it on $\mathbb{H}_n$ through a particular embedding into $\operatorname{Sp}(2n, \mathbb{R})$. We first define $\boldsymbol{H} = \operatorname{diag}(h_1, ..., h_n)$ and $H$ as in \eqref{Equation: Skew-Smith form of H}. The group of our interest is as follows:
	\begin{equation*}
		\operatorname{Sp}(\mathbb{Z}^{2n}, H) := \left\{ A \in \operatorname{GL}(2n, \mathbb{Z}): A H A^T = H \right\}.
	\end{equation*}
	This group has an automorphism $A \mapsto A^{-T}$. There is then a group embedding
	\begin{equation}
		\label{Equation: group embedding of integral symplectic group}
		\operatorname{Sp}(\mathbb{Z}^{2n}, H) \hookrightarrow \operatorname{Sp}(2n, \mathbb{R}), \quad A \mapsto
		\begin{pmatrix}
			\boldsymbol{H}^{-1} & 0\\
			0 & I_n
		\end{pmatrix}
		A^{-T}
		\begin{pmatrix}
			\boldsymbol{H} & 0\\
			0 & I_n
		\end{pmatrix},
	\end{equation}
	which, together with \eqref{Equation: standard action of symplectic group on upper half space}, induces an action
	\begin{equation}
		\label{Equation: twisted action on upper half space}
		\operatorname{Sp}(\mathbb{Z}^{2n}, H) \times \mathbb{H}_n \to \mathbb{H}_n, \quad (A, \Omega) \mapsto A \cdot \Omega.
	\end{equation}
	This action satisfies the following property. If
	\begin{equation*}
		y = (y^1, ..., y^{2n}) \in \mathbb{R}^{2n}, \quad A = \begin{pmatrix}
			\boldsymbol{A}_1^1 & \boldsymbol{A}_1^2\\
			\boldsymbol{A}_2^1 & \boldsymbol{A}_2^2
		\end{pmatrix}
		\in \operatorname{Sp}(\mathbb{Z}^{2n}, H) \quad \text{and} \quad y' = A^{-T} y,
	\end{equation*}
	then for all $\Omega \in \mathbb{H}_n$, letting $\Omega' = A \cdot \Omega$,
	\begin{equation}
		(\boldsymbol{y}')^1 - \boldsymbol{H}^{-1} \Omega' (\boldsymbol{y}')^2 = ( \boldsymbol{A}_1^1 - \boldsymbol{H}^{-1} \Omega \boldsymbol{A}_1^2 )^{-1} ( \boldsymbol{y}^1 - \boldsymbol{H}^{-1} \Omega \boldsymbol{y}^2 ).
	\end{equation}
	
	\bibliographystyle{amsplain}
	\bibliography{References}
	
\end{document}